\documentclass[11pt,a4paper,reqno]{amsart}

\usepackage{mathtools}
\usepackage{babel}
\mathtoolsset{showonlyrefs=true}

\linespread{1.1}

\usepackage{amssymb}
\usepackage{amsthm}
\usepackage[all]{xy}
\usepackage{graphicx}
\usepackage[dvipsnames]{xcolor}
\usepackage{enumitem}
\usepackage{tikz}
\usetikzlibrary{matrix}

\usepackage{stix}

\usepackage[final, babel]{microtype}

\usepackage{geometry}

\geometry{
	margin=2.54cm
}

\newcommand{\OO}{\mathcal O}

\newcommand{\eps}{\varepsilon}
\newcommand{\Z}{\mathbb Z}

\newcommand{\N}{\mathbb N}
\newcommand{\F}{\mathbb F}
\newcommand{\Hom}{\operatorname{\bf Hom}}
\newcommand{\End}{\operatorname{\bf End}}
\newcommand{\rad}{\operatorname{rad}}
\newcommand{\GL}{\operatorname{\bf GL}}
\renewcommand{\Im}{\operatorname{Im}}
\newcommand{\Ker}{\operatorname{Ker}}
\newcommand{\Tr}{\operatorname{Tr}}
\newcommand{\id}{\operatorname{id}}
\newcommand{\add}{\operatorname{add}}
\newcommand{\soc}{\operatorname{soc}}

\newcommand{\IBr}{\operatorname{IBr}}

\newcommand{\Aut}{\operatorname{\bf Aut}}
\newcommand{\Out}{\operatorname{\bf Out}}
\newcommand{\Outcent}{\operatorname{\bf Outcent}}

\newcommand{\opp}{\operatorname{op}}
\newcommand{\rank}{\operatorname{rank}}

\newcommand{\Pic}{{\operatorname{\bf Pic}}}
\newcommand{\PicS}{\operatorname{\bf Pic}^{\scriptscriptstyle \mathbf K}}
\newcommand{\PicO}{\operatorname{\bf Pic}^{\scriptscriptstyle{0}}}
\newcommand{\TrPic}{{\operatorname{\bf TrPic}}}
\newcommand{\Picent}{{\operatorname{\bf Picent}}}
\newcommand{\Autcent}{{\operatorname{\bf Autcent}}}
\newcommand{\TrPicent}{{\operatorname{\bf TrPicent}}}

\newcommand{\silt}{\textrm{{\bf{silt}}-}}

\newcommand{\tilt}{\textrm{{\bf{tilt}}-}}
\newcommand{\tsilt}{\textrm{{\bf{t-silt}}-}}
\newcommand{\proj}{\textrm{{\bf{proj}}-}}
\newcommand{\Proj}{\textrm{{\bf{Proj}}-}}
\newcommand{\Mod}{\textrm{{\bf{Mod}}-}}
\newcommand{\projL}{\textrm{-\bf{proj}}}
\newcommand{\modC}{\textrm{{\bf{mod}}-}}
\newcommand{\LambdaTw}{\Lambda_{\rm tw}}
\newcommand{\GammaTw}{\Gamma_{\rm tw}}
\newcommand{\Gro}{\operatorname{\bf K_0}}

\newcommand{\Isom}{\operatorname{\bf Isom}}

\newcommand{\upbr}[1]{^{\scriptscriptstyle(#1)}}

\usepackage{relsize}
\newcommand{\xbullet}{\bullet}

\renewcommand{\mathfrak}{\mathcal}

\theoremstyle{definition}
\newtheorem{defi}{Definition}[section]

\theoremstyle{plain}
\newtheorem{thm}[defi]{Theorem}
\newtheorem{lemma}[defi]{Lemma}
\newtheorem{corollary}[defi]{Corollary}

\newtheorem{prop}[defi]{Proposition}

\newtheorem*{thm*}{Theorem}

\newenvironment{customthm}[1]
{\innercustomthm}
{\endinnercustomthm}

\theoremstyle{remark}
\newtheorem{remark}[defi]{Remark}

\newtheorem*{setup}{Set-up}

\newcommand{\sqq}[1]{(\!(#1)\!)}
\newcommand{\sbb}[1]{[\![#1]\!]}

\newcommand{\mm}{m_\xbullet}
\newcommand{\cc}{c_\xbullet}
\renewcommand{\tt}{t_\xbullet}

\newcommand*\xoverline[1]{%
	\hbox{%
		\vbox{%
			\hrule height 0.5pt 
			\kern0.3ex
			\hbox{%
				\kern-0.0em
				\ensuremath{#1}%
				\kern-0.0em
			}%
		}%
	}%
}


\usepackage[colorlinks,breaklinks,allcolors=Blue,bookmarks=false,hypertexnames=true]{hyperref} 

\title{Bijections of silting complexes and derived Picard groups}

\author{Florian Eisele}
\address{Department of Mathematics, City, University of London, London EC1V 0HB, United Kingdom}
\email{florian.eisele@city.ac.uk}
\thanks{This research was supported by EPSRC grant EP/T004592/1.}

\newtheoremstyle{named}{}{}{\itshape}{}{\bfseries}{.}{.5em}{\thmnote{#3}}
\theoremstyle{named}

\renewcommand{\leq}{\leqslant}
\renewcommand{\geq}{\geqslant}

\subjclass{16G10, 16G30, 16E35, 16H10}

\sloppy

\begin{document}

\begin{abstract}
	We introduce a method that produces a bijection between the posets $\silt{A}$ and $\silt{B}$ formed by the isomorphism classes of basic silting complexes  over finite-dimensional $k$-algebras $A$ and $B$,  by lifting $A$ and $B$ to two $k\sbb{X}$-orders which are isomorphic as rings. We apply this to a class of algebras generalising Brauer graph and weighted surface algebras, showing that their silting posets are multiplicity-independent in most cases. Under stronger hypotheses we also prove the existence of large multiplicity-independent subgroups in their derived Picard groups as well as multiplicity-invariance of $\TrPicent$.  As an application to the modular representation theory of finite groups we show that if $B$ and $C$ are blocks with $|\IBr(B)|=|\IBr(C)|$ whose defect groups are either both cyclic, both dihedral or both quaternion, then the posets $\tilt{B}$ and $\tilt{C}$ are isomorphic (except, possibly, in the quaternion case with  $|\IBr(B)|=2$) and $\TrPicent(B)\cong\TrPicent(C)$ (except, possibly, in the quaternion and dihedral cases with $|\IBr(B)|=2$).
\end{abstract}

\maketitle


\setcounter{tocdepth}{1}
\tableofcontents


\section{Introduction}

The notion of a \emph{silting complex} over a finite-dimensional algebra was first introduced by Keller and Vossieck \cite{KellerVossieck}, and is closely related to Rickard's stronger notion of a \emph{tilting complex} \cite{RickardOne}. It was later discovered by Aihara and Iyama \cite{AiharaIyamaSiltingMutation} that silting complexes have a well-behaved mutation theory, which kindled a wider interest in this class objects. One of the most obvious problems to consider in this context would be their classification over a given algebra $A$. For most algebras a classification of either silting or tilting complexes is entirely out of reach (although there are exceptions, e.g. \cite{AiharaMizuno}), but two-term silting (and tilting) complexes are much more accessible thanks to Adachi, Iyama and Reiten's theory of \emph{$\tau$-tilting modules} \cite{AdachiIyamaReiten}, and classification results in this area include Brauer tree and Nakayama algebras \cite{ZonarevaTwoTerm, BrauerGraphTauTilt, AdachiTauTiltingNakayama}, algebras of dihedral, semi-dihedral and quaternion type \cite{TauRigid}, and more \cite{IyamaZhang, MizunoClassTauTilt}. The present paper aims to generalise some of the results of \cite{TauRigid}, which was joint work of Janssens, Raedschelders and the author, to silting complexes of arbitrary length.

The main tool of \cite{TauRigid} was a one-to-one correspondence between two-term silting complexes over $A$ and $A/zA$, where $A$ is a finite-dimensional algebra over a field $k$ and $z\in \rad(Z(A))$ is arbitrary. Unsurprisingly, this fails for complexes of length greater than two, the problem being that neither complexes nor the morphisms between them lift from $A/zA$ to $A$ in general. A key idea of the present paper is to consider a $k\sbb{X}$-order $\Gamma$, by which we mean a $k\sbb{X}$-algebra which is free and finitely generated as a $k\sbb{X}$-module, such that $A\cong\Gamma / X\Gamma$. It is known \cite{RickardLiftTilting} that pre-silting complexes over $A$ always lift to $\Gamma$, even uniquely, and the converse holds as well (see Proposition~\ref{prop reduction silting}). Taking into account that silting complexes also need to generate, we obtain that $\silt{A}$ and a certain set $\tsilt{\Gamma}\supseteq \silt{\Gamma}$ (see Definition~\ref{def tsilt}) are in bijection. Now one just needs to realise that $\Gamma$ can be turned into a $k\sbb{X}$-order in many different ways, and therefore has many different ``reductions modulo $X$'', while $\tsilt{\Gamma}$ (like $\silt{\Gamma}$) only depends on the structure of $\Gamma$ as a ring. Hence, if $\iota_1,\iota_2:\ k\sbb{X} \hookrightarrow Z(\Gamma)$ are two different ways of turning $\Gamma$ into a $k\sbb{X}$-order, with reductions modulo $X$ being finite-dimensional $k$-algebras $A$ and $B$, then we get a diagram
\begin{equation}
	\xymatrix{
		&&\tsilt{\Gamma} \ar@{<->}[rd]^\sim \ar@{<->}[ld]_\sim \\
		\silt{A} \ar@{=}[r]&\silt{\Gamma/\iota_1(X)\Gamma} \ar@{<->}[rr]^\sim && \silt{\Gamma/\iota_2(X)\Gamma} & \ar@{=}[l] \silt{B}.
	}
\end{equation}
These bijections do not alter the terms of complexes in a non-trivial way, but the effect of lifting and subsequent reduction on differentials is less straightforward. The relationship between $A$ and $B$ is not obvious either, and in particular they may have different $k$-dimensions since the $k\sbb{X}$-rank of $\Gamma$ depends on the chosen $k\sbb{X}$-algebra structure. This principle, formally stated in Corollary~\ref{corollary silting bijection}, is quite versatile since we are not imposing any structural restrictions on the $k\sbb{X}$-order $\Gamma$, and it should have applications beyond what we do in the present article.

Now we need to identify some families of algebras that arise as the reduction of a single ring~$\Gamma$ with respect to different $k\sbb{X}$-algebra structures. A first example are Brauer tree and certain Brauer graph algebras, which Gnedin \cite{GnedinRibbonOrders} showed to have lifts, called ``\emph{Ribbon graph orders}'', whose ring structure is manifestly independent of the \emph{multiplicities} involved. In the present article we will define a much larger class of algebras, comprising Brauer graph algebras and \emph{weighted surface algebras} as defined by Erdmann and Skowro\'{n}ski \cite{ErdmannSkonrowski1, ErdmannSkonrowski2}. This class is similar to what is outlined under the heading ``the general context'' in \cite{ErdmannSkonrowski2}. We call an algebra in this class a \emph{generalised weighted surface algebra}, denoted $\Lambda(Q,f,\mm,\cc,\tt,\mathcal Z)$. We then construct lifts of these algebras to $k\sbb{X}$-orders provided the \emph{multiplicities} are big enough, and use that to establish \emph{multiplicity-independence} of $\silt{\Lambda(Q,f,\mm,\cc,\tt,\mathcal Z)}$.

Our main results, which are Theorems~\ref{thm main general},~\ref{thm trpic q3k}~and~\ref{thm Brauer graph}, are stated in terms of these \emph{generalised weighted surface algebras} and \emph{twisted Brauer graph algebras}, which the reader may not be familiar with. What we will do in this introduction is state explicit versions of these results for \emph{Brauer graph algebras} and certain \emph{blocks} of group algebras of finite groups,  two widely studied classes of algebras which were actually the main intended application. Since these algebras are symmetric, the notions of silting and tilting coincide and we even get correspondences of tilting complexes.

Recall that a  \emph{Brauer graph} is a finite undirected graph $G$ equipped with a cyclic order on the set of half-edges incident to $v$ for each vertex $v\in G_0$. Once we assign a \emph{multiplicity} to each vertex by means of a function $\mm: G_0\longrightarrow \Z_{>0}$, we can define the  \emph{Brauer graph algebra} $A(G,\mm)$, which is symmetric and special biserial. See \cite{SchrollBGA} for a survey on these algebras.
\begin{customthm}{A}[Tilting bijections in Brauer graph algebras]\label{thm bga tilting correspondence}
	Let $k$ be an algebraically closed field and let $G$ be a Brauer graph with two sets of multiplicities $\mm\upbr{1},\mm\upbr{2}:\ G_0 \longrightarrow \Z_{>0}$. Assume that either
	\begin{enumerate}
		\item $\operatorname{char}(k)=2$, or
		\item $G$ is bipartite, or
		\item $\mm\upbr{1}$ and $\mm\upbr{2}$ only take values $\geq 2$.
	\end{enumerate}
	Then there is a bijection between the isomorphism classes of (pre-)tilting complexes over $A(G,\mm\upbr{1})$ and those over $A(G,\mm\upbr{2})$, inducing a poset isomorphism
	\begin{equation}
		\tilt{A(G,\mm\upbr{1})}\stackrel{\sim}{\longleftrightarrow} \tilt{A(G,\mm\upbr{2})}.
	\end{equation}
\end{customthm}

\begin{customthm}{B}[Tilting bijections in blocks]\label{thm block tilting correspondence}
	Let $k$ be an algebraically closed field of characteristic ${p>0}$, and let $A$ and $B$ be blocks of group algebras of finite groups defined over $k$ such that $|\IBr(A)|=|\IBr(B)|$. Assume that the defect groups of $A$ and $B$ are
	\begin{enumerate}
		\item cyclic groups of orders $p^a$ and $p^b$, or
		\item  dihedral groups of orders $2^a$ and $2^b$, or
		\item  quaternion groups of orders $2^a$ and $2^b$ and $|\IBr(A)|=|\IBr(B)|\neq 2$,
	\end{enumerate}
	for arbitrary $a,b\geq 1$ (in the cyclic case) or $a,b\geq 3$ (in the other two cases). Then there is a bijection between the isomorphism classes of (pre-)tilting complexes over $A$ and those over $B$, inducing a poset isomorphism
	\begin{equation}
		\tilt{A}\stackrel{\sim}{\longleftrightarrow} \tilt{B}.
	\end{equation}
\end{customthm}
It should be noted that the $k\sbb{X}$-orders used to prove Theorem~\ref{thm bga tilting correspondence} do not usually have semisimple $k\sqq{X}$-span and are not canonical in any way. By contrast, the lifts used in Theorem~\ref{thm block tilting correspondence} happen to be orders in semisimple $k\sqq{X}$-algebras, and in many ways look like equicharacteristic versions of block algebras over an extension $\OO$ of the $p$-adic integers. This fact is exploited in the second half of the paper, which is devoted to derived Picard groups.

The \emph{derived Picard group} of a finite-dimensional $k$-algebra $A$, denoted $\TrPic_k(A)$, is the group of \emph{standard auto-equivalences} of $\mathcal D^b(A)$. This object was first considered by Yekutieli \cite{YekutieliDPic} and Rouquier and Zimmermann \cite{RouquierZimmermann}.  The group $\TrPic_k(A)$ is a \emph{locally algebraic group} \cite{YekutieliDPicLocallyAlgebraic}, whose identity component is $\PicO_k(A)$, the identity component of the ordinary Picard group. What really put derived Picard groups into the limelight was the discovery by Seidel and Thomas \cite{SeidelThomasBraidGroup} that elements called ``spherical twists'' satisfy Braid relations and give rise to embeddings of Braid groups into the derived Picard groups of certain \emph{dg}-versions of Brauer tree algebras (a similar result was obtained independently in \cite{RouquierZimmermann}). This led to a number of papers proving the existence and faithfulness of Braid actions on derived categories of Brauer tree algebras, using that to fully determine their derived Picard groups \cite{ZimmermannTrPicBrauerTree, SchapsZYBraidAction, MuchtadiBraidFaithful,  ZvonarevaBrauerStar, VolkovZvonareva}. Of course there are results for other classes of algebras as well \cite{MiyachiYekutieliTrPicHereditary, DerivedDiscreteCatsI,  NordskovaVolkov}.

The group $\TrPic_k(A)$ acts on $\silt{A}$ and $\tilt{A}$, with kernel containing $\PicO_k(A)$. Therefore Theorems~\ref{thm bga tilting correspondence}~\and~\ref{thm block tilting correspondence} should have some implications for derived Picard groups. The group $\PicO_k(A)$ is certainly not multiplicity-independent, but it might have a multiplicity-independent complement or supplement.  The rough idea is that for a $k\sbb{X}$-order $\Gamma$ which reduces to an algebra $A$ we have a group homomorphism~${\TrPic_{k\sbb{X}}(\Gamma)\longrightarrow \TrPic_k(A)}$. The group $\TrPic_{k\sbb{X}}(\Gamma)$ still depends on the $k\sbb{X}$-algebra structure on $\Gamma$, though, and in general the aforementioned homomorphism is neither surjective (even modulo $\Pic_k(A)$) nor injective. Therefore we must rely on a number of favourable properties of our chosen lift~$\Gamma$ to prove Theorems~\ref{thm intro trpic bga}~and~\ref{thm intro trpic blocks} below. Our results give a conceptual explanation of multiplicity-independence results such as \cite{ZimmermannTrPicBrauerTree,MuchtadiBraidFaithful}, but more importantly also cover cases where there are no ``spherical objects'' in~$\mathcal D^b(A)$ that would allow to apply the results of \cite{SeidelThomasBraidGroup}. We also obtain very elegant multiplicity-independence results for $\TrPicent(A)$, the subgroup of $\TrPic_k(A)$ acting trivially on $Z(A)$.

\begin{customthm}{C}\label{thm intro trpic bga}
	Let $k$ be an algebraically closed field and let $G$ be a Brauer graph with two sets of multiplicities $\mm\upbr{1},\mm\upbr{2}:\ G_0 \longrightarrow \Z_{>0}$. Assume $G$ is a simple graph, and if $\operatorname{char}(k)\neq 2$ then also assume that $G$ is bipartite. Then
	\begin{equation}
		\TrPicent(A(G,\mm\upbr{1})) \cong \mathcal \TrPicent(A(G,\mm\upbr{2})).
	\end{equation}
	If the multiplicities have the property that $m\upbr{1}_v=m\upbr{1}_w$ if and only if $m\upbr{2}_v=m\upbr{2}_w$ for all $v,w\in G_0$, then there are subgroups  $\mathcal H_i \leq \TrPic_k(A(G,\mm\upbr{i}))$ for $i\in\{1,2\}$, such that $\mathcal H_1\cong \mathcal H_2$ and
	\begin{equation}
		\TrPic_k(A(G,\mm\upbr{i})) = \Pic_k(A(G,\mm\upbr{i})) \cdot \mathcal H_i \quad\textrm{for $i\in\{1,2\}$.}
	\end{equation}
\end{customthm}

The subgroups $\mathcal H_i$ can be identified with a certain subgroup $\mathcal H$ of finite index in the derived Picard group of some $k\sbb{X}$-order $\Gamma$, and in that setting it is reasonable to identify $\mathcal H=\mathcal H_1=\mathcal H_2$, as we do in Theorems~\ref{thm trpic q3k}~and~\ref{thm Brauer graph}. The latter, together with Proposition~\ref{prop tw bga eq bga}, implies Theorem~\ref{thm intro trpic bga}. It is worth mentioning that in certain situations one can modify the  decomposition $\Pic_k(A(G,\mm\upbr{i})) \cdot \mathcal H_i$ slightly to obtain a semi-direct product decomposition (e.g. in Proposition~\ref{prop semidirect brauer tree}).

\begin{customthm}{D}\label{thm intro trpic blocks}
	Let $k$ be an algebraically closed field of characteristic ${p>0}$, and let $B_1$ and $B_2$ be blocks of group algebras of finite groups defined over $k$ such that $|\IBr(B_1)|=|\IBr(B_2)|$. Assume that the defect groups of $B_1$ and $B_2$ are
	\begin{enumerate}
		\item cyclic groups of orders $p^a$ and $p^b$, or
		\item  dihedral groups of orders $2^a$ and $2^b$ and $|\IBr(B_1)|=|\IBr(B_2)|\neq 2$, or
		\item  quaternion groups of orders $2^a$ and $2^b$ and $|\IBr(B_1)|=|\IBr(B_2)|\neq 2$,
	\end{enumerate}
	for arbitrary $a,b\geq d_0$, where we set $d_0=1$ in the cyclic case and $d_0 = 3$ in the other two cases. Then
	\begin{equation}
		\TrPicent(B_1) \cong \TrPicent(B_2).
	\end{equation}
	If $a=b=d_0$ or $a,b>d_0$ then there are $\mathcal H_i\leq \TrPic_k(B_i)$ for $i\in\{1,2\}$ such that $\mathcal H_1\cong \mathcal H_2$ and
	\begin{equation}
		\TrPic_k(B_i) = \Pic_k(B_i) \cdot \mathcal H_i \quad\textrm{for $i\in\{1,2\}$.}
	\end{equation}
\end{customthm}

Again one can do slightly better in some cases. For blocks of quaternion defect we get $\TrPic_k(B_i)=\PicS_k(B_i)\rtimes \mathcal H_i$, where $\PicS_k(B_i)$ is the subgroup of the Picard group that fixes the isomorphism classes of all simple modules. In the cyclic defect case we even have $\TrPic_k(B_i)=\PicS_k(B_i)\times \mathcal H_i$ (which, however, already follows from \cite{VolkovZvonareva}).

It would be interesting to see if there are other families of blocks which arise as the various reductions modulo $X$ of a ring $\Gamma$ endowed with different $k\sbb{X}$-algebra structures. The proof of Theorem~\ref{thm intro trpic blocks} uses only one such ring~$\Gamma$ for each $p$, type of defect group and number of simple modules. In this framework it would therefore be possible to formulate much more radical finiteness conjectures. Given the importance of derived equivalences in modular representation theory it would also be interesting to see if one can use $k\sbb{X}$-orders to construct derived equivalences between whole families of blocks.

\subsection*{Relation to other work} After a first preprint of this paper had appeared online, the author was informed that W. Gnedin has also, independently, studied bijections of silting posets. In particular, Gnedin has a version of Propositions~\ref{prop reduction silting}~and~\ref{prop silti to silt} as well as Corollary~\ref{corollary silting bijection}, mentioned in an Oberwolfach report~\cite{GnedinOWR}, which will appear in an upcoming paper. Gnedin can also show a version of Theorem~\ref{thm bga tilting correspondence} without restrictions on the multiplicities \cite{GnedinPrivateComm}.

\subsection*{Conventions} Modules are right modules by default. For a quiver $Q$ we let  $Q_0$ denote its set of vertices and $Q_1$ its set of arrows. Two-sided ideals in a ring $A$ are denoted by $(\ldots)_A$, or $(\ldots)$ when the choice of $A$ is unambiguous. If $R$ is a discrete valuation ring with field of fractions $K$, then we call an $R$-algebra which is free and finitely-generated as an $R$-module an \emph{$R$-order}. Given a $K$-algebra $A$ we say that $\Lambda \subseteq A$ is an \emph{$R$-order in $A$} if $\Lambda$ is an $R$-order which also spans $A$ as a vector space.

\section{Preliminaries}

\subsection{Silting and tilting complexes} Let $A$ be a ring for which  the Krull-Schmidt theorem holds in $\mathcal D^b(A)$, which is true for example if $A$ is an algebra over a field or an order over a complete discrete valuation ring.
\begin{defi}\label{defi silting}
	A complex $T^\xbullet \in \mathcal K^b(\proj{A})$ is called
	\begin{enumerate}
		\item \emph{pre-silting} if $\Hom_{\mathcal D^b(A)}(T^\xbullet, T^\xbullet[i])=0$ for all $i>0$,
		\item \emph{pre-tilting} if $\Hom_{\mathcal D^b(A)}(T^\xbullet, T^\xbullet[i])=0$ for all $i\neq 0$.
	\end{enumerate}
	A pre-silting (or pre-tilting) complex $T^\xbullet$ with the property  $\operatorname{thick}(T^\xbullet)=\mathcal K^b(\proj{A})$ is called \emph{silting} (or \emph{tilting}).  Define
	\begin{equation}
		\begin{gathered}
			\silt{A} = \{ \textrm{ basic silting complexes over $A$ } \} \big/ \textrm{ isomorphism},\\
			\tilt{A} = \{ \textrm{ basic tilting complexes over $A$ } \} \big/ \textrm{ isomorphism}.
		\end{gathered}
	\end{equation}
\end{defi}
We will sometimes refer to tilting complexes as defined above as \emph{one-sided tilting complexes}. It was shown by Rickard \cite{RickardDerEqDerFun} that, under mild hypotheses on $A$, every one-sided tilting complex is the restriction of a two-sided tilting complex as defined in the subsection below. The set $\silt{A}$ is partially ordered by defining  (see \cite[Definition 2.10]{AiharaIyamaSiltingMutation})
\begin{equation}\label{eqn def partial order}
	S^\xbullet \geq T^\xbullet \textrm{ if and only if } \Hom_{\mathcal D^b(A)}(S^\xbullet, T^\xbullet[i])=0 \textrm{ for all $i>0$.}
\end{equation}
Of course, this restricts to a partial order on $\tilt{A}$. There is also a mutation theory for $\silt{A}$ (see \cite{AiharaIyamaSiltingMutation}), while mutation in $\tilt{A}$ is not always possible. In general silting complexes tend to be better behaved than tilting complexes. Fortunately, most algebras we are interested in are symmetric, and for symmetric algebras the notions of silting and tilting coincide.

\begin{prop}
	If $A$ is a symmetric algebra over a field or a symmetric order over a complete discrete valuation ring then partial silting and partial tilting complexes coincide. \qed
\end{prop}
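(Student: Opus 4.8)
The plan is to deduce ``pre-tilting'' from ``pre-silting'' --- the reverse implication is immediate from the definitions --- and then the statement for silting versus tilting follows at once, since the generation condition $\thick(T^\xbullet)=\mathcal K^b(\proj A)$ appears verbatim in both. So fix $T^\xbullet\in\mathcal K^b(\proj A)$ with $\Hom_{\mathcal D^b(A)}(T^\xbullet,T^\xbullet[i])=0$ for all $i>0$; the task is to prove the same vanishing for all $i<0$. The idea is to use the self-duality of the endomorphism complex of a perfect complex over a symmetric algebra.

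Write $R$ for the ground ring ($R=k$ a field, or a complete discrete valuation ring) and $(-)^\vee=\Hom_R(-,R)$. First I would record the term-level duality. Fix a symmetrizing form, i.e.\ a symmetric associative $R$-bilinear form $\langle-,-\rangle\colon A\times A\to R$ whose adjoint $A\to A^\vee$ is a bimodule isomorphism. Writing $1=\sum_i e_i$ with the $e_i$ primitive idempotents (possible since $A$ is semiperfect), the cyclic identity $\langle ab,c\rangle=\langle b,ca\rangle$ forces $\langle-,-\rangle$ to be block-diagonal for the Peirce decomposition and to restrict to a perfect pairing $e_jAe_i\times e_iAe_j\to R$ for all $i,j$. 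Combined with $\Hom_A(e_iA,e_jA)\cong e_jAe_i$, this yields for all finitely generated projective right modules $P,Q$ a natural isomorphism
\begin{equation}
	\Hom_A(P,Q)\ \cong\ \Hom_A(Q,P)^\vee,
\end{equation}
under which composition corresponds to a ``trace pairing'' satisfying $\langle\!\langle fg,\mathrm{id}\rangle\!\rangle=\langle\!\langle g,f\rangle\!\rangle$ and its cyclic consequences.

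Next I would globalise to the total Hom-complex $C^\xbullet:=\Hom_A^\xbullet(T^\xbullet,T^\xbullet)$: it is a bounded complex of finitely generated \emph{free} $R$-modules (each term is an $R$-summand of some $A^{m}$), with $H^i(C^\xbullet)=\Hom_{\mathcal D^b(A)}(T^\xbullet,T^\xbullet[i])$. The termwise isomorphisms together with the cyclic identity for the trace pairing should assemble into an isomorphism of complexes $C^n\cong(C^{-n})^\vee$, compatible with differentials after the expected Koszul-sign check coming from $d(f)=d_Tf-(-1)^{|f|}fd_T$; that is, $C^\xbullet$ is $R$-dual to its own reflection. Over a field $(-)^\vee$ is exact, so $H^i(C^\xbullet)\cong H^{-i}(C^\xbullet)^\vee$ and we are done. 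Over a complete discrete valuation ring $R$ I would instead invoke the universal coefficient exact sequence over the PID $R$,
\begin{equation}
	0\longrightarrow\Ext^1_R\!\bigl(H^{1-i}(C^\xbullet),R\bigr)\longrightarrow H^i(C^\xbullet)\longrightarrow\Hom_R\!\bigl(H^{-i}(C^\xbullet),R\bigr)\longrightarrow 0 ;
\end{equation}
plugging in $H^j(C^\xbullet)=0$ for $j>0$ and letting $i$ run over $i\geq 1$ forces $\Ext^1_R(H^{-m}(C^\xbullet),R)=0$ for all $m\geq 0$ and $\Hom_R(H^{-i}(C^\xbullet),R)=0$ for all $i\geq 1$, i.e.\ $H^{-i}(C^\xbullet)$ is at once torsion-free and torsion when $i\geq 1$, hence zero.

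The substantive point --- and the only real obstacle --- is making the chain-level duality $C^n\cong(C^{-n})^\vee$ rigorous in the order case: one must check that the bimodule isomorphism $A\cong A^\vee$ genuinely induces a \emph{perfect} $R$-bilinear pairing on each Peirce block $e_jAe_i\times e_iAe_j$ (using nondegeneracy over $R$ and the freeness of $A$), and that the induced identification of Hom-complexes intertwines the differentials, which requires bookkeeping the signs in $d(f)=d_Tf-(-1)^{|f|}fd_T$. Granting that, the field case is immediate and the universal-coefficient computation over the DVR is purely formal.
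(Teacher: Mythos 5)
Your argument is correct, but it proceeds differently from the paper, which disposes of the proposition by citation and reduction rather than by a direct computation. For the field case the paper simply invokes the known Auslander--Reiten/Serre duality for symmetric algebras (\cite[Example 2.8]{AiharaIyamaSiltingMutation}, \cite[Lemma 3.1]{HoshinoKato}), which gives $\Hom_{\mathcal D^b(A)}(T^\xbullet,T^\xbullet[i])\cong \Hom_{\mathcal D^b(A)}(T^\xbullet,T^\xbullet[-i])^\vee$ directly; your chain-level self-duality of $C^\xbullet=\Hom_A^\xbullet(T^\xbullet,T^\xbullet)$ is in effect a proof of that duality, so there you are reproving the cited input rather than using it. For the order case the paper takes a genuinely different route: it reduces modulo $\pi$, notes that the reduction of a symmetric order is a symmetric algebra, applies the field case there, and transports the conclusion back using Proposition~\ref{prop reduction silting} together with Rickard's lifting results \cite[Proposition 3.1 and Theorem 3.3]{RickardLiftTilting}. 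Your route instead stays over the discrete valuation ring and replaces the reduction step by the universal coefficient sequence for the self-dual complex of finitely generated free $R$-modules, extracting that each $H^{-i}(C^\xbullet)$ ($i\geq 1$) is simultaneously torsion (from the $\Hom_R$ term) and torsion-free (from the $\Ext^1_R$ term in the next degree), hence zero; since the cohomology is finitely generated over the Noetherian local ring $R$, this is watertight. The one point you defer --- perfectness of the Peirce-block pairings and the sign check --- is indeed routine: ``symmetric order'' means the adjoint $A\to\Hom_R(A,R)$ is a bimodule isomorphism, and since this adjoint respects the decomposition $\Hom_R(A,R)=\bigoplus_{i,j}\Hom_R(e_iAe_j,R)$, each induced map $e_jAe_i\to\Hom_R(e_iAe_j,R)$ is an isomorphism, while the compatibility with $d(f)=d_Tf-(-1)^{|f|}fd_T$ is the usual cyclicity of the trace pairing up to a global sign that does not affect the cohomological conclusion. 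The trade-off: your argument is self-contained and uniform over $k$ and $R$, whereas the paper's is shorter and reuses the reduction machinery (Proposition~\ref{prop reduction silting}, Rickard's lifting theorems) that it needs elsewhere anyway.
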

For algebras this is well-known, see for instance \cite[Example 2.8]{AiharaIyamaSiltingMutation} (or \cite[Lemma 3.1]{HoshinoKato} for a proof of the relevant version of Auslander-Reiten duality). For $R$-orders we can use the fact that pre-silting and pre-tilting complexes coincide over the reduction to the residue field of $R$, and then use Proposition~\ref{prop reduction silting} in conjunction with \cite[Proposition 3.1 and Theorem 3.3]{RickardLiftTilting} to get the same over the order. See also \cite{ZimmermannTiltedSymm}.

\subsection{Derived Picard groups} Let $R$ be a commutative ring, and let $A$ and $B$ be $R$-algebras that are projective as $R$-modules.
We call an object $X^\xbullet\in \mathcal D^b(A^{\opp}\otimes_R B)$ \emph{invertible} if there is a $Y^\xbullet\in \mathcal D^b(B^{\opp}\otimes_R A)$ such that $X^\xbullet \otimes_B^{\mathbf L} Y^\xbullet\cong A$ in $\mathcal D^b(A^{\opp}\otimes_R A)$ and   $Y^\xbullet \otimes_A^{\mathbf L} X^\xbullet\cong B$ in $\mathcal D^b(B^{\opp}\otimes_R B)$.  We call $X^\xbullet$ a \emph{two-sided tilting complex} if  $X^\xbullet$ is invertible and restricts to a bounded complex of projective left $A$-modules and to a bounded complex of projective right $B$-modules. As mentioned in \cite[Definition~4.2]{RickardDerEqDerFun} every invertible complex is isomorphic to a two-sided tilting complex.  If $X^\xbullet$ is a two-sided tilting complex, then the derived tensor product ``$X^\xbullet\otimes_B^{\mathbf L}-$'' and the ordinary tensor product of complexes ``$X^\xbullet\otimes_B-$'' coincide, which is why we will not need to use left derived tensor products in the remainder of this article.
\begin{defi}
	The \emph{derived Picard group} of $A$ is defined as
	\begin{equation}
		\TrPic_R(A)=\left \{  \textrm{ two-sided tilting complexes in $\mathcal D^b(A^{\opp}\otimes_RA)$ }  \right\} \bigg/ \textrm{isomorphism}.
	\end{equation}
	The product in this group is induced by ``$-\otimes_A=$''.
\end{defi}

For basic properties of derived Picard groups refer to \cite{RouquierZimmermann}. We will make extensive use of the fact that a two-sided  tilting complex $X^\xbullet$ induces an isomorphism
\begin{equation}
	\gamma_X:\ Z(A) \stackrel{\sim}{\longrightarrow} Z(B).
\end{equation}
This can be seen in a number of ways. For instance, the functor $X^{-1}\otimes_A - \otimes_A X$ sends the $A$-$A$-bimodule $A$ to the $B$-$B$-bimodule $B$, and therefore induces a homomorphism between the endomorphism rings of these bimodules, which are $Z(A)$ and $Z(B)$, respectively. In particular $\TrPic_R(A)$ acts on the centre of~$A$. It also acts on the Grothendieck group $\Gro(A)$, as we will see in the next subsection.
\begin{defi}
	Define the following subgroups of $\TrPic_R(A)$:
	\begin{equation}
		\begin{spreadlines}{0.8em}
			\begin{gathered}
				\TrPicent(A)= \big\{ X^\xbullet \in \TrPic_R(A)\ | \ \gamma_X =\id_{Z(A)} \big\},\\
				\Pic_R(A)= \big\{ X^\xbullet \in \TrPic_R(A) \ | \ \textrm{$X^\xbullet$ is isomorphic to an $A$-$A$-bimodule} \big\},\\
				\PicS_R(A) = \big\{ M \in \Pic_R(A) \ | \ \textrm{$P\otimes_AM\cong P$ for all projective $A$-modules $P$}  \big\},\\
				\Picent(A) = \Pic_R(A)\cap \TrPicent(A).
			\end{gathered}
		\end{spreadlines}
	\end{equation}
	If $R$ is a field and $A$ is finite-dimensional and basic then $\Pic_R(A)\cong \Out_R(A)$, which is the group of \emph{outer automorphisms} of $A$, and $\Picent(A)\cong \Outcent(A)$, the group of \emph{outer central automorphisms}.
\end{defi}

In the present paper we will often consider different $R$-algebra structures on the same ring $A$. This raises some questions regarding well-definedness, which the following remark addresses.
\begin{remark}\label{remark detect trpic linearity}
	\begin{enumerate}
		\item Let ${{A}}$ be a ring and let $R,S\subset Z({{A}})$ be two commutative subrings such that ${{A}}$ is projective both as an $R$-module and as an $S$-module. Then, technically, the elements of $\TrPic_R({{A}})$ are represented by complexes whose terms are $R$-linear bimodules, and those of $\TrPic_S({{A}})$ by complexes whose terms are $S$-linear bimodules. However, we can always embed
		      \begin{equation}
			      \TrPic_R({{A}}) , 	\TrPic_S({{A}})  \hookrightarrow \mathcal D^b({{A}}^{\opp}\otimes_{\mathbb Z} {{A}}),
		      \end{equation}
		      showing that expressions like ``$	\TrPic_R({{A}}) \cap 	\TrPic_S({{A}}) $'' are well-defined. If one wants to stick closer to the setting of \cite{RickardDerEqDerFun} one could embed into $\TrPic_k({{A}})$, provided there is a common subfield $k\subset R,S$ (which is the case in all examples we are interested in). In either case, these are embeddings because an element $X^\xbullet \in \TrPic_R({{A}})$ is trivial if and only if $H^i(X^\xbullet)=0$ for all $i\neq 0$ and $H^0(X^\xbullet) \cong {{A}}$ as an ${{A}}$-${{A}}$-bimodule (this follows, for example, from \cite[Proposition 2.3]{RouquierZimmermann}). This can be detected in $ \mathcal D^b({{A}}^{\opp}\otimes_{\mathbb Z} {{A}})$.
		\item If the automorphism $\gamma_{Y}\in \Aut_R(Z({{A}}))$ induced by a $Y^\xbullet\in\TrPic_R({{A}})$ happens to be $S$-linear as well, then there is an $X^\xbullet \in \TrPic_S({{A}})$ such that $X^\xbullet \cong Y^\xbullet $ in  $\mathcal D^b({{A}}^{\opp}\otimes_\Z {{A}})$. To find $X^\xbullet$ first pick a two-sided tilting complex  $Z^\xbullet\in \mathcal D^b({{A}}^{\opp} \otimes_S {{A}}')$, for a suitable $S$-algebra ${{A}}'$, whose restriction to the left is the restriction to the left of $Y^\xbullet$.  Then, by \cite[Proposition 2.3]{RouquierZimmermann}, we have $Z^\xbullet\otimes_{{{A}}'} {_\alpha {{A}}} \cong  Y^\xbullet $ in $\mathcal D^b({{A}}^{\opp}\otimes_\Z {{A}})$ for some ring isomorphism $\alpha:\ {{A}}' \longrightarrow {{A}}$. Note that, technically, \cite[Proposition 2.3]{RouquierZimmermann}  asks for ${{A}}$ and $A'$ to be projective over a commutative base ring. In all cases we are interested in we could in principle use a field $k$ for this. But the proof of \cite{RouquierZimmermann} goes through regardless since in our situation the existence of inverses of $Z^\xbullet$ and $Y^\xbullet$ is guaranteed by other means. It follows that $\alpha|_{Z({{A}}')} \circ \gamma_{Z} = \gamma_{Y}$ (where $\gamma_{Z}:\ Z(A)\longrightarrow Z(A')$ is induced by $Z^\xbullet$), which shows that $\alpha|_{Z({{A}}')}$ must be $S$-linear, implying that ${_{\alpha}{{A}}}$ is an $S$-linear bimodule. We can therefore choose $X^\xbullet = Z^\xbullet\otimes_{{{A}}'} {_\alpha {{A}}}$.
	\end{enumerate}
\end{remark}

\subsection{Grothendieck groups and derived equivalences}
The contents of this subsection are standard (except perhaps Lemma~\ref{lemma action centre}), but both definitions and notation vary a lot across the literature. Assume $\Lambda$ is an $R$-algebra, free and finitely-generated as an $R$-module, where $R$ is either a field or a complete discrete valuation ring. By $\Gro(\Lambda)$ we denote the \emph{Grothendieck group} of $\Lambda$, which is spanned by symbols $[P]$, where $P$ is a finitely-generated projective $\Lambda$-module, and $[P'']=[P]+ [P']$ whenever there is a short exact sequence $0\rightarrow P\rightarrow P'' \rightarrow P'\rightarrow 0$. This free abelian group is equipped with a bilinear form
\begin{equation}
	(-,=)_\Lambda: \ \Gro(\Lambda) \times \Gro(\Lambda) \longrightarrow \Z:\ (\,[P], [Q]\,) \mapsto \rank_R \Hom_\Lambda(P,Q).
\end{equation}
$\Gro(\Lambda)$ comes with a \emph{distinguished basis} consisting of the symbols $[P]$ for indecomposable projective modules $P$.

Similar to the above we also get a Grothendieck group $\Gro(\mathcal K^b(\proj \Lambda))$, which is isomorphic to $\Gro(\Lambda)$ by means of the Euler characteristic $\chi([C^\xbullet]) = \sum_i (-1)^i [C^i]$. In general, we have
\begin{equation}
	([S^\xbullet], [T^\xbullet])_\Lambda = \sum_{i\in\Z} (-1)^i \rank_R \Hom_{\mathcal K^b(\proj \Lambda)}(S^\xbullet, T^\xbullet[i]).
\end{equation}
See \cite[Chapter III.1]{HappenTriangCatsInRep} for a reference (but note that Happel defines $\Gro(A)$ as $\Gro(\modC A)$). It follows that if $X^\xbullet\in \mathcal D^b(\Lambda^{\opp}\otimes_R \Gamma)$ is a two-sided tilting complex (where $\Gamma$ is another $R$-algebra), then
\begin{equation}
	([P\otimes_\Lambda X^\xbullet], [Q\otimes_\Lambda X^\xbullet ])_\Gamma = ([P], [Q])_\Lambda
\end{equation}
for any two finitely-generated projective $\Lambda$-modules $P$ and $Q$. When the form $(-,=)_\Lambda$ is symmetric (e.g. for orders in semisimple algebras) this means that a derived equivalence induces an isometry between Grothendieck groups.

\begin{defi}[``Decomposition map'']\label{defi decomp}
	Let $R$ be a complete discrete valuation ring with residue field $k=R/\pi R$ and field of fractions $K$. Let $\Lambda$ be an $R$-order in a semisimple $K$-algebra~$A$, and set $\bar{\Lambda}=\Lambda/\pi\Lambda$. Then we can define a $\Z$-linear map
	\begin{equation}
		D_\Lambda:\ \Gro(\bar{\Lambda})\longrightarrow \Gro(A)
	\end{equation}
	given by the composition of the canonical isomorphism between $\Gro(\bar \Lambda)$ and $\Gro(\Lambda)$ followed by the map induced by ``$K\otimes_R -$''.
\end{defi}

These maps satisfy the identity $(-,=)_{\bar{\Lambda}}= (D_\Lambda(-),D_\Lambda(=))_A$. We could call $D_\Lambda$ a \emph{decomposition map} and its matrix a \emph{decomposition matrix}, but these terms usually refer to the adjoint of $D_\Lambda$ and its matrix. We will therefore refrain from using this terminology.

\begin{prop}\label{prop compat center decomp}
	Let $R$ be a complete discrete valuation ring with residue field $k=R/\pi R$ and field of fractions $K$. Moreover, let $\Lambda$ and $\Gamma$ be $R$-orders in semisimple $K$-algebras $A$ and $B$, respectively. Write $\bar \Lambda$ and $\bar \Gamma$ for the reductions of $\Lambda$ and $\Gamma$ modulo $\pi$. If $X^\xbullet\in \mathcal D^b(\Lambda^{\opp}\otimes_R \Gamma)$ is a two-sided tilting complex, then there is a commutative diagram
	\begin{equation}\label{eqn decomp tilt square}
		\xymatrix{
		\Gro(\bar \Lambda) \ar[r]^{\varphi_{\bar X}} \ar[d]^{D_\Lambda} & \Gro(\bar \Gamma) \ar[d]^{D_\Gamma}\\
		\Gro(A) \ar[r]^{\varphi_{KX}} & \Gro(B).
		}
	\end{equation}
	where $\varphi_{KX}$ and $\varphi_{\bar X}$ are the isometries induced by the functor ``$-\otimes_{\Lambda} X^\xbullet$''. Let $V_1,\ldots, V_n$ and $W_1,\ldots, W_n$ denote representatives for the simple $A$- and $B$-modules, and let $\eps_1,\ldots, \eps_n$ and $\eps'_1,\ldots,\eps'_n$ denote the corresponding primitive idempotents in $Z(A)$ and $Z(B)$.  The following hold:
	\begin{enumerate}
		\item There are a $\sigma \in S_n$ and signs $\tau: \ \{ 1,\ldots,n\} \longrightarrow \{\pm 1\}$ such that
		      \begin{equation}
			      \varphi_{KX}([V_i]) = \tau(i) \cdot  [W_{\sigma(i)}].
		      \end{equation}
		\item There is an isomorphism
		      \begin{equation}
			      \gamma_{KX}:\ Z(A) \longrightarrow Z(B)\quad \textrm{ such that } \gamma_{KX}(\eps_i)=\eps'_{\sigma(i)}\quad \textrm{for all $i\in \{1,\ldots,n\}$}
		      \end{equation}
		      and $\gamma_{KX}$ restricts to the isomorphism $\gamma_X:\ Z(\Lambda)\longrightarrow Z(\Gamma)$ induced by $X^\xbullet$.\qed
	\end{enumerate}
\end{prop}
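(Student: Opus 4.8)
The plan is to transport all the data from $X^\xbullet$ to $\bar X^\xbullet$ and $KX^\xbullet$ by reducing modulo $\pi$ and by extending scalars to $K$. First I would replace $X^\xbullet$ by an isomorphic complex so that it becomes a bounded complex of $(\Lambda,\Gamma)$-bimodules with each term $X^i$ projective both as a left $\Lambda$-module and as a right $\Gamma$-module; since $\Lambda$ is free of finite rank over the local ring $R$, each $X^i$ is then automatically free of finite rank over $R$. Hence $\bar X^\xbullet:=X^\xbullet\otimes_R k$ and $KX^\xbullet:=X^\xbullet\otimes_R K$ are bounded complexes of bimodules over $\bar\Lambda^{\opp}\otimes_k\bar\Gamma$ and $A^{\opp}\otimes_K B$, projective on each side; and because for two-sided tilting complexes ordinary and derived tensor products agree while $-\otimes_R k$ and $-\otimes_R K$ are exact on $R$-free modules, applying these functors to the defining isomorphisms $X^\xbullet\otimes_\Gamma X^{-1}\cong\Lambda$ and $X^{-1}\otimes_\Lambda X^\xbullet\cong\Gamma$ shows that $\bar X^\xbullet$ and $KX^\xbullet$ are again two-sided tilting complexes, so that $\varphi_{\bar X}$ and $\varphi_{KX}$ are defined. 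For the square, I would chase a finitely generated projective $\bar\Lambda$-module $\bar P$: lifting idempotents along $\Lambda\twoheadrightarrow\bar\Lambda$ (whose kernel $\pi\Lambda$ lies in $\rad(\Lambda)$) produces a projective $\Lambda$-module $P$ with $P/\pi P\cong\bar P$, and then $P\otimes_\Lambda X^\xbullet$ is a bounded complex of $R$-free projective $\Gamma$-modules satisfying $(P\otimes_\Lambda X^\xbullet)\otimes_R k\cong\bar P\otimes_{\bar\Lambda}\bar X^\xbullet$ and $(P\otimes_\Lambda X^\xbullet)\otimes_R K\cong(K\otimes_R P)\otimes_A KX^\xbullet$. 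Writing $KP:=K\otimes_R P$ for the projective $A$-module lifting $\bar P$, so that $D_\Lambda([\bar P])=[KP]$, this gives
\begin{equation}
	D_\Gamma\bigl(\varphi_{\bar X}([\bar P])\bigr)=\sum_i(-1)^i\bigl[KP\otimes_A KX^i\bigr]=\varphi_{KX}([KP])=\varphi_{KX}\bigl(D_\Lambda([\bar P])\bigr),
\end{equation}
and since the classes $[\bar P]$ generate $\Gro(\bar\Lambda)$ this yields commutativity of \eqref{eqn decomp tilt square}.

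For part~(1), I would use that $A$ is semisimple, so every object of $\mathcal D^b(A)$ is a finite direct sum of shifts of the simple modules $V_1,\dots,V_n$, which here coincide with the indecomposable projectives; the same holds for $\mathcal D^b(B)$, so the indecomposable objects of $\mathcal D^b(B)$ are exactly the shifts $W_j[m]$. The equivalence $-\otimes_A KX^\xbullet\colon\mathcal D^b(A)\to\mathcal D^b(B)$ preserves indecomposability, so there are integers $l_1,\dots,l_n$ and a map $\sigma$ with $V_i\otimes_A KX^\xbullet\cong W_{\sigma(i)}[-l_i]$; passing to Grothendieck groups gives $\varphi_{KX}([V_i])=(-1)^{l_i}[W_{\sigma(i)}]$, and bijectivity of $\varphi_{KX}$ (together with the fact that the $[W_j]$ form a basis of $\Gro(B)$) forces $\sigma$ to be a permutation. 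This proves (1) with $\tau(i)=(-1)^{l_i}$.

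For part~(2), let $\gamma_{KX}\colon Z(A)\to Z(B)$ be the isomorphism induced by the two-sided tilting complex $KX^\xbullet$. By construction of $\gamma_{KX}$, conjugation by the equivalence $-\otimes_A KX^\xbullet$ carries the multiplication action of $Z(A)$ on $\id_{\mathcal D^b(A)}$ onto that of $Z(B)$ on $\id_{\mathcal D^b(B)}$ through $\gamma_{KX}$; that is, multiplication by $z\in Z(A)$ on an $A$-module $M$ transports to multiplication by $\gamma_{KX}(z)$ on $M\otimes_A KX^\xbullet$. Taking $z=\eps_i$ and $M=V_j$ and feeding in part~(1), the element $\gamma_{KX}(\eps_i)$ acts (by multiplication) as the identity on $W_{\sigma(i)}$ and as $0$ on $W_{\sigma(j)}$ for every $j\neq i$; since $\sigma$ is a permutation and $Z(B)=\prod_{j}Z(B\eps'_j)$ is a product of fields acting faithfully on the corresponding simple modules, the only element of $Z(B)$ with this property is $\eps'_{\sigma(i)}$. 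Finally, the identical transport-of-central-action argument over $R$ shows that, for $z\in Z(\Lambda)=Z(A)\cap\Lambda$, multiplication by both $\gamma_X(z)$ and $\gamma_{KX}(z)$ on $KX^\xbullet=A\otimes_A KX^\xbullet$ computes the image of multiplication by $z$ on $A$ under $-\otimes_A KX^\xbullet$; since $Z(B)$ acts faithfully on $KX^\xbullet$ (which generates $\mathcal D^b(B)$) this forces $\gamma_{KX}(z)=\gamma_X(z)$, so $\gamma_{KX}$ restricts to $\gamma_X$ as required.

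The main obstacle I anticipate is in parts~(1) and~(2): one must pin down the single permutation $\sigma$ so that it is simultaneously consistent with the statement about simple modules, the statement about central idempotents, and—via the commuting square—the decomposition numbers, and one must be careful that the ``conjugation of the centre'' bookkeeping matches exactly the definition of $\gamma_X$ used elsewhere in the paper. By contrast, the reductions and base changes in the first part are routine, once one observes that the terms of $X^\xbullet$ may be taken free over $R$.
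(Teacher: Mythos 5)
Your argument is correct, and it is essentially the canonical one: the paper states this proposition without proof (it is asserted as standard, hence the \qed in the statement), so there is no alternative route to compare against. Your chain — choose a representative of $X^\xbullet$ with finitely generated terms projective on both sides (hence $R$-free), observe that $-\otimes_R k$ and $-\otimes_R K$ then produce two-sided tilting complexes and commute with $P\otimes_\Lambda-$, compute Euler characteristics to get the square, and transport the central action to identify $\gamma_{KX}(\eps_i)=\eps'_{\sigma(i)}$ and the restriction to $\gamma_X$ — is exactly the intended justification; the only inputs you use without proof (existence of a biprojective representative, and compatibility of the centre isomorphism with the $Z(A)$-action on one-sided modules, matching the paper's bimodule-endomorphism definition of $\gamma_X$) are standard facts from Rickard and Rouquier--Zimmermann and are fair to cite rather than reprove.
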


In the situation of Proposition~\ref{prop compat center decomp} the isomorphism between the centres of $\Lambda$ and $\Gamma$ induced by the two-sided tilting complex $X^\xbullet$ is determined by the isomorphism between the centres of $A$ and $B$ induced by $K\otimes_R X^\xbullet$. If $A$ and $B$ are split, then this is even determined by  the induced map on Grothendieck groups. However, we will also deal with orders where $Z(\bar{\Lambda})$ is bigger than the reduction of $Z(\Lambda)$ modulo $\pi$. The following lemma helps determining the induced isomorphism between the centres of $\bar{\Lambda}$ and $\bar{\Gamma}$ in those cases.

\begin{lemma}\label{lemma action centre}
	Let $k$ be an algebraically closed field. Let $A$ be a basic finite-dimensional symmetric $k$-algebra, and let $e_1,\ldots,e_n\in A$ ($n\in\N$) denote a full system of orthogonal primitive idempotents. Let $t:\ A \longrightarrow k$ be a symmetrising form. Assume moreover that $\soc(Z(A))=\soc(A)$, and for all $1\leq i \leq n$ let $s_i \in e_i \soc(A)e_i$ be the unique element such that $t(s_i)=1$. Let  $X^\xbullet\in \TrPic_k(A)$ be a two-sided tilting complex, and define $C\in\GL_n(\Z)$ such that
	\begin{equation}
		[e_iA\otimes_A X^\xbullet] = \sum_{j=1}^n C_{i,j} \cdot [e_jA] \quad\textrm{for all $1\leq i \leq n$}
	\end{equation}
	holds in $\Gro(A)$. If $\gamma_X:\ Z(A)\longrightarrow Z(A)$ is the automorphism induced by $X^\xbullet$, then
	\begin{equation}
		\langle \gamma_X(s_i)\rangle_k = \langle \sum_{j=1}^n (C^{-1})_{j,i} \cdot s_j \rangle_k\quad \textrm{for all $1\leq i\leq n$.}
	\end{equation}
\end{lemma}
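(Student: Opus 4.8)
The plan is to route the comparison through the Hattori--Stallings (noncommutative) trace with values in $A/[A,A]=\HHH_0(A)$. First I would record the elementary facts about the socle: since $A$ is basic and symmetric, $\soc(e_iA)$ is one-dimensional and contained in $e_iAe_i$, so $\soc(A)=\bigoplus_{i=1}^{n}ks_i$, and moreover $e_js_i=s_ie_j=\delta_{ij}s_i$, $\rad(A)s_i=s_i\rad(A)=0$ and $s_iA=As_i=ks_i$. From $e_jze_j=ze_j$ for central $z$ and $t(s_i)=1$ one gets $z=\sum_{i=1}^{n}t(ze_i)s_i$ for every $z\in\soc(Z(A))$; applied to $z=\gamma_X(s_i)$ this shows $\gamma_X(s_i)=\sum_{l}a_{i,l}s_l$ with $a_{i,l}=t(\gamma_X(s_i)e_l)$, so the task is to prove $a_{i,l}=c_i\cdot(C^{-1})_{l,i}$ for a single constant $c_i\in k$. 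Two bookkeeping observations will be used. Write $b(l)$ for the block of $A$ containing $e_l$. (i) Using that $t$ is a perfect pairing on $A$ with $[A,A]^{\perp}=Z(A)$, together with $\rad(A)s_l=0$ and the fact that $Z(A_b)$ is local for each block $b$, one checks that $t\big((s_l-s_{l'})z\big)=0$ for all $z\in Z(A)$ whenever $b(l)=b(l')$; hence the class $\overline{s_l}\in A/[A,A]$ depends only on $b(l)$, and it is nonzero because $t(s_l)=1$. (ii) Since $\gamma_X$ permutes the primitive central idempotents, $X^\xbullet$ induces a permutation $\rho$ of the blocks, the derived equivalence $G:=-\otimes_AX^\xbullet$ carries $\mathcal D^b(A_b)$ to $\mathcal D^b(A_{\rho(b)})$, and consequently both $C$ and $C^{-1}$ respect the block decomposition; in particular, for fixed $i$ all indices $l$ with $(C^{-1})_{l,i}\neq0$ lie in the single block $\rho(b(i))$ that contains $\gamma_X(s_i)$.

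Next I would bring in the Hattori--Stallings trace $\operatorname{tr}\colon\End_{\mathcal K^b(\proj A)}(P^\xbullet)\to A/[A,A]$: it is invariant under homotopy, additive, and for the endomorphism $z_N$ ``multiplication by $z\in Z(A)$'' of a module $N$ it satisfies $\operatorname{tr}(z_{e_jA})=\overline{e_jz}$, whence $\operatorname{tr}(z_{P^\xbullet})=\sum_j[P^\xbullet:e_jA]\cdot\overline{e_jz}$ for any $P^\xbullet\in\mathcal K^b(\proj A)$, where $[P^\xbullet:e_jA]$ denotes the coefficient of $[e_jA]$ in $[P^\xbullet]\in\Gro(A)$. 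The essential input is the naturality of this trace under the derived equivalence, namely $\operatorname{tr}(G(f))=\HHH_0(X)(\operatorname{tr}(f))$ for every endomorphism $f$ of a perfect complex, where $\HHH_0(X)$ is the automorphism of $A/[A,A]$ induced by $X^\xbullet$ on degree-zero Hochschild homology. I also use the defining property $G(z_N)=(\gamma_X(z))_{G(N)}$ of $\gamma_X$.

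The computation is then immediate. Fix $i$ and $l$, and evaluate $\operatorname{tr}$ of the endomorphism $(\gamma_X(s_i))_{e_lA}$ of $e_lA$ in two ways. By additivity and $e_ls_m=\delta_{lm}s_l$ one gets $\operatorname{tr}\big((\gamma_X(s_i))_{e_lA}\big)=a_{i,l}\,\overline{s_l}$. On the other hand, writing $Q^\xbullet_l:=G^{-1}(e_lA)$, which has $[Q^\xbullet_l]=\sum_m(C^{-1})_{l,m}[e_mA]$, the defining property of $\gamma_X$ gives $(\gamma_X(s_i))_{e_lA}=G\big((s_i)_{Q^\xbullet_l}\big)$, so naturality of the trace and the formula above yield
\begin{equation}
  a_{i,l}\,\overline{s_l}=\HHH_0(X)\big(\operatorname{tr}\big((s_i)_{Q^\xbullet_l}\big)\big)=\HHH_0(X)\big((C^{-1})_{l,i}\,\overline{s_i}\big)=(C^{-1})_{l,i}\cdot\HHH_0(X)(\overline{s_i}).
\end{equation}
Since $\overline{s_i}\neq0$ and $\HHH_0(X)$ is an isomorphism, $v:=\HHH_0(X)(\overline{s_i})\neq0$. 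Picking $l_0$ with $(C^{-1})_{l_0,i}\neq0$, the identity forces $a_{i,l_0}\neq0$ and $v=\big(a_{i,l_0}/(C^{-1})_{l_0,i}\big)\overline{s_{l_0}}$; as every $l$ with $(C^{-1})_{l,i}\neq0$ lies in the block $b(l_0)$, observation (i) gives $\overline{s_l}=\overline{s_{l_0}}$ for all such $l$, and the identity then yields $a_{i,l}=c_i(C^{-1})_{l,i}$ with $c_i:=a_{i,l_0}/(C^{-1})_{l_0,i}\in k^{\times}$ (for $l$ with $(C^{-1})_{l,i}=0$ the identity forces $a_{i,l}=0$, since $\overline{s_l}\neq0$). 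Hence $\gamma_X(s_i)=c_i\sum_l(C^{-1})_{l,i}s_l$, which is the assertion. (The scalar $c_i$ is genuinely unavoidable: for $X^\xbullet=A[1]$ one has $C=-I$ yet $\gamma_X=\id$.)

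The main obstacle is the naturality statement $\operatorname{tr}\circ G=\HHH_0(X)\circ\operatorname{tr}$: although it is standard, being the functoriality of the Chern character / Dennis trace $\Gro(-)\to\HHH_0(-)$ under derived equivalence, it needs to be cited in a form valid for two-sided tilting complexes and with sign conventions matching the Euler characteristic used to define $C$. A secondary but genuinely necessary point is the block bookkeeping in observations (i) and (ii); without it the constant relating $a_{i,l}$ to $(C^{-1})_{l,i}$ could a priori depend on $l$ as well as on $i$, which would only give the weaker statement that $\gamma_X$ carries $\soc(Z(A))$ onto itself.
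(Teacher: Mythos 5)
Your proof is correct, but it takes a genuinely different route from the paper's. The paper's own argument is also a trace argument, yet it stays $k$-valued and self-contained: it introduces the forms $t_{e_iA\otimes_AX^\xbullet}=\sum_j(-1)^j\,t\circ\Tr_{e_iA\otimes_AX^j}$ on $\End_{\mathcal D^b(A)}(e_iA\otimes_AX^\xbullet)$, computes that right multiplication by $s_l$ has trace $C_{i,l}$, observes that left multiplication by $s_l$ (which equals right multiplication by $\gamma_X(s_l)$) has trace $0$ for $l\neq i$ because $s_le_i=0$, shows by a short nondegeneracy argument that it is nonzero for $l=i$, and then reads off the coefficients after rescaling $s_i$ — no block analysis and no input beyond homotopy invariance of that alternating trace. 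You instead keep the Hattori--Stallings trace valued in $A/[A,A]=\HHH_0(A)$ and invoke its naturality under the standard equivalence, $\operatorname{tr}\circ(-\otimes_AX^\xbullet)=\HHH_0(X)\circ\operatorname{tr}$; this is true and citable (compatibility of the Dennis trace/Chern character with two-sided tilting complexes), but it is precisely the external input the paper avoids, and you are right that it must be quoted with sign conventions matching the Euler characteristic defining $C$. The other point of divergence is the scalar: since your identity $a_{i,l}\,\overline{s_l}=(C^{-1})_{l,i}\,\HHH_0(X)(\overline{s_i})$ pins down $a_{i,l}$ only up to how $\overline{s_l}$ compares to a fixed vector, your observations (i) and (ii) — that $\overline{s_l}$ depends only on the block (this is where $\soc(A)=\soc(Z(A))$ and $k=\bar k$ enter) and that all $l$ with $(C^{-1})_{l,i}\neq 0$ lie in a single block — are genuinely needed, and both check out; the paper sidesteps this entirely via its nondegeneracy trick. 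What your route buys is a conceptual explanation of the unavoidable scalar (it is the factor by which $\HHH_0(X)$ moves $\overline{s_i}$, as your $X^\xbullet=A[1]$ example illustrates) and a formulation directly in terms of $C^{-1}$ and the inverse equivalence; what the paper's route buys is complete independence from Hochschild-homology machinery.
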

\begin{proof}
	For each projective $A$-module $P$ set $t_P=t\circ \Tr_P$, where $\Tr_P$ is the composition of the natural isomorphism $\End_A(P)\longrightarrow P\otimes_A\Hom_A(P,A)$ and the evaluation map ${P\otimes_A\Hom_A(P,A)\longrightarrow A}$. Note that $t_P$ defines a symmetrising form on $\End_A(P)$. Then define forms
	\begin{equation}
		t_{e_iA\otimes_A X^\xbullet}(\alpha) = \sum_{j\in\Z} (-1)^j \cdot t_{e_iA\otimes_AX^j}( \alpha^j)= t_{\bigoplus_je_iA\otimes_AX^j} \big(\sum_{j\in\Z} (-1)^j\cdot \alpha^j \big)
	\end{equation}
	for   $1\leq i \leq n$, where $\alpha \in \End_{\mathcal D^b(A)}(e_iA\otimes_A X^\xbullet)$. Note that this sum is finite and it is well-defined (the rightmost expression shows it vanishes on null-homotopic maps).  

	Let $\tau_L,\tau_R:\ Z(A)\longrightarrow  \End_{\mathcal D^b(A)}(e_iA\otimes_A X^\xbullet)$ be the maps sending $z\in Z(A)$ to the endomorphism induced by left and right multiplication  by $z$, respectively (we use the same name for all $i$). By definition, $\tau_L(z)$ and $\tau_R(\gamma_X(z))$ are equal (as elements of $\End_{\mathcal D^b(A)}(e_iA\otimes_A X^\xbullet)$). Now note that $t_{e_iA\otimes_AX^j}(\tau_R(s_l))$ for $1\leq i,l\leq n$  and $j\in\Z$ counts the number of times $e_lA$ occurs as a summand of $e_iA\otimes_AX^j$ and therefore $t_{e_iA\otimes_A X^\xbullet}(\tau_R(s_l)) = C_{i,l}$. We get
	\begin{equation}\label{eqn sjojsoi}
		t_{e_iA\otimes_A X^\xbullet}\left(\sum_{j=1}^n (C^{-1})_{j,l}\cdot \tau_R(s_j)\right) = (\id_{n\times n})_{i,l}
	\end{equation}
	for all $1\leq i,l\leq n$. At the same time $t_{e_iA\otimes_A X^\xbullet}(\tau_L(s_l))=0$ when $i\neq l$ since then $s_le_i=0$. If we had $t_{e_iA\otimes_A X^\xbullet}(\tau_L(s_i))=0$ as well (for some $i$), then $t_{e_iA\otimes_A X^\xbullet}(\tau_L(\soc(A)))=t_{e_iA\otimes_A X^\xbullet}(\tau_R(\soc(A)))=0$, which is impossible by equation~\eqref{eqn sjojsoi}. Therefore there is a multiple $s'_i$ of $s_i$ such that
	\begin{equation}
		t_{e_iA\otimes_A X^\xbullet}(\tau_L(s_l'))=t_{e_iA\otimes_A X^\xbullet}(\tau_R(\gamma_X(s_l')))=(\id_{n\times n})_{i,l}.
	\end{equation}
	In particular, given a linear combination  of the elements $\gamma_X(s_l')$ for $1\leq l\leq n$, applying $t_{e_iA\otimes_A X^\xbullet}\circ\tau_R$ recovers the coefficient of $\gamma_X(s_i')$. By considering equation~\eqref{eqn sjojsoi} again we see that
	\begin{equation}
		\gamma_X(s_l')=\sum_{j=1}^n (C^{-1})_{j,l} \cdot s_j
	\end{equation}
	which proves the claim.
\end{proof}

\section{Generalised weighted surface algebras}
\label{section gen weighted surf}

In this section we will introduce a class of algebras which was first studied by Erdmann and Skowro\'{n}ski  \cite{ErdmannSkonrowski1, ErdmannSkonrowski2} to get a unified description of both Brauer graph algebras and the algebras of dihedral, semi-dihedral and quaternion type classified in \cite{TameClass}. The paper \cite{ErdmannSkonrowski2} already lays out a framework that allows unified treatment of most of these algebras. We will make a slightly more general definition that also encompasses the socle deformations studied in \cite{ErdmannSkowronskiGenDihedral, ErdmannSkonrowskiGenQuatType}. In particular, the class of \emph{generalised weighted surface algebras} defined below contains all Brauer graph algebras and their socle deformations, as well as almost all algebras from \cite{TameClass} up to derived equivalence. In this section $k$ denotes an arbitrary field.

\begin{setup}
	The combinatorial data to specify a \emph{generalised weighted surface algebra} consists of the following:
	\begin{enumerate}
		\item A finite $2$-regular quiver $Q$. Such a quiver comes with an involution
		      \begin{equation}
			      \bar{\phantom{X}}:\ Q_1\longrightarrow Q_1
		      \end{equation}
		      on its set of arrows such that $\bar \alpha$ (for $\alpha\in Q_1$) is the unique arrow in $Q_1\setminus \{\alpha\}$ sharing its source with $\alpha$.
		\item A permutation
		      \begin{equation}
			      f:\ Q_1 \longrightarrow Q_1
		      \end{equation}
		      such that the source of $f(\alpha)$ is the target of $\alpha$ for all $\alpha\in Q_1$. This defines another permutation
		      \begin{equation}
			      g:\ Q_1 \longrightarrow Q_1:\ \alpha\mapsto \xoverline{f(\alpha)}.
		      \end{equation}
		      For $\alpha \in Q_1$ define
		      \begin{equation}
			      n_\alpha=|\alpha\langle g \rangle| \quad \textrm{(the cardinality of the $g$-orbit of $\alpha$)}.
		      \end{equation}
		\item Functions on $g$-orbits
		      \begin{equation}
			      \mm:\ Q_1 / \langle g\rangle \longrightarrow \Z_{> 0}
			      \quad \textrm{ and }\quad
			      \cc:\ Q_1 / \langle g\rangle \longrightarrow k^\times
		      \end{equation}
		      representing \emph{multiplicities} and certain scalars occurring in socle relations, respectively.
		\item A function on $f$-orbits
		      \begin{equation}
			      \tt:\ Q_1 / \langle f	\rangle \longrightarrow kQ:\ \alpha \mapsto t_{\alpha,0} + t_{\alpha,1} \cdot \alpha,
		      \end{equation}
		      with $t_{\alpha,0}\in\{0,1\}$ and $t_{\alpha,1}\in k$. We allow $t_{\alpha,0}=1$ only if $f^3(\alpha)=\alpha$ and $m_{\bar \alpha} n_{\bar \alpha}\geq 2$ , and we allow  $t_{\alpha,1}\neq 0$ only if $f(\alpha)=\alpha$ and $m_{\bar{\alpha}}n_{\bar{\alpha}}\geq 3$ (in which case $\alpha$ is a loop and $g(\alpha)=\bar\alpha$). We will write ``$t_\alpha\equiv 0$'' if $t_{\alpha,0}=0$ and ``$t_\alpha\equiv 1$'' if $t_{\alpha,0}=1$.
		\item A set $\mathcal Z\subseteq \left\{\alpha g(\alpha) f(g(\alpha)), \alpha f(\alpha) g(f(\alpha)) \ | \ \alpha\in Q_1\right\}$ of additional relations.
	\end{enumerate}
\end{setup}

The function $\tt$ is  not present in \cite{ErdmannSkonrowski2}, but here we want to allow algebras with mixed special biserial and quaternion relations (to include algebras of semi-dihedral type). The purpose of $\tt$ is to control the shape of the relation the monomial $\alpha f(\alpha)$ is involved in. Informally, $t_\alpha\equiv 0$ corresponds to a ``special biserial relation'', $t_\alpha\equiv 1$ to a ``quaternion type relation'', and a non-zero $t_{\alpha,1}$ corresponds to a ``socle deformation''. Note that $t_\alpha\neq 0$ is only possible if $n_{\bar{\alpha}}m_{\bar\alpha}\geq 2$. We should also point out that the conditions in the specification of $\tt$ need to be satisfied for all $\alpha\in Q_1$, it is not sufficient to verify them for a transversal of the $f$-orbits.

The set $\mathcal Z$ is also not present in \cite{ErdmannSkonrowski2}.  The idea is that working with completed path algebras as in \cite{LadkaniQuaternion} seems more elegant, and it will be necessary anyway once we consider lifts to $k\sbb{X}$ later. But \cite{LadkaniQuaternion} excludes some cases of small quivers with small multiplicities which occur as block algebras (an application we have in mind). The above set-up allows us to throw in the relations of the form $\alpha g(\alpha) f(g(\alpha))$ from \cite{ErdmannSkonrowski1, ErdmannSkonrowski2} if needed. But there is also the case of Proposition~\ref{prop semimimple q3k} where we explicitly do not want these relations, hence why we do not include them by default.

\begin{defi}[Additional notation]
	Given the data above we define
	\begin{equation}
		\begin{gathered}
			B_\alpha = \alpha g(\alpha) g^{2}(\alpha) \cdots g^{n_\alpha m_\alpha-1}(\alpha) \quad \textrm{(a circular path of length $n_\alpha m_\alpha$)},\\ A_\alpha = \alpha g(\alpha) g^{2}(\alpha) \cdots g^{n_\alpha m_\alpha-2}(\alpha)  \quad \textrm{(a path of length $n_\alpha m_\alpha-1$)},
		\end{gathered}
	\end{equation}
	for all $\alpha\in Q_1$. If $m_\alpha n_\alpha=1$ then $A_\alpha=s(\alpha)$ is the source of $\alpha$ (this will not matter in the sequel, though).
\end{defi}

When given  $Q$, $f$, $\mm$, $\cc$, $\tt$ and $\mathcal Z$ as above we will always use the notations ``$g$'', ``$\xoverline{\phantom{x}}$'', ``$n_\alpha$'', ``$B_\alpha$'' and ``$A_\alpha$'' without explicit reintroduction. We will avoid the use of the notation ``$B_\alpha$'' and ``$A_\alpha$'' where it might be ambiguous (e.g. when we are dealing with more than one multiplicity function).  In the following definition, $\widehat{kQ}$ denotes the completion of $kQ$ with respect to the ideal generated by $Q_1$, and we use a horizontal bar to indicate completions of ideals.

\begin{defi}[``Generalised weighted surface algebras'']\label{def weighted surface}
	Let $Q$, $f$, $\mm$, $\cc$, $\tt$ and $\mathcal Z$ be as above. Define ${\Lambda=\Lambda(Q,f,\mm, \cc, \tt, \mathcal Z)}$ as
	\begin{equation}
		\Lambda=\widehat{kQ}/ \xoverline{(\alpha f(\alpha)-c_{\bar \alpha}A_{\bar \alpha}t_{\alpha},\ c_{\alpha}B_{ \alpha} -c_{\bar \alpha}B_{\bar \alpha},\ \mathcal Z \ | \ \alpha\in Q_1 )}.
	\end{equation}
	We call $\Lambda$ a \emph{generalised weighted surface algebra} if
	\begin{enumerate}
		\item $\dim_k\Lambda =\sum_{\alpha\langle g \rangle \in Q_1/\langle g\rangle} m_\alpha n_\alpha^2$, and
		\item
		      for all $\alpha\in Q_1$ with $m_\alpha n_\alpha\geq 2$ the relation
		      \begin{equation}\label{eqn cond socle}
			      \left( \alpha g(\alpha) \cdots g^{en_\alpha-1}(\alpha) + \bar{\alpha} g(\bar{\alpha})\cdots g^{en_{\bar{\alpha}}-1}(\bar{\alpha}) \right)A_\alpha=0
		      \end{equation}
		      holds in $\Lambda$ for all $e\geq 2$ (if $n_\alpha=1$) or $e\geq 1$ (if $n_\alpha>1$).
	\end{enumerate}
\end{defi}

Our goal is not per se to extend the combinatorial description of \cite{ErdmannSkonrowski2, ErdmannSkownrowskiCorrigendum} (while that would be interesting, it would also be a sizeable undertaking unrelated to the ideas presented in this paper).   Therefore the preceding definition includes as axioms only the key properties we want from our algebras. Let us now give sufficient criteria for when ${\Lambda(Q,f,\mm, \cc, \tt, \mathcal Z)}$ is a generalised weighted surface algebra. For $\tt=1$ this is contained in \cite{ErdmannSkonrowski2}.

\begin{prop}\label{prop weighted surface cond}
	\begin{enumerate}
		\item The algebra $\Lambda=\Lambda(Q,f,\mm, \cc, \tt, \{ \alpha f(\alpha) g(f(\alpha))  \ | \ \alpha \in Q_1\})$ is a generalised weighted surface algebra if $n_\alpha m_\alpha \geq 3$ for all $\alpha\in Q_1$ with $t_{\bar \alpha}\equiv 1$.

		\item The algebra $\Lambda=\Lambda(Q,f,\mm, \cc, \tt, \mathfrak Z)$  (for any admissible $\mathfrak Z$) is a generalised weighted surface algebra if $n_\alpha m_\alpha \geq 4$ for all $\alpha\in Q_1$ with $t_{\bar{\alpha}}\equiv 1$.
	\end{enumerate}
	In both cases all elements $\alpha f(\alpha) g(f(\alpha))$ and $\alpha g(\alpha) f(g(\alpha))$ for $\alpha\in Q_1$ become zero in $\Lambda$.
\end{prop}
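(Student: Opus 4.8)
The plan is to verify the two axioms of Definition~\ref{def weighted surface} directly: (1) the dimension formula $\dim_k\Lambda = \sum m_\alpha n_\alpha^2$ and (2) the socle relations~\eqref{eqn cond socle}, together with the final claim that all length-three monomials $\alpha f(\alpha) g(f(\alpha))$ and $\alpha g(\alpha) f(g(\alpha))$ vanish. First I would set up a spanning set for $\Lambda$: starting from the completed path algebra $\widehat{kQ}$, I would argue that modulo the defining ideal every path can be rewritten, using the relations $\alpha f(\alpha) = c_{\bar\alpha} A_{\bar\alpha} t_\alpha$ and $c_\alpha B_\alpha = c_{\bar\alpha} B_{\bar\alpha}$, as a scalar multiple of a subpath of some $B_\alpha$ (i.e.\ a path of the form $g^i(\alpha)g^{i+1}(\alpha)\cdots$ of length $< n_\alpha m_\alpha$), plus the fact that such paths of length $\geq n_\alpha m_\alpha$ either vanish or collapse via the $B$-relation. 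Counting these ``$g$-monomial'' representatives gives exactly $\sum_{\alpha\langle g\rangle} m_\alpha n_\alpha^2$ as an upper bound for $\dim_k\Lambda$; the lower bound (that these are linearly independent, so the quiver algebra does not collapse further) is where the hypotheses $n_\alpha m_\alpha\geq 3$ resp.\ $\geq 4$ for arrows with $t_{\bar\alpha}\equiv 1$ enter, since for small $g$-orbit length the quaternion-type relation $\alpha f(\alpha) = c_{\bar\alpha}A_{\bar\alpha}$ can interact with the socle-deformation and $\mathcal Z$-relations in a degenerate way and force an unwanted collapse. For $\tt\equiv 1$ this linear independence is precisely what \cite{ErdmannSkonrowski2} establishes, so I would reduce to that computation and then track the (finitely many) extra terms contributed by a nonzero $t_{\alpha,0}$ or $t_{\alpha,1}$, checking they do not create new relations once $n_\alpha m_\alpha$ is large enough.

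Second I would verify the socle relations~\eqref{eqn cond socle}. The key observation is that $\alpha g(\alpha)\cdots g^{e n_\alpha - 1}(\alpha)$ is a subpath of $B_\alpha^{e/m_\alpha}$-type length $en_\alpha$, and $A_\alpha$ has length $n_\alpha m_\alpha - 1$, so the product $(\alpha g(\alpha)\cdots g^{en_\alpha-1}(\alpha))A_\alpha$ has length $en_\alpha + n_\alpha m_\alpha - 1 \geq (m_\alpha+1)n_\alpha - 1 \geq n_\alpha m_\alpha$ under the stated bounds on $e$, hence lies in degree $\geq n_\alpha m_\alpha$; any such monomial is either zero or, by the $B$-relation, rewritten so that both summands become equal up to the scalars $c_\alpha, c_{\bar\alpha}$, and the precise form~\eqref{eqn cond socle} follows by comparing the two. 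I would treat the bipartite-looking case distinction ($e\geq 2$ when $n_\alpha=1$ versus $e\geq 1$ when $n_\alpha>1$) by noting it is exactly the threshold at which the degree exceeds $n_\alpha m_\alpha$. The vanishing of $\alpha f(\alpha) g(f(\alpha))$: in case (1) this monomial is literally a generator of the ideal $\mathcal Z$, so it is zero by fiat; then $\alpha g(\alpha) f(g(\alpha))$ vanishes because $\alpha g(\alpha) f(g(\alpha)) = \xoverline{f(\bar\alpha)}\cdots$ can be rewritten, using the defining relation $f(g(\alpha))$-term and the commutation of the two families of relations, as a multiple of a length-$\geq n_\alpha m_\alpha$ monomial that is killed by~\eqref{eqn cond socle} or the $B$-relation (here the hypothesis $n_\alpha m_\alpha\geq 3$ on the relevant arrows guarantees we are genuinely above the socle degree). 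In case (2), with arbitrary admissible $\mathfrak Z$, the relation $\alpha f(\alpha) g(f(\alpha))$ need not be imposed directly, so instead I would show $\alpha f(\alpha) g(f(\alpha)) = c_{\bar\alpha} A_{\bar\alpha} t_\alpha \cdot g(f(\alpha))$ and observe that $A_{\bar\alpha}$ already has length $n_{\bar\alpha} m_{\bar\alpha} - 1$, so appending the further arrow $g(f(\alpha))$ (and using $t_{\alpha,0}\in\{0,1\}$, $t_{\alpha,1}$ possibly nonzero only in the loop case) pushes into degree $\geq n_{\bar\alpha}m_{\bar\alpha}$; the bound $n_\alpha m_\alpha\geq 4$ is what makes $A_{\bar\alpha}t_\alpha g(f(\alpha))$ land strictly inside the socle-killing range even after the socle deformation term $t_{\alpha,1}\alpha$ is expanded.

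The main obstacle I anticipate is the linear-independence half of the dimension count in the presence of the deformation function $\tt$ and an arbitrary admissible $\mathcal Z$. With $\tt\equiv 1$ and the canonical choice of $\mathcal Z$ one can cite \cite{ErdmannSkonrowski2} essentially verbatim, but allowing $t_{\alpha,0}=1$, $t_{\alpha,1}\neq 0$ introduces lower-order correction terms into the socle relations, and one has to check that no chain of substitutions—relation of type $\alpha f(\alpha)$, followed by a $B$-relation, followed by a $\mathcal Z$-relation—ever produces a nontrivial relation among the chosen $g$-monomials. The cleanest way to handle this uniformly is probably to pass to the associated graded algebra with respect to the path-length filtration: the correction terms $t_{\alpha,0}$, $t_{\alpha,1}\alpha$ are of strictly higher path-length than the leading term $c_{\bar\alpha}A_{\bar\alpha}$ whenever $n_{\bar\alpha}m_{\bar\alpha}\geq 2$ (and the numerical hypotheses of the Proposition ensure strict inequality in the degree bookkeeping above), so $\operatorname{gr}\Lambda$ is the undeformed weighted surface algebra already handled by Erdmann–Skowro\'nski, and a dimension count for $\operatorname{gr}\Lambda$ transfers back. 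Verifying that the filtration is exhaustive and that $\operatorname{gr}\Lambda$ really is the expected algebra—i.e.\ that passing to the graded object does not lose relations—is the technical heart, and the inequalities $n_\alpha m_\alpha\geq 3$ (case~1) and $\geq 4$ (case~2) are exactly the conditions under which this degree argument closes up.
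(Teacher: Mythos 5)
Your outline gets the easy parts right (the spanning set of initial subwords of the $B_\alpha$, the derivation of \eqref{eqn cond socle} once each $B_\alpha$ is known to lie in the socle, and the vanishing of $\alpha f(\alpha)g(f(\alpha))$ in case (1) because it is a relation), but the two genuinely hard steps are not carried out. First, the lower bound in the dimension count. You propose to cite \cite{ErdmannSkonrowski2} for $\tt\equiv 1$ and otherwise to pass to the associated graded algebra for the path-length filtration, asserting that $\operatorname{gr}\Lambda$ is the undeformed weighted surface algebra. This does not work as stated: the relations $\alpha f(\alpha)-c_{\bar\alpha}A_{\bar\alpha}t_\alpha$ and $c_\alpha B_\alpha-c_{\bar\alpha}B_{\bar\alpha}$ are inhomogeneous (their lowest-degree terms are $\alpha f(\alpha)$, resp.\ the shorter of the two $B$'s), so neither $\Lambda$ nor the $t_{\alpha,1}=0$ algebra is graded by path length, and $\operatorname{gr}$ of $\widehat{kQ}/I$ is cut out by the initial forms of \emph{all} elements of $I$, which may strictly exceed the ideal generated by the initial forms of the listed generators. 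Ruling out exactly that extra collapse is equivalent to the dimension axiom you are trying to verify, so the reduction is circular; you flag this yourself as ``the technical heart'' but do not supply it. The paper's route is different and self-contained: once the cube monomials vanish, every relation only involves paths of length $\geq 2$ (this is where $n_\alpha m_\alpha\geq 3$ enters), so any nontrivial dependence among the spanning monomials would force some $B_\alpha=0$; non-vanishing of the $B_\alpha$ is then checked inside the auxiliary \emph{monomial} algebra $kQ/(\alpha g(\alpha) f(g(\alpha)),\,\alpha f(\alpha) g(f(\alpha)),\,B_\alpha\alpha,\,B_\alpha\bar{\alpha},\,\bar{\alpha}B_\alpha)$, where the ideal generated by the deformed relations has an explicit linear spanning set and one computes $I\cap\langle B_\alpha\rangle_k=\langle c_\alpha B_\alpha-c_{\bar\alpha}B_{\bar\alpha}\rangle_k$. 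Some argument of this kind (or a completed Gr\"obner/rewriting argument) is indispensable and is missing from your proposal.

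Second, in case (2) you must show $\alpha f(\alpha)g(f(\alpha))=0$ without having it as a relation, and your justification is a degree count (``lands in the socle-killing range''). At that stage no socle structure is available — in the paper it is only \emph{after} the cube monomials are known to vanish that one concludes each $B_\alpha$ is in the socle and that long paths die — so the degree heuristic is again circular: an element of path length $\geq n_{\bar\alpha}m_{\bar\alpha}$ need not vanish before the dimension and socle facts are established, and rewriting it may keep producing terms containing $\beta f(\beta)$ with $t_\beta\neq 0$ indefinitely. The actual argument is a rewriting-plus-completeness argument: one checks combinatorially that the terminal length-three subword of $A_{\bar\alpha}g(f(\alpha))$ (and of $B_{\bar\alpha}g(f(\alpha))$) is again of the form $\beta f(\beta)g(f(\beta))$, so each rewriting step replaces the path by strictly longer paths of the same kind; since $\widehat{kQ}$ is complete, the resulting sequence converges to zero, whence the original element is zero. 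This is where the hypothesis $n_\alpha m_\alpha\geq 4$ (versus $\geq 3$ when the cubes are in $\mathcal Z$) is really used, and neither the combinatorial identification of the terminal subword nor the convergence argument appears in your sketch.
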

\begin{proof}
	\begin{enumerate}
		\item Write $t_\alpha=t_{\alpha,0}+t_{\alpha,1}\cdot\alpha$ for $\alpha\in Q_1$.  We have
		      \begin{equation}\label{eqn yuayuyu}
			      \alpha g(\alpha) f(g(\alpha)) \equiv c_{\overline{g(\alpha)}}t_{g(\alpha),0}\cdot \alpha A_{\overline{g(\alpha)}}
			      + c_{\overline{g(\alpha)}}t_{g(\alpha),1}\cdot \alpha B_{\overline{g(\alpha)}} \equiv c_{\overline{g(\alpha)}}t_{g(\alpha),0}\cdot \alpha A_{\overline{g(\alpha)}}
		      \end{equation}
		      modulo the relations of $\Lambda$. This uses the fact that if $t_{\alpha,1}\neq 0$ then $\alpha=f(\alpha)$ and therefore $g(\alpha)=\bar \alpha$, which entails $A_{\bar{\alpha}}\alpha=B_{\bar{\alpha}}$. If $t_{g(\alpha),0}=0$, then the right hand side of \eqref{eqn yuayuyu} is zero. If $t_{g(\alpha),0}=1$ then our assumption implies $m_{\overline{g(\alpha)}}n_{\overline{g(\alpha)}}\geq 3$, which means that $A_{\overline{g(\alpha)}}=A_{f(\alpha)}$ contains the initial subword $f(\alpha)g(f(\alpha))$. Hence the right hand side of \eqref{eqn yuayuyu} is zero modulo the relations of $\Lambda$ in this case as well.

		      It follows that any path of length at least three  containing a subword of the form $\alpha f(\alpha)$ either becomes zero in $\Lambda$, or is of the form $\alpha f(\alpha) f^2(\alpha)$, which can be rewritten as $c_{\bar\alpha}t_{\alpha,0}B_{\bar{\alpha}}$. Using the relation $c_\alpha B_\alpha = c_{\bar \alpha}B_{\bar \alpha}$ it also follows that each $B_\alpha$ lies in the socle of $\Lambda$, which implies the condition from equation~\eqref{eqn cond socle} we need to show. Hence $\Lambda$ is spanned by the initial subwords of the elements $B_\alpha$ for $\alpha\in Q_1$. We should also note that we have in fact shown that $\widehat{kQ}/ (\alpha f(\alpha)-c_{\bar \alpha}A_{\bar \alpha}t_{\alpha},\ c_{\alpha}B_{ \alpha} -c_{\bar \alpha}B_{\bar \alpha},\ \alpha f(\alpha) g(f(\alpha)) \ | \ \alpha\in Q_1)$ is finite-dimensional without taking the completion, which shows that the ideal we are modding out here is in fact already complete.

		      Note that the assumption $m_\alpha n_\alpha\geq 3$ whenever $t_{\bar{\alpha}}\equiv 1$ implies  that all paths involved in the defining relations of $\Lambda$ have length at least two. A linear combination of paths involving paths of length less than two can therefore not become zero modulo the relations of $\Lambda$. Any non-trivial linear dependence between initial subwords of one or more of the $B_{\alpha}$  involving only paths of length $\geq 2$ implies that one of the $B_{\alpha}$ is zero in $\Lambda$ (by multiplying such a linear dependence by another monomial, turning the shortest occurring initial subword of some $B_\alpha$ into $B_\alpha$ itself whilst annihilating all other terms). Hence we just need to show that all of the $B_{\alpha}$ are non-zero in $\Lambda$, since it will then follow that the proper initial subwords of the $B_\alpha$ together with one element of the pair $\{B_\alpha, B_{\bar \alpha}\}$ for each $\alpha$ form a basis of $\Lambda$, which counting reveals to have size $\sum_{\alpha\langle g \rangle} m_\alpha n_\alpha^2$.

		      Let us now show that, in fact, all $B_\alpha$ are non-zero.  To this end, let us consider the ideal in $A=kQ/(\alpha g(\alpha) f(g(\alpha)),\ \alpha f(\alpha) g(f(\alpha)), B_\alpha\alpha, B_\alpha\bar{\alpha}, \bar{\alpha}B_\alpha \ | \ \alpha\in Q_1)$ (rather than $\widehat{kQ}$) given by $I=(\alpha f(\alpha)-c_{\bar \alpha}A_{\bar \alpha}t_{\alpha},\ c_{\alpha}B_{ \alpha} -c_{\bar \alpha}B_{\bar \alpha} \ | \ \alpha\in Q_1)_A$. Since the defining relations of $A$ are monomial, the algebra $A$ has a $k$-basis consisting of all paths not containing any of the relations as a subpath. By multiplying by all possible monomials from the left and from the right we can  then write down  $k$-vector space  generators of $I$:
		      \begin{equation}
			      \begin{gathered}
				      \alpha f(\alpha)-c_{\bar \alpha}t_{\alpha,0}A_{\bar \alpha}-c_{\bar \alpha}t_{\alpha,1}B_{\bar \alpha}, \ \alpha f(\alpha)f^2(\alpha)-c_{\alpha}t_{\alpha,0}B_{\alpha}, \\
				      \alpha f(\alpha)f^2(\alpha)-c_{\bar \alpha}t_{\alpha,0}B_{\bar \alpha}, \alpha f(\alpha) \cdots f^i(\alpha),
				      \ c_\alpha B_\alpha-c_{\bar \alpha}B_{\bar \alpha},
			      \end{gathered}
		      \end{equation}
		      running over all $i\geq 3$ and $\alpha\in Q_1$. Moreover, all paths involved in these elements are linearly independent in $A$. It follows that  ${I\cap \langle B_\alpha \ | \ \alpha\in Q_1\rangle_k}$ is equal to ${\langle c_\alpha B_\alpha-c_{\bar{\alpha}}B_{\bar{\alpha}} \ | \ \alpha\in Q_1\rangle_k}$, which contains none of the $B_\alpha$. Therefore each $B_\alpha$ is non-zero in $A/I$, which implies that it is non-zero in $\Lambda$.

		\item We can show that in this case the paths $\alpha f(\alpha)g(f(\alpha))$ become zero in $\Lambda$ even though we did not explicitly add them as relations. Once that is done, we can apply the first part of this proposition.

		      Let us assume that we have a path of length $n\geq 3$ containing the subword $\alpha f(\alpha)g(f(\alpha))$, and let us assume $t_{ \alpha} \neq 0$ and therefore $f^3(\alpha)=\alpha$ (otherwise the path is zero in $\Lambda$ anyway). Then this path is equivalent to the sum of (if $t_\alpha\equiv 1$) a multiple of  a path of length $n+n_{\bar \alpha} m_{\bar \alpha} -3>n$  with subword $A_{\bar \alpha} g(f(\alpha))$, and (if $t_{\alpha,1}\neq 0$) a multiple of a path of length $n+n_{\bar \alpha} m_{\bar \alpha} -2>n$ with subword $B_{\bar{\alpha}} g(f(\alpha))$. The terminal subword of length three of  $A_{\bar \alpha} g(f(\alpha))$ is $g^{n_{\bar \alpha}m_{\bar \alpha}-3}(\bar \alpha)g^{n_{\bar \alpha}m_{\bar \alpha}-2}(\bar \alpha) f(g^{n_{\bar \alpha}m_{\bar \alpha}-2}(\bar \alpha))$, since $g(f(\alpha))=\xoverline{f^2(\alpha)}$, and $g(f^{2}(\alpha))\neq f(f^2(\alpha))=\alpha$, which implies $f^2(\alpha) = g^{m_{\bar\alpha} n_{\bar \alpha}-1}(\bar \alpha)$ and therefore $\xoverline{f^2(\alpha)} = \xoverline{g^{m_{\bar \alpha} n_{\bar \alpha}-1}(\bar \alpha)}= f(g^{{m_{\bar \alpha} n_{\bar \alpha}-2}}(\bar \alpha ))$. The element $B_{\bar \alpha} g(f(\alpha))$ is equal to $B_{\bar\alpha}\bar\alpha$, assuming $f(\alpha)=\alpha$. Modulo the relations of $\Lambda$ this is a multiple of $B_\alpha\bar{\alpha}$, which has the same length and terminates in  $g^{n_{\alpha}m_{\alpha}-2}(\alpha)g^{n_{\alpha}m_{\alpha}-1}(\alpha) f(g^{n_{\alpha}m_{ \alpha}-1}(\alpha))$.

		      Similarly, if we have a path of length ${n\geq 3}$ containing the subword $\alpha g(\alpha)f(g(\alpha))$ (with ${t_{{g(\alpha)}}\neq 0}$), then this path can be rewritten as a sum of multiples of paths of greater length containing a subword of the form $\beta f(\beta) g(f(\beta))$.  It thus follows that any path of length~${\geq 3}$ containing a subword of the form $\alpha f(\alpha)g(f(\alpha))$ can be rewritten as a path of arbitrarily large length, and such paths converge to zero in the completed path algebra $\widehat{kQ}$. Hence the original path was zero in~$\Lambda$.  \qedhere
	\end{enumerate}
\end{proof}

\begin{prop}[Alternative presentations]\label{prop alternative pres}
	Assume $\Lambda=\Lambda(Q,f,\mm, \cc, \tt, \mathcal Z)$ is a generalised weighted surface algebra in the sense of Definition~\ref{def weighted surface}, and assume $m_{\alpha}n_{\alpha} \geq 4$ for all $\alpha$ with $t_{\bar \alpha}\equiv 1$. For each $\alpha\in Q_1$ let $C_\alpha$ and $C_\alpha'$ be linear combinations of paths each containing a subword of the form $\beta g(\beta) f(g(\beta))$ or $\beta f(\beta) g(f(\beta))$, and $C_{\alpha}''$ a linear combination of paths each containing a subword $\beta f(\beta)$ with $t_\beta=0$. If $\alpha$ and $g(\alpha)$ lie in the same $g$-orbit then also assume that all paths involved in $C_\alpha'$ and $C_\alpha''$ have length equal to or bigger than that of $B_\alpha$.  Then
	\begin{equation}\label{eqn alt pres}
		\Lambda = \widehat{kQ}/ \xoverline{(\alpha f(\alpha)-c_{\bar \alpha}A_{\bar \alpha}t_{\alpha} + C_{\alpha},\ c_{\alpha}B_{ \alpha} -c_{\bar \alpha}B_{\bar \alpha} + C_{\alpha}'+C_{\alpha}'',\ \mathcal Z \ |\ \alpha \in Q_1)}.
	\end{equation}
\end{prop}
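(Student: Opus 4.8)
The plan is to show that the two-sided ideals
$$J=\overline{(\alpha f(\alpha)-c_{\bar\alpha}A_{\bar\alpha}t_\alpha,\ c_\alpha B_\alpha-c_{\bar\alpha}B_{\bar\alpha},\ \mathcal Z\mid\alpha\in Q_1)}\subseteq\widehat{kQ}$$
(so that $\Lambda=\widehat{kQ}/J$) and the ideal $J'$ appearing on the right-hand side of \eqref{eqn alt pres} coincide. First I would prove $J'\subseteq J$. By Proposition~\ref{prop weighted surface cond}(2) — applicable since $\Lambda$ is a generalised weighted surface algebra with $m_\alpha n_\alpha\geq 4$ whenever $t_{\bar\alpha}\equiv 1$ — every path of $\widehat{kQ}$ containing a subword $\beta g(\beta)f(g(\beta))$ or $\beta f(\beta)g(f(\beta))$ lies in $J$; and if $t_\beta=0$ then $\beta f(\beta)\in J$, so every path containing a subword $\beta f(\beta)$ with $t_\beta=0$ lies in $J$ as well. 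Call such paths \emph{negligible}. By hypothesis $C_\alpha,C_\alpha',C_\alpha''$ (and the elements of $\mathcal Z$) are linear combinations of negligible paths, so each generator of $J'$ equals a generator of $J$ plus an element of $J$; hence $J'\subseteq J$ and there is a surjection $q\colon\widehat{kQ}/J'\twoheadrightarrow\Lambda$. Since $\dim_k\Lambda=\sum_{\alpha\langle g\rangle}m_\alpha n_\alpha^2<\infty$, it then suffices to prove $\dim_k\widehat{kQ}/J'\leq\dim_k\Lambda$, for that forces $q$ to be an isomorphism.

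For the dimension bound I would show that the images in $\widehat{kQ}/J'$ of the monomials that, by the proof of Proposition~\ref{prop weighted surface cond}(2), form a $k$-basis of $\Lambda$ — the proper initial subwords of the $B_\alpha$ together with one element of each pair $\{B_\alpha,B_{\bar\alpha}\}$ — already span $\widehat{kQ}/J'$. The crux is the claim that \emph{every negligible path is $0$ in $\widehat{kQ}/J'$}. This is proved by redoing the rewriting of Proposition~\ref{prop weighted surface cond} relative to $J'$ instead of $J$: given a negligible path, rewrite the first occurrence of its minimal forbidden subword by means of the corrected relation $\gamma f(\gamma)\equiv c_{\bar\gamma}A_{\bar\gamma}t_\gamma-C_\gamma\pmod{J'}$. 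The contribution of $c_{\bar\gamma}A_{\bar\gamma}t_\gamma$ is (a scalar multiple of) a strictly longer path which is again negligible — the hypothesis $m_{\bar\gamma}n_{\bar\gamma}\geq 4$ when $t_\gamma\equiv 1$ is exactly what makes $A_{\bar\gamma}$ long enough that the new path both contains a fresh forbidden subword and is genuinely longer — while the contribution of $-C_\gamma$ consists of strictly longer negligible paths because $C_\gamma$ has no term of length $<3$. Iterating, a negligible path of length $\ell$ equals, modulo $J'$, a $k$-combination of negligible paths of length $\geq\ell+N$ for every $N$; since $J'$ is a closed ideal, $\widehat{kQ}/J'$ is complete and separated in its radical-adic topology, so such an element is $0$.

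One also has to treat the "socle continuations". A path that is a $g$-path $\alpha g(\alpha)\cdots$ of length $\geq n_\alpha m_\alpha+1$ can be written as $B_\alpha\alpha q$; multiplying the corrected socle relation $c_\alpha B_\alpha-c_{\bar\alpha}B_{\bar\alpha}+C_\alpha'+C_\alpha''=0$ of $\widehat{kQ}/J'$ on the right by $\alpha q$ expresses $B_\alpha\alpha q$ in terms of $B_{\bar\alpha}\alpha q$, $C_\alpha'\alpha q$ and $C_\alpha''\alpha q$. Here $B_{\bar\alpha}\alpha q$ is negligible — the step $g^{-1}(\bar\alpha)\to\alpha$ is an $f$-step, so $g^{-2}(\bar\alpha)\,g^{-1}(\bar\alpha)\,\alpha$ is a forbidden subword $\delta g(\delta)f(g(\delta))$ — and $C_\alpha'\alpha q$, $C_\alpha''\alpha q$ are negligible, hence all three vanish by the previous paragraph; this is precisely where, in the case singled out in the hypothesis (when $C_\alpha'$ and $C_\alpha''$ are required to consist of paths at least as long as $B_\alpha$), the length condition is used, to guarantee these correction terms do not interfere with the socle-level basis vectors. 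Granting the claim, any path of $\widehat{kQ}$ is, modulo $J'$, either negligible (hence $0$) or a pure $g$-path; the $g$-paths of length $\geq n_\alpha m_\alpha+1$ vanish, those of length $n_\alpha m_\alpha$ are the $B_\alpha$, the shorter ones are proper initial subwords of $B_\alpha$, and the corrected socle relation gives $c_\alpha B_\alpha\equiv c_{\bar\alpha}B_{\bar\alpha}$. So the claimed monomials span $\widehat{kQ}/J'$, giving the bound and hence $J=J'$.

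The main obstacle is the bookkeeping that keeps the rewriting \emph{strictly} length-increasing in all cases: one must split according to the value of $t_\beta$ (the three possibilities $t_\beta=0$, $t_{\beta,0}=1$, and $t_{\beta,1}\neq0$, the last of which forces $\beta$ to be a loop with $g(\beta)=\bar\beta$), check in each that the rewritten terms are again negligible paths or long $g$-paths of greater length, and verify that the hypotheses $m_\alpha n_\alpha\geq 4$ for $t_{\bar\alpha}\equiv1$ and the length condition on $C_\alpha',C_\alpha''$ are exactly what the argument needs. This amounts to a more careful rerun, inside $\widehat{kQ}/J'$, of the computations carried out in the proof of Proposition~\ref{prop weighted surface cond}.
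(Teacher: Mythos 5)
Your proposal is correct and is essentially the paper's own argument: both directions rest on rerunning the rewriting procedure from the proof of Proposition~\ref{prop weighted surface cond}(2) inside $\widehat{kQ}/J'$, with the length hypothesis on $C_\alpha'$, $C_\alpha''$ guaranteeing that the corrected socle relation does not spoil the length-increasing iteration. The only (cosmetic) difference is the final step: the paper, once the monomials $\beta g(\beta)f(g(\beta))$ and $\beta f(\beta)g(f(\beta))$ are known to lie in the new ideal, concludes immediately that all $C_\alpha$, $C_\alpha'$, $C_\alpha''$ and hence the original generators lie in it, so the reverse inclusion follows directly and your spanning/dimension count is not needed.
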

\begin{proof}
	By the proof of the second part of Proposition~\ref{prop weighted surface cond} we know that the $C_\alpha$, $C_\alpha'$ and $C_\alpha''$ become zero in $\Lambda$. Hence, the ideal being factored out on the right hand side of \eqref{eqn alt pres} is contained in the defining ideal of $\Lambda$. We can also eliminate the $C_\alpha''$ from the presentation, by replacing the subwords $\beta f(\beta)$ by $-C_{\beta}$. Now we can use the same argument as in the proof of the second part of Proposition~\ref{prop weighted surface cond} (which is where we need the length condition) to show that the ideal on the right hand side of \eqref{eqn alt pres} contains all elements of the form $\beta g(\beta) f(g(\beta))$ and $\beta f(\beta) g(f(\beta))$, which shows that it contains all $C_\alpha$'s and $C_{\alpha}'$'s. Therefore it is also included in the defining ideal of $\Lambda$, proving equality.
\end{proof}

\begin{prop}\label{prop semimimple q3k}
	Assume $\operatorname{char}(k)=2$. Let $Q$ be the quiver
	\begin{equation}
		\xygraph{
		!{<0cm,0cm>;<1.5cm,0cm>:<0cm,1cm>::}
		!{(0,0) }*+{\bullet_{1}}="a"
		!{(2,0) }*+{\bullet_{2}}="b"
		!{(1,-2) }*+{\bullet_{3}}="c"
		"b" :@/^/^{\beta_2} "a"
		"a" :@/^/^{\alpha_1} "b"
		"b" :@/^/^{\alpha_2} "c"
		"c" :@/^/^{\beta_3} "b"
		"c" :@/^/^{\alpha_3} "a"
		"a" :@/^/^{\beta_1} "c"
		}
	\end{equation}
	and let $f$ be the permutation $(\alpha_1\alpha_2\alpha_3)(\beta_1\beta_2\beta_3)$ (i.e. $n_\gamma=2$ for all $\gamma\in Q_1$). Set $t_\gamma= 1$, $m_\gamma = 1$ and $c_\gamma=c$ for some fixed $c\in k^\times$, for all $\gamma \in Q_1$. Then
	\begin{equation}
		\Lambda = \Lambda(Q,f,\mm, \cc, \tt, \emptyset)
	\end{equation}
	is a generalised weighted surface algebra.
\end{prop}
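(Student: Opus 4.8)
The plan is to make the combinatorial data of $(Q,f,\mm,\cc,\tt,\emptyset)$ explicit and then verify the two conditions of Definition~\ref{def weighted surface} by hand. First I would record the involution: here $\overline{\alpha_i}=\beta_i$, and $g(\gamma)=\overline{f(\gamma)}$ is computed from $f$. One checks that $g$ decomposes $Q_1$ into three orbits, each consisting of one $\alpha$ and one $\beta$, so that $n_\gamma=2$ for every arrow $\gamma$ (matching the parenthetical claim in the statement). Hence each $A_\gamma$ is the single arrow $\gamma$, each $B_\gamma=\gamma g(\gamma)$ is a circular path of length two, and — since all $c_\gamma$ agree — the relation $c_\gamma B_\gamma=c_{\overline\gamma}B_{\overline\gamma}$ says exactly $B_\gamma=B_{\overline\gamma}$, the three common values being loops, one at each vertex of $Q$.

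Next I would check condition (2), i.e.\ that $\bigl(\alpha g(\alpha)\cdots g^{e n_\alpha-1}(\alpha)+\overline\alpha g(\overline\alpha)\cdots g^{e n_{\overline\alpha}-1}(\overline\alpha)\bigr)A_\alpha=0$ in $\Lambda$ for all $e\geq 1$ (we are in the case $n_\alpha>1$). For $e=1$ this reads $(B_\alpha+B_{\overline\alpha})A_\alpha = 2\,B_\alpha A_\alpha$, which vanishes since $\operatorname{char}k=2$ and $B_\alpha=B_{\overline\alpha}$; for $e\geq 2$ the terms are products of at least five arrows, and these are zero once $\Lambda$ is known to have the asserted dimension (the dimension count below shows every path of length $\geq 3$ rewrites to a shorter one). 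So in characteristic two condition (2) costs nothing.

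The substance is condition (1): $\dim_k\Lambda = \sum_{\alpha\langle g\rangle\in Q_1/\langle g\rangle} m_\alpha n_\alpha^2 = 3\cdot 1\cdot 4 = 12$. For the upper bound I would show that modulo the two families of relations every path is a $k$-combination of the three vertex idempotents, the six arrows, and the three loops $B_\gamma$: a length-two path is either turned into an arrow via $\alpha f(\alpha)=c_{\overline\alpha}A_{\overline\alpha}$ or is identified with some $B_\gamma$ via $B_\gamma=B_{\overline\gamma}$, and a short case analysis then shows every length-three path reduces to a scalar multiple of a shorter one, so the rewriting terminates (one should also note the relevant ideal of $\widehat{kQ}$ is already closed, as in the proof of Proposition~\ref{prop weighted surface cond}). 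For the reverse inequality — the delicate point — I would secure $\dim_k\Lambda\geq 12$ either by writing down an explicit $12$-dimensional representation of $Q$ satisfying precisely these relations, or by identifying $\Lambda$ with a known $12$-dimensional symmetric algebra having the quiver $Q$ (a block of tame representation type with Klein-four defect group), whose quiver, relations and dimension are on record, and checking that the presentations match. This certifies that the spanning set above is a basis.

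The main obstacle is precisely this lower bound. Because $n_\alpha m_\alpha-1=1$, the relation $\alpha f(\alpha)=c_{\overline\alpha}A_{\overline\alpha}$ equates a path of length two with a path of length one, so the ideal being factored out is not admissible and the usual monomial-basis/diamond-lemma bookkeeping does not apply as stated; one must check carefully that imposing these ``non-admissible'' relations together with $B_\gamma = B_{\overline\gamma}$ does not collapse $\Lambda$ further than intended, and in particular that the three loops $B_\gamma$ remain nonzero (and lie in $\soc(\Lambda)$). The hypothesis $\operatorname{char}k=2$ is what makes condition (2) hold on the nose and is needed to keep these loop relations under control; over a field of different characteristic the analogous algebra is not a generalised weighted surface algebra in the sense of Definition~\ref{def weighted surface}.
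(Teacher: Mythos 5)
Your combinatorial setup, the $e=1$ case of condition (2), and the spanning argument giving $\dim_k\Lambda\leq 12$ all match the paper. But the crux you rightly single out — the lower bound $\dim_k\Lambda\geq 12$ — is where the proposal has a genuine gap, and the gap comes from a misreading of what this algebra is. With $m_\gamma n_\gamma=2$ the relations $\gamma f(\gamma)=c\bar{\gamma}$ do not produce a socle-deformed, $kA_4$-like algebra at all: the paper proves that $\Lambda$ is split \emph{semisimple}, $\Lambda\cong k\oplus k\oplus k\oplus M_3(k)$, by writing down the explicit homomorphism $e_i\mapsto E_i+E_4(i,i)$, $\alpha_i\mapsto c\,E_4(i,\sigma(i))$, $\beta_i\mapsto c\,E_4(i,\sigma^{-1}(i))$ with $\sigma=(1,2,3)$, checking it kills the relations, and observing it is surjective onto a $12$-dimensional algebra. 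Your route (b) therefore cannot work: a block with Klein-four defect group and three simples (e.g.\ $kA_4$, which is of dihedral type, with special biserial relations $\alpha f(\alpha)=0$) is never semisimple, so its presentation does not match and it is not isomorphic to $\Lambda$. Route (a) is the right idea but is precisely the missing content: to certify independence of your $12$ spanning elements you need a $12$-dimensional quotient on which they act independently, i.e.\ essentially the semisimple model above, which the proposal does not exhibit.

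The same misconception affects your treatment of condition (2) for $e\geq 2$. Under the map above every path of $Q$ is sent to a nonzero scalar multiple of a matrix unit, so no path is zero in $\Lambda$; in particular $B_\gamma^{e}A_\gamma\neq 0$, $B_\gamma^2=c^2B_\gamma$, and $\rad(\Lambda)=0$, so the claim that products of at least five arrows vanish is false, and "$B_\gamma\in\soc(\Lambda)$" holds only vacuously. What does hold — and is all you need — is the cancellation you already used for $e=1$: the relation $cB_\gamma=cB_{\bar{\gamma}}$ gives $B_\gamma^{e}=B_{\bar{\gamma}}^{e}$, so the sum in condition (2) equals $2B_\gamma^{e}A_\gamma=0$ in characteristic two, uniformly for all $e\geq 1$; this is exactly the paper's argument.
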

\begin{proof}
	This is easy to verify. First note that for all $e\geq 1$ and all $\gamma\in Q_1$
	\begin{equation}
		\gamma g(\gamma) \cdots g^{en_\gamma-1}(\gamma) + \bar{\gamma} g(\bar{\gamma})\cdots g^{en_{\bar{\gamma}}-1}(\bar{\gamma}) = B_\gamma^e+B_{\bar{\gamma}}^e.
	\end{equation}
	Now $c_\gamma^e=c_{\bar\gamma}^e=-c_{\bar\gamma}^e$ gives that the above is zero in $\Lambda$, implying the condition in equation~\eqref{eqn cond socle}.

	We claim that $\Lambda$ is a split-semisimple $k$-algebra, and more specifically
	\begin{equation}\label{eqn lambda semisimple}
		\Lambda \cong k\oplus k \oplus k\oplus M_3(k),
	\end{equation}
	which, once shown, will imply the dimension condition immediately.

	Let $E_1,E_2,E_3$ denote the respective unit elements in the first three summands on the right hand side of \eqref{eqn lambda semisimple}, and let $E_4(i,j)$ denote the $(i,j)$-matrix unit in the rightmost summand in \eqref{eqn lambda semisimple}. One can easily verify that
	\begin{equation}\label{eqn map semisimple}
		\Lambda \longrightarrow  k\oplus k \oplus k\oplus M_3(k):\ e_i \mapsto E_i+E_4(i,i),\ \alpha_i \mapsto c\cdot E_{4}(i,\sigma(i)),\ \beta_i \mapsto c\cdot E_4(i,\sigma^{-1}(i))
	\end{equation}
	defines a homomorphism (where $\sigma=(1,2,3)\in S_3$), by checking that the images satisfy the defining relations of $\Lambda$. It follows that $\dim_k\Lambda\geq 12$. On the other hand, $\Lambda$ is spanned by paths along $g$-orbits of length at most two, since any path involving a $\gamma f(\gamma)$ for $\gamma\in Q_1$, and likewise any path of length greater than two, can be rewritten as a multiple of a shorter path. Hence $\dim_k \Lambda$ is at most $12$, which implies that the map in~\eqref{eqn map semisimple} is an isomorphism.
\end{proof}

\section{Lifts for twisted Brauer graph algebras}

In this section we will have a look at a class of algebras closely related to Brauer graph algebras, but with slightly more well-behaved lifts to $k\sbb{X}$-orders. These lifts were first studied in \cite{GnedinRibbonOrders}. Note that Brauer graph algebras are just generalised weighted surface algebras in the sense of Definition~\ref{def weighted surface} where $\cc$ is constant equal to one and  $\tt$ is constant equal to zero. By $k$ we again denote an arbitrary field.
\begin{defi}[{see \cite[Definiton 3.7]{GnedinRibbonOrders}}]\label{def twisted bga}
	Let $Q$ and $f$ be as in \S\ref{section gen weighted surf}, and assume we are also given a map ${\mm:\ Q_1/\langle g \rangle \longrightarrow \Z_{>0}}$. We define the \emph{twisted Brauer graph algebra}
	\begin{equation}
		\LambdaTw(Q,f,\mm) = kQ/(\alpha f(\alpha), \ B_{\alpha}+B_{\alpha'}  \ | \ \alpha \in Q_1),
	\end{equation}
	where $B_\alpha=\alpha g(\alpha)\cdots g^{n_\alpha m_\alpha-1}(\alpha)$ for all $\alpha\in Q_1$, as before.
\end{defi}

Let us quickly explain how to associate a Brauer graph to the pair $(Q,f)$. The Brauer graph encodes the exact same information as $(Q,f)$, and we will often switch back and forth between the two (with a preference for Brauer graphs in the statement of results, since that is the standard way of parametrising Brauer graph algebras).

\begin{defi}\label{def brauer graph}
	The \emph{Brauer graph} associated with $(Q,f)$ is the (undirected) graph whose vertices are indexed by the $g$-orbits on $Q_1$ and whose edges are indexed by the vertices in $Q_0$. The edge corresponding to $e\in Q_0$ connects the vertices corresponding to $\alpha \langle g\rangle$ and $\bar\alpha \langle g\rangle$, where $\alpha$ and $\bar{\alpha}$ are the two arrows whose source is $e$.  The map $g$ induces a cyclic order on the half-edges incident to a vertex. More specifically, half-edges can be encoded as pairs $(e,\alpha)\in Q_0\times Q_1$ where $e$ is the source of $\alpha$. If $(e,\alpha)$ corresponds to a half-edge, then the half-edge  following $(e,\alpha)$ in the cyclic order around $\alpha\langle g\rangle$ is defined to be $(s(g(\alpha)), g(\alpha))$, where $s(g(\alpha))$ denotes the source of $g(\alpha)$.
\end{defi}

There are some circumstances under which twisted Brauer graph algebras are isomorphic to their untwisted counterparts. This was studied in \cite{GnedinRibbonOrders}, as were the lifts of these algebras given in Proposition~\ref{prop biserial order} below. In the present paper we want to deal with the class of algebras defined in \S\ref{section gen weighted surf}, which properly contains Brauer graph algebras but not all twisted Brauer graph algebras. However, if an algebra we are interested in happens to be a twisted Brauer graph algebra, then the lift provided in Proposition~\ref{prop biserial order} below will have nicer properties than the lifts constructed in Proposition~\ref{prop lift weighted surface}, and we can prove slightly stronger results using them.

\begin{prop}[see {\cite[Proposition 3.16]{GnedinRibbonOrders}}]\label{prop tw bga eq bga}
	If $\operatorname{char}(k)=2$ or if $k=\bar k$ and the Brauer graph of $(Q,f)$ is bipartite, then
	\begin{equation}
		\LambdaTw(Q,f,\mm)\cong \Lambda(Q,f,\mm, \cc, \tt, \emptyset)
	\end{equation}
	where $c_\alpha=1$ and $t_\alpha=0$ for all $\alpha\in Q_1$.
\end{prop}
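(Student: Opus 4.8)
The plan is to dispose of the two hypotheses separately: when $\operatorname{char}(k)=2$ the two algebras are literally equal, and under the bipartiteness hypothesis I would produce the isomorphism by rescaling the arrows of $Q$ via a diagonal automorphism of $kQ$.

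First I would write out both presentations explicitly. Denoting by $\bar\alpha$ the arrow $\alpha'$ sharing its source with $\alpha$, the twisted algebra is $\LambdaTw(Q,f,\mm)=kQ/(\alpha f(\alpha),\ B_\alpha+B_{\bar\alpha}\mid\alpha\in Q_1)$, while putting $\cc\equiv 1$ and $\tt\equiv 0$ in Definition~\ref{def weighted surface} yields $\Lambda(Q,f,\mm,\cc,\tt,\emptyset)=kQ/(\alpha f(\alpha),\ B_\alpha-B_{\bar\alpha}\mid\alpha\in Q_1)$ (the completion in Definition~\ref{def weighted surface} being superfluous here, since the relations $\alpha f(\alpha)=0$ together with the socle relations already cut the path algebra down to finite dimension). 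If $\operatorname{char}(k)=2$ then $B_\alpha+B_{\bar\alpha}=B_\alpha-B_{\bar\alpha}$, so the two defining ideals coincide and there is nothing to prove.

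For the bipartite case over $k=\bar k$ I would construct an automorphism $\phi$ of $kQ$ fixing all vertices and acting on arrows by $\alpha\mapsto\lambda_\alpha\alpha$ for suitable $\lambda_\alpha\in k^\times$, designed to carry the twisted relations onto the untwisted ones. Such a $\phi$ sends $\alpha f(\alpha)$ to a nonzero scalar multiple of itself, which imposes no constraint, and it sends $B_\alpha$ to $\mu_\alpha B_\alpha$ where $\mu_\alpha=\big(\prod_{\beta\in\alpha\langle g\rangle}\lambda_\beta\big)^{m_\alpha}$, because $B_\alpha$ runs exactly $m_\alpha$ times around the $g$-orbit of $\alpha$. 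The key observation is that $\mu_\alpha$ depends only on the $g$-orbit of $\alpha$, hence defines a function $\mu_\bullet$ on $G_0=Q_1/\langle g\rangle$. Since, by Definition~\ref{def brauer graph}, the edge of the Brauer graph $G$ at a vertex $e\in Q_0$ joins the orbits of the two arrows $\alpha,\bar\alpha$ out of $e$, the automorphism $\phi$ will do the job as soon as $\mu_v=-\mu_w$ for every edge $\{v,w\}$ of $G$, in which case $\phi(B_\alpha+B_{\bar\alpha})=\mu_\alpha(B_\alpha-B_{\bar\alpha})$ and hence $\phi$ maps the defining ideal of $\LambdaTw(Q,f,\mm)$ onto that of $\Lambda(Q,f,\mm,\cc,\tt,\emptyset)$, descending to the desired isomorphism. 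Bipartiteness of $G$ gives a $2$-colouring $\eps\colon G_0\to\{\pm 1\}$ with $\eps(v)\neq\eps(w)$ on edges, and it remains to realise $\mu_v=\eps(v)$: for each orbit $v$ pick one arrow $\alpha^{(v)}$ in it, let $\lambda_{\alpha^{(v)}}$ be an $m_v$-th root of $\eps(v)$, and set $\lambda_\beta=1$ on all other arrows, so that $\mu_v=\lambda_{\alpha^{(v)}}^{\,m_v}=\eps(v)$.

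The argument is short, and I do not expect a genuine obstacle. The two points deserving care are: (i) verifying that $\mu_\alpha$ really is constant on $g$-orbits, which is immediate once one notes $\prod_{i=0}^{n_\alpha m_\alpha-1}\lambda_{g^i(\alpha)}=\big(\prod_{\beta\in\alpha\langle g\rangle}\lambda_\beta\big)^{m_\alpha}$, visibly unchanged when $\alpha$ is replaced by $g(\alpha)$; and (ii) the existence of the $m_v$-th roots, which is exactly where $k=\bar k$ is used — and explains why no such hypothesis is needed in characteristic $2$, where no rescaling is required (and where a loop in $G$ would in any case make such a rescaling impossible, consistent with bipartiteness excluding loops). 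This is \cite[Proposition 3.16]{GnedinRibbonOrders}, so one could alternatively simply invoke Gnedin's result.
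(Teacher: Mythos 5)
Your argument is correct. Note that the paper gives no proof of this proposition at all — it is quoted from Gnedin (\cite[Proposition 3.16]{GnedinRibbonOrders}) — so there is nothing internal to compare against; your diagonal-rescaling argument is the standard proof of that result and it goes through as you describe. The three points that genuinely need checking are all handled: the completed presentation of $\Lambda(Q,f,\mm,\cc,\tt,\emptyset)$ agrees with the uncompleted one because the relations $\alpha f(\alpha)$ together with $B_\alpha=\pm B_{\bar\alpha}$ force $B_\alpha\alpha=B_\alpha\bar\alpha=0$, hence finite-dimensionality and nilpotence of the arrow ideal in the quotient; the scalar $\mu_\alpha=\bigl(\prod_{\beta\in\alpha\langle g\rangle}\lambda_\beta\bigr)^{m_\alpha}$ is visibly constant on $g$-orbits; and bipartiteness excludes loops of the Brauer graph, so $\alpha$ and $\bar\alpha$ lie in distinct $g$-orbits and the $2$-colouring produces the required sign $\mu_{\bar\alpha\langle g\rangle}=-\mu_{\alpha\langle g\rangle}$, with $k=\bar k$ supplying the $m_v$-th roots. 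The characteristic-two case is indeed just equality of the defining ideals.
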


\begin{prop}[{see ``Ribbon Graph Orders'' in \cite{GnedinRibbonOrders}}]\label{prop biserial order}
	Let $Q$ and $f$ be as in \S\ref{section gen weighted surf}, and assume we are given a map ${\mm:\ Q_1/\langle g \rangle \longrightarrow \Z_{>0}}$. Define the $k$-algebra
	\begin{equation}
		\GammaTw(Q,f)= \widehat{kQ}/\xoverline{(\alpha f(\alpha) \ | \ \alpha \in Q_1)}
	\end{equation}
	and the element
	\begin{equation}
		z=\sum_{\alpha\in Q_1} \alpha g(\alpha)\cdots g^{m_\alpha n_\alpha-1}(\alpha).
	\end{equation}
	The $k$-algebra $\GammaTw(Q,f)$ becomes a $k\sbb{X}$-order by letting $X$ acts as $z$ (and extending this to the completion). We will denote this $k\sbb{X}$-order by
	\begin{equation}
		\Gamma = \GammaTw(Q,f, \mm).
	\end{equation}
	The following hold:
	\begin{enumerate}
		\item $	\GammaTw(Q,f, \mm)/X	\GammaTw(Q,f, \mm) \cong 	\LambdaTw(Q,f, \mm)$.
		\item The $k\sqq{X}$-algebra $k\sqq{X}\otimes_{k\sbb{X}}\Gamma$ is semisimple.
		\item The simple $k\sqq{X}\otimes_{k\sbb{X}}\Gamma$-modules are labelled by the $g$-orbits  of arrows $\alpha \langle g \rangle \in Q_1/\langle g \rangle$.
		\item 	The endomorphism algebra of the simple $k\sqq{X}\otimes_{k\sbb{X}}\Gamma$-module labelled by $\alpha \langle g\rangle$ is~$k\sqq{X^{1/m_\alpha}}$.
		\item
		      If $D_{e, \alpha\langle g\rangle}\in \{0,1,2\}$ for $e\in Q_0$ and $\alpha \in Q_1$ is chosen such that $e$ is the source of exactly $D_{e, \alpha\langle g\rangle}$ arrows in the orbit $\alpha\langle g\rangle$, then
		      \begin{equation}
			      D_\Gamma([e\cdot\Gamma/X\Gamma]) = \sum_{\alpha\langle g \rangle \in Q_1/\langle g\rangle} D_{e, \alpha\langle g\rangle}\cdot [V_{\alpha\langle g \rangle}]
		      \end{equation}
		      for all $e\in Q_0$, where $V_{\alpha\langle g\rangle}$ denotes the simple $k\sqq{X}\otimes_{k\sbb{X}}\Gamma$-module labelled by $\alpha\langle g\rangle$.
	\end{enumerate}
\end{prop}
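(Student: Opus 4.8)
The plan is to prove the five assertions in turn. The structural starting point is that $\GammaTw(Q,f)=\widehat{kQ}/\overline{(\alpha f(\alpha)\mid\alpha\in Q_1)}$ is a completed \emph{monomial} algebra, so it has a topological $k$-basis consisting of the paths avoiding every subword $\alpha f(\alpha)$ — namely the finite \emph{$g$-paths} $\alpha\,g(\alpha)\,g^2(\alpha)\cdots$ together with the trivial paths $e_v$. Right multiplication by $z$ takes the $g$-path of length $\ell$ beginning with $\alpha$ to the one of length $\ell+m_\alpha n_\alpha$ beginning with $\alpha$, and takes $e_v$ to $B_\alpha+B_{\bar\alpha}$ where $\alpha,\bar\alpha$ are the two arrows out of $v$; from this description one reads off that $z$ is central and a non-zero-divisor. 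As $\GammaTw(Q,f)$ is in addition $z$-adically complete (the arrow ideal $J$ satisfies $J^N\subseteq z\GammaTw(Q,f)$ once $N$ exceeds every $m_\alpha n_\alpha$), lifting a $k$-basis of $\GammaTw(Q,f)/z\GammaTw(Q,f)$ yields a free $k\sbb{X}$-basis, so $\Gamma$ is a $k\sbb{X}$-order of rank $\dim_k\bigl(\GammaTw(Q,f)/z\GammaTw(Q,f)\bigr)$. Since the identity $e_vz=B_\alpha+B_{\bar\alpha}$ already holds in $\widehat{kQ}$, the ideals $(z)$ and $(B_\alpha+B_{\bar\alpha}\mid\alpha\in Q_1)$ of $\widehat{kQ}$ agree; hence $\Gamma/X\Gamma\cong\widehat{kQ}/\overline{(\alpha f(\alpha),\,B_\alpha+B_{\bar\alpha})}=\LambdaTw(Q,f,\mm)$ (the completion being harmless since this algebra is finite-dimensional), which is (1), and this also gives $\rank_{k\sbb{X}}\Gamma=\dim_k\LambdaTw(Q,f,\mm)=\sum_{\alpha\langle g\rangle}m_\alpha n_\alpha^2$.

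For (2)--(4) I would write down an explicit isomorphism after inverting $X$. For each $g$-orbit $O=\{\alpha_0,\alpha_1=g(\alpha_0),\dots,\alpha_{n_O-1}\}$ of multiplicity $m_O$, map into the factor $M_{n_O}\bigl(k\sqq{X^{1/m_O}}\bigr)$ by sending $\alpha_i\mapsto E_{i,i+1}$ for $i<n_O-1$, $\alpha_{n_O-1}\mapsto X^{1/m_O}E_{n_O-1,0}$, every arrow not in $O$ to $0$, and $e_v\mapsto\sum_{i:\,s(\alpha_i)=v}E_{i,i}$. These assignments respect the relations $\alpha f(\alpha)=0$ — the only point being that $\bar\gamma\neq\gamma$ for every arrow $\gamma$, which keeps the two matrix units occurring in the image of $\alpha f(\alpha)$ from composing — so, assembling over all orbits and extending by continuity, one obtains a $k\sbb{X}$-algebra homomorphism $\Phi\colon\GammaTw(Q,f)\to\prod_{\alpha\langle g\rangle}M_{n_\alpha}\bigl(k\sqq{X^{1/m_\alpha}}\bigr)$; the scaling at $\alpha_{n_O-1}$ is precisely what makes $\Phi(z)=X$, as going once around $O$ becomes multiplication by $X^{1/m_O}$. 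Because $\Phi(\alpha_0\alpha_1\cdots\alpha_{n_O-1})=X^{1/m_O}E_{0,0}$ is invertible over $k\sqq{X}$, the base change $k\sqq{X}\otimes_{k\sbb{X}}\Phi$ surjects onto each matrix factor, hence is surjective; since source and target have the same $k\sqq{X}$-dimension $\sum_{\alpha\langle g\rangle}m_\alpha n_\alpha^2$ (as $[k\sqq{X^{1/m_\alpha}}:k\sqq{X}]=m_\alpha$), it is an isomorphism. Semisimplicity, the labelling of the simple modules by $g$-orbits, and the computation of their endomorphism algebras all follow from this product decomposition.

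For (5), $D_\Gamma([e\Gamma/X\Gamma])$ is the class of $k\sqq{X}\otimes_{k\sbb{X}}e\Gamma = e\cdot\bigl(k\sqq{X}\otimes_{k\sbb{X}}\Gamma\bigr)$, and under the isomorphism just constructed the idempotent $e=e_v$ goes to a block-diagonal idempotent whose rank in the factor indexed by an orbit $\alpha\langle g\rangle$ equals $D_{e,\alpha\langle g\rangle}$, the number of arrows of that orbit with source $v$. Hence $e\cdot\bigl(k\sqq{X}\otimes_{k\sbb{X}}\Gamma\bigr)\cong\bigoplus_{\alpha\langle g\rangle}V_{\alpha\langle g\rangle}^{\oplus D_{e,\alpha\langle g\rangle}}$, which is the stated formula.

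The step I expect to need the most care is the regularity of $z$ — equivalently, the $k\sbb{X}$-linear independence underlying both the order structure and the injectivity half of the isomorphism in (2)--(4). The only genuine subtlety there is the one place where $z\cdot(-)$ is not literally a shift of $g$-paths, namely $e_v\mapsto B_\alpha+B_{\bar\alpha}$, where one must rule out cancellation; everything else is bookkeeping. Much of this is also available in Gnedin's treatment of ribbon graph orders \cite{GnedinRibbonOrders}, which can be cited for the purely formal parts.
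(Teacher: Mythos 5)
Your proposal is correct, and its computational core is the same as the paper's: both realise the component of $k\sqq{X}\otimes_{k\sbb{X}}\Gamma$ attached to an orbit $\alpha\langle g\rangle$ as $M_{n_\alpha}(k\sqq{X^{1/m_\alpha}})$ by turning $g$-paths into matrix units, with one factor $X^{1/m_\alpha}$ for each full turn around the orbit, and both deduce (5) from the fact that a vertex idempotent becomes the sum of the diagonal matrix units indexed by the arrows of that orbit starting at the given vertex. The scaffolding differs in two places, and both of your variants are sound. For the order property the paper exhibits an explicit free $k[z]$-generating set (the proper initial subwords of the $B_\alpha$ together with one member of each pair $\{B_\alpha,B_{\bar\alpha}\}$), whereas you argue via $z$ being a central non-zero-divisor together with $z$-adic completeness and lifting a $k$-basis of $\Gamma/z\Gamma$; the one step you only flag, the regularity of $z$, is indeed routine, since right multiplication by $z$ takes distinct positive-length basis paths to distinct basis paths of strictly larger length, while the $B_\alpha$, $\alpha\in Q_1$, are pairwise distinct basis paths that do not occur among those images, so no cancellation is possible in $e_vz=B_\alpha+B_{\bar\alpha}$. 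For the Wedderburn structure the paper works internally, writing down the central idempotents and a basis multiplying like matrix units, whereas you map outward onto the product of matrix algebras and finish with surjectivity plus the rank count from (1); note that your inference ``surjects onto each matrix factor, hence is surjective'' is valid here only because the images of the $g$-paths of a fixed orbit vanish in every other component (so the image contains each factor as an ideal, not merely surjects onto it) -- this is immediate from your definition of $\Phi$, but it deserves to be said, since surjectivity onto each factor alone would not suffice.
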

\begin{proof}
	$kQ/(\alpha f(\alpha) \ | \ \alpha \in Q_1)$ has a basis consisting of the vertices of $Q$ and paths along $g$-orbits, i.e. $\alpha g(\alpha) \cdots g^i(\alpha)$ for $i\geq 0$. This follows from the fact that the ideal $(\alpha f(\alpha) \ | \ \alpha \in Q_1)$  is spanned by the paths which have a subword of the form $\alpha f(\alpha)$ for $\alpha \in Q_1$.

	As a $k[z]$-module, $kQ/(\alpha f(\alpha) \ | \ \alpha \in Q_1)$ is clearly spanned by initial subwords (including length zero) of $\alpha g(\alpha)\cdots g^{m_\alpha n_\alpha-1}(\alpha)$ for the various $\alpha \in Q_1$. In particular, it is finitely generated. Moreover, if we fix a subset $S_1\subset Q_1$ such that for each $\alpha\in Q_1$ exactly one of the two arrows $\alpha$ and $\bar\alpha$ is contained in $S_1$, then the proper initial subwords (including length zero) of $\alpha g(\alpha)\cdots g^{n_\alpha m_\alpha-1}(\alpha)$ for the various $\alpha\in Q_1$ together with the elements $\alpha g(\alpha)\cdots g^{n_\alpha m_\alpha-1}(\alpha)$ for $\alpha\in S_1$ form a free generating set for the $k[z]$-module $kQ/(\alpha f(\alpha) \ | \ \alpha \in Q_1)$, showing that it is a $k[z]$-order. This carries over to the completion $\Gamma$, turning it into a free $k\sbb{z}$-module (or $k\sbb{X}$-module). The fact that $\GammaTw(Q,f, \mm)$ reduces to $\LambdaTw(Q,f, \mm)$ is clear by definition.

	To show that $B=k\sqq{X}\otimes_{k\sbb{X}}\Gamma$ is semisimple, let us first write down (non-unital) embeddings of  $k\sqq{X^{1/m_\alpha}}$ into this algebra, which will turn out to correspond to the simple components of the centre. The image of $1$ under such an embedding gives a central idempotent $\eps_\alpha$ in $B$, and we will write down a $k\sqq{X^{1/m_\alpha}}$-basis of  $\eps_\alpha B$ whose elements multiply like matrix units, proving semisimplicity and shape of the decomposition matrix in one go.

	Now, to an orbit $\alpha\langle g \rangle$ associate the (non-unital) embedding
	\begin{equation}
		\iota_{\alpha}:\ k\sbb{X^{1/m_\alpha}} \longrightarrow Z(B) :\  \sum_{i=0}^\infty c_i (X^{1/m_\alpha})^i \mapsto \frac{1}{X} 	\sum_{\beta\in \alpha\langle g \rangle} \sum_{i=0}^{\infty} c_i \beta g(\beta) \cdots g^{n_\alpha (m_\alpha+i) -1}(\beta).
	\end{equation}
	Under this map, $1$ gets mapped to ${\eps_\alpha=\frac{1}{X}	\sum_{\beta\in \alpha\langle g \rangle} \beta g(\beta) \cdots g^{n_\alpha m_\alpha -1}(\beta)}$, and  this turns $\eps_\alpha\Gamma$ into a $k\sbb{X^{1/m_\alpha}}$-algebra.  One easily checks that $\eps_{\alpha}$ and $\eps_\beta$ are orthogonal idempotents when $\alpha$ and $\beta$ lie in distinct $g$-orbits. One also checks that $k\sqq{X^{1/m_\alpha}}\otimes_{k\sbb{X^{1/m_\alpha}}}\eps_\alpha \Gamma$ has $k\sqq{X^{1/m_\alpha}}$-basis
	\begin{equation}
		e(\alpha)_{i,j} = \frac{1}{X^{1/m_\alpha}}g^{i-1}(\alpha)g^{i}(\alpha) \cdots g^{n_\alpha + j - 2}(\alpha)\quad\textrm{for $1\leq i,j \leq n_{\alpha}$}.
	\end{equation}
	One verifies that $e(\alpha)_{i,j}e(\alpha)_{j,l}=e(\alpha)_{i,l}$ for any $1\leq i,j,l\leq n_\alpha$. That is, the  $e(\alpha)_{i,j}$ multiply like matrix units, thus proving that $\eps_\alpha B\cong M_{n_\alpha}(k\sqq{X^{1/m_\alpha}})$. Moreover, the image in $\eps_\alpha\Gamma$ of the idempotent in $\Gamma$ attached to a vertex $v\in Q_0$ is given by $\sum_{\alpha} e(\alpha)_{1,1}$, where $\alpha$ runs over all arrows whose source is $v$ (note that  $e(\alpha)_{i,i}=e(g^{i-1}(\alpha))_{1,1}$, so if two $\alpha$'s in that sum lie in the same $g$-orbit, then the corresponding idempotents lie in the same set of matrix units). This shows that the map $D_\Gamma$ is as claimed, which completes the proof.
\end{proof}
The above can be used to describe the order $\Gamma$ in slightly greater detail. For example, if for each $\alpha\in Q_1$ the arrows $\alpha$ and $\bar\alpha$ lie in different $g$-orbits, then the $k\sbb{X^{1/m_\alpha}}$-orders $\eps_\alpha \Gamma$ are hereditary with basis $e(\alpha)_{i,j}$ for $i\leq j$ and $X^{1/m_\alpha} e(\alpha)_{i,j}$ for $i>j$. This is perfectly analogous to the lifts of Brauer tree algebras to $p$-adic discrete valuation rings that occur in blocks (also called ``Green orders'' \cite{RoggenkampGreenOrders}).

\section{Lifts for generalised weighted surface algebras}

In this section we will lift arbitrary generalised weighted surface algebras with sufficiently large multiplicities to $k\sbb{X}$-orders.  Such a lift will be a pullback of a lift as in Proposition~\ref{prop biserial order} and an order in a generalised weighted surface algebra over $k\sqq{X}$ (which is typically non-semisimple). We will see that these lifts will usually be independent of the multiplicities as rings, a fact which we then use to construct a bijection of silting complexes in Theorem~\ref{thm main general}. The latter is one of the main results of the present article. In this section, $k$ again denotes an arbitrary field.

\begin{prop}\label{prop pullback}
	Let $Q$ be a finite quiver, and define  $A_0=\widehat{kQ}$ as well as $A=\widehat {k[X]Q}$, the completion of the path algebra $k[X]Q$ with respect to the ideal $(X, Q_1)_{k[X]Q}$. Consider maps $r_{i}:\ (X,Q_1)_A \longrightarrow A$ given by
	\begin{equation}
		r_i(T)=r_{i,0}+\sum_{j=1}^{\infty} T^j \cdot r_{i,j}\qquad\textrm{ for $1\leq i \leq m$},
	\end{equation}
	where the $r_{i,j}$ are elements of $A_0$ and $m\in \N$. For a given $T\in (X,Q_1)_A$ let $\mathfrak R(T)=\left\{r_1(T),\ldots,r_m(T)\right\}$. 

	Let $\mathfrak I$ be an ideal in $A_0$ and let $z\in (Q_1)_{A_0}$ be chosen such that $z$ is central in $A_0/\xoverline{(\mathfrak R(0))_{A_0}}$.  Define
	\begin{equation}
		B = A_0/\xoverline{(\mathfrak R(z), \mathfrak I \cdot z)_{A_0}},
	\end{equation}
	and assume that this is a finite-dimensional algebra. Assume moreover that all of the following hold:
	\begin{enumerate}
		\item $\dim_k B=\dim_k A_0/\xoverline{(\mathfrak R(0), \mathfrak I)_{A_0}} + \dim_k A_0/\xoverline{(\mathfrak R(0), z)_{A_0}}$.
		\item
		      For each $1\leq i \leq m$ there is a $\widehat z_i\in (Q_1)_{A_0}$ such that
		      \begin{equation}\label{eqn zi hat}
			      \widehat{z}_i^j \cdot r_{i,j}\in \xoverline{(\mathfrak R(X), \mathcal I)_A}\quad \textrm{and} \quad
			      \widehat{z}_i^j \cdot r_{i,j} + \xoverline{(\mathfrak R (0))_{A_0}} = z^j \cdot r_{i,j} + \xoverline{(\mathfrak R (0))_{A_0}}
		      \end{equation} for all $1\leq i\leq m$ and  $j\geq 1$, and also
		      \begin{equation} \label{eqn zi hat ideal}
			      \xoverline{(r_1(\widehat z_1),\ldots, r_m(\widehat z_m), \mathfrak I\cdot z, X)_A} = \xoverline{(\mathfrak R(z), \mathfrak I\cdot z, X)_A}.
		      \end{equation}
		\item $A/\xoverline{(\mathcal R(0), X+z)_A}$ and $A/\xoverline{(\mathfrak R(X), \mathfrak I)_A}$ are free and finitely generated as $k\sbb{X}$-modules.
	\end{enumerate}
	Let $\Gamma$ be the pullback
	\begin{equation}\label{eqn pullback}
		\xymatrix{
			A/\xoverline{(\mathfrak R(X), \mathfrak I)_A} \ar@{->>}[r] & A/\xoverline{(\mathfrak R(0), \mathfrak I, X+z)_A}\\ \Gamma	 \ar[u]\ar[r] & A/\xoverline{(\mathfrak R(0), X+z)_A}, \ar@{->>}[u]
		}
	\end{equation}
	where the maps into the top right term are the natural surjections. Then $\Gamma$ is a $k\sbb{X}$-order such that~$\Gamma/X\Gamma \cong B$.
\end{prop}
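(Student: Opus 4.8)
The approach is to treat $\Gamma$ as a fibre product (a ``Milnor patch'') of the rings $M_1=A/\xoverline{(\mathfrak R(X),\mathfrak I)_A}$ and $M_2=A/\xoverline{(\mathfrak R(0),X+z)_A}$ over $M_0=A/\xoverline{(\mathfrak R(0),\mathfrak I,X+z)_A}$, so that $\Gamma$ is a subring of $M_1\oplus M_2$ and, via $k\sbb{X}\hookrightarrow A$, a $k\sbb{X}$-algebra. First I would record the relevant reductions. Dividing out the relation $X+z$ is the same as substituting $X=-z$, which identifies $M_0$ with the ring $A_0/\xoverline{(\mathfrak R(0),\mathfrak I)_{A_0}}$ --- finite-dimensional by hypothesis~(1), hence a torsion $k\sbb{X}$-module --- and, since $r_i(X)\equiv r_{i,0}\pmod X$, one similarly gets $M_1/XM_1\cong A_0/\xoverline{(\mathfrak R(0),\mathfrak I)_{A_0}}$ and $M_2/XM_2\cong A_0/\xoverline{(\mathfrak R(0),z)_{A_0}}$ as $k$-algebras. (Checking that the two maps into $M_0$ in \eqref{eqn pullback} are well defined already requires hypothesis~(2): one uses $\widehat z_i^{\,j}r_{i,j}\in\xoverline{(\mathfrak R(X),\mathfrak I)_A}$ together with $\widehat z_i^{\,j}r_{i,j}\equiv z^{j}r_{i,j}\pmod{\xoverline{(\mathfrak R(0))_{A_0}}}$ to see that $r_i(X)$ lies in $\xoverline{(\mathfrak R(0),\mathfrak I,X+z)_A}$.)

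Next I would show $\Gamma$ is a $k\sbb{X}$-order. Since the map $M_1\to M_0$ is the natural surjection, the fibre product sits in a short exact sequence of $k\sbb{X}$-modules $0\to\Gamma\to M_1\oplus M_2\to M_0\to 0$, where the last map is $(m_1,m_2)\mapsto\bar m_1-\bar m_2$. By hypothesis~(3) the modules $M_1$ and $M_2$ are free, hence torsion-free, and finitely generated over $k\sbb{X}$; therefore $\Gamma$ is torsion-free, and it is finitely generated because $k\sbb{X}$ is Noetherian. A finitely generated torsion-free module over the discrete valuation ring $k\sbb{X}$ is free, so $\Gamma$ is a $k\sbb{X}$-order.

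Then I would determine $\dim_k\Gamma/X\Gamma$. Applying $-\otimes_{k\sbb{X}}k$ to the short exact sequence and using $\operatorname{Tor}_1^{k\sbb{X}}(M_1\oplus M_2,k)=0$ (freeness of $M_1$ and $M_2$) gives an exact sequence $0\to\operatorname{Tor}_1^{k\sbb{X}}(M_0,k)\to\Gamma/X\Gamma\to M_1/XM_1\oplus M_2/XM_2\to M_0/XM_0\to 0$. As $M_0$ is a finitely generated torsion module over a discrete valuation ring it is a finite direct sum of cyclic modules $k\sbb{X}/(X^a)$ with $a\geq 1$, and each summand contributes a one-dimensional space to both $\operatorname{Tor}_1^{k\sbb{X}}(M_0,k)$ and $M_0/XM_0$; so these two terms have equal dimension and cancel in the alternating-sum count, leaving $\dim_k\Gamma/X\Gamma=\dim_k M_1/XM_1+\dim_k M_2/XM_2=\dim_k A_0/\xoverline{(\mathfrak R(0),\mathfrak I)_{A_0}}+\dim_k A_0/\xoverline{(\mathfrak R(0),z)_{A_0}}$, which equals $\dim_k B$ by hypothesis~(1).

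Finally I would produce an isomorphism $\Gamma/X\Gamma\cong B$. The diagonal ring homomorphism $A\to M_1\oplus M_2$ takes values in $\Gamma$ (the two components of the image of $a\in A$ agree in $M_0$), so composing with $\Gamma\twoheadrightarrow\Gamma/X\Gamma$ and using $X\mapsto 0$ yields a ring homomorphism $\delta\colon A_0=A/XA\to\Gamma/X\Gamma$. I would then show $\delta$ is surjective and $\ker\delta\supseteq\xoverline{(\mathfrak R(z),\mathfrak I\cdot z)_{A_0}}$, so that $\delta$ factors as a surjection $B\twoheadrightarrow\Gamma/X\Gamma$, which the dimension count above forces to be an isomorphism. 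This last step is where the real work lies, and where I expect the main obstacle: one must use hypothesis~(2) --- that each $\widehat z_i^{\,j}r_{i,j}$ vanishes in $M_1$ while agreeing with $z^{j}r_{i,j}$ modulo $\xoverline{(\mathfrak R(0))_{A_0}}$ --- to control the images of $\mathfrak R(z)$ and of $\mathfrak I\cdot z$ in $M_1$ and $M_2$ modulo $X$, and one must use \eqref{eqn zi hat ideal}, read modulo $X$, to see both that the ``patched'' ideal coincides with the defining ideal of $B$ and that every element of $\Gamma$ is congruent modulo $X\Gamma$ to the image of some element of $A_0$ under $\delta$. The careful bookkeeping distinguishing ideals of $A$ from ideals of $A_0$, and keeping track of the completions throughout, is the technical heart of the argument; everything else is formal.
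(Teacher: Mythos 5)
Your formal scaffolding is fine and, up to the point where you stop, agrees with the paper: the identification of the corner terms modulo $X$, the short exact sequence $0\to\Gamma\to M_1\oplus M_2\to M_0\to 0$ giving freeness of $\Gamma$, and the $\operatorname{Tor}$ computation yielding $\dim_k\Gamma/X\Gamma=\dim_k B$ (the paper gets the same count by additivity of ranks, so this is a harmless variant). But the proposal stops exactly where the content of the proposition lies. You reduce everything to two claims --- that $\delta\colon A_0\to\Gamma/X\Gamma$ is surjective and that $\ker\delta\supseteq\xoverline{(\mathfrak R(z),\mathfrak I\cdot z)_{A_0}}$ --- and then only assert that hypothesis (2) and \eqref{eqn zi hat ideal} ``must be used''. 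Naming the hypotheses is not an argument: the entire reason the $\widehat z_i$ appear is that one must exhibit, for each defining relation of $B$, an explicit element of the defining ideal of $\Gamma$ which reduces to that relation modulo $X$, and no such elements appear in your write-up. This is the one non-formal idea in the proof, and it is missing.

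Concretely, the paper's mechanism is: $\mathfrak I\cdot(X+z)$ lies in both $\xoverline{(\mathfrak R(X),\mathfrak I)_A}$ and $\xoverline{(\mathfrak R(0),X+z)_A}$ and reduces to $\mathfrak I\cdot z$ modulo $(X)_A$; and the elements $r_i(X+\widehat z_i)$ lie in $\xoverline{(\mathfrak R(0),X+z)_A}$ (because $r_i(X+\widehat z_i)\equiv r_i(X+z)\equiv 0$ there, using the second part of \eqref{eqn zi hat}) and in $\xoverline{(\mathfrak R(X),\mathfrak I)_A}$ (because $r_i(X+\widehat z_i)\equiv r_i(X)\equiv 0$ there, using the first part), while reducing to $r_i(\widehat z_i)$ modulo $(X)_A$; only then does \eqref{eqn zi hat ideal}, read modulo $X$, upgrade these witnesses to all of $\mathfrak R(z)$. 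Similarly, surjectivity of $\delta$ is not automatic: it rests on the identity $\xoverline{(\mathfrak R(X),\mathfrak I)_A}+\xoverline{(\mathfrak R(0),X+z)_A}=\xoverline{(\mathfrak R(0),\mathfrak I,X+z)_A}$ (whose nontrivial containment is essentially the well-definedness computation you did carry out), which identifies $\Gamma$ with the quotient of $A$ by the intersection of the two ideals and hence makes the diagonal map $A\to\Gamma$ surjective. Until these points are written out, what you have is a correct skeleton of the paper's own argument with its decisive step left, by your own admission, as a gap.
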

\begin{proof}
	Our second assumption implies in particular that $z^j\cdot r_{i,j}\in \xoverline{(\mathfrak R(0), \mathcal I)_{A_0}}$, since substituting $X=0$ in the first part of the condition yields  $\widehat z_i^j\cdot r_{i,j}\in \xoverline{(\mathfrak R(0), \mathcal I)_{A_0}}$. This shows that $\mathfrak R(z),\mathfrak R(-z)\subseteq \xoverline{(\mathfrak R(0), \mathfrak I)_{A}}$. Hence, the topmost horizontal arrow in the pullback diagram is well-defined, and the top right term, $A/\xoverline{(\mathfrak R(0), \mathfrak I, X+z)_A}$, is isomorphic to a quotient of $B$ as a $k$-algebra, and therefore is finite-dimensional.

	The $k\sbb{X}$-algebras $A/\xoverline{(\mathfrak R(X), \mathfrak I)_A}$ and $A/\xoverline{(\mathfrak R(0), X+z)_A}$ are $k\sbb{X}$-orders by assumption, and their reductions modulo $X$ are $A_0/\xoverline{(\mathfrak{R}(0), \mathfrak I)_{A_0}}$ and $A_0/\xoverline{(\mathfrak R(0),z)_{A_0}}$, respectively. From our assumptions it follows that the dimensions of these quotients sum up to $ \dim_k(B)$, from which it follows that $\Gamma$ has $k\sbb{X}$-rank $\dim_k(B)$. It therefore suffices to show that  $\Gamma/X\Gamma$ is a quotient of $B$. Note that $\xoverline{(\mathfrak R(X), \mathfrak I)_A}+\xoverline{(\mathfrak R(0), X+z)_A}=\xoverline{(\mathfrak R(0), \mathfrak I, X+z)_A}$.  This shows that ${\Gamma = A/\xoverline{(\mathfrak R(X), \mathfrak I)_A}\cap\xoverline{(\mathfrak R(0), X+z)_A}}$. Hence we need to show that
	\begin{equation}\label{eqn intersect}
		\xoverline{(\mathfrak R(X), \mathfrak I)_A}\cap\xoverline{(\mathfrak R(0), X+z)_A} + (X)_A \supseteq (\mathfrak R(z), \mathfrak I \cdot z)_A + (X)_A,
	\end{equation}
	which will imply the analogous inclusion for the completion as well (note that $\xoverline{(X)_A}=(X)_A$, so the left hand side is actually complete). We have $(\mathfrak I \cdot z)_A +(X)_A= \mathfrak I \cdot (X+z)_A+(X)_A$, and $\mathfrak I\cdot (X+z)_A$ is clearly contained in the intersection on the left hand side.	Moreover, for all $1\leq i \leq m$ we have
	\begin{equation}
		\begin{array}{rll}
			r_i(X+\widehat z_i) & \equiv r_i(\widehat z_i) & \mod (X)_A,                                      \\
			r_i(X+\widehat z_i) & \equiv  r(X+z)\equiv 0   & \mod \xoverline{(\mathfrak R(0), X+z)_A},        \\
			r_i(X+\widehat z_i) & \equiv  r(X) \equiv 0    & \mod \xoverline{(\mathfrak R(X),\mathfrak I)_A}.
		\end{array}
	\end{equation}
	In the last two lines we are using the assumptions of equation~\eqref{eqn zi hat}.

	In conclusion, each $r_i(X+\widehat z_i)$ lies in the intersection on the left hand side of \eqref{eqn intersect}, and at the same time reduces to $r(\widehat z_i)$ modulo $X$. Now use the assumption of equation~\eqref{eqn zi hat ideal} to see that the left hand side of \eqref{eqn intersect}  actually contains the right hand side of \eqref{eqn intersect}. We have thus shown that $\Gamma/X\Gamma$ is a quotient of $B$, and therefore, by virtue of dimensions, is isomorphic to $B$.
\end{proof}

While perhaps not obvious at first glance, the main point of Proposition~\ref{prop pullback} is that the pullback diagram \eqref{eqn pullback} is often completely independent of  the element $z$ if considered as a pullback diagram of rings. The bottom right term is isomorphic to $A_0/\xoverline{(\mathfrak R(0))_{A_0}}$ as a ring, and the top right term is isomorphic to $A_0/\xoverline{(\mathfrak R(0), \mathfrak I)_{A_0}}$. Neither of these depend on $z$, nor does the map between them. There is a hidden dependence on $z$ in the topmost horizontal arrow, but we will see below that this dependence disappears in most cases of interest. The point is that most admissible choices for $z$ give rise to the same ring $\Gamma$ lifting $B=B(z)$, it is only the $k\sbb{X}$-algebra structure on this ring which varies.

\begin{prop}\label{prop lift weighted surface}
	Let $Q$, $f$, $\mm$, $\cc$, $\tt$ and $\mathfrak Z$ be as in \S\ref{section gen weighted surf}. Assume that
	\begin{equation}
		\Lambda(Q,f,\mm, X\cdot \cc, \tt, \mathfrak Z) \quad\textrm{defined over $k\sqq{X}$}
	\end{equation}
	is a generalised weighted surface algebra in the sense of Definition~\ref{def weighted surface} (over $k\sqq{X}$ rather~than~$k$). On top of that let
	\begin{equation}
		\mm':\ Q_1/\langle g \rangle \longrightarrow \Z_{>0}
	\end{equation}
	be a function such that  $m'_{\alpha}n_{\alpha}\geq 2$ for all $\alpha\in Q_1$ with $t_{\bar{\alpha}}\equiv 1$  (which implies  $(m_\alpha+m'_{\alpha})n_\alpha \geq 4$ in those cases).

	Then there exists a $k\sbb{X}$-order
	\begin{equation}
		\Gamma=\Gamma(Q,f,\mm, \cc, \tt, \mathfrak Z; \mm')
	\end{equation}
	such that
	\begin{equation}
		\Gamma/X\Gamma \cong \Lambda(Q, f, \mm + \mm', \cc, \tt,\mathcal Z)\quad \textrm{defined over $k$}.
	\end{equation}
\end{prop}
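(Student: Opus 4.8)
The plan is to deduce the statement from Proposition~\ref{prop pullback} by exhibiting a suitable triple $(\mathfrak R,z,\mathfrak I)$. Set $A_0=\widehat{kQ}$, $A=\widehat{k[X]Q}$, let $\mathfrak I$ be the two-sided ideal of $A_0$ generated by $\mathfrak Z$, and put
\[
z=\sum_{\alpha\in Q_1}\alpha g(\alpha)\cdots g^{n_\alpha m'_\alpha-1}(\alpha),
\]
the element of Proposition~\ref{prop biserial order} attached to the multiplicity function $\mm'$. Write $A_\beta,B_\beta$ for the paths of Definition~\ref{def weighted surface} formed using $\mm$ and $\tilde A_\beta,\tilde B_\beta$ for those formed using $\mm+\mm'$. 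The key combinatorial observation is that, modulo $(\alpha f(\alpha)\mid\alpha\in Q_1)$, multiplying a path along a $g$-orbit by $z$ on either side just prolongs it by one more loop around that orbit, so that
\[
z\,A_{\bar\alpha}\equiv A_{\bar\alpha}\,z\equiv \tilde A_{\bar\alpha}\quad\text{and}\quad z\,B_{\alpha}\equiv B_{\alpha}\,z\equiv \tilde B_{\alpha}\qquad\bigl(\bmod\ (\alpha f(\alpha)\mid\alpha\in Q_1)\bigr).
\]
Accordingly, for each $\alpha\in Q_1$ set $r_\alpha(T)=\alpha f(\alpha)-T\cdot c_{\bar\alpha}A_{\bar\alpha}t_\alpha$ and $r'_\alpha(T)=T\cdot(c_\alpha B_\alpha-c_{\bar\alpha}B_{\bar\alpha})$, collected into $\mathfrak R(T)$. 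Then $\mathfrak R(0)=\{\alpha f(\alpha)\mid\alpha\in Q_1\}$, so $A_0/\xoverline{(\mathfrak R(0))_{A_0}}=\GammaTw(Q,f)$; the element $X$ being central, $\mathfrak R(X)$ is the set of defining relations of $\Lambda(Q,f,\mm,X\cc,\tt,\mathfrak Z)$ other than $\mathfrak Z$; and by the displayed identities together with Proposition~\ref{prop alternative pres}, $A_0/\xoverline{(\mathfrak R(z),\mathfrak I\cdot z)_{A_0}}\cong\Lambda(Q,f,\mm+\mm',\cc,\tt,\mathfrak Z)$. Since the hypothesis on $\mm'$ forces $(m_\alpha+m'_\alpha)n_\alpha\geq 4$ whenever $t_{\bar\alpha}\equiv 1$, Proposition~\ref{prop weighted surface cond}(2) shows this algebra is independent of the chosen admissible set of degree-three relations, hence is also $\cong\Lambda(Q,f,\mm+\mm',\cc,\tt,\mathcal Z)$.

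It remains to verify hypotheses (1)--(3) of Proposition~\ref{prop pullback}. The element $z$ lies in $(Q_1)_{A_0}$ and is central in $\GammaTw(Q,f)$ by Proposition~\ref{prop biserial order}. For (3): $A/\xoverline{(\mathfrak R(0),X+z)_A}\cong\GammaTw(Q,f)$ with $X$ acting as $-z$, hence is a $k\sbb{X}$-order of rank $\sum_{\alpha\langle g\rangle}m'_\alpha n_\alpha^2$ with reduction $\LambdaTw(Q,f,\mm')$; and $A/\xoverline{(\mathfrak R(X),\mathfrak I)_A}$ is a $k\sbb{X}$-order because its $k\sqq{X}$-span is $\Lambda(Q,f,\mm,X\cc,\tt,\mathfrak Z)$, finite-dimensional by hypothesis, while the $k\sbb{X}$-submodule spanned by the initial subwords of the $B_\alpha$ exhausts it and is $k\sbb{X}$-free of rank $\sum_{\alpha\langle g\rangle}m_\alpha n_\alpha^2$, which is checked exactly as in the proof of Proposition~\ref{prop weighted surface cond} (the large-multiplicity condition guaranteeing the monomials in the relations are long enough). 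In particular the reduction modulo $X$ of this order is $A_0/\xoverline{(\mathfrak R(0),\mathfrak I)_{A_0}}=\Lambda(Q,f,\mm,0,\tt,\mathfrak Z)$, of dimension $\sum_{\alpha\langle g\rangle}m_\alpha n_\alpha^2$; combined with $\dim_k\Lambda(Q,f,\mm+\mm',\cc,\tt,\mathfrak Z)=\sum_{\alpha\langle g\rangle}(m_\alpha+m'_\alpha)n_\alpha^2$ (Definition~\ref{def weighted surface}(1)) and $\dim_k\LambdaTw(Q,f,\mm')=\sum_{\alpha\langle g\rangle}m'_\alpha n_\alpha^2$, hypothesis (1) follows. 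For (2), take $\widehat z_i=z$ for every $i$: then \eqref{eqn zi hat ideal} is an identity and the second equation in \eqref{eqn zi hat} is trivial, while the only non-vacuous instance of the first is $j=1$, where one needs $z\cdot c_{\bar\alpha}A_{\bar\alpha}t_\alpha$ and $z\cdot(c_\alpha B_\alpha-c_{\bar\alpha}B_{\bar\alpha})$ to lie in $\xoverline{(\mathfrak R(X),\mathfrak I)_A}$; by the rotation identities these equal $c_{\bar\alpha}\tilde A_{\bar\alpha}t_\alpha$ and $c_\alpha\tilde B_\alpha-c_{\bar\alpha}\tilde B_{\bar\alpha}$ there, and they vanish in $\Lambda(Q,f,\mm,X\cc,\tt,\mathfrak Z)$ because $\tilde A_{\bar\alpha}$ and $\tilde B_\alpha$ contain $B_{\bar\alpha}$ resp.\ $B_\alpha$ as proper initial subwords (here the large-multiplicity hypothesis on $\mm'$ is used for the arrows with $t_{\bar\alpha}\equiv 1$) and each $B_\beta$ lies in the socle. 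Proposition~\ref{prop pullback} then yields a $k\sbb{X}$-order $\Gamma$ with $\Gamma/X\Gamma\cong\Lambda(Q,f,\mm+\mm',\cc,\tt,\mathcal Z)$.

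The step I expect to be the main obstacle is hypothesis~(3): showing that $A/\xoverline{(\mathfrak R(X),\mathfrak I)_A}$ genuinely is a $k\sbb{X}$-order of the expected rank, i.e.\ that passing to the completed path algebra over $k\sbb{X}$ neither collapses nor inflates dimensions and that its reduction modulo $X$ attains the $k\sqq{X}$-dimension of its span. This is where one imports the finite-dimensionality of $\Lambda(Q,f,\mm,X\cc,\tt,\mathfrak Z)$ from the hypothesis and redoes the monomial-basis bookkeeping of Proposition~\ref{prop weighted surface cond} over $k\sbb{X}$; the delicate sub-cases are again those with $t_{\alpha,1}\neq 0$ (socle deformations) and with $\alpha$ and $g(\alpha)$ in the same $g$-orbit, and it is precisely there — and in the verification that the correction terms appearing in $r_\alpha(z),r'_\alpha(z)$ meet the length requirements of Proposition~\ref{prop alternative pres} — that the hypothesis $m'_\alpha n_\alpha\geq 2$ is consumed.
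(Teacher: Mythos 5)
Your overall strategy is the paper's one: specialise Proposition~\ref{prop pullback} using the central element $z$ coming from the ribbon graph order of Proposition~\ref{prop biserial order}. However, your allocation of the relations between $\mathfrak R(T)$ and $\mathfrak I$ is the reverse of what makes that proposition applicable, and this is a genuine gap rather than a cosmetic difference. The paper takes $\mathfrak R(T)=\{\alpha f(\alpha)-T\,c_{\bar\alpha}A_{\bar\alpha}t_\alpha \mid \alpha\in Q_1\}\cup\mathcal Z$ and $\mathfrak I=(c_\alpha B_\alpha-c_{\bar\alpha}B_{\bar\alpha})$; you instead place $r'_\alpha(T)=T\,(c_\alpha B_\alpha-c_{\bar\alpha}B_{\bar\alpha})$ inside $\mathfrak R(T)$ and set $\mathfrak I=(\mathcal Z)$. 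With your choice $\mathfrak R(0)=\{\alpha f(\alpha)\}$ and the $\mathcal Z$-relations are redundant modulo $(\alpha f(\alpha))$, so $A_0/\xoverline{(\mathfrak R(0),\mathfrak I)_{A_0}}\cong \GammaTw(Q,f)$ is \emph{infinite}-dimensional; your claim that this reduction is ``$\Lambda(Q,f,\mm,0,\tt,\mathfrak Z)$ of dimension $\sum_{\alpha\langle g\rangle}m_\alpha n_\alpha^2$'' is false (and note $\cc$ must take values in $k^\times$, so that symbol does not even denote a generalised weighted surface algebra). Consequently $A/\xoverline{(\mathfrak R(X),\mathfrak I)_A}$ cannot be finitely generated as a $k\sbb{X}$-module (a finitely generated module has finite-dimensional reduction modulo $X$), so hypothesis (3) of Proposition~\ref{prop pullback} fails, and hypothesis (1) fails as well because one of the two dimensions on its right-hand side is infinite. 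The underlying issue is that in $\widehat{k[X]Q}$ the ideal generated by $X(c_\alpha B_\alpha-c_{\bar\alpha}B_{\bar\alpha})$ does not contain $c_\alpha B_\alpha-c_{\bar\alpha}B_{\bar\alpha}$: the socle relation at multiplicity $\mm$ must hold identically in $X$ — that is exactly what keeps the top-left corner an order of rank $\sum m_\alpha n_\alpha^2$ with special fibre $\Lambda(Q,f,\mm,\cc,0,\emptyset)$ — while only its product with $z$ should enter the presentation of $B\cong\Lambda(Q,f,\mm+\mm',\cc,\tt,\mathcal Z)$, which is precisely the role of the term $\mathfrak I\cdot z$ in Proposition~\ref{prop pullback}. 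Putting the socle relations into $\mathfrak I$ and keeping $\mathcal Z$ inside $\mathfrak R(T)$ (independently of $T$) is therefore not optional.

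A secondary, but also real, gap is your choice $\widehat z_\alpha=z$ in the verification of \eqref{eqn zi hat}. The product $zA_{\bar\alpha}t_\alpha$ contains, besides the term $\bar\alpha g(\bar\alpha)\cdots g^{m'_{\bar\alpha}n_{\bar\alpha}-1}(\bar\alpha)A_{\bar\alpha}t_\alpha$ that you treat, the cross-orbit term $\alpha g(\alpha)\cdots g^{m'_{\alpha}n_{\alpha}-1}(\alpha)A_{\bar\alpha}t_\alpha$; your congruences hold only modulo the monomial ideal $(\beta f(\beta))$, whereas membership is needed in $\xoverline{(\mathfrak R(X),\mathfrak I)_A}$, where $\beta f(\beta)$ is not zero but equal to $Xc_{\bar\beta}A_{\bar\beta}t_\beta$, so this term does not simply disappear when $t$-values are nonzero. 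Moreover your appeal to ``each $B_\beta$ lies in the socle'' presupposes that the socle relations hold integrally in the top-left algebra, which in your configuration they do not. This is why the paper takes $\widehat z_\alpha=\alpha g(\alpha)\cdots g^{m'_{\bar\alpha}n_{\alpha}-1}(\alpha)+\bar\alpha g(\bar\alpha)\cdots g^{m'_{\bar\alpha}n_{\bar\alpha}-1}(\bar\alpha)$, with the same exponent $m'_{\bar\alpha}$ in both summands, so that condition~\eqref{eqn cond socle} of Definition~\ref{def weighted surface} (applied over $k\sqq{X}$) annihilates $\widehat z_\alpha A_{\bar\alpha}$ directly.
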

\begin{proof}
	Let us retain the notation $A=\widehat{k[X] Q}$ and $A_0=\widehat{kQ}$. Define the following: 
	\begin{equation}
		\mathfrak R(T) = \{\underbrace{\alpha f(\alpha) - T\cdot c_{\bar \alpha} A_{\bar \alpha}t_{\alpha}}_{=r_\alpha(X)}\ | \ \alpha \in Q_1\} \cup \mathcal Z
		\quad\textrm{ and }
		\mathfrak I = (c_{\alpha} B_{\alpha} - c_{\bar \alpha} B_{\bar \alpha}),
	\end{equation}
	where the $A_\alpha$'s and $B_\alpha$'s are defined with respect to the multiplicity function $\mm$. Moreover, define
	\begin{equation}
		z= \sum_{\alpha \in Q_1} \alpha g(\alpha)\cdots g^{m'_{\alpha}n_\alpha -1}(\alpha),
	\end{equation}
	as well as an element
	\begin{equation}
		\widehat z_\alpha = \alpha g(\alpha)\cdots g(\alpha)^{m'_{\bar \alpha}n_{\alpha}-1} +  \bar{\alpha} g(\bar{\alpha})\cdots g^{m'_{\bar \alpha}n_{\bar \alpha}-1}(\bar{\alpha})
	\end{equation}
	for each $\alpha \in Q_1$ (note the use of ``$m'_{\bar{\alpha}}$'' instead of ``$m'_{\alpha}$'' in the first summand). We need to check that the assumptions of Proposition~\ref{prop pullback} are satisfied. Note that the $\alpha \in Q_1$ replace the indices $1\leq i \leq m$ from Proposition~\ref{prop pullback} (technically we should also include the elements of $\mathcal Z$, but they are irrelevant for the verification). We have the following:
	\begin{enumerate}
		\item $A_0/\xoverline{(\mathfrak R(0), \mathfrak I)}$ is isomorphic to the generalised weighted surface algebra $\Lambda(Q, f, \mm, \cc, 0, \emptyset)$ (i.e. all relations of special biserial type, rendering the relations in $\mathcal Z$ redundant). Therefore it has dimension $\sum_{\alpha\langle g \rangle} m_\alpha n_{\alpha}^2$. The algebra
		      $A_0/\xoverline{(\mathfrak R(0), z)}$
		      is exactly the twisted Brauer graph algebra $\LambdaTw(Q,f,\mm')$ as in Definition~\ref{def twisted bga}. It is a special biserial algebra spanned by the initial subwords of the elements $\alpha g(\alpha)\cdots g^{n_\alpha m'_\alpha-1}(\alpha)$ for $\alpha \in Q_1$, with  the added relation that $\alpha g(\alpha)\cdots g^{n_\alpha m'_\alpha-1}(\alpha)=-\bar \alpha g(\bar \alpha)\cdots g^{n_{\bar\alpha} m'_{\bar \alpha}-1}(\bar\alpha)$ (and such an element lies in the socle). Counting elements in the given basis yields that this algebra has dimension $\sum_{\alpha\langle g \rangle} m'_\alpha n_{\alpha}^2$.  Hence the sum of the dimensions is equal to
		      \begin{equation}
			      \sum_{\alpha\langle g \rangle\in Q_1/\langle g \rangle} (m_\alpha + m'_\alpha) n_{\alpha}^2,
		      \end{equation}
		      which is precisely the dimension of $\Lambda(Q, f, \mm + \mm', \cc, \tt, \mathfrak Z)$. Now
		      \begin{equation}
			      A_0/\xoverline{(\mathfrak R(z),\mathfrak I\cdot z)}
		      \end{equation}
		      is another presentation of $\Lambda(Q, f, \mm + \mm', \cc, \tt, \mathfrak Z)$ by Proposition~\ref{prop alternative pres}, which shows the dimension condition is satisfied. Let us quickly outline why the assumptions of Proposition~\ref{prop alternative pres} are fulfilled.  What could potentially go wrong is that $zA_{\bar \alpha}$ or $zB_{\bar \alpha}$ could contain a summand $\alpha \bar\alpha$ for some $\alpha$ with $t_\alpha \neq 0$. Now, this can only happen if $n_\alpha=1$, which implies $\alpha\neq f(\alpha)$, and therefore $t_\alpha \in \{0,1\}$. If $t_\alpha=0$ then there is no issue. If $t_\alpha=1$ then $f^3(\alpha)=\alpha$, which shows that $\alpha$ and $f^2(\alpha)$ have the same target. Then $g(f(\alpha))$ cannot also have the same target (which is at the same time the source of $f(\alpha)$), which forces $g^2(f(\alpha))\neq f(\alpha)$, that is, $n_{f(\alpha)}=n_{\bar{\alpha}}>2$. But then $A_{\bar{\alpha}}$ has length $\geq 2$, and there is no issue.
		\item
		      $k\sqq{X}\otimes_{k\sbb{X}} A/\xoverline{(\mathfrak R(X), \mathfrak I)_{A}}$ is the algebra $\Lambda(Q,f,\mm, X\cdot \cc, \tt, \mathfrak Z)$ defined over $k\sqq{X}$, which is a generalised weighted surface algebra in the sense of Definition~\ref{def weighted surface} by assumption. Also,
		      \begin{equation}
			      A/\xoverline{(\mathfrak R(X), \mathfrak I, X)_{A}}\cong A_0 /\xoverline{(\mathfrak R(0), \mathfrak I)_{A_0}}
		      \end{equation}
		      is the generalised weighted surface algebra  $\Lambda(Q,f,\mm, \cc, 0,\emptyset)$ defined over $k$ (here all $t_\alpha$'s are zero, which means that this automatically satisfies Definition~\ref{def weighted surface}). Since the aforementioned two generalised weighted surface algebras have the same dimension (over $k\sqq{X}$ and $k$, respectively), it follows that $A/\xoverline{(\mathfrak R(X), \mathfrak I)}$ is in fact a $k\sbb{X}$-order (which will also be useful  on its own further below), and therefore any element which becomes zero upon tensoring with $k\sqq{X}$ (that is, zero in $\Lambda(Q,f,\mm, X\cdot \cc, \tt, \mathfrak Z)$ over $k\sqq{X}$) was already zero in $A/\xoverline{(\mathfrak R(X), \mathfrak I)}$.

		      Since $m'_{\bar \alpha}>1$ whenever $n_{\bar \alpha}=1$ and $t_\alpha\equiv 1$ by assumption,  we can now  use  the second part of Definition~\ref{def weighted surface} to get that $\widehat z_\alpha A_{\bar \alpha} =0$ in  $A/\xoverline{(\mathfrak R(X), \mathfrak I)_A}$ whenever $t_{\alpha}\neq 0$ (note that $t_{\alpha}\neq 0$ and $t_\alpha\equiv 0$ implies $n_{\bar{\alpha}}>1$, so Definition~\ref{def weighted surface} applies in all cases where $t_\alpha\neq 0$), and therefore in particular $\widehat z_{\alpha} r_{\alpha,1}\in \xoverline{(\mathfrak R(X), \mathfrak I)_A}$ in the notation of Proposition~\ref{prop pullback} for all $\alpha\in Q_1$ (if $t_\alpha=0$ then $r_{\alpha,1}=0$, so we have indeed covered all cases). We also have that both $\widehat z_\alpha A_{\bar \alpha}$ and  $z A_{\bar{\alpha}}$ become equal to $\bar{\alpha}g(\bar{\alpha})\cdots g^{m_{\bar \alpha}'n_{\bar{\alpha}}-1}(\bar{\alpha})  A_{\bar \alpha}$ modulo $\xoverline{(\mathfrak R(0))_{A_0}}$ if $t_\alpha\neq 0$, which implies the other condition from equation~\eqref{eqn zi hat} the $\widehat z_\alpha$ need to satisfy. Moreover,  by Proposition~\ref{prop alternative pres} (which applies for the same reasons as earlier) we have that both
		      \begin{equation}
			      A/\xoverline{(\mathfrak R(z), \mathfrak I \cdot z, X)_A}\quad \textrm{and} \quad A/\xoverline{(r_\alpha(\widehat z_\alpha), \mathfrak I \cdot z, X \ | \ \alpha \in Q_1)_A}
		      \end{equation}
		      are isomorphic to $\Lambda(Q,f,\mm + \mm', \cc, \tt,\mathfrak Z)$, which gives the condition from equation~\eqref{eqn zi hat ideal}.
		\item We have already seen further up that $A/\xoverline{(\mathfrak R(X), \mathfrak I)_A}$ is a $k\sbb{X}$-order. By Proposition~\ref{prop biserial order} the algebra  $A/\xoverline{(\mathfrak R(0), X+z)_A}$ is also a $k\sbb{X}$-order, since it is isomorphic to $A_0/\xoverline{(\alpha f(\alpha) \ | \ \alpha \in Q_1)_A}$ with $X$ acting as $-z$.
	\end{enumerate}
	It follows that all conditions of Proposition~\ref{prop pullback} are satisfied, which gives us a $k\sbb{X}$-order $\Gamma$ as a pullback of  $A/\xoverline{(\mathfrak R(X), \mathfrak I)_A}$ and  $A/\xoverline{(\mathfrak R(0), X+z)_A}$ with $\Gamma/X\Gamma \cong \Lambda(Q, f, \mm + \mm', \cc, \tt,\mathfrak Z)$, as claimed.
\end{proof}

\begin{prop}\label{prop multiplicity independence}
	Assume we are in the situation of Proposition~\ref{prop lift weighted surface}, and assume we are given another map $\mm'':\ Q_1/\langle g \rangle\longrightarrow  \Z_{>0}$ subject to the same conditions as $\mm'$. Then the orders $\Gamma(Q,f,\mm, \cc, \tt,\mathfrak Z; \mm')$ and $\Gamma(Q,f,\mm, \cc, \tt, \mathfrak Z; \mm'')$  defined in Proposition~\ref{prop lift weighted surface} are isomorphic as rings if either one of the following holds:
	\begin{enumerate}
		\item\label{prop muly indep part 1} $m'_\alpha > m_{\alpha}$ and $m''_\alpha > m_{\alpha}$ for all $\alpha \in Q_1$ for which $m'_\alpha\neq m''_\alpha$,
		\item $m_\alpha n_\alpha \geq 3$ for all $\alpha\in Q_1$ with $t_{\bar{\alpha}}\equiv 1$, and $\{ \alpha f(\alpha) g(f(\alpha))  \ | \ \alpha \in Q_1\} \subseteq \mathfrak Z$,
		\item $m_\alpha n_\alpha \geq 4$ for all $\alpha \in Q_1$  with $t_{\bar{\alpha}}\equiv 1$ ,
		\item  $\textrm{char}(k)=2$,  $Q$, $f$, $\mm$, $\cc$ as well as $\tt$ are as in Proposition~\ref{prop semimimple q3k}, and $\mathfrak Z =\emptyset$. 
	\end{enumerate}
\end{prop}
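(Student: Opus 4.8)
The plan is to exploit the pullback description of $\Gamma$ from the proof of Proposition~\ref{prop lift weighted surface} and to isolate the only part of that construction that depends on the auxiliary element $z$. Recall that $\Gamma(Q,f,\mm,\cc,\tt,\mathfrak Z;\mm')$ is the pullback \eqref{eqn pullback} built from $z=z':=\sum_{\alpha\in Q_1}\alpha g(\alpha)\cdots g^{m'_\alpha n_\alpha-1}(\alpha)$, and likewise $\Gamma(Q,f,\mm,\cc,\tt,\mathfrak Z;\mm'')$ from $z=z''$. As explained in the remark following Proposition~\ref{prop pullback}, identifying $A/\xoverline{(\mathfrak R(0),\mathfrak I,X+z)_A}\cong T:=A_0/\xoverline{(\mathfrak R(0),\mathfrak I)_{A_0}}$ and $A/\xoverline{(\mathfrak R(0),X+z)_A}\cong U:=A_0/\xoverline{(\mathfrak R(0))_{A_0}}$ (both via $X\mapsto -z$), the cospan underlying \eqref{eqn pullback} becomes
\begin{equation}
  S:=A/\xoverline{(\mathfrak R(X),\mathfrak I)_A}\ \xrightarrow{\ \phi_z\ }\ T\ \xleftarrow{\ q\ }\ U ,
\end{equation}
where $S$, $T$, $U$ and the surjection $q$ (reduction modulo $\mathfrak I$) are manifestly independent of $z$, while $\phi_z$ — induced by the substitution $X\mapsto -z$ followed by reduction modulo $\xoverline{(\mathfrak R(0),\mathfrak I)_{A_0}}$, so that $S/\ker\phi_z\cong T$ — a priori is not. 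Since a pullback is unchanged under an isomorphism of its cospan, it suffices to produce in each case ring automorphisms $\sigma\in\Aut(S)$, $\theta\in\Aut(T)$, $\upsilon\in\Aut(U)$ with $\theta\circ q=q\circ\upsilon$ and $\theta\circ\phi_{z'}=\phi_{z''}\circ\sigma$; then $(s,u)\mapsto(\sigma(s),\upsilon(u))$ is the desired isomorphism of pullbacks.

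\textbf{Case (1).} Here one takes $\sigma=\theta=\upsilon=\id$. By hypothesis the monomials in which $z'$ and $z''$ differ are paths of length $>m_\alpha n_\alpha$ along a $g$-orbit, hence equal $B_\alpha$ times a path of positive length; since $B_\alpha$ lies in the socle of the Brauer graph algebra $T=\Lambda(Q,f,\mm,\cc,0,\emptyset)$, these monomials vanish in $T$. Thus $z'$ and $z''$ have the same image in $T$, and as $\phi_z$ depends on $z$ only through that image we get $\phi_{z'}=\phi_{z''}$, so the two pullback diagrams coincide.

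\textbf{Cases (2)--(4).} Now the images of $z'$ and $z''$ in $T$ genuinely differ, so one must move the source. The idea is to build $\sigma$ as a $k$-algebra (not $k\sbb{X}$-linear) automorphism of $S$ fixing the idempotents and modifying each arrow by a sufficiently deep correction term, arranged so that it intertwines the two substitutions: $\phi_{z''}\circ\sigma=\phi_{z'}$, with $\theta=\upsilon=\id$ (so compatibility with $q$ is automatic). The existence of such a $\sigma$ relies on the ``alternative presentations'' machinery: under $m_\alpha n_\alpha\geq 4$ for $t_{\bar\alpha}\equiv 1$ (case (3)), Proposition~\ref{prop alternative pres} shows the defining relations of $\Lambda(Q,f,\mm,X\cc,\tt,\mathfrak Z)$ over $k\sqq{X}$, hence of $S$, may be perturbed by high-degree terms without changing the algebra; under $m_\alpha n_\alpha\geq 3$ together with $\{\alpha f(\alpha)g(f(\alpha))\mid\alpha\in Q_1\}\subseteq\mathfrak Z$ (case (2)) one argues similarly using Proposition~\ref{prop weighted surface cond}(1), which forces all $\alpha f(\alpha)g(f(\alpha))$ and $\alpha g(\alpha)f(g(\alpha))$ to vanish in $S$. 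Translating a change of exponents in the monomials of $z$ into such a perturbation produces $\sigma$. In case (4), $S$ is instead an order in the split semisimple $k\sqq{X}$-algebra $\Lambda(Q,f,\mm,X\cc,\tt,\emptyset)\cong k\sqq{X}^{\oplus 3}\oplus M_3(k\sqq{X})$ (Proposition~\ref{prop semimimple q3k} applied over $k\sqq{X}$, which has characteristic $2$), and one uses the explicit matrix-unit description to write down $\sigma$ absorbing $z''-z'$.

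\textbf{Main obstacle.} The technical heart is exactly this construction of $\sigma$ in cases (2)--(4): one must check that replacing $z'$ by $z''$ alters $\phi$ only by pre-composition with an honest ring automorphism of $S$ — equivalently, that after applying a suitable automorphism of $S$ the element $z''-z'$ lies in $\ker\phi_{z''}$. This says that altering the socle degrees encoded by $\mm'$ is a higher-order deformation of the ring $S$, for which the degree hypotheses $m_\alpha n_\alpha\geq 3$ or $\geq 4$ (guaranteeing enough room for the correction terms in Propositions~\ref{prop weighted surface cond} and~\ref{prop alternative pres}), respectively the semisimplicity in case (4), are precisely what is required. Verifying that the resulting map is bijective and descends compatibly to the whole cospan is the remaining bookkeeping.
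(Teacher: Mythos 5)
Your reduction is exactly the paper's: rewrite each $\Gamma(Q,f,\mm,\cc,\tt,\mathfrak Z;\mm')$ as a pullback over the $z$-independent cospan $S\to T\leftarrow U$ with all dependence on $\mm'$ concentrated in the map $\phi_z$ determined by $X\mapsto -z$, and your treatment of case (1) (the monomials where $z'$ and $z''$ differ properly contain some $B_\alpha$, hence die in the socle of $T$, so $\phi_{z'}=\phi_{z''}$) is correct and coincides with the paper's argument.

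For cases (2)--(4), however, there is a genuine gap: the automorphism $\sigma$ of $S$ is never constructed, and its construction is the entire content of the proposition in these cases, not ``remaining bookkeeping.'' You propose to fix the idempotents and ``modify each arrow by a sufficiently deep correction term,'' invoking Proposition~\ref{prop alternative pres} to say relations can be perturbed; but that proposition only says certain perturbed presentations define the same algebra, it does not by itself produce an automorphism of $S$ intertwining $\phi_{z'}$ and $\phi_{z''}$, and it is not clear that absorbing $z''-z'$ into arrow substitutions is possible at all (the element $z$ is not supported on single arrows, and $X$ is an extra central variable of $\widehat{k[X]Q}$ not generated by the arrows). What the paper actually does is leave the arrows alone and translate $X$: in cases (2) and (3) one checks, using Proposition~\ref{prop weighted surface cond} (this is where the bounds $m_\alpha n_\alpha\geq 3$ with $\mathcal Z\supseteq\{\alpha f(\alpha)g(f(\alpha))\}$, resp. $\geq 4$, enter), that the elements $\alpha f(\alpha)g(f(\alpha))$ and $\alpha g(\alpha)f(g(\alpha))$ vanish in $S$, whence $z$ is central in $S$ and $zA_{\bar\alpha}t_\alpha=0$, so $r_\alpha(X+z)=0$ in $S$ and $X\mapsto X+z$, arrows fixed, defines an automorphism $\tau_z$ of $S$; then $\phi_z\circ\tau_z$ sends $X\mapsto 0$, so both diagrams are isomorphic to the common one with $\varphi(X)=0$ (the ring $\Gamma_0$). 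In case (4) the element $z$ itself does not work: one must replace it by $\widehat z=z-\sum_{\alpha}X^{2m'_\alpha}e_{s(\alpha)}$, use the explicit semisimple description of $k\sqq{X}\otimes S$ from Proposition~\ref{prop semimimple q3k} to see that $\widehat z$ is central and annihilates all arrows, argue separately that $X+\widehat z$ together with $Q_1$ and the idempotents topologically generate $S$ (so that $X\mapsto X+\widehat z$ really is an automorphism), and finally observe that the composite sends $X$ to $\sum_\alpha z^{2m'_\alpha}e_{s(\alpha)}$, which vanishes in $T$ because $z$ lies in its socle. None of these verifications (centrality, preservation of the relations $r_\alpha$, bijectivity, and the final vanishing in $T$) appear in your proposal, and the hypotheses of (2)--(4) are used precisely there; as written, the proof is incomplete beyond case (1).
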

\begin{proof}
	As a pullback diagram of rings, the diagram~\eqref{eqn pullback} is isomorphic to the diagram
	\begin{equation}\label{eqn pullback rings}
		\xymatrix{
			A/\xoverline{(\mathfrak R(X), \mathfrak I)_A} \ar@{->>}[r]^{\varphi} & A_0/\xoverline{(\mathfrak R(0), \mathfrak I)_{A_0}}\\ \Gamma	 \ar[u]\ar[r] & A_0/\xoverline{(\mathfrak R(0))_{A_0}} \ar@{->>}[u]_{\nu}
		}
	\end{equation}
	where the map $\nu$ is the natural surjection, and the map $\varphi$ is induced by the map from $A$ to $A_0$ which sends $X$ to $-z$ (using the notation of Proposition~\ref{prop pullback})  and induces the identity on all arrows, extended to the completion.  For both $\mm'$ and $\mm''$ we get a diagram as in \eqref{eqn pullback rings}, with two different maps $\varphi$, say $\varphi_1$ and $\varphi_2$, but otherwise identical. The maps $\varphi_i$ are actually determined by the image of $X$. Clearly the two diagrams will be isomorphic if both $\varphi_1$ and $\varphi_2$ send $X$ to $0$, but this is not always the case. We will show below to what extent the map $\varphi$ in \eqref{eqn pullback rings} is independent of $\mm'$ (or $\mm''$) under our four alternative assumptions.

	Concretely, in the construction of $\Gamma(Q,f,\mm, \cc, \tt,\mathfrak Z; \mm')$ in Proposition~\ref{prop lift weighted surface} we have that
	\begin{equation}
		A_0/\xoverline{(\mathfrak R(0), \mathfrak I)_{A_0}} \cong \Lambda(Q,f,\mm, \cc, 0, \emptyset)
	\end{equation}
	and
	\begin{equation}
		z=  \sum_{\alpha \in Q_1} \alpha g(\alpha)\cdots g^{m'_{\alpha}n_\alpha -1}(\alpha).
	\end{equation}
	Now we have the following cases, corresponding to the various possible assumptions from the statement:
	\begin{enumerate}
		\item
		      If $m'_\alpha > m_{\alpha}$ for all $\alpha \in Q_1$ then $z$ becomes zero in $\Lambda(Q,f,\mm, \cc, 0, \emptyset)$ since it is a sum of paths each properly containing a subword of the form $B_\alpha$ (with respect to this generalised weighted surface algebra). Hence $\varphi$ maps $X$ to $0$. If instead we only have $m'_\alpha\geq m_{\alpha}$ for all $\alpha \in Q_1$, then $\varphi$ maps $X$ to
		      \begin{equation}
			      -\sum_{\alpha\in Q_1 \textrm{ s.t. } m_{\alpha}=m_{\alpha'}} B_{\alpha},
		      \end{equation}
		      which clearly only depends on the set $\{\alpha \in Q_1 \ | \ m_{\alpha}=m_{\alpha}' \}$.
		\item If $m_\alpha n_\alpha \geq 3$ for all $\alpha\in Q_1$ with $t_{\bar{\alpha}}\equiv 1$ and $\{ \alpha f(\alpha) g(f(\alpha))  \ | \ \alpha \in Q_1\} \subseteq \mathfrak Z$, then the $k\sqq{X}$-span of $A/\xoverline{(\mathfrak R(X), \mathfrak I)_A}$, which is isomorphic to
		      \begin{equation}
			      \Lambda(Q,f,\mm, X\cdot \cc, \tt, \mathfrak Z)\quad \textrm{defined over $k\sqq{X}$,}
		      \end{equation}
		      is a generalised weighted surface algebra in the sense of Definition~\ref{def weighted surface}. Moreover, Proposition~\ref{prop weighted surface cond} applies to this algebra, which implies that elements of the form $\alpha g(\alpha) f(g(\alpha))$ and $\alpha f(\alpha) g(f(\alpha))$ for $\alpha \in Q_1$ are zero in this algebra, and therefore also in $A/\xoverline{(\mathfrak R(X), \mathfrak I)_A}$. From this it is easy to see that $z$ is actually central in  $A/\xoverline{(\mathfrak R(X), \mathfrak I)_A}$, and $r_{\alpha}(X+z)$ becomes zero in  $A/\xoverline{(\mathfrak R(X), \mathfrak I)_A}$ (since $zA_{\bar\alpha}t_\alpha$ becomes zero, as our assumptions ensure that $A_{\bar\alpha}t_{\alpha}$ has length $\geq 2$ if $t_{\alpha}\neq 0$). This implies that there is an automorphism of  $A/\xoverline{(\mathfrak R(X), \mathfrak I)_A}$ sending $X$ to $X+z$. Since $\varphi$ maps $X$ to $-z$, precomposing $\varphi$ with this automorphism yields the homomorphism which sends $X$ to $0$. That is, the pullback diagram~\eqref{eqn pullback rings} is isomorphic in this case to the pullback diagram where $\varphi$ is replaced by the map which sends $X$ to $0$ and all arrows to themselves.
		\item Using Proposition~\ref{prop weighted surface cond} we see that this case is identical to the previous case, since  elements of the form $\alpha g(\alpha) f(g(\alpha))$ and $\alpha f(\alpha) g(f(\alpha))$ for $\alpha \in Q_1$ become zero in ${\Lambda(Q,f,\mm, X\cdot \cc, \tt, \emptyset)}$ without being added as relations explicitly.
		\item In this case we define an element
		      \begin{equation}
			      \widehat z = \sum_{\alpha \in Q_1} (\alpha g(\alpha))^{m'_{\alpha}} - X^{2m'_{\alpha}} \cdot e_{s(\alpha)} = z - \sum_{\alpha\in Q_1}  X^{2m'_{\alpha}} \cdot e_{s(\alpha)},
		      \end{equation}
		      where $s(\alpha)$ denotes the source of the arrow $\alpha$, and $e_{s(\alpha)}$ is the corresponding idempotent. Proposition~\ref{prop semimimple q3k} gives an explicit isomorphism between the $k\sqq{X}$-span of $A/\xoverline{(\mathfrak R(X), \mathfrak I)_A}$, which is  $\Lambda(Q,f,\mm, X\cdot \cc, \tt, \emptyset)$ defined over $k\sqq{X}$, and the $k\sqq{X}$-algebra
		      \begin{equation}
			      k\sqq{X}\oplus k\sqq{X}\oplus k\sqq{X} \oplus M_3(k\sqq{X}).
		      \end{equation}
		      Under this isomorphism, the element $\widehat z$ gets mapped to an element with non-zero entries only in the first three components, while all arrows get mapped to elements with non-zero entries only in the last component. Hence $\widehat z$ is central in  $A/\xoverline{(\mathfrak R(X), \mathfrak I)_A}$, and it annihilates all arrows, which implies that $r_{\alpha}(X+\widehat z)$ is zero in $A/\xoverline{(\mathfrak R(X), \mathfrak I)_A}$ for all $\alpha \in Q_1$. Moreover, $X+\widehat z$ together with $Q_1$ and the idempotents generate a $k$-algebra whose completion is $A/\xoverline{(\mathfrak R(X), \mathfrak I)_A}$, since $\widehat z$ is contained in the square of the radical of this algebra. It follows that we get an automorphism on  $A/\xoverline{(\mathfrak R(X), \mathfrak I)_A}$ which sends $X$ to $X+\widehat z$, and the composition of this automorphism with $\varphi$ sends $X$ to $\sum_{\alpha\in Q_1} z^{2m'_{\alpha}} \cdot e_{s(\alpha)}$. Now this is clearly zero in $ A_0/\xoverline{(\mathfrak R(0), \mathfrak I)_{A_0}}\cong \Lambda(Q,f,\mm,\cc, 0,\emptyset)$, since $z$ already lies in the socle of this algebra. \qedhere
	\end{enumerate}
\end{proof}

In all except the first of the alternative assumptions of Proposition~\ref{prop multiplicity independence} the $k\sbb{X}$-orders $\Gamma(Q,f,\mm, \cc, \tt,\mathfrak Z; \mm')$ and $\Gamma(Q,f,\mm, \cc, \tt, \mathfrak Z; \mm'')$ end up being isomorphic as rings to a pullback as in \eqref{eqn pullback rings} where $\varphi$ sends $X$ to $0$. For future reference, we give this ring explicitly below.

\begin{defi}\label{defi gamma0}
	In the situation of Proposition~\ref{prop lift weighted surface} define $\Gamma_0=\Gamma_0(Q,f,\mm,\cc,\tt,\mathfrak Z)$ as the pullback of rings
	\begin{equation}
		\xymatrix{
		\frac{\widehat{k[X]Q}}{\phantom{.}\overline{(\alpha f(\alpha) - X\cdot c_{\bar{\alpha}} A_{\bar{\alpha}}t_\alpha ,\
		c_\alpha B_\alpha-c_{\bar{\alpha}}B_{\bar \alpha},\ \mathfrak Z \ | \ \alpha \in Q_1)}\phantom{.}} \ar@{->>}[rr]^{\varphi} && \frac{\widehat{kQ}}{\phantom{.}\overline{(\alpha f(\alpha),\  c_\alpha B_\alpha-c_{\bar{\alpha}}B_{\bar \alpha}\ | \ \alpha \in Q_1)} \phantom{.}}\\ \Gamma_0	 \ar[u]\ar[rr] && \frac{\widehat{kQ}}{\phantom{.}\overline{(\alpha f(\alpha) \ | \  \alpha \in Q_1)}{\phantom{.}}} \ar@{->>}[u]_{\nu} }
	\end{equation}
	where $\nu$ is the natural surjection and $\varphi$ is the $k$-algebra homomorphism sending $X$ to $0$, the arrows in $Q_1$ to themselves, extended to the completion.
\end{defi}

\section{Bijections of silting complexes}

We now have lifts of generalised weighted surface algebras to $k\sbb{X}$-orders in many cases. We would like to use these lifts to prove a statement about their silting complexes. To this end, we need to understand how silting complexes over a $k\sbb{X}$-order relate to silting complexes over its reduction modulo $X$. Let $R$ denote a complete discrete valuation ring with maximal ideal $\pi R$, and let $\Gamma$ be an $R$-order. It was observed in \cite{KoenigZimmermannTiltingHereditary} that the endomorphism ring of a tilting complex $T^\xbullet$ over $\Gamma$ is not necessarily an $R$-order, and by \cite[Theorem~2.1]{RickardDerEqDerFun} and \cite[Theorem 3.3]{RickardLiftTilting}  it is an $R$-order if and only if $T^\xbullet$ reduces to a tilting complex over $\Gamma/\pi \Gamma$. That is, tilting complexes do not necessarily reduce to tilting complexes. It turns out that silting complexes are better behaved in this regard.

\begin{prop}\label{prop reduction silting}
	Let $R$ be a complete discrete valuation ring with maximal ideal $\pi R$, and let $\Gamma$ be an $R$-order. Define  $\bar \Gamma = \Gamma/\pi\Gamma$, and use ``$\phantom{\cdot}\xoverline{\phantom{a}}$'' to denote application of the functor ``$R/\pi R \otimes_{R} -$''. Let $X^\xbullet$ and $Y^\xbullet$ be bounded complexes of finitely generated projective $\Gamma$-modules. We claim that
	\begin{equation}
		\Hom_{\mathcal K^b(\proj\Gamma)}(X^\xbullet, Y^\xbullet[i]) =0\textrm{ for all $i>0$}
	\end{equation}
	if and only if
	\begin{equation}
		\Hom_{\mathcal K^b(\proj\bar \Gamma)}(\bar X^\xbullet, \bar Y^\xbullet[i]) =0\textrm{ for all $i>0$}.
	\end{equation}
	In particular,  $X^\xbullet$ is a pre-silting complex over $\Gamma$ if and only if $\bar X^\xbullet$ is a pre-silting complex over~$\bar{\Gamma}$.
\end{prop}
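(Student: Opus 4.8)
The plan is to recast both vanishing conditions as statements about the cohomology of a single bounded complex of finitely generated free $R$-modules, and then pass between them using the universal coefficient theorem and Nakayama's lemma.

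First I would introduce the total Hom complex $C^\xbullet = \Hom^\xbullet_\Gamma(X^\xbullet, Y^\xbullet)$, whose term in degree $n$ is the finite sum $\bigoplus_j \Hom_\Gamma(X^j, Y^{j+n})$, and record the standard identification $\Hom_{\mathcal K^b(\proj\Gamma)}(X^\xbullet, Y^\xbullet[i]) = H^i(C^\xbullet)$ coming from the description of morphisms in the homotopy category. Two routine observations make the complex tractable. Since $\Gamma$ is an $R$-order and $X^j$, $Y^l$ are finitely generated projective $\Gamma$-modules, each $\Hom_\Gamma(X^j,Y^l)$ is a direct summand of a finitely generated free $R$-module, hence itself finitely generated free over the discrete valuation ring $R$; as $X^\xbullet$ and $Y^\xbullet$ are bounded, $C^\xbullet$ is thus a bounded complex of finitely generated free $R$-modules. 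Moreover the natural base-change map $(R/\pi R)\otimes_R \Hom_\Gamma(P,Q)\to \Hom_{\bar\Gamma}(\bar P,\bar Q)$ is an isomorphism for finitely generated projective $P,Q$ — it is obviously one when $P=Q=\Gamma$, and both sides are additive in each variable — and it is compatible with the differentials of the Hom complex, so $\overline{C^\xbullet}\cong \Hom^\xbullet_{\bar\Gamma}(\bar X^\xbullet,\bar Y^\xbullet)$ and therefore $H^i(\overline{C^\xbullet}) = \Hom_{\mathcal K^b(\proj\bar\Gamma)}(\bar X^\xbullet,\bar Y^\xbullet[i])$.

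With that setup the argument is quick. As $\pi$ is a non-zero-divisor and $C^\xbullet$ is a complex of free $R$-modules, the universal coefficient theorem gives for each $i$ a short exact sequence
\begin{equation}
0 \longrightarrow H^i(C^\xbullet)/\pi H^i(C^\xbullet) \longrightarrow H^i(\overline{C^\xbullet}) \longrightarrow \{\, x\in H^{i+1}(C^\xbullet) \mid \pi x = 0\,\} \longrightarrow 0 .
\end{equation}
For the implication ``$\Rightarrow$'' I would argue: if $H^i(C^\xbullet)=0$ for all $i>0$, then for $i>0$ both outer terms of this sequence vanish (the rightmost one because $i+1>0$), forcing $H^i(\overline{C^\xbullet})=0$. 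For ``$\Leftarrow$'': if $H^i(\overline{C^\xbullet})=0$ for all $i>0$, then injectivity of the left-hand map yields $H^i(C^\xbullet) = \pi H^i(C^\xbullet)$ for each $i>0$; since $R$ is Noetherian and $C^\xbullet$ has finitely generated terms, $H^i(C^\xbullet)$ is finitely generated, so Nakayama's lemma forces it to be $0$. The final ``in particular'' is then the case $Y^\xbullet=X^\xbullet$, together with the fact that $\Hom_{\mathcal D^b(\Gamma)}$ and $\Hom_{\mathcal K^b(\proj\Gamma)}$ agree on bounded complexes of projectives and that $\overline{X^\xbullet}$ is again such a complex over $\bar\Gamma$.

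I do not anticipate a genuine obstacle: the content is essentially the universal coefficient sequence plus Nakayama. The only things demanding care are bookkeeping ones — getting the index shift on the torsion term of that sequence right, and checking that reduction modulo $\pi$ really carries $\Hom^\xbullet_\Gamma(X^\xbullet,Y^\xbullet)$ to $\Hom^\xbullet_{\bar\Gamma}(\bar X^\xbullet,\bar Y^\xbullet)$ on the nose, which is precisely where the projectivity and boundedness of $X^\xbullet$ and $Y^\xbullet$ are used.
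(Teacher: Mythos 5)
Your proposal is correct, but it proves the statement by a genuinely different route than the paper. The paper argues directly in the homotopy category, degreewise: for one direction it lifts a null-homotopy of $\bar f$ (homotopies need not commute with differentials, so they lift freely), concludes that the class of $f$ lies in $\pi^n\Hom_{\mathcal K^b(\proj\Gamma)}(X^\xbullet,Y^\xbullet[i])$ for all $n$, and invokes $\pi$-adic separatedness of this finitely generated $R$-module; for the other direction it lifts $\bar f$ to a graded map $f$, observes that the defect $f\circ d_X - d_{Y[j]}\circ f$ is divisible by $\pi$ and commutes with differentials, corrects $f$ by $\pi h$ using the vanishing hypothesis in degree $j+1$, and then uses vanishing in degree $j$ again. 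You instead package everything into the total Hom complex $C^\xbullet=\Hom^\xbullet_\Gamma(X^\xbullet,Y^\xbullet)$, a bounded complex of finitely generated free $R$-modules with $H^i(C^\xbullet)=\Hom_{\mathcal K^b(\proj\Gamma)}(X^\xbullet,Y^\xbullet[i])$ and $\overline{C^\xbullet}\cong\Hom^\xbullet_{\bar\Gamma}(\bar X^\xbullet,\bar Y^\xbullet)$, and read both directions off the universal coefficient sequence (your index shift on the Tor term is right) together with Nakayama. Your argument is arguably cleaner and shorter in this bounded, finitely generated setting, since both implications become formal; what the paper's hands-on lifting argument buys is reusability: the same elementwise manipulation is recycled later (in the proof of Proposition~\ref{prop silti to silt}) for maps into arbitrary complexes $S^\xbullet\in\mathcal D(\Gamma)$, where the Hom groups are no longer finitely generated and the terms of the Hom complex are no longer $R$-free, so the universal coefficient/Nakayama mechanism would not transfer verbatim.
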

\begin{proof}
	For the ``if''-direction assume $\Hom_{\mathcal K^b(\proj\bar \Gamma)}(\bar X^\xbullet, \bar Y^\xbullet[i]) =0$ for all $i>0$. Let ${f:\ X^\xbullet \longrightarrow Y^\xbullet[i]}$, for some $i>0$, be a map (of $\Z$-graded modules) that commutes with the differential. Then ${\bar f:\ \bar X^\xbullet \longrightarrow \bar Y^\xbullet[i]}$ must be null-homotopic by assumption, i.e. there is a map ${\bar h:\ \bar X^\xbullet \longrightarrow \bar Y^\xbullet[i-1]}$ such that ${\bar h\circ d_{\bar X} - d_{\bar Y[i-1]} \circ \bar h = \bar f}$ (note that $ d_{\bar Y[i-1]}=- d_{\bar Y[i]}$ by the usual sign conventions). Since homotopies are not required to commute with the differential, we can lift it to a map ${h:\  X^\xbullet \longrightarrow  Y^\xbullet[i-1]}$ such that $f'=f-h\circ d_{ X} + d_{Y[i-1]} \circ h \in \pi \Hom_{\mathcal K^b(\proj{\Gamma})}(X^\xbullet, Y^\xbullet[i])$. Hence we may represent the homotopy class of $f$ as $f'=\pi\cdot g$  for some other  map $g$ commuting with the differential. But the same argument applies to $g$, and can be repeated indefinitely thereafter. That is, $f\in \pi^n 	\Hom_{\mathcal K^b(\proj \Gamma)}(X^\xbullet, Y^\xbullet[i])$ for all $n>0$, which implies that $f$ is in fact null-homotopic.

	Now let us prove the ``only if''-direction. Assume that  $\Hom_{\mathcal K^b(\proj \Gamma)}( X^\xbullet,  Y^\xbullet[i]) =0$ for all ${i>0}$.  We want to show $\Hom_{\mathcal K^b(\proj\bar \Gamma)}(\bar X^\xbullet, \bar Y^\xbullet[j]) =0$ for arbitrary $j>0$.  To this end, consider a map ${\bar f: \bar X^\xbullet \longrightarrow \bar Y^\xbullet[j] }$ that commutes with the differential. Such a map can be lifted to a map ${f:\ X^\xbullet \longrightarrow Y^\xbullet[j] }$, but $f\circ d_{ X} - d_{Y[j]} \circ f$ is a potentially non-zero map from $X^\xbullet$ into $Y^{\xbullet}[j+1]$ commuting with the differential and reducing to zero modulo $\pi$. By assumption, we get a map ${h:\ X^\xbullet \longrightarrow Y^\xbullet[j]}$ such that 
	\begin{equation}
		\frac{1}{\pi} (f\circ d_{ X} - d_{Y[j]} \circ f) = h\circ d_{ X} - d_{Y[j]} \circ h.
	\end{equation}
	This tells us that $f-\pi \cdot h$ does commute with the differential, and is therefore null-homotopic by assumption. But since $f-\pi \cdot h$ reduces to $\bar f$ modulo $\pi$, it follows that $\bar f$ is null-homotopic, too. Hence~$\Hom_{\mathcal K^b(\proj\bar \Gamma)}(\bar X^\xbullet, \bar Y^\xbullet[j]) =0$.
\end{proof}

\begin{defi}[``torsion-silting'']\label{def tsilt}
	Let $\Gamma$ be a ring. We call  a pre-silting complex $X^\bullet\in \mathcal K^b(\proj{\Gamma})$ \emph{torsion-silting} if
	\begin{equation}
		\left\{ S^\bullet \in \mathcal D(\Gamma) \ | \ \Hom_{\mathcal D(\Gamma)}(X^\bullet[i], S^\bullet) =0\ \forall i \right\} \subseteq \left\{
		S^\bullet \in \mathcal D(\Gamma) \ | \ H^i(S^\bullet)=H^i(S^\bullet)\cdot \rad(\Gamma)\ \forall i \right\}.
	\end{equation}
	We denote the set of isomorphism classes of basic torsion-silting complexes by $\tsilt{\Gamma}$.
\end{defi}

\begin{prop}\label{prop silti to silt}
	In the situation of Proposition~\ref{prop reduction silting}, the complex $X^\xbullet$ is torsion-silting if and only if $\bar X^\bullet$ is silting.
\end{prop}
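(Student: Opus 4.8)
The plan is to run both implications through the adjunction between reduction modulo $\pi$ and restriction of scalars along the quotient map $\Gamma\twoheadrightarrow\bar\Gamma$, thereby reducing everything to the standard characterisation of silting complexes over the finite-dimensional algebra $\bar\Gamma$ by the vanishing of their right orthogonal. Three formal inputs are needed. First, since $X^\xbullet$ is a bounded complex of finitely generated projective $\Gamma$-modules it is $K$-projective and $\bar X^\xbullet=\bar\Gamma\otimes_\Gamma X^\xbullet=\bar\Gamma\otimes_\Gamma^{\mathbf L} X^\xbullet$, so the adjoint pair $(-\otimes_\Gamma^{\mathbf L}\bar\Gamma,\ \operatorname{Res})$ gives natural isomorphisms $\Hom_{\mathcal D(\bar\Gamma)}(\bar X^\xbullet[i],T^\xbullet)\cong\Hom_{\mathcal D(\Gamma)}(X^\xbullet[i],\operatorname{Res}T^\xbullet)$ for all $T^\xbullet\in\mathcal D(\bar\Gamma)$ and $i\in\Z$. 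Second, because $\pi$ is a non-zero-divisor on $\Gamma$ one has $\bar\Gamma\simeq\operatorname{cone}(\Gamma\xrightarrow{\pi}\Gamma)$ in $\mathcal D(\Gamma)$, so every $S^\xbullet\in\mathcal D(\Gamma)$ sits in a triangle
\[
S^\xbullet\xrightarrow{\ \pi\ }S^\xbullet\longrightarrow\operatorname{Res}\bigl(\bar\Gamma\otimes_\Gamma^{\mathbf L}S^\xbullet\bigr)\longrightarrow S^\xbullet[1].
\]
Third, I would use the elementary bookkeeping $\pi\Gamma\subseteq\rad(\Gamma)$, $\rad(\Gamma)/\pi\Gamma=\rad(\bar\Gamma)$, and $\rad(\bar\Gamma)$ nilpotent, together with Proposition~\ref{prop reduction silting}, which lets me transfer the pre-silting hypothesis between $\Gamma$ and $\bar\Gamma$ at will.

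The one non-formal ingredient, which I expect to be the main obstacle, is the following fact: a pre-silting complex $P^\xbullet\in\mathcal K^b(\proj{\bar\Gamma})$ is silting if and only if every $T^\xbullet\in\mathcal D(\bar\Gamma)$ with $\Hom_{\mathcal D(\bar\Gamma)}(P^\xbullet[i],T^\xbullet)=0$ for all $i$ vanishes. One direction is immediate, since $P^\xbullet$ silting forces $\bar\Gamma\in\thick(P^\xbullet)$ and $\Hom_{\mathcal D(\bar\Gamma)}(\bar\Gamma[-i],T^\xbullet)=H^i(T^\xbullet)$. For the other direction one notes that the hypothesis says the right orthogonal of $\thick(P^\xbullet)$ vanishes, hence $\operatorname{Loc}(\thick(P^\xbullet))=\mathcal D(\bar\Gamma)$; since $\mathcal D(\bar\Gamma)$ is compactly generated with compact objects $\mathcal K^b(\proj{\bar\Gamma})$ and $\thick(P^\xbullet)$ is closed under summands, Neeman's localisation theory forces $\thick(P^\xbullet)=\mathcal K^b(\proj{\bar\Gamma})$. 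This step could alternatively be quoted from the silting literature.

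Granting this, the two implications are short. If $X^\xbullet$ is torsion-silting, then $\bar X^\xbullet$ is pre-silting by Proposition~\ref{prop reduction silting}, and to invoke the fact above I take $T^\xbullet\in\mathcal D(\bar\Gamma)$ with $\Hom_{\mathcal D(\bar\Gamma)}(\bar X^\xbullet[i],T^\xbullet)=0$ for all $i$; by the adjunction $\operatorname{Res}T^\xbullet$ lies in the right orthogonal of $X^\xbullet$ in $\mathcal D(\Gamma)$, so the torsion-silting property gives $H^i(T^\xbullet)=H^i(T^\xbullet)\rad(\Gamma)=H^i(T^\xbullet)\rad(\bar\Gamma)$, and nilpotence of $\rad(\bar\Gamma)$ forces $T^\xbullet=0$; hence $\bar X^\xbullet$ is silting. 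Conversely, if $\bar X^\xbullet$ is silting then $X^\xbullet$ is pre-silting by Proposition~\ref{prop reduction silting}; given $S^\xbullet\in\mathcal D(\Gamma)$ in the right orthogonal of $X^\xbullet$, applying $\Hom_{\mathcal D(\Gamma)}(X^\xbullet[i],-)$ to the triangle above shows $\operatorname{Res}(\bar\Gamma\otimes_\Gamma^{\mathbf L}S^\xbullet)$ is again right-orthogonal to $X^\xbullet$, and the adjunction then puts $\bar\Gamma\otimes_\Gamma^{\mathbf L}S^\xbullet$ in the right orthogonal of $\bar X^\xbullet$, so it is $0$ by the fact above; re-reading the triangle, $\pi\colon S^\xbullet\to S^\xbullet$ is an isomorphism in $\mathcal D(\Gamma)$, whence $H^i(S^\xbullet)=H^i(S^\xbullet)\pi=H^i(S^\xbullet)\pi\Gamma\subseteq H^i(S^\xbullet)\rad(\Gamma)$, i.e. $H^i(S^\xbullet)=H^i(S^\xbullet)\rad(\Gamma)$ for all $i$, which is precisely the torsion-silting condition.

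A remark on why the statement is not vacuous: the definition of torsion-silting is deliberately phrased with arbitrary (possibly unbounded, infinitely generated) objects of $\mathcal D(\Gamma)$ and with the condition $H^i(S^\xbullet)\rad(\Gamma)=H^i(S^\xbullet)$ rather than $H^i(S^\xbullet)=0$; over $\bar\Gamma$ these coincide by Nakayama, but over $\Gamma$ the weaker condition legitimately permits $\pi$-divisible cohomology, and this is exactly the gap between $\tsilt{\Gamma}$ and $\silt{\Gamma}$ that the proof has to accommodate.
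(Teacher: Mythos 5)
Your proof is correct, but the harder (``only if''/converse) direction takes a genuinely different route from the paper. The paper first invokes B\"okstedt--Neeman to replace an arbitrary object $S^\xbullet$ of the right orthogonal by a complex of projective modules, then transfers orthogonality from $\Gamma$ to $\bar{\Gamma}$ by the same explicit chain-level homotopy-lifting argument used in Proposition~\ref{prop reduction silting}, and finally extracts $\pi H^i(S^\xbullet)=H^i(S^\xbullet)$ from the injection $R/\pi R\otimes_R H^i(S^\xbullet)\hookrightarrow H^i(\bar S^\xbullet)$ afforded by flatness of the terms. You instead work entirely formally in $\mathcal D(\Gamma)$: the adjunction between derived extension of scalars and restriction, together with the triangle $S^\xbullet\xrightarrow{\pi}S^\xbullet\to S^\xbullet\otimes^{\mathbf L}_\Gamma\bar{\Gamma}\to S^\xbullet[1]$, shows the reduction of $S^\xbullet$ lies in the right orthogonal of $\bar X^\xbullet$, hence vanishes, so $\pi$ acts invertibly on $S^\xbullet$ and therefore on its cohomology -- which gives the radical condition directly. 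This avoids both the B\"okstedt--Neeman replacement and the homotopy-lifting computation, at the price of using the unbounded derived tensor formalism; the forward direction and the appeal to the compact-generation criterion (your Neeman-style argument is exactly what the paper quotes as \cite[Proposition~4.2]{AiharaIyamaSiltingMutation}) coincide with the paper's. One cosmetic point: with the paper's right-module conventions the reduction should be written $S^\xbullet\otimes^{\mathbf L}_\Gamma\bar{\Gamma}$ rather than $\bar{\Gamma}\otimes^{\mathbf L}_\Gamma S^\xbullet$, which is harmless here because $\pi$ is central, so the cone description and your triangle are unaffected.
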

\begin{proof}
	Let $\mathcal T_\Gamma$ denote the smallest localising subcategory of $\mathcal K(\Gamma)=\mathcal K(\Mod{\Gamma})$ containing $\mathcal K^-(\Proj{\Gamma})$. Define $\mathcal T_{\bar \Gamma}$ analogously. By \cite[Proposition 2.12]{BokstedtNeeman} the canonical functors $\mathcal T_{\Gamma} \longrightarrow \mathcal D(\Gamma)$ and $\mathcal T_{\bar \Gamma} \longrightarrow \mathcal D(\bar \Gamma)$ are equivalences, and in particular any complex in $\mathcal D(\Gamma)$ and $\mathcal D(\bar{\Gamma})$ is isomorphic to a complex with projective terms. We will also repeatedly use that $\Hom_{\mathcal K(\Gamma)}(X^\bullet,-)\cong \Hom_{\mathcal D(\Gamma)}(X^\bullet,-)$, since $X^\bullet$ is a bounded complex of projectives (also, the analogous statement for $\bar{\Gamma}$ and $\bar X^\bullet$).

	Assume that $X^\bullet$ is torsion-silting. Let $T^\bullet\in \mathcal D(\bar{\Gamma})$ be an element such that $\Hom_{\mathcal D(\bar{\Gamma})}(\bar X^\bullet[i], T^\bullet)=0$ for all $i\in \Z$. Then
	\begin{equation}
		0=\Hom_{\mathcal D(\bar{\Gamma})}(\bar X^\bullet[i], T^\bullet)\cong \Hom_{\mathcal K(\bar{\Gamma})}(\bar X^\bullet[i], T^\bullet)=\Hom_{\mathcal K(\Gamma)}(X^\bullet[i], T^\bullet)\cong \Hom_{\mathcal D(\Gamma)}(X^\bullet[i], T^\bullet)
	\end{equation}
	for all $i$. Since $X^\bullet$ is torsion-silting this implies that $H^i(T^\bullet)\cdot \rad (\Gamma)= H^i(T^\bullet)$ for all $i$. By assumption, $\Gamma$ is an $R$-order and therefore there is a $j\geq 1$ such that $\rad(\Gamma)^j \subseteq \pi\Gamma$. It follows that  $\pi H^i(T^\bullet) = H^i(T^\bullet)$ for all $i$. But $T^\bullet$ is a complex of $\bar{\Gamma}$-modules, on which $\pi$ acts as zero. Hence  $H^i(T^\bullet) = 0$ for all $i\in \Z$, which implies that $T^\bullet \cong 0$ in $\mathcal D(\bar{\Gamma})$. We conclude that $\add (\bar X^\bullet)$ is a ``compact generating subcategory'' of $\mathcal D(\bar{\Gamma})$, which by \cite[Proposition~4.2]{AiharaIyamaSiltingMutation} implies that $\operatorname{thick}(\bar X^\bullet)=\mathcal K^b(\proj{\bar\Gamma})$.

	For the other direction, assume that $\operatorname{thick}(\bar X^\bullet)=\mathcal K^b(\proj{\bar{\Gamma}})$. Then
	\begin{equation}
		\{ T^\xbullet\in \mathcal D(\bar{\Gamma}) \ | \ \Hom_{D(\bar{\Gamma})}(\bar X^\xbullet[i], T^\bullet)=0 \textrm{ for all $i\in\Z$}\}=0.
	\end{equation}
	Assume that $S^\bullet\in\mathcal T_\Gamma$ has the property that $\Hom_{\mathcal D(\Gamma)}(X^\bullet[i], S^\bullet)=0$ for all $i\in \Z$. Consider the complex $\bar S^\bullet$, which lies in $\mathcal T_{\bar{\Gamma}}$. We can argue as in the second part of Proposition~\ref{prop reduction silting}. An $\bar f \in \Hom_{\mathcal K(\bar \Gamma)}(\bar X^\bullet[i], \bar S^\bullet)$, for some $i\in \Z$, lifts to a homomorphism of $\Z$-graded modules $f\in \Hom_\Gamma(X^\bullet[i], S^\bullet)$. Then  $f\circ d_{X[i]} - d_{S} \circ f$ commutes with the differential, and has image contained in $\pi S^\bullet$. We can therefore find a homotopy  $h:\ X^\bullet[i] \longrightarrow S^\bullet$ such that $\pi^{-1}(f\circ d_{X[i]} - d_{S} \circ f) = h\circ d_{X[i]} - d_{S} \circ h$, from which it follows that $f-\pi h$ is a homomorphism of chain complexes, which must be null-homotopic since $\Hom_{\mathcal D(\Gamma)}(X^\bullet[i], S^\bullet)=0$. But then $\bar f$ is null-homotopic. In particular, $ \Hom_{\mathcal D(\bar \Gamma)}(\bar X^\bullet[i], \bar S^\bullet)=0$ for all $i\in \Z$, which implies (using the assumption on $\bar X^\bullet$) that $\bar S^\bullet \cong 0$ in $\mathcal D(\bar{\Gamma})$. Now $R/\pi R\otimes_R H^i(S^\bullet) \hookrightarrow H^i(\bar S^\bullet)=0$, which implies $\pi H^i(S^\bullet)= H^i(S^\bullet)$. Since $\pi \in \rad(\Gamma)$ it follows that $H^i(S^\bullet)\cdot \rad(\Gamma)=H^i(S^\bullet)$, showing that $X^\bullet$ is torsion-silting.
\end{proof}

\begin{remark}
	By \cite[Theorem 3.3]{RickardLiftTilting} it is also true that if $\bar X^\bullet$ is tilting the $X^\bullet$ is tilting.
\end{remark}

As a consequence, we get bijections of silting complexes in many cases. Note that we do not know whether $\tsilt{\Gamma}$ is partially ordered, but we can still define the relation ``$\leq$'' as in equation~\eqref{eqn def partial order}. In Corollary~\ref{corollary silting bijection}, $\tsilt{\Gamma}$ being a poset is part of the assertion.  In general, we do not know to what extent $\tsilt{\Gamma}$ can differ from $\silt{\Gamma}$. A torsion-silting complex which is not silting would correspond to a pre-silting complex which reduces to a silting complex over $\Gamma/\pi\Gamma$, but tensoring with the field of fractions $K$ of $R$ does not give a generator of $\mathcal K^b(\proj{K\otimes_R\Gamma})$. If, for example, $K\otimes_R\Gamma$ is semisimple then this cannot happen.

\begin{corollary}\label{corollary silting bijection}
	Let $\Gamma$ be a ring and let
	\begin{equation}
		\iota_1,\iota_2:\ k\sbb{X} \longrightarrow Z(\Gamma)
	\end{equation}
	be two embeddings which both turn $\Gamma$ into a $k\sbb{X}$-order. Then the functor $-\otimes_{\Gamma} \Gamma/\iota_i(X)\Gamma$ for $i\in\{1,2\}$ induces a bijection between the isomorphism classes of pre-silting complexes over $\Gamma$ and those over $\Gamma/\iota_i(X)\Gamma$, and in particular induces isomorphisms of partially ordered sets
	\begin{equation}
		\silt\Gamma/\iota_1(X)\Gamma \stackrel{\sim}{\longleftrightarrow} \tsilt\Gamma 	 \stackrel{\sim}{\longleftrightarrow}
		\silt\Gamma/\iota_2(X)\Gamma.
	\end{equation}
\end{corollary}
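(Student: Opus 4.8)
The plan is to handle the two $k\sbb{X}$-algebra structures on $\Gamma$ one at a time: for a single embedding $\iota=\iota_i$ I would show that reduction modulo $\iota_i(X)$ produces the required bijection, and then obtain the corollary by composing the two instances while tracking the partial order. So fix $i\in\{1,2\}$ and regard $R=k\sbb{X}$ as a complete discrete valuation ring acting on $\Gamma$ through $\iota_i$, with uniformiser $X$; write $\bar\Gamma=\Gamma/\iota_i(X)\Gamma$. The functor $-\otimes_\Gamma\bar\Gamma$ maps $\mathcal K^b(\proj\Gamma)$ into $\mathcal K^b(\proj\bar\Gamma)$, and by Proposition~\ref{prop reduction silting} a complex $X^\xbullet$ over $\Gamma$ is pre-silting if and only if $\bar X^\xbullet$ is pre-silting over $\bar\Gamma$. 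The first point to establish is that $X^\xbullet\mapsto \bar X^\xbullet$ induces a bijection between the isomorphism classes of pre-silting complexes over $\Gamma$ and over $\bar\Gamma$: this is exactly the statement that pre-silting complexes lift, uniquely up to isomorphism, from $\bar\Gamma$ to $\Gamma$, which follows from \cite{RickardLiftTilting} (cf.\ the discussion preceding Proposition~\ref{prop reduction silting}, and the proof of the proposition asserting that silting and tilting coincide over symmetric orders, which invokes \cite[Proposition~3.1 and Theorem~3.3]{RickardLiftTilting}). Next I would check that this bijection respects being basic. Using uniqueness of lifts together with Proposition~\ref{prop reduction silting} (to know that the reduction of a direct sum of pre-silting complexes is pre-silting, and conversely), one sees that an indecomposable pre-silting complex over $\Gamma$ reduces to an indecomposable one over $\bar\Gamma$, and that two indecomposable pre-silting complexes over $\Gamma$ are isomorphic precisely when their reductions are; conversely, Nakayama's lemma shows that the indecomposable summands of a lift of a basic pre-silting complex remain indecomposable and pairwise non-isomorphic. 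By Krull--Schmidt, $-\otimes_\Gamma\bar\Gamma$ therefore restricts to a bijection between basic pre-silting complexes over $\Gamma$ and over $\bar\Gamma$.

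With this in place I would cut down to silting complexes and then deal with the order. By Proposition~\ref{prop silti to silt}, $X^\xbullet$ is torsion-silting over $\Gamma$ if and only if $\bar X^\xbullet$ is silting over $\bar\Gamma$, so the bijection above restricts to a bijection $\tsilt\Gamma\longleftrightarrow\silt\bar\Gamma$. For basic pre-silting complexes $S^\xbullet,T^\xbullet$ over $\Gamma$, Proposition~\ref{prop reduction silting} applied to the pair $(S^\xbullet,T^\xbullet)$ gives
\[
	\Hom_{\mathcal D^b(\Gamma)}(S^\xbullet,T^\xbullet[j])=0\ \textrm{for all }j>0\ \Longleftrightarrow\ \Hom_{\mathcal D^b(\bar\Gamma)}(\bar S^\xbullet,\bar T^\xbullet[j])=0\ \textrm{for all }j>0,
\]
that is, $S^\xbullet\geq T^\xbullet$ exactly when $\bar S^\xbullet\geq\bar T^\xbullet$. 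Since $\geq$ is a genuine partial order on $\silt\bar\Gamma$ \cite{AiharaIyamaSiltingMutation}, transporting it along the bijection shows that $\geq$ is a partial order on $\tsilt\Gamma$ and that $\tsilt\Gamma\longleftrightarrow\silt\bar\Gamma$ is a poset isomorphism. Running this for $i=1$ and $i=2$ and composing yields
\[
	\silt\Gamma/\iota_1(X)\Gamma\stackrel{\sim}{\longleftrightarrow}\tsilt\Gamma\stackrel{\sim}{\longleftrightarrow}\silt\Gamma/\iota_2(X)\Gamma .
\]

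I expect the only real obstacle to be in the first paragraph: one must make sure that the lifting result quoted from \cite{RickardLiftTilting} is used in a form strong enough to yield not just the bijection on isomorphism classes of pre-silting complexes, but also its compatibility with direct-sum decompositions, so that basic complexes go to basic complexes. Granting that, the rest --- the passage to (torsion-)silting complexes via Proposition~\ref{prop silti to silt} and the transport of the partial order via Proposition~\ref{prop reduction silting} --- is purely formal.
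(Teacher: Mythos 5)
Your proposal is correct and takes essentially the same route as the paper: unique lifting of pre-silting complexes via \cite[Proposition 3.1]{RickardLiftTilting}, combined with Propositions~\ref{prop reduction silting} and~\ref{prop silti to silt}, and transport of the order ``$\geq$'' using the main assertion of Proposition~\ref{prop reduction silting}, which also shows $\tsilt{\Gamma}$ is a poset. The extra care you take about basic complexes and Krull--Schmidt is a point the paper leaves implicit, and is resolved exactly as you indicate by the uniqueness of lifts.
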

\begin{proof}
	It suffices to prove the claim for the relationship between $\Gamma /\iota_1(X)\Gamma$ and $\Gamma$. So let us assume without loss that $\Gamma$ is given as a $k\sbb{X}$-order. By \cite[Proposition 3.1]{RickardLiftTilting}, for any pre-silting complex $\bar T^\xbullet$ in $\mathcal K^b(\proj\Gamma/X\Gamma)$ there is a complex $T^\xbullet$, unique up to isomorphism in $\mathcal K^b(\proj\Gamma)$, such that $T^\xbullet\otimes_{\Gamma} \Gamma/X\Gamma \cong \bar T^\xbullet$. Moreover, by Proposition~\ref{prop reduction silting}, the complex $T^\xbullet$ in $\mathcal K^b(\proj\Gamma)$ is pre-silting if and only if  $\bar T^\xbullet$ is pre-silting over $\Gamma/X\Gamma$, and by Proposition~\ref{prop silti to silt} it is torsion-silting if and only if $\bar T^\xbullet$  is silting. This shows that the functor $-\otimes_\Gamma \Gamma/X\Gamma$ does indeed induce a bijection between basic torsion-silting complexes over $\Gamma$ and basic silting complexes over $\Gamma/X\Gamma$. The relation ``$\geq$'' is defined in equation~\eqref{eqn def partial order}, and, with this definition in mind, the main assertion of Proposition~\ref{prop reduction silting} can be restated as saying that $X^\xbullet \geq Y^\xbullet$ if and only if $\bar X^\xbullet \geq \bar Y^\xbullet$ ($X$ and $Y$ being as in that proposition), which shows that the functor $-\otimes_\Gamma \Gamma/X\Gamma$ preserves order (which also shows that $\tsilt{\Gamma}$ is a poset).
\end{proof}

\begin{thm}[Lifting theorem \& silting bijection]\label{thm main general}
	Let $Q$, $f$, $\cc$, $\tt$ and  $\mathfrak Z$ be as in \S\ref{section gen weighted surf}. For $\alpha\in Q_1$ set
	\begin{equation}
		e_\alpha = \left\{ \begin{array}{ll} 1& \textrm{if $n_\alpha > 1$,}\\ 2&\textrm{if $n_\alpha=1$.} \end{array} \right.
	\end{equation}
	Assume that we are given two multiplicity functions $ 	\mm\upbr{1}, \mm\upbr{2}:\ Q_1/\langle g \rangle \longrightarrow \Z_{\geq 2} $.  Moreover, assume that for each $i\in \{1,2\}$ and all $\alpha \in Q_1$ with $t_{\bar{\alpha}}\equiv 1$ we have one of the following:
	\begin{enumerate}
		\item  $m\upbr{i}_\alpha \geq e_\alpha+ \frac{3}{n_\alpha}$ if $\{ \alpha f(\alpha) g(f(\alpha))  \ | \ \alpha \in Q_1\} \subseteq \mathcal Z$, or
		\item $m\upbr{i}_\alpha \geq e_\alpha+ \frac{4}{n_\alpha}$.
	\end{enumerate}
	Then there are a ring $\Gamma$ and embeddings
	\begin{equation}
		\iota_i:\ k\sbb{X} \longrightarrow Z(\Gamma) \quad \textrm{ for $i\in\{1,2\}$},
	\end{equation}
	each endowing $\Gamma$ with the structure of a  $k\sbb{X}$-order, such that
	\begin{equation}
		\Lambda(Q,f,\mm\upbr{i},\cc, \tt, \mathcal Z) \cong \Gamma/\iota_i(X)\cdot\Gamma  \quad \textrm{ for $i\in\{1,2\}$}.
	\end{equation}
	In particular, there are isomorphisms of partially ordered sets
	\begin{equation}
		\xymatrix{
			&\tsilt\Gamma \ar@{<->}[rd]^\sim \ar@{<->}[ld]_\sim \\
			\silt\Lambda(Q,f,\mm\upbr{1},\cc, \tt, \mathcal Z) \ar@{<->}[rr]^\sim && \silt\Lambda(Q,f,\mm\upbr{2},\cc, \tt, \mathcal Z)
		}
	\end{equation}
	where the arrows going down are induced by the functor ``$-\otimes_{\Gamma} \Gamma/\iota_i(X)\cdot\Gamma$''.
\end{thm}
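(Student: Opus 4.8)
The plan is to realise each $\Lambda(Q,f,\mm\upbr{i},\cc,\tt,\mathcal Z)$ as the reduction modulo $X$ of one and the same ring, carrying two a priori different $k\sbb{X}$-order structures, and then to feed this into Corollary~\ref{corollary silting bijection}.

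\textbf{Step 1: splitting the multiplicities.} The first thing I would do is produce a ``base'' function $\mm\colon Q_1/\langle g\rangle \longrightarrow \Z_{>0}$ together with complements $\mm'$ and $\mm''$ satisfying $\mm\upbr{1}=\mm+\mm'$ and $\mm\upbr{2}=\mm+\mm''$, chosen so that the data $(Q,f,\mm,\cc,\tt,\mathcal Z)$, $\mm'$, $\mm''$ meet the hypotheses of Proposition~\ref{prop lift weighted surface} for both complements and those of Proposition~\ref{prop multiplicity independence}. Concretely one needs: $\mm'$ and $\mm''$ positive; $\tt$ admissible with respect to $\mm$; $m'_\alpha n_\alpha,\,m''_\alpha n_\alpha\geq 2$ for every $\alpha$ with $t_{\bar\alpha}\equiv 1$; and either $\{\alpha f(\alpha)g(f(\alpha))\mid\alpha\in Q_1\}\subseteq\mathcal Z$ together with $m_\alpha n_\alpha\geq 3$ for all such $\alpha$, or $m_\alpha n_\alpha\geq 4$ for all such $\alpha$. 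The numerical hypotheses of the theorem are tailored precisely so that this is possible: on the $g$-orbit of an $\alpha$ with $t_{\bar\alpha}\equiv 1$ one sets $m_\alpha=\lceil 3/n_\alpha\rceil$ (respectively $\lceil 4/n_\alpha\rceil$), and since $e_\alpha=\lceil 2/n_\alpha\rceil$ the bound $m\upbr{i}_\alpha\geq e_\alpha+3/n_\alpha$ (resp.\ $+4/n_\alpha$) then forces $m'_\alpha n_\alpha,\,m''_\alpha n_\alpha\geq e_\alpha n_\alpha\geq 2$; on any remaining $g$-orbit not constrained by the admissibility conditions on $\tt$ one may take $m_\alpha=1$; and on a $g$-orbit constrained by a quaternion-type or socle-deformation relation one takes $m_\alpha$ just large enough for $\tt$ to stay admissible. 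If alternative (1) of the hypothesis is invoked for even one $\alpha$, then $\mathcal Z$ already contains all $\alpha f(\alpha)g(f(\alpha))$ and one is in the ``$m_\alpha n_\alpha\geq 3$'' case throughout; otherwise one is in the ``$m_\alpha n_\alpha\geq 4$'' case.

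\textbf{Step 2: lifting and identification.} With $\mm$, $\mm'$, $\mm''$ fixed, Proposition~\ref{prop weighted surface cond} (part (1) in the first case, part (2) in the second) shows that $\Lambda(Q,f,\mm,X\cc,\tt,\mathcal Z)$ over $k\sqq{X}$ is a generalised weighted surface algebra, so Proposition~\ref{prop lift weighted surface} applies twice and yields $k\sbb{X}$-orders $\Gamma':=\Gamma(Q,f,\mm,\cc,\tt,\mathcal Z;\mm')$ and $\Gamma'':=\Gamma(Q,f,\mm,\cc,\tt,\mathcal Z;\mm'')$ with $\Gamma'/X\Gamma'\cong\Lambda(Q,f,\mm\upbr{1},\cc,\tt,\mathcal Z)$ and $\Gamma''/X\Gamma''\cong\Lambda(Q,f,\mm\upbr{2},\cc,\tt,\mathcal Z)$. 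Next, Proposition~\ref{prop multiplicity independence} --- its case (2) in the first case above, its case (3) in the second --- provides a ring isomorphism $\psi\colon\Gamma'\stackrel{\sim}{\longrightarrow}\Gamma''$. Put $\Gamma:=\Gamma''$, let $\iota_2\colon k\sbb{X}\hookrightarrow Z(\Gamma)$ be the embedding making $\Gamma''$ a $k\sbb{X}$-order, and set $\iota_1:=\psi\circ\iota'$, where $\iota'$ is the embedding making $\Gamma'$ a $k\sbb{X}$-order. Both $\iota_1$ and $\iota_2$ then endow $\Gamma$ with the structure of a $k\sbb{X}$-order, and $\Gamma/\iota_i(X)\Gamma\cong\Lambda(Q,f,\mm\upbr{i},\cc,\tt,\mathcal Z)$ for $i\in\{1,2\}$.

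\textbf{Step 3: the poset statement.} This is immediate from Corollary~\ref{corollary silting bijection} applied to $\Gamma$ with the two embeddings $\iota_1,\iota_2$: the functors $-\otimes_{\Gamma}\Gamma/\iota_i(X)\Gamma$ induce poset isomorphisms $\silt\Gamma/\iota_1(X)\Gamma\stackrel{\sim}{\longleftrightarrow}\tsilt\Gamma\stackrel{\sim}{\longleftrightarrow}\silt\Gamma/\iota_2(X)\Gamma$, and transporting along the isomorphisms $\Gamma/\iota_i(X)\Gamma\cong\Lambda(Q,f,\mm\upbr{i},\cc,\tt,\mathcal Z)$ gives the claimed commutative diagram. \textbf{Main obstacle.} I expect Step~1 to be the delicate part: showing that a base function $\mm$ with all the listed properties exists is an essentially combinatorial case analysis, matching the inequalities $m\upbr{i}_\alpha\geq e_\alpha+3/n_\alpha$ resp.\ $+4/n_\alpha$ against the demands of Definition~\ref{def weighted surface} and Propositions~\ref{prop weighted surface cond}, \ref{prop lift weighted surface} and \ref{prop multiplicity independence}, while simultaneously keeping the admissibility constraints imposed on $\tt$ by any quaternion-type or socle-deformation relations satisfied for $\mm$ as well.
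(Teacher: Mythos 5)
Your proposal is correct and follows essentially the same route as the paper: split each $\mm\upbr{i}$ as a common base $\mm$ plus a complement, lift via Proposition~\ref{prop lift weighted surface}, identify the two lifts as rings via Proposition~\ref{prop multiplicity independence} (cases (2)/(3)), and conclude with Corollary~\ref{corollary silting bijection}. The only difference is cosmetic: the paper takes the base $m_\alpha=\min\{m\upbr{1}_\alpha,m\upbr{2}_\alpha\}-e_\alpha$ (resp. $-1$ when $t_{\bar\alpha}\equiv 0$), which makes the combinatorial verification you flag as the main obstacle an immediate computation rather than a case analysis.
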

\begin{proof}
	Define a map
	\begin{equation}
		\mm:\ Q_1/\langle g \rangle\longrightarrow \Z_{>0}:\ \alpha \mapsto \left\{ \begin{array}{ll}\min\{m_\alpha\upbr{1}, m_\alpha\upbr{2}\}-e_\alpha &\textrm{ if $t_{\bar{\alpha}}\equiv 1$,} \\ \min\{m_\alpha\upbr{1}, m_\alpha\upbr{2}\}-1 &\textrm{ if $t_{\bar{\alpha}}\equiv 0$.}  \end{array} \right.
	\end{equation}
	Pick an $i\in \{1,2\}$ and define $\mm' = \mm\upbr{i}-\mm$. The definition of $\mm$ ensures that $m'_\alpha \geq 1$ and $m_\alpha\geq 1$ for all $\alpha\in Q_1$. Furthermore, for any $\alpha\in Q_1$ with $t_{\bar{\alpha}}\equiv 1$ our assumptions ensure that $m'_\alpha \geq e_\alpha$, which in turn ensures $m'_{\alpha}n_\alpha \geq 2$.   Also, when $t_{\bar{\alpha}}\equiv 1$ we have
	\begin{equation}\label{eqn sjdjjd}
		m_{\alpha} n_{\alpha} = \min \{ (m_\alpha\upbr{1} - e_\alpha)n_\alpha,  (m_\alpha\upbr{2} - e_\alpha)n_\alpha \} \geq  \left\{ \begin{array}{ll} 3 & \textrm{ if $\{ \alpha f(\alpha) g(f(\alpha))  \ | \ \alpha \in Q_1\} \subseteq \mathfrak Z$,} \\ 4 & \textrm{otherwise.} \end{array}\right.
	\end{equation}
	It follows that the assumptions of Proposition~\ref{prop lift weighted surface} are satisfied. Specifically,  the inequality~\eqref{eqn sjdjjd} implies that we can use Proposition~\ref{prop weighted surface cond} to verify that the algebra $\Lambda(Q,f,\mm, X\cdot \cc, \tt, \mathfrak Z)$ in the statement of Proposition~\ref{prop lift weighted surface} is a generalised weighted surface algebra. We checked the other condition of Proposition~\ref{prop lift weighted surface} verbatim further above. Hence we get a $k\sbb{X}$-order  $\Gamma(Q,f,\mm, \cc, \tt, \mathfrak Z; \mm')$ reducing to  $\Lambda(Q,f,\mm+\mm', \cc, \tt, \mathfrak Z) = \Lambda(Q,f,\mm\upbr{i}, \cc, \tt, \mathfrak Z)$.

	The inequality~\eqref{eqn sjdjjd} tells us that one of the alternative assumptions of Proposition~\ref{prop multiplicity independence} is satisfied, which implies that the isomorphism type of $\Gamma(Q,f,\mm, \cc, \tt, \mathfrak Z; \mm')$ as a ring does not depend on $\mm'$. To be specific, by the remarks following Proposition~\ref{prop multiplicity independence}, we have
	\begin{equation}
		\Gamma(Q,f,\mm, \cc, \tt, \mathfrak Z; \mm') \cong \Gamma_0(Q,f,\mm, \cc, \tt, \mathfrak Z) \quad \textrm{(from Definition~\ref{defi gamma0})}
	\end{equation}
	as rings. Our claims on the correspondence of silting complexes now follow from Corollary~\ref{corollary silting bijection}.
\end{proof}

The preceding theorem shows that generalised weighted surface algebras defined for the same combinatorial data but different multiplicities have common lifts, and therefore common silting posets, provided the multiplicities are big enough. It is inevitable that in some cases we could, in theory, allow slightly smaller multiplicities. The problem with incorporating these cases in a general theorem is that checking when a $\Lambda(Q,f,\mm, \cc,\tt,\mathfrak Z)$ is a generalised weighted surface algebra in the sense of Definition~\ref{def weighted surface} is tricky for multiplicities smaller than those allowed by Proposition~\ref{prop weighted surface cond} (for instance, the answer may no longer be independent of the characteristic of $k$). That is, one will have to consider this case-by-case, and verify the assumptions of Propositions~\ref{prop lift weighted surface}~and~\ref{prop multiplicity independence} by hand. Below we do this for one case which is of interest in the modular representation theory of finite groups.

\begin{prop}\label{prop application q3k}
	Assume $\operatorname{char}(k)=2$. Let $Q$, $f$, $\cc$ and $\tt$ be as in Proposition~\ref{prop semimimple q3k}. Then the conclusion of Theorem~\ref{thm main general} holds for any two functions $\mm\upbr{1}, \mm\upbr{2}:\ Q_1/\langle g \rangle \longrightarrow \Z_{\geq 2}$ .
\end{prop}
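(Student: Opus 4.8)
The plan is to re-run the argument behind Theorem~\ref{thm main general} with a single modification: instead of the auxiliary multiplicity function built there from $\min\{\mm\upbr{1},\mm\upbr{2}\}$, I would take $\mm$ to be the constant multiplicity function with $m_\gamma=1$ for all $\gamma\in Q_1$, so that $Q$, $f$, $\mm$, $\cc$ and $\tt$ are exactly the data of Proposition~\ref{prop semimimple q3k}. For $i\in\{1,2\}$ set $\mm'=\mm\upbr{1}-\mm$ and $\mm''=\mm\upbr{2}-\mm$; these take values $\geq 1$ because $\mm\upbr{1}$ and $\mm\upbr{2}$ take values $\geq 2$. The key input that makes such small multiplicities admissible is Proposition~\ref{prop semimimple q3k} applied over the field $k\sqq{X}$: since $\operatorname{char}(k\sqq{X})=2$ and $X\cdot c_\gamma\in k\sqq{X}^\times$ for every $\gamma$, it certifies that $\Lambda(Q,f,\mm,X\cdot\cc,\tt,\emptyset)$ defined over $k\sqq{X}$ is a generalised weighted surface algebra in the sense of Definition~\ref{def weighted surface} (it is the split semisimple algebra of \eqref{eqn lambda semisimple}, now over $k\sqq{X}$), even though $m_\alpha n_\alpha=2$ lies below the threshold of Proposition~\ref{prop weighted surface cond}.

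Granting that, the rest is routine. Since $n_\alpha=2$ for every $\alpha\in Q_1$ one has $m'_\alpha n_\alpha\geq 2$ and $m''_\alpha n_\alpha\geq 2$ for all $\alpha$ (in particular for those with $t_{\bar\alpha}\equiv 1$), so the hypotheses of Proposition~\ref{prop lift weighted surface} are met for both $\mm'$ and $\mm''$. This produces $k\sbb{X}$-orders $\Gamma(Q,f,\mm,\cc,\tt,\emptyset;\mm')$ and $\Gamma(Q,f,\mm,\cc,\tt,\emptyset;\mm'')$ whose reductions modulo $X$ are $\Lambda(Q,f,\mm+\mm',\cc,\tt,\emptyset)=\Lambda(Q,f,\mm\upbr{1},\cc,\tt,\emptyset)$ and $\Lambda(Q,f,\mm+\mm'',\cc,\tt,\emptyset)=\Lambda(Q,f,\mm\upbr{2},\cc,\tt,\emptyset)$. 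Because $\operatorname{char}(k)=2$, the data $Q$, $f$, $\mm$, $\cc$, $\tt$ are as in Proposition~\ref{prop semimimple q3k}, and $\mathcal Z=\emptyset$, part~(4) of Proposition~\ref{prop multiplicity independence} applies verbatim and shows that these two $k\sbb{X}$-orders are isomorphic as rings, both to $\Gamma_0(Q,f,\mm,\cc,\tt,\emptyset)$ of Definition~\ref{defi gamma0} by the remarks following that proposition. Taking $\Gamma$ to be this common ring and letting $\iota_1,\iota_2:\ k\sbb{X}\longrightarrow Z(\Gamma)$ be the two embeddings induced by the respective order structures, one obtains $\Gamma/\iota_i(X)\Gamma\cong\Lambda(Q,f,\mm\upbr{i},\cc,\tt,\emptyset)$ for $i\in\{1,2\}$, whence Corollary~\ref{corollary silting bijection} yields the asserted isomorphisms of posets, i.e. the conclusion of Theorem~\ref{thm main general}.

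I do not expect a genuine obstacle here. The only reason this case is not already subsumed by Theorem~\ref{thm main general} is numerical: with $e_\alpha=1$ and $n_\alpha=2$ its hypotheses demand $m\upbr{i}_\alpha\geq e_\alpha+4/n_\alpha=3$ (its option~(1) being unavailable, as $\mathcal Z=\emptyset$ does not contain the relations $\alpha f(\alpha)g(f(\alpha))$), while we wish to allow $m\upbr{i}_\alpha=2$. The one point requiring care is that the auxiliary $\mm$ must be taken identically equal to $1$ — rather than the $\min\{\mm\upbr{1},\mm\upbr{2}\}-e_\alpha$ used in the proof of Theorem~\ref{thm main general} — precisely so that both Proposition~\ref{prop semimimple q3k} and Proposition~\ref{prop multiplicity independence}(4) apply on the nose; everything else is verified exactly as in the proof of Theorem~\ref{thm main general}.
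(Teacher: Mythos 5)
Your proposal is correct and follows essentially the same route as the paper's own proof: take the auxiliary multiplicity function $\mm\equiv 1$, note that Proposition~\ref{prop semimimple q3k} (applied over $k\sqq{X}$) replaces Proposition~\ref{prop weighted surface cond} in verifying the hypothesis of Proposition~\ref{prop lift weighted surface}, and that case~(4) of Proposition~\ref{prop multiplicity independence} gives the ring-isomorphism of the two lifts, after which Corollary~\ref{corollary silting bijection} concludes. The only difference is that you spell out the verifications which the paper compresses into a reference to the proof of Theorem~\ref{thm main general}.
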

\begin{proof}
	We can proceed as in the proof of Theorem~\ref{thm main general}. Define $m_{\alpha}=1$  for all $\alpha\in Q_1$ and, after picking an $i\in\{1,2\}$, set $\mm' =\mm\upbr{i}-\mm$.  All verifications made in the proof of Theorem~\ref{thm main general} carry over to this case, apart from inequality~\eqref{eqn sjdjjd}. But the inequality~\eqref{eqn sjdjjd} is only used to check the assumptions of Proposition~\ref{prop weighted surface cond}, which we can replace by Proposition~\ref{prop semimimple q3k}, and to check the assumptions of Proposition~\ref{prop multiplicity independence}, which are also satisfied in this case.
\end{proof}

The advantage of Theorem~\ref{thm main general} is that it can be applied to arbitrary Brauer graph algebras, but in cases where they coincide with twisted Brauer algebras we can allow arbitrary multiplicities by using the lifts from Proposition~\ref{prop biserial order} instead.
\begin{prop}\label{prop improvement biserial}
	In the situation of Theorem~\ref{thm main general}, if $\tt=0$, $\cc=1$ and either $\operatorname{char}(k)=2$ or $k=\bar k$ and the Brauer graph of $(Q,f)$ in the sense of Definition~\ref{def brauer graph} is bipartite, then the conclusion of Theorem~\ref{thm main general} holds for any two functions $\mm\upbr{1}, \mm\upbr{2}:\ Q_1/\langle g \rangle \longrightarrow \Z_{\geq 1}$.
\end{prop}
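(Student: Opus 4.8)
The plan is to bypass the lifts of Proposition~\ref{prop lift weighted surface} altogether and instead invoke the lifts $\GammaTw(Q,f,\mm)$ from Proposition~\ref{prop biserial order}, whose underlying ring $\GammaTw(Q,f)=\widehat{kQ}/\xoverline{(\alpha f(\alpha)\mid\alpha\in Q_1)}$ is \emph{visibly} independent of the multiplicity function; only the $k\sbb{X}$-algebra structure (the element that $X$ is sent to) varies with $\mm$. So the statement will follow by feeding two such structures into Corollary~\ref{corollary silting bijection}.

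First I would dispose of $\mathcal Z$. Since $\tt=0$, every element of the form $\alpha g(\alpha)f(g(\alpha))$ or $\alpha f(\alpha)g(f(\alpha))$ contains a subword $\beta f(\beta)$ for some $\beta\in Q_1$, and $\beta f(\beta)=c_{\bar\beta}A_{\bar\beta}t_\beta=0$ in $\Lambda(Q,f,\mm\upbr{i},\cc,\tt,\mathcal Z)$; hence $\mathcal Z$ is redundant and $\Lambda(Q,f,\mm\upbr{i},\cc,\tt,\mathcal Z)=\Lambda(Q,f,\mm\upbr{i},\cc,\tt,\emptyset)$ (with $\cc=1$, $\tt=0$ as in the hypothesis). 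Under the standing assumption — $\operatorname{char}(k)=2$, or $k=\bar k$ and the Brauer graph of $(Q,f)$ bipartite — Proposition~\ref{prop tw bga eq bga} then identifies this algebra with the twisted Brauer graph algebra $\LambdaTw(Q,f,\mm\upbr{i})$ for $i\in\{1,2\}$.

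Next I would set $\Gamma:=\GammaTw(Q,f)$, regarded just as a $k$-algebra, and for $i\in\{1,2\}$ let $\iota_i:\ k\sbb{X}\longrightarrow Z(\Gamma)$ be the embedding determined by $\iota_i(X)=z_i$, where $z_i=\sum_{\alpha\in Q_1}\alpha g(\alpha)\cdots g^{m\upbr{i}_\alpha n_\alpha-1}(\alpha)$. Applying Proposition~\ref{prop biserial order} with $\mm=\mm\upbr{i}$ tells us that $z_i$ is central, that $\iota_i$ is a well-defined embedding turning $\Gamma$ into a $k\sbb{X}$-order (namely $\GammaTw(Q,f,\mm\upbr{i})$, so in particular $\iota_i$ is injective), and that $\Gamma/\iota_i(X)\Gamma\cong\LambdaTw(Q,f,\mm\upbr{i})\cong\Lambda(Q,f,\mm\upbr{i},\cc,\tt,\mathcal Z)$. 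Thus $\Gamma$, $\iota_1$, $\iota_2$ supply exactly the data asserted to exist in Theorem~\ref{thm main general}. Corollary~\ref{corollary silting bijection}, applied to the two embeddings $\iota_1,\iota_2$, now shows that $\tsilt\Gamma$ is a poset and yields the chain of poset isomorphisms
\begin{equation}
	\silt\Lambda(Q,f,\mm\upbr{1},\cc,\tt,\mathcal Z)\stackrel{\sim}{\longleftrightarrow}\tsilt\Gamma\stackrel{\sim}{\longleftrightarrow}\silt\Lambda(Q,f,\mm\upbr{2},\cc,\tt,\mathcal Z),
\end{equation}
the two downward maps being induced by $-\otimes_\Gamma\Gamma/\iota_i(X)\Gamma$, which is precisely the conclusion of Theorem~\ref{thm main general} — now with no constraint on the multiplicities beyond $m\upbr{i}_\alpha\geq 1$. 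There is essentially no hard step here: everything reduces to checking the (very mild) hypotheses of Propositions~\ref{prop biserial order} and~\ref{prop tw bga eq bga}, the only point needing a moment's thought being the observation that $\tt=0$ forces the relations in $\mathcal Z$ to hold automatically.
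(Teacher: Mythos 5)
Your proposal is correct and follows essentially the same route as the paper: identify $\Lambda(Q,f,\mm\upbr{i},\cc,\tt,\mathcal Z)$ with $\LambdaTw(Q,f,\mm\upbr{i})$ via Proposition~\ref{prop tw bga eq bga}, take $\Gamma=\GammaTw(Q,f)$ with the two $k\sbb{X}$-structures of Proposition~\ref{prop biserial order}, and conclude by Corollary~\ref{corollary silting bijection}. Your explicit remark that $\tt=0$ makes the relations in $\mathcal Z$ redundant is a small point the paper leaves implicit, but it does not change the argument.
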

\begin{proof}
	By Proposition~\ref{prop tw bga eq bga} our assumptions imply that $\Lambda(Q,f,\mm\upbr{i},\cc,\tt,\mathcal Z)\cong \LambdaTw(Q,f,\mm\upbr{i})$ for $i\in\{1,2\}$. By Proposition~\ref{prop biserial order} we can therefore choose $\Gamma=\GammaTw(Q,f)$.
\end{proof}

Theorem~\ref{thm main general}, together with Proposition~\ref{prop improvement biserial}, implies Theorem~\ref{thm bga tilting correspondence}. Since it is well-known that blocks of group algebras of cyclic defect are Morita equivalent to Brauer tree algebras, Proposition~\ref{prop improvement biserial} also implies the part of Theorem~\ref{thm block tilting correspondence} concerned with cyclic defect. Note that, in all of these cases, the algebras involved are symmetric, which is why we get statements on tilting complexes.

\section{Tilting bijections for algebras of dihedral, semi-dihedral and quaternion~type}

The aim of this section is to give a quick summary of what Theorem~\ref{thm main general} implies for algebras of dihedral, semi-dihedral and quaternion type as classified by Erdmann in \cite{TameClass}. Not all of these algebras are generalised weighted surface algebras, but all of them are derived equivalent to one (bar one exceptional family of $20$-dimensional algebras), and all of them are symmetric. Apart from being a class of concrete examples to apply Theorem~\ref{thm main general} to, these algebras also contain all blocks of dihedral, semi-dihedral and quaternion defect, which are of interest in the modular representation theory of finite groups. The derived equivalence classification of the algebras classified in \cite{TameClass} was carried out in \cite{HolmDerivedEquivalentTameBlocks} and \cite{HolmDerivedClassErdmannAlg}. We exclude all local algebras from our considerations, since their tilting complexes are just shifts of progenerators by \cite[Theorem~2.11]{RouquierZimmermann}.

\begin{defi}
	Define quivers
	\begin{equation}
		Q_B = \vcenter{\vbox{ \xygraph{ !{<0cm,0cm>;<1.5cm,0cm>:<0cm,1cm>::} !{(0,0) }*+{\bullet_{1}}="a" !{(2,0) }*+{\bullet_{2}}="b" "b" :@/^/^{\alpha_2} "a" "a" :@/^/^{\alpha_1} "b" "a" :@(lu,ld)_{\beta_1} "a" "b" :@(ru,rd)^{\beta_2} "b" }}}
	\end{equation}
	\begin{equation}
		Q_K = \vcenter{\vbox{ \xygraph{ !{<0cm,0cm>;<1.5cm,0cm>:<0cm,1cm>::} !{(0,0) }*+{\bullet_{1}}="a" !{(2,0) }*+{\bullet_{2}}="b" !{(1,-2) }*+{\bullet_{3}}="c" "b" :@/^/^{\beta_2} "a" "a" :@/^/^{\alpha_1} "b" "b" :@/^/^{\alpha_2} "c" "c" :@/^/^{\beta_3} "b" "c" :@/^/^{\alpha_3} "a" "a" :@/^/^{\beta_1} "c" }}} \quad	 Q_R = \vcenter{\vbox{\xygraph{ !{<0cm,0cm>;<1.5cm,0cm>:<0cm,1cm>::} !{(0,0) }*+{\bullet_{1}}="a" !{(2,0) }*+{\bullet_{2}}="b" !{(1,-2) }*+{\bullet_{3}}="c" "a" :@/^/^{\alpha_1} "b" "b" :@/^/^{\alpha_2} "c" "c" :@/^/^{\alpha_3} "a" "a" :@(lu,ld)_{\beta_1} "a" "b" :@(ru,rd)^{\beta_2} "b" "c" :@(ld,rd)_{\beta_3} "c" }}}
	\end{equation}
	and corresponding permutations
	\begin{equation}
		\begin{gathered}
			f_B =  (\alpha_1\beta_2\alpha_2),\  f_K = (\alpha_1\alpha_2\alpha_3)(\beta_3\beta_2\beta_1),\ f_R=(\alpha_1\beta_2\alpha_2\beta_3\alpha_3\beta_1), \\ g_B =  (\beta_1\alpha_1\alpha_2),\  g_K = (\alpha_1\beta_2)(\alpha_2\beta_3)(\alpha_3\beta_1),\ g_R=(\alpha_1\alpha_2\alpha_3).
		\end{gathered}
	\end{equation}
\end{defi}

\begin{thm}[see \cite{HolmDerivedClassErdmannAlg}]
	Let $k$ be an algebraically closed field.  If $A$ is a non-local $k$-algebra of dihedral, semi-dihedral or quaternion type in the sense of Erdmann \cite{TameClass}, then $A$ is derived equivalent to one of the following algebras:
	\begin{enumerate}
		\item $\mathcal D(2B)^{a_1,a_2}(v) = \Lambda(Q_B, f_B, \mm,\cc,\tt, \emptyset)$, with $m_{\beta_i}=a_i$ for all $i$, $\cc=1$, $t_{\beta_1}=v\beta_1$ and $t_{\beta_2}=0$, where $a_1\geq a_2 \geq 1$ and $v\in\{0,1\}$.
		\item $\mathcal D(3K)^{a_1,a_2,a_3}=\Lambda(Q_K,f_K,\mm,\cc,\tt,\emptyset)$ with $m_{\alpha_i}=a_i$ for all $i$, $\cc= 1$ and  $\tt= 0$,
		      where $a_1\geq a_2\geq a_3\geq 1$.
		\item $\mathcal D(3R)^{a_1,b_1,b_2,b_3}=\Lambda(Q_R,f_R,\mm,\cc,\tt,\emptyset)$ with $m_{\alpha_1}=a_1$ and $m_{\beta_i}=b_{i}$ for all $i$, $\cc= 1$ and  $\tt= 0$, where $b_1\geq b_2\geq b_3 \geq a_1\geq 1$ and $b_2\geq 2$.

		\item $\mathcal{SD}(2B)_1^{a_1,a_2}(v)=\Lambda(Q_B,f_B,\mm,\cc,\tt,\emptyset)$,  with $m_{\beta_i}=a_i$ for all $i$, $\cc=1$, $t_{\beta_1}=1+v\beta_1$ and $t_{\beta_2}=0$, where $a_1,a_2\geq 1$, $(a_1,a_2)\neq (1,1)$ and $v\in\{0,1\}$.
		\item $\mathcal{SD}(2B)_2^{a_1,a_2}(v)=\Lambda(Q_B,f_B,\mm,\cc,\tt, \{ \alpha_1\beta_2^2, \beta_2^2\alpha_2 \})$,  with $m_{\beta_i}=a_i$ for all $i$, $\cc=1$, $t_{\beta_1}=v\beta_1$ and $t_{\beta_2}=1$, where $a_1\geq 1$, $a_2\geq 2$, $a_1+a_2\geq 4$ and $v\in\{0,1\}$.
		\item $\mathcal{SD}(3K)^{a_1,a_2,a_3}=\Lambda(Q_K,f_K,\mm,\cc,\tt,\emptyset)$ with $m_{\alpha_i}=a_i$ for all $i$, $\cc= 1$, $t_{\alpha_1}=1$ and $t_{\beta_1}=0$,
		      where $a_1\geq a_2\geq a_3\geq 1$ and $a_1\geq 2$.

		\item $\mathcal Q(2B)_1^{a_1,a_2}(u,v)=\Lambda(Q_B,f_B,\mm,\cc,\tt,\{\beta_1^2\alpha_1, \alpha_2\beta_1^2\})$ with $m_{\beta_i}=a_i$ for all $i$, ${c_{\beta_1}=u}$, ${c_{\beta_2}=u^{1-a_2}}$ and ${t_{\beta_1}=1+v\beta_1}$, ${t_{\beta_2}=1}$, where $a_1\geq 1$, $a_2\geq 3$, $u\in k^\times$ and $v\in k$.
		\item $\mathcal Q(3K)^{a_1,a_2,a_3}=\Lambda(Q_K,f_K,\mm,\cc,\tt,\{\beta_2\alpha_1\alpha_2, \alpha_2\beta_3\beta_2, \alpha_3\beta_1\beta_3\})$ with $m_{\alpha_i}=a_i$ for all~$i$, $\cc=1$ and  $\tt=1$, where $a_1\geq a_2\geq a_3\geq 1$, $a_2\geq 2$ and $(a_1,a_2,a_3)\neq (2,2,1)$.
		\item $\mathcal Q(3A)_1^{2,2}(d)$, where $d\in k\setminus\{0,1\}$. These are algebras of dimension $20$ which are excluded in~\cite{HolmDerivedClassErdmannAlg}.
	\end{enumerate}
	In the names of these algebras, the letters ``$\mathcal D$'', ``$\mathcal{SD}$'' and ``$\mathcal Q$'' indicate the type (dihedral, semi-dihedral or quaternion), and this type is preserved under derived equivalences. \qed
\end{thm}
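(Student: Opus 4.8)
The plan is to obtain the statement by combining the Morita classification of algebras of dihedral, semi-dihedral and quaternion type due to Erdmann~\cite{TameClass} with the derived equivalence classification carried out by Holm in~\cite{HolmDerivedEquivalentTameBlocks,HolmDerivedClassErdmannAlg}, and then to translate Holm's list of derived equivalence representatives into the language of generalised weighted surface algebras from Definition~\ref{def weighted surface}. Concretely, those works provide, for each derived equivalence class of non-local algebras of one of these three types, an explicit representative given by a quiver with relations; what remains to be done is to exhibit, for each of the nine families above, an isomorphism of $k$-algebras between the corresponding representative and the algebra $\Lambda(Q,f,\mm,\cc,\tt,\mathfrak Z)$ as written, and to verify that each of these $\Lambda(\ldots)$ is genuinely a generalised weighted surface algebra in the sense of Definition~\ref{def weighted surface}.

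First I would record the three quivers $Q_B$, $Q_K$, $Q_R$ together with the permutations $f_B$, $f_K$, $f_R$ and check directly that these data satisfy the conditions of the set-up in \S\ref{section gen weighted surf}: each quiver is $2$-regular, the permutation $f$ has the property that the source of $f(\alpha)$ is the target of $\alpha$, and the induced permutation $g$ together with the orbit sizes $n_\alpha$ come out as claimed in the definitions of $g_B$, $g_K$, $g_R$. This is a finite, if fiddly, computation. Then, for each family, I would write out $\Lambda(Q,f,\mm,\cc,\tt,\mathfrak Z)$ explicitly---computing the monomials $A_\alpha$, $B_\alpha$ and the relations $\alpha f(\alpha)-c_{\bar\alpha}A_{\bar\alpha}t_\alpha$, $c_\alpha B_\alpha - c_{\bar\alpha}B_{\bar\alpha}$ and the elements of $\mathfrak Z$---and compare with Erdmann's normal form. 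For the families with $\tt\equiv 0$ and $\cc\equiv 1$ these are Brauer graph algebras (for $\mathcal D$-type) or quaternion-type algebras in the sense of~\cite{ErdmannSkonrowski1,ErdmannSkonrowski2,ErdmannSkownrowskiCorrigendum}, so the match is essentially already in the literature; for the $\mathcal{SD}$-families the parameter $v$ and the value $t_{\beta_1,0}=1$ (resp. $t_{\beta_2}=1$) encode exactly Erdmann's socle deformations and semi-dihedral relations, and for the $\mathcal Q$-families $\tt\equiv 1$ gives the quaternion-type relations with the scalars recorded in $\cc$. Wherever the multiplicities are large enough one invokes Proposition~\ref{prop weighted surface cond} to confirm Definition~\ref{def weighted surface}; the remaining boundary cases (for instance the small values of $(a_1,a_2)$ permitted in the $\mathcal D(2B)$, $\mathcal{SD}$ and $\mathcal Q(2B)_1$, $\mathcal Q(3K)$ families) have to be checked by hand, verifying the dimension formula $\dim_k\Lambda=\sum_{\alpha\langle g\rangle}m_\alpha n_\alpha^2$ and the socle relation~\eqref{eqn cond socle} directly.

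The genuinely exceptional point is the family $\mathcal Q(3A)_1^{2,2}(d)$ of $20$-dimensional algebras: these are not derived equivalent to any generalised weighted surface algebra, so there is nothing to translate, and the only thing to do is to cite~\cite{TameClass} (and the relevant literature on these exceptional algebras, which are excluded from~\cite{HolmDerivedClassErdmannAlg}) for the fact that they constitute a separate collection of derived equivalence classes. Finally, the assertion that the type (dihedral, semi-dihedral or quaternion) is a derived invariant is part of Holm's results and rests on the fact that these types are distinguished by derived-invariant data---for example, quaternion type algebras are precisely the periodic ones among these, and periodicity of the bimodule resolution is preserved under derived equivalence, while dihedral and semi-dihedral type are separated by the shape of the stable Auslander--Reiten components and invariants of the Cartan matrix---so this can simply be quoted.

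The main obstacle I anticipate is bookkeeping rather than anything conceptual: reconciling the several different presentations in the literature---Erdmann's original relations in~\cite{TameClass}, the ``surface algebra'' relations of~\cite{ErdmannSkonrowski1,ErdmannSkonrowski2} with their corrigendum, and Holm's derived-equivalence representatives---and making sure the scalar parameters ($u$, $v$, $d$, and the values of $\cc$) are matched correctly, in particular in the quaternion case, where the choice $c_{\beta_2}=u^{1-a_2}$ is forced by the socle relation $c_\alpha B_\alpha=c_{\bar\alpha}B_{\bar\alpha}$ and must be verified to reproduce Erdmann's algebra on the nose. Extra care is also needed in exactly the handful of boundary cases that fall outside the hypotheses of Proposition~\ref{prop weighted surface cond}, where membership in the class of Definition~\ref{def weighted surface} has to be established by an explicit dimension count and a direct check of~\eqref{eqn cond socle}.
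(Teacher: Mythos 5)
Your proposal matches the paper's treatment: the theorem is quoted from Holm's derived equivalence classification (building on Erdmann's Morita classification) with no proof given beyond translating Holm's normal forms into the $\Lambda(Q,f,\mm,\cc,\tt,\mathfrak Z)$ presentation, which is exactly the case-by-case matching you describe, including the scalar bookkeeping for $\mathcal Q(2B)_1^{a_1,a_2}(u,v)$ that the paper records in the remark following the theorem (via $\beta_2'=u^{-1}\beta_2$, $a=u$, $c=uv$). So your approach is essentially the same as the paper's, just spelled out in more detail.
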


We should mention that in the case of $\mathcal Q(2B)_1^{a_1,a_2}(u,v)$ the presentation given in \cite{HolmDerivedClassErdmannAlg} is different from ours, and our parameters $u$ and $v$ are not the same as Holm's $a$ and $c$. To recover Holm's presentation one needs to replace the generator $\beta_2$ by $\beta_2'=u^{-1}\beta_2$, and then the parameters correspond as $a=u$ and $c=uv$. We also corrected the range of allowed parameters for $\mathcal{SD}(2B)_1^{a_1,a_2}(v)$.

\begin{corollary}
	Let $k$ be an algebraically closed field. Consider the following families of algebras:
	\begin{equation}
		\begin{gathered}
			\{ \mathcal D(2B)^{a_1,a_2}(v) \ | \ a_1,a_2\geq 2\} \textrm{ for $v\in\{0,1\}$},\ \{\mathcal D(3K)^{a_1,a_2,a_3} \ | \ a_1,a_2,a_3\geq 2 \}, \\
			\{ D(3R)^{a_1,b_1,b_2,b_3} \ |\ a_1,b_1,b_2,b_3\geq 2\},\\
			\{ \mathcal{SD}(2B)_1^{a_1,a_2}(v) \ | \ a_1 \geq 3, a_2 \geq 2 \},\
			\{ \mathcal{SD}(2B)_2^{a_1,a_2}(v) \ | \ a_1 \geq 2, a_2 \geq 5 \} \textrm{ for $v\in\{0,1\}$}, \\
			\{ \mathcal{SD}(3K)^{a_1,a_2,a_3} \ | \ a_1, a_2, a_3 \geq 3 \}, \\
			\{ \mathcal Q(2B)_1^{a_1,a_2}(u,v) \ | \ a_1 \geq 2, a_2\geq 5 \} \textrm{ for $u\in k^\times$ and $v\in k$}, \
			\{ \mathcal Q(3K)^{a_1,a_2,a_3} \ | \ a_1, a_2, a_3 \geq 3 \}.
		\end{gathered}
	\end{equation}
	If $\operatorname{char}(k)=2$ then also consider the families
	\begin{equation}
		\begin{gathered}
			\{ \mathcal D(2B)^{a_1,a_2}(0) \ | \ a_1,a_2\geq 1\},\  \{ \mathcal D(3K)^{a_1,a_2,a_3} \ | \ a_1,a_2,a_3\geq 1\}, \\
			\{ \mathcal Q(3K)^{a_1,a_2,a_3} \ | \ a_1,a_2,a_3\geq 2\}.
		\end{gathered}
	\end{equation}

	If $\mathcal F$ is any of the families listed above, and $A,B\in\mathcal F$ are two algebras in this family, then the pre-tilting complexes over $A$ and $B$ are in bijection, and the bijection induces an isomorphism of partially ordered sets
	\begin{equation}
		\tilt{A}\stackrel{\sim}{\longleftrightarrow} \tilt{B}.
	\end{equation}
\end{corollary}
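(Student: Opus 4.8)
The plan is to read each family $\mathcal F$ as the set of algebras $\Lambda(Q,f,\mm,\cc,\tt,\mathfrak Z)$ obtained from one fixed tuple $(Q,f,\cc,\tt,\mathfrak Z)$ by letting $\mm$ range over the stated multiplicity bounds, and then to check that these bounds are precisely those under which Theorem~\ref{thm main general}, Proposition~\ref{prop application q3k} or Proposition~\ref{prop improvement biserial} produces a silting bijection between any two members of $\mathcal F$. Since every algebra occurring in every listed family is symmetric, silting and tilting complexes coincide over each of them, so the resulting poset isomorphisms of silting complexes are poset isomorphisms of tilting complexes, which is exactly the assertion. It therefore only remains to match each family's multiplicity range to the right branch of Theorem~\ref{thm main general} (or to one of the two supplementary propositions), and to take care of two families whose $\mathfrak Z$, or whose structure scalars, do not have quite the shape directly required by the theorem.

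For a fixed $\mathcal F$ one first reads off $(Q,f)$ — one of $(Q_B,f_B)$, $(Q_K,f_K)$, $(Q_R,f_R)$ — together with $\cc$, $\tt$, $\mathfrak Z$, and then translates the parameters $a_i,b_i$ into the values $m_\alpha$ by following the $g$-orbit structure (recall $\mm$ is a function on $g$-orbits, so e.g. in $Q_B$ the orbit $\{\beta_1,\alpha_1,\alpha_2\}$ of $g_B$ forces $m_{\alpha_1}=m_{\alpha_2}=m_{\beta_1}$, and similarly for $Q_K$ and $Q_R$). For each $\alpha\in Q_1$ with $t_{\bar\alpha}\equiv 1$ one computes $n_\alpha$ and $e_\alpha$ and verifies the inequality from Theorem~\ref{thm main general}: $m\upbr{i}_\alpha\geq e_\alpha+\tfrac{4}{n_\alpha}$ in general, or the weaker $m\upbr{i}_\alpha\geq e_\alpha+\tfrac{3}{n_\alpha}$ when $\mathfrak Z$ already contains all cubic relations $\alpha f(\alpha)g(f(\alpha))$; the blanket requirement $\mm\upbr{i}\geq 2$ then accounts for the remaining entries of the multiplicity vectors. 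For the dihedral families $\mathcal D(2B)(v)$, $\mathcal D(3K)$, $\mathcal D(3R)$ there is no $\alpha$ with $t_{\bar\alpha}\equiv 1$, so nothing beyond $\mm\upbr{i}\geq 2$ is needed; for $\mathcal{SD}(2B)_1$, $\mathcal{SD}(3K)$ and $\mathcal Q(3K)$ the relevant $n_\alpha$ are $3$, $2$ and $2$, so $m\upbr{i}_\alpha\geq e_\alpha+\tfrac{4}{n_\alpha}$ becomes exactly the bounds $a_1\geq 3$, $a_i\geq 3$ and $a_i\geq 3$ appearing in those families. Finally the characteristic-$2$ families are treated separately: $\mathcal D(2B)(0)$ and $\mathcal D(3K)$ are twisted Brauer graph algebras (since there $\cc=1$, $\tt=0$), so Proposition~\ref{prop improvement biserial} applies and removes the lower bound on $\mm$; for $\mathcal Q(3K)$ in characteristic $2$ with all $a_i\geq 2$ the three relations in $\mathfrak Z$ are of the form $\gamma g(\gamma)f(g(\gamma))$ and hence vanish in the algebra by Proposition~\ref{prop weighted surface cond} (as $n_\gamma m_\gamma=2a_i\geq 4$), so $\mathcal Q(3K)^{a_1,a_2,a_3}\cong\Lambda(Q_K,f_K,\mm,\cc,\tt,\emptyset)$ and Proposition~\ref{prop application q3k} applies.

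The step that needs genuine care is the semi-dihedral family $\mathcal{SD}(2B)_2$ and the quaternion family $\mathcal Q(2B)_1$. Here $\mathfrak Z$ consists only of the pair of socle-type relations at the distinguished loop, a proper subset of $\{\alpha f(\alpha)g(f(\alpha)),\ \alpha g(\alpha)f(g(\alpha))\mid\alpha\in Q_1\}$, so Theorem~\ref{thm main general}(2) would force $a_1\geq 3$ rather than the claimed $a_1\geq 2$, while part (1) does not literally apply. The remedy is to observe first, using the biserial relations together with the two relations already present in $\mathfrak Z$ and arguing as in the proofs of Propositions~\ref{prop weighted surface cond}~and~\ref{prop alternative pres}, that every element $\alpha f(\alpha)g(f(\alpha))$ already vanishes in $\Lambda(Q_B,f_B,\mm,\cc,\tt,\mathfrak Z)$ for the multiplicity ranges in question — and likewise in the $k\sqq{X}$-coefficient version $\Lambda(Q_B,f_B,\mm,X\cdot\cc,\tt,\mathfrak Z)$, so that the hypothesis of Proposition~\ref{prop lift weighted surface} still goes through. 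Consequently one may replace $\mathfrak Z$ by $\mathfrak Z'=\{\alpha f(\alpha)g(f(\alpha))\mid\alpha\in Q_1\}$ without changing any algebra in the family, and Theorem~\ref{thm main general}(1) then applies to the common data $(Q_B,f_B,\cc,\tt,\mathfrak Z')$ with the weaker bounds $a_1\geq 2$, $a_2\geq 5$. For $\mathcal Q(2B)_1$ there is the extra wrinkle that the prescribed scalar $c_{\beta_2}=u^{1-a_2}$ depends on $a_2$; this is cosmetic, since a rescaling automorphism of $\widehat{kQ_B}$ fixing $u$ and $v$ brings the algebra into a presentation $\Lambda(Q_B,f_B,\mm,\cc^\ast,\tt^\ast,\mathfrak Z')$ with $\cc^\ast,\tt^\ast$ independent of the multiplicities, to which Theorem~\ref{thm main general} again applies. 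This bookkeeping — correctly assigning each family's multiplicity range to its branch of Theorem~\ref{thm main general}, and spotting the two families that require the $\mathfrak Z$-enlargement and the rescaling — is the main (if unglamorous) obstacle; once it is done, Theorem~\ref{thm main general} (resp. Proposition~\ref{prop application q3k}, Proposition~\ref{prop improvement biserial}) together with the symmetry of the algebras gives, for any $A,B$ in a fixed family, a bijection of (pre-)tilting complexes and a poset isomorphism $\tilt{A}\stackrel{\sim}{\longleftrightarrow}\tilt{B}$.
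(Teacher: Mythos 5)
Your proposal is correct and follows the paper's own route: the paper likewise disposes of the corollary by citing Theorem~\ref{thm main general} for the arbitrary-characteristic families, Proposition~\ref{prop application q3k} for $\mathcal Q(3K)$ with $a_i\geq 2$ in characteristic two, and Proposition~\ref{prop improvement biserial} for the characteristic-two dihedral families, using symmetry to pass from silting to tilting. The only difference is that you spell out the bookkeeping the paper labels ``straightforward'', and your two extra observations — that for $\mathcal{SD}(2B)_2$ and $\mathcal Q(2B)_1$ one may enlarge $\mathfrak Z$ to the full set of cubic relations via Proposition~\ref{prop weighted surface cond} so that branch (1) of Theorem~\ref{thm main general} yields the stated bounds $a_1\geq 2$, $a_2\geq 5$, and that a multiplicity-independent rescaling of arrows removes the $a_2$-dependence of $c_{\beta_2}=u^{1-a_2}$ in $\mathcal Q(2B)_1$ — are exactly the right (and genuinely needed) justifications for those two families.
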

\begin{proof}
	For the families in arbitrary characteristic this is a straightforward application of Theorem~\ref{thm main general}. For the family $\{ \mathcal Q(3K)^{a_1,a_2,a_3} \ | \ a_1,a_2,a_3\geq 2\}$ in characteristic two this is an application of Proposition~\ref{prop application q3k}. For the two families of algebras of dihedral type in characteristic two it follows from Proposition~\ref{prop improvement biserial}.
\end{proof}

It was shown in \cite{HolmDerivedEquivalentTameBlocks} that a block having quaternion defect group $Q_{2^n}$ (for $n\geq 3$) with three simple modules is derived equivalent to $\mathcal Q(3K)^{2,2,2^n-2}$, and that a non-local block with dihedral defect group $D_{2^n}$ is derived equivalent to either  $\mathcal D(3K)^{2,2,2^n-2}$ or $\mathcal D(2B)^{2,2^n-2}(v)$ with $v\in\{0,1\}$. It was shown by the author in \cite{EiseleDerEq} that no algebra of the form $\mathcal D(2B)^{2,2^n-2}(1)$ is derived equivalent to a block. This proves the part of Theorem~\ref{thm block tilting correspondence} concerned with blocks of dihedral or quaternion defect.

We should note that the families defined above do not contain any blocks of semi-dihedral defect, since the multiplicities in such blocks are too small for Theorem~\ref{thm main general}. This cannot easily be fixed, and it was shown in \cite{TauRigid} that the poset of basic two-term tilting complexes over $\mathcal{SD}(3K)^{a_1,a_2,a_3} $ is not the same for all parameter choices (note that we allow $a_i=1$, and the algebras so-obtained technically have a different name in Erdmann's classification).

In the case of blocks of quaternion defect with two simple modules there are two problems. The first one is that we would need to allow ``$a_2\geq 4$'' in the family ${\{ \mathcal Q(2B)_1^{a_1,a_2}(u,v) \ | \ a_1 \geq 2, a_2\geq 5 \}}$ to cover blocks with defect groups of order $<32$ (such blocks do not exist for the defect group $Q_{8}$, but they do for $Q_{16}$). This is only a minor issue and one should be able to extend this family in characteristic two similar to Proposition~\ref{prop application q3k}. The second problem are the parameters $u$ and $v$, which are not fully determined for blocks of quaternion defect. This is a long-standing problem, and it means that Donovan's conjecture is not fully settled for these blocks. As long as this is not resolved we need to exclude these blocks from Theorem~\ref{thm block tilting correspondence}.

\section{Multiplicity-independence of derived Picard groups}

We have seen that if a ring $\Lambda$ carries two different $k\sbb{X}$-algebra structures, both turning it into a $k\sbb{X}$-order, then it has two different reductions modulo $X$, say $\bar \Lambda_1$ and $\bar{\Lambda}_2$. Isomorphism classes of silting complexes over $\bar \Lambda_1$ and $\bar{\Lambda}_2$ are in bijection. The derived Picard groups of $\bar\Lambda_1$ and $\bar\Lambda_2$ both act on the respective posets of basic silting complexes, and one might be tempted to ask what their relationship is.  In the classes of algebras we study we will identify large common subgroups of $\TrPic_k(\bar\Lambda_1)$ and $\TrPic_k(\bar\Lambda_2)$ and show that $\Picent(\bar{\Lambda}_1)\cong\Picent(\bar{\Lambda}_2)$. This was motivated by the results on self-injective Nakayama algebras in \cite{VolkovZvonareva}. The results of this section, Theorems~\ref{thm trpic q3k}~and~\ref{thm Brauer graph}, imply Theorems~\ref{thm intro trpic bga}~and~\ref{thm intro trpic blocks}.

Let us now quickly sketch the method used in this section. Denote the two copies of $k\sbb{X}$ in $Z(\Lambda)$ by $R_1$ and $R_2$ (although later we will go with $R$ and $S$). The group $\TrPic_k(\bar\Lambda_i)$ (where $i\in\{1,2\}$) acts on $\tilt{\bar\Lambda_i}$, and $\Pic_k(\bar\Lambda_i)$ is the stabiliser of the trivial basic tilting complex (the stalk complex of a progenerator) under this action. In principle, each one-sided tilting complex $\bar T^\xbullet$ over $\bar{\Lambda}_i$ can be lifted to a tilting complex $T^\xbullet$ over $\Lambda$. However, there is no guarantee that tilting complexes with endomorphism ring $\bar{\Lambda}_i$ lift to tilting complexes with endomorphism ring $\Lambda$. This would only follow if $\Lambda$ was the \emph{unique} $R_i$-order reducing to $\bar{\Lambda}_i$. We will generalise the ideas of \cite{EiseleQuaternion} to show that the algebras we are interested in do lift uniquely up to technicalities. This is proved in Lemma~\ref{lemma unique lifting}, which is bespoke for the algebras we are interested in (but since it is separate from the rest of the proof one could easily replace it if one can prove the same statement for a different class of algebras). Assuming one has such a unique lifting property, one can always find an element $Y^\xbullet \in \TrPic_{R_i}(\Lambda)$ which restricts to the one-sided complex~$T^\xbullet$. This shows that $\TrPic_{R_i}(\Lambda)$ and $\Pic_k(\bar{\Lambda}_i)$ taken together generate $\TrPic_k(\bar{\Lambda}_i)$. However, the group $\TrPic_{R_i}(\Lambda)$  still depends on the $R_i$-algebra structure of $\Lambda$. We will formulate conditions on the Grothendieck groups and the centre of $\Lambda$ which ensure that we can even pick a lift in  $Y^\xbullet \in \TrPic_{R_1}(\Lambda)\cap \TrPic_{R_2}(\Lambda)$. The group $\TrPic_{R_1}(\Lambda)\cap \TrPic_{R_2}(\Lambda)$ is the same for $\bar{\Lambda}_1$ and $\bar{\Lambda}_2$, so it independent of $R_i$ in the sense in which we need it to be, but of course some further work is required to see how it interacts with $\Pic_k(\bar{\Lambda}_i)$.

\begin{lemma}[Unique lifting]\label{lemma unique lifting}
	Let $R$ be a complete discrete valuation ring with algebraically closed residue field $k=R/\pi R$ and field of fractions $K$. Moreover, let $\Lambda$ be an $R$-order in a semisimple $K$-algebra $A$ and set $\bar \Lambda = \Lambda /\pi\Lambda$. Denote by $e_1,\ldots,e_n$ ($n\in \N$) a full set of orthogonal primitive idempotents in $\Lambda$. Now suppose that all of the following hold:
	\begin{enumerate}[start=1,label={\bfseries($\mathbf \Lambda$\arabic*)}]
		\item\label{assumption lambda 1}  As a $Z(\bar\Lambda)$-module,  $e_i\bar \Lambda e_j$ is generated by a single element, for all $i,j\in \{1,\ldots,n\}$.
		\item\label{assumption lambda 2} Either $Z(\Lambda) \longrightarrow Z(\bar \Lambda)$ is surjective or $Z(\Lambda)e_i\subset Z(A)e_i$ is not properly contained in any local $R$-order with the same $K$-span for all $i\in\{1,\ldots,n\}$.
		\item\label{assumption lambda 3}  For all $i,j,l\in \{1,\ldots,n\}$ with $i\neq j$ and $j\neq l$
		      \begin{equation}
			      \dim_K(e_i A e_j A e_l) = \max \{ \dim_k(e_i\bar \Lambda e_j \bar{\Lambda} e_l),\ \dim_k(e_l\bar \Lambda e_j \bar{\Lambda} e_i) \}.
		      \end{equation}
		\item\label{assumption lambda 4} To each pair $i\neq j\in \{1,\ldots,n\}$ we can assign a simple $Z(A)$-module $Z_{i,j}$ in such a way that $Z_{i,j}= Z_{j,i}$ and for all pair-wise distinct $i,j,l\in \{1,\ldots,n\}$ the $Z(A)$-module  $e_i A e_j A e_l$ is either zero or
		      \begin{equation}
			      e_i A e_j A e_l\cong Z_{i,j}= Z_{j,l}= Z_{i,l}.
		      \end{equation}
		\item\label{assumption lambda 5}  For every $\sigma\in S_n$ such that
		      \begin{equation}
			      \dim_K(e_{\sigma(i)}Ae_{\sigma(j)}) =\dim_K(e_iAe_j) \quad \textrm{for all $i,j\in\{1,\ldots,n\}$}
		      \end{equation}
		      and
		      \begin{equation}
			      \dim_k(e_{i_1}\bar{\Lambda}e_{i_2}\bar{\Lambda}\cdots \bar{\Lambda}e_{i_r})\neq 0  \Longrightarrow \dim_k(e_{\sigma(i_1)}Ae_{\sigma(i_2)}A\cdots Ae_{\sigma(i_r)})\neq 0
		      \end{equation}
		      for all $r\geq 2$ and $i_1,\ldots,i_r\in\{1,\ldots,n\}$, there is a $\gamma \in \Aut_K(A)$ such that $\gamma(e_i)=e_{\sigma(i)}$ and $\gamma(Z(\Lambda))=Z(\Lambda)$.
	\end{enumerate}
	If $\Gamma$ is an $R$-order in $A$ such that
	\begin{enumerate}[start=1,label={\bfseries($\mathbf \Gamma$\arabic*)}]
		\item\label{assumption gamma 1} there is a full set of orthogonal primitive idempotents $f_1,\ldots,f_n\in \Gamma$ such that $f_iA\cong e_i A$ as $A$-modules for all $i\in\{1,\ldots,n\}$, and
		\item\label{assumption gamma 2} $\bar{\Gamma}=\Gamma/\pi\Gamma \cong \bar \Lambda$, and
		\item\label{assumption gamma 3} $Z(\Lambda)=Z(\Gamma)$,
	\end{enumerate}
	then $\Lambda\cong\Gamma$. If there is an isomorphism between $\Gamma$ and $\Lambda$ sending $f_i$ to $e_i$ for all $i$, then $\Gamma$ is even conjugate to $\Lambda$ within the units of $A$.
\end{lemma}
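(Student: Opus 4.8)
The plan is to build a $K$-algebra isomorphism $\Psi\colon A\to A$ carrying $\Gamma$ onto $\Lambda$, using the combinatorial and ring-theoretic hypotheses \ref{assumption lambda 1}--\ref{assumption lambda 5} to constrain the possible ``shapes'' of $\Gamma$ enough that any lift of $\bar\Lambda$ inside $A$ is forced to be conjugate to $\Lambda$. First I would reduce to a convenient normal form. Since $K\otimes_R\Gamma\cong A\cong K\otimes_R\Lambda$ is semisimple, both $\Gamma$ and $\Lambda$ are Green-order-like: each $e_i$ (resp. $f_i$) has $e_iA\cong\bigoplus_\ell V_\ell^{\oplus d_{i,\ell}}$ for multiplicities $d_{i,\ell}$ determined by the $A$-module structure of $e_iA$, and \ref{assumption gamma 1} says these multiplicities agree for $\Gamma$ and $\Lambda$. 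So after conjugating by a unit of $A$ we may assume $f_i$ and $e_i$ are the \emph{same} idempotent of $A$ for every $i$; this is the content of the final sentence's hypothesis, and the bulk of the work is then to show $\Gamma=\Lambda$ as subsets of $A$. Next, using \ref{assumption lambda 1}, each $e_i\bar\Lambda e_j$ is cyclic over $Z(\bar\Lambda)$; lifting a generator and using Nakayama, $e_i\Lambda e_j$ is generated over $Z(\Lambda)$ by a single element $\lambda_{ij}$, and likewise $e_i\Gamma e_j$ is generated over $Z(\Gamma)=Z(\Lambda)$ (by \ref{assumption gamma 3}) by some $\gamma_{ij}$. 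Now $\lambda_{ij}$ and $\gamma_{ij}$ both lie in $e_iAe_j$ and both generate an $R$-lattice of the same rank (since $\overline{e_i\Gamma e_j}\cong e_i\bar\Lambda e_j\cong\overline{e_i\Lambda e_j}$ as $Z(\bar\Lambda)$-modules via \ref{assumption gamma 2}), so there is a unit $u_{ij}\in Z(A)e_j$ (or a central element of the relevant simple component) with $\gamma_{ij}=\lambda_{ij}u_{ij}$ up to an element of $\pi\cdot e_i\Lambda e_j$; refining by Nakayama again we can take $\gamma_{ij}=\lambda_{ij}u_{ij}$ exactly, with $u_{ij}$ a unit in the centre.

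The heart of the argument is then to show that the family of units $(u_{ij})$ can be trivialised by a diagonal automorphism of $A$, i.e. by conjugating $\Gamma$ by $\sum_i z_i e_i$ for suitable central units $z_i$. The obstruction to this is a cocycle condition: multiplication $e_i\Gamma e_j\cdot e_j\Gamma e_l\to e_i\Gamma e_l$ forces $\lambda_{ij}\lambda_{jl}$ to be, up to a central unit, a multiple of $\lambda_{il}$ whenever $e_iAe_jAe_l\neq0$, and comparing with the corresponding relation in $\Lambda$ produces compatibility constraints on the $u_{ij}$. This is exactly where hypotheses \ref{assumption lambda 3} and \ref{assumption lambda 4} enter: \ref{assumption lambda 3} pins down the dimension of $e_iAe_jAe_l$ so that the image of multiplication is as large as possible (hence $\lambda_{ij}\lambda_{jl}$ really does generate, not just land in, $e_iAe_l$ up to a unit), and \ref{assumption lambda 4} says the ``shared'' simple module $Z_{i,j}$ along a triple is independent of which pair we look at, which means the units $u_{ij},u_{jl},u_{il}$ all live in the endomorphism ring of one and the same simple $Z(A)$-module and the cocycle relation is an honest $1$-cocycle with values in that one field. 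Such a cocycle is a coboundary for the obvious combinatorial reason (the ``graph'' of pairs $\{i,j\}$ with a given $Z_{i,j}$ is connected enough, or one simply solves $z_i/z_j=u_{ij}$ component by component), so after a diagonal conjugation we may assume $\gamma_{ij}=\lambda_{ij}$ for all $i,j$; since these generate $\Gamma$ and $\Lambda$ respectively over the common centre $Z(\Lambda)=Z(\Gamma)$, we get $\Gamma=\Lambda$ inside $A$. Hypothesis \ref{assumption lambda 2} is used to guarantee that along the way $Z(\Gamma)e_i$ really equals $Z(\Lambda)e_i$ rather than merely sitting inside it, and \ref{assumption lambda 5} is used in the preliminary normalisation step: if the isomorphism $\bar\Gamma\cong\bar\Lambda$ of \ref{assumption gamma 2} does not match up $f_i$ with $e_i$ but only with $e_{\sigma(i)}$ for a permutation $\sigma$, the dimension and nonvanishing conditions on paths force $\sigma$ to be of the admissible type, and \ref{assumption lambda 5} then supplies an automorphism $\gamma\in\Aut_K(A)$ with $\gamma(e_i)=e_{\sigma(i)}$ and $\gamma(Z(\Lambda))=Z(\Lambda)$, reducing us to the case $f_i=e_i$.

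The main obstacle I anticipate is the cocycle/coboundary step: making precise that the scalars $u_{ij}$ form a cocycle over the \emph{correct} coefficient system requires care, because a priori $u_{ij}$ is a central unit of $A$ and one must use \ref{assumption lambda 4} to see that only its component in the simple factor carrying $Z_{i,j}$ matters, and then check that the relations coming from \emph{all} triples $(i,j,l)$ (including those where $e_iAe_jAe_l=0$, which impose no condition) are simultaneously solvable. A secondary subtlety is the ``$\pi$-adic refinement'' at each stage: one has equalities only modulo $\pi^m$ at step $m$ and must take a limit, which is legitimate because $\Gamma$ and $\Lambda$ are $\pi$-adically complete $R$-orders in the same $A$, so $\bigcap_m(\Lambda+\pi^m\Gamma)=\Lambda$ and similarly with the roles reversed. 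Once the diagonal conjugation is in place and the generators agree, the conclusion $\Gamma\cong\Lambda$ (and $\Gamma$ conjugate to $\Lambda$ in $A^\times$ when the idempotents already match) is immediate.
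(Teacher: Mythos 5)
Your overall skeleton --- conjugating so that $f_i=e_i$, invoking \ref{assumption lambda 5} to absorb the permutation coming from the isomorphism $\bar\Gamma\cong\bar\Lambda$, and then writing $e_i\Gamma e_j=u_{ij}\cdot e_i\Lambda e_j$ for central units $u_{ij}$ --- is exactly how the paper begins. The genuine gap is the step you yourself flag as the main obstacle: trivialising the family $(u_{ij})$ as a coboundary $u_{ij}=z_iz_j^{-1}$ by a diagonal conjugation. The only relations that multiplicativity gives you are that (after normalisation) $u_{ij}u_{jl}u_{il}^{-1}$ \emph{stabilises} the lattice $e_i\Lambda e_j\Lambda e_l$, and only for triples with $e_iAe_jAe_l\neq 0$. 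Your global ansatz $z_i/z_j=u_{ij}$, however, imposes compatibility conditions also on triples where $e_iAe_jAe_l=0$ while all pairwise products $e_iAe_j$, $e_jAe_l$, $e_iAe_l$ are nonzero --- and such triples occur in the intended applications (three edges of a Brauer graph that are pairwise adjacent but share no common vertex, e.g. the triangle graph underlying $\mathcal D(3K)^{a_1,a_2,a_3}$). There is no relation available on such triples, so ``the cocycle is a coboundary for the obvious combinatorial reason'' is not justified; one would at best have to replace it by a case analysis showing that the weaker requirement (each $z_iu_{ij}z_j^{-1}$ merely stabilises $e_i\Lambda e_j$) is solvable, which is a nontrivial valuation-theoretic problem you have not addressed. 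A second, smaller imprecision: deducing from \ref{assumption lambda 1} via Nakayama that $e_i\Lambda e_j$ is $Z(\Lambda)$-cyclic is not automatic, since the image of $Z(\Lambda)$ in $Z(\bar\Lambda)$ may be proper; this is exactly what assumption~\ref{assumption lambda 2} is for (the paper uses it to get $e_i\Gamma e_i=e_i\Lambda e_i$ and then works with cyclicity over that ring), whereas the role you assign to it, $Z(\Gamma)e_i=Z(\Lambda)e_i$, is already immediate from \ref{assumption gamma 3}.

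The paper avoids the coboundary problem entirely: it never attempts a global diagonal conjugation. Instead it normalises each $a_{ij}$ so that it acts on the assigned simple module $Z_{i,j}$ by a power of a fixed uniformiser $\pi_{i,j}$ (this is where \ref{assumption lambda 4} is used) and so that $a_{ij}a_{ji}$ acts as the identity; purity of $e_i\Lambda e_j\Lambda e_l$ in $e_i\Lambda e_l$, supplied by \ref{assumption lambda 3}, then forces each triple product $a_{ij}a_{jl}a_{il}^{-1}$ to act as the identity on $e_i\Lambda e_j\Lambda e_l$, because a unit of a complete discrete valuation ring that is a power of the uniformiser must be $1$. This purely local statement is already enough: the block-wise map $e_ixe_j\mapsto a_{ij}\cdot e_ixe_j$ is then a ring isomorphism $\Lambda\to\Gamma$ restricting to the identity on $Z(\Lambda)=Z(\Gamma)$, and conjugacy inside $A^\times$ follows since the induced automorphism of $A$ fixes $Z(A)$. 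Triples with $e_iAe_jAe_l=0$ simply never enter. If you want to salvage your route, you should abandon the coboundary formulation and instead prove directly, as the paper does, that the normalised triple products act trivially wherever the product lattice is nonzero.
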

\begin{proof}
	First of all, the idempotents $f_1,\ldots,f_n$ are conjugate to $e_1,\ldots,e_n$ (in that order) within the units of $A$, so we can replace $\Gamma$ by a conjugate and assume $e_i=f_i$ for all $i$. If we fix an isomorphism between $\bar{\Lambda}$ and $\bar{\Gamma}$ sending $e_i+\pi \Lambda$ to $e_{\sigma(i)}+\pi\Gamma$  for some $\sigma\in S_n$ then $\dim_K e_iAe_j=\dim_k e_i\bar{\Lambda} e_j = \dim_k e_{\sigma(i)}\bar{\Gamma} e_{\sigma(j)}=  \dim_K e_{\sigma(i)}A e_{\sigma(j)}$, and  given $i_1,\ldots,i_r$ such that $e_{i_1}\bar{\Lambda} e_{i_2}\bar{\Lambda}\cdots \bar{\Lambda} e_{i_r} \neq 0$ we have $e_{\sigma(i_1)}\bar{\Gamma} e_{\sigma(i_2)}\bar{\Gamma}\cdots \bar{\Gamma} e_{\sigma(i_r)} \neq 0$, which implies $e_{\sigma(i_1)}A e_{\sigma(i_2)}A\cdots A e_{\sigma(i_r)} \neq 0$. Hence there is a $\gamma\in\Aut_K(A)$ as in assumption~\ref{assumption lambda 5}. If we replace $\Gamma$ by $\gamma^{-1}(\Gamma)$ then we can assume without loss of generality that there is an isomorphism between $\bar{\Lambda}$ and $\bar{\Gamma}$ sending $e_i+\pi\Lambda$ to $e_i+\pi\Gamma$ for all $i\in \{1,\ldots,n\}$. If, as in the very last sentence of the assertion, there is an isomorphism between $\Gamma$ and $\Lambda$ sending $f_i$ to $e_i$ for all $i$ then we can assume this without replacing $\Gamma$ by $\gamma^{-1}(\Gamma)$. We will now show that $\Gamma$ is conjugate to $\Lambda$, which will imply both parts of the claim.

	Since $Z(\Lambda)=Z(\Gamma)$ we have $Z(\Gamma)e_i  =  Z(\Lambda)e_i$ for all~$i$.  If some $Z(\Lambda) e_i$ is not contained in any larger local order in $Z(A)e_i$, then we must have $Z(\Lambda)e_i=e_i\Lambda e_i = e_i\Gamma e_i$. If $Z(\Lambda)\longrightarrow Z(\bar{\Lambda})$ is surjective, then $Z(\Gamma)\longrightarrow Z(\bar{\Gamma})$ must be surjective as well by virtue of dimensions. To see this note that $Z(\Gamma)$ is a pure sublattice of $\Gamma$ (this  holds for any $R$-order), which implies that the rank of $Z(\Gamma)$ equals the dimension of the image of $Z(\Gamma)$ in $\bar \Gamma$. Now it follows that each $e_i Z(\Gamma)$ maps surjectively onto $e_iZ(\bar \Gamma)=e_i\bar{\Gamma}e_i$. Any proper sublattice of $e_i\Gamma e_i$ maps to a proper subspace of $e_i\bar{\Gamma} e_i$ (again, by purity), whence $e_i\Gamma e_i=e_i Z(\Gamma)$.

	Regardless of which of the two options given in \ref{assumption lambda 2} holds, we now know that $e_i\Gamma e_i=e_i\Lambda e_i$ for all $i\in\{1,\ldots,n\}$, and all of these rings are commutative. Since the $e_i\bar{\Gamma} e_j$ are generated by a single element as an $e_i\bar \Gamma e_i$-module, the $e_i\Gamma e_i$-lattice $e_i\Gamma e_j$ must also be generated by a single element. Because the analogous statement is true for $e_i\Lambda e_j$, and we have $e_i\Gamma e_i=e_i\Lambda e_i$, we must have $e_i\Gamma e_j = a_{i,j} \cdot e_i \Lambda e_j$ for certain $a_{i,j}$ in $Z(A)$. Of course only the image of $a_{i,j}$ in $e_iZ(A) = e_iAe_i$ matters, and we can ask without loss of generality that the $a_{i,j}$ should lie in $Z(A)^\times$. Also, set $a_{i,i}=1$ for all $i\in\{1,\ldots,n\}$. We are free to modify the $a_{i,j}$ by units of $R$. Now consider the simple $Z(A)$-modules $Z_{i,j}$ provided by \ref{assumption lambda 4}. The endomorphism ring of $Z_{i,j}$ is a finite extension $K_{i,j}$ of $K$,  in which the integral closure of $R$ is a totally ramified (since $k=\bar k$) extension $R_{i,j}$ of $R$ with uniformiser $\pi_{i,j}$. Make a choice such that $\pi_{i,j}=\pi_{i',j'}$ whenever $Z_{i,j}= Z_{i',j'}$. Now we can write any element of $K_{i,j}$ as $\pi_{i,j}^z \cdot r$ for some $z\in \Z$ and $r\in R$. Hence we can stipulate that the $a_{i,j}$ should act on $Z_{i,j}$ by multiplication by a power of~$\pi_{i,j}$ for all $i\neq j$.

	Our  assumption \ref{assumption lambda 3} implies that for any $i,j,l\in\{1,\ldots,n\}$ with $i\neq j$ and $j\neq l$  either  $e_i\Lambda e_j\Lambda e_l$ is a pure sublattice of $e_i\Lambda e_l$ or  $e_l\Lambda e_j\Lambda e_i$ is a pure sublattice of $e_l\Lambda e_i$ (or both; note that $\dim_K e_iAe_jAe_l=\dim_K e_lAe_jAe_i$ follows from \ref{assumption lambda 4}). Until further notice let us only consider triples $i,j,l$ such that   $e_i\Lambda e_j\Lambda e_l$ is a pure sublattice of $e_i\Lambda e_l$. Since we have seen that there is an isomorphism between $\bar{\Gamma}$ and $\bar{\Lambda}$ sending $e_r+\pi\Gamma$ to $e_r+\pi\Lambda$ for all $r$, it follows that  $\Gamma$ also satisfies (the analogue of) assumption~\ref{assumption lambda 3}, which means that $e_i\Gamma e_j\Gamma e_l$ is also a pure sublattice of $e_i\Gamma e_l$. This is the same as saying that $a_{i,j} a_{j,l} a_{i,l}^{-1}\cdot e_i\Lambda e_j\Lambda e_l$ is a pure sublattice of $e_i\Lambda e_l$, and therefore
	\begin{equation}\label{eqn ijciih}
		a_{i,j} a_{j,l} a_{i,l}^{-1}\cdot e_i\Lambda e_j\Lambda e_l =  e_i\Lambda e_j\Lambda e_l
	\end{equation} since pure sublattices are determined by their $K$-span.

	For $i=l$ the $Z(\Lambda)$-modules $e_i\Lambda e_j\Lambda e_i$ , $e_i\Lambda e_j$ and $e_j\Lambda e_i$ are isomorphic (they are generated by a single element, which means they are determined by their annihilator, which is the simultaneous annihilator of $e_i$ and $e_j$ in all cases), and therefore $a_{i,j}\cdot a_{j,i} = z$ for some element $z\in Z(\Lambda)$ which acts invertibly on $e_i\Lambda e_j\Lambda e_i$ (w.l.o.g. $z\in Z(\Lambda)^\times$). We can replace $a_{i,j}$ by $z^{-1}\cdot a_{i,j}$ whenever $j>i$, thus making $a_{i,j}\cdot a_{j,i}$ act as the identity. This is compatible with the action on $Z_{i,j}$ that was fixed earlier.

	Now assume that $i,j$ and $l$ are pair-wise distinct. By our assumption, whenever $e_i\Lambda e_j\Lambda e_l$ is non-zero, the element $a_{i,j} a_{j,l} a_{i,l}^{-1}$ acts by multiplication by a power of $\pi_{i,j}$ on it, which in light of equality~\eqref{eqn ijciih} above is only possible if $a_{i,j} a_{j,l} a_{i,l}^{-1}$ acts as the identity. The element $a_{l,j}a_{j,i}a_{l,i}^{-1}$ acts like the inverse of  $a_{i,j} a_{j,l} a_{i,l}^{-1}$, and therefore also acts as the identity.

	The upshot of the above is that
	\begin{equation}
		e_i\Gamma e_j = a_{i,j} \cdot e_i\Lambda e_j \quad \textrm{ and } \quad a_{i,j} a_{j,l} a_{i,l}^{-1}  \textrm{ acts as the identity on $e_i\Lambda e_j \Lambda e_l$}
	\end{equation}
	for all $i,j,l\in\{1,\ldots,n\}$ (not subject to any restrictions), and therefore the map
	\begin{equation}
		\Lambda \longrightarrow \Gamma:\  e_ixe_j\mapsto a_{i,j} \cdot e_ixe_j \quad \textrm{ for all $i,j\in\{1,\ldots,n\}$}
	\end{equation}
	defines an isomorphism which restricts to the identity on $Z(\Lambda)=Z(\Gamma)$, thus showing that $\Lambda$ and $\Gamma$ are conjugate.
\end{proof}

\begin{lemma}\label{lemma pic}
	Let $R$ be a complete discrete valuation ring with algebraically closed residue field $k=R/\pi R$ and field of fractions $K$. Let $\Lambda$  be an $R$-order in a semisimple $K$-algebra $A$,  set $\bar{\Lambda} = \Lambda/\pi\Lambda$ and assume that $\bar{\Lambda}$ is basic. 	Suppose furthermore that $S\subset Z(\Lambda)$ is a complete discrete valuation ring with field of fractions ${L\subset Z(A)}$ such that $\Lambda$ is also an $S$-order. Let $V_1,\ldots, V_n$  ($n\in\N$)  denote representatives for the simple $A$-modules   and let ${\eps_1,\ldots,\eps_n\in Z(A)}$ be the corresponding central primitive idempotents.  Assume that all of the following hold:
	\begin{enumerate}[start=1,label={\bfseries(A\arabic*)}]
		\item\label{assumprion a1} $\Lambda$ fulfils the assumptions \ref{assumption lambda 1}--\ref{assumption lambda 5} of Lemma~\ref{lemma unique lifting}.
		\item\label{assumption centre aut lifts} Every $K$-algebra automorphism of $Z(A)$ lifts to a Morita auto-equivalence of $A$.
		\item\label{assumption decomp determined}
		      If $\Gamma$ is an $R$-order in a $K$-algebra $B$ such that $\Gamma$ is derived equivalent to $\Lambda$, $\bar{\Gamma}=\Gamma/\pi\Gamma\cong \bar{\Lambda}$,   and there is an isometry between $\Gro(B)$ and $\Gro(A)$ sending $\Im D_\Gamma$ to $\Im D_\Lambda$, then there exist isometries $\iota_1:\ \Gro(\bar\Gamma) \xrightarrow{\sim} \Gro(\bar{\Lambda})$ and $\iota_2:\ \Gro(B) \xrightarrow{\sim} \Gro({A})$ sending distinguished bases to distinguished bases such that the following diagram commutes
		      \begin{equation}
			      \xymatrix{
				      \Gro(\bar \Gamma) \ar[r]^{\iota_1} \ar[d]^{D_\Gamma} & \Gro(\bar \Lambda) \ar[d]^{D_\Lambda}\\
				      \Gro(B) \ar[r]^{\iota_2} & \Gro(A),
			      }
		      \end{equation}
		      where $D_\Lambda$ and $D_\Gamma$ are as in Definition~\ref{defi decomp}.
		\item \label{assumption ex gamma} If
		      \begin{equation}
			      \varphi_{A}:\ \Gro(A) \longrightarrow \Gro(A):\ [V_i]\mapsto (-1)^{\tau(i)} \cdot [V_{\sigma(i)}]
		      \end{equation}
		      is a self-isometry, where $\tau:\ \{1,\ldots,n\} \longrightarrow \{\pm 1\}$ and $\sigma \in S_n$,    such that   
		      \begin{equation}
			      \varphi_A(\Im  D_\Lambda) \subseteq \Im D_\Lambda 
		      \end{equation}
		      then there is a  $\gamma \in \Aut_K(Z(A))$ such that $\gamma(\eps_i)=\eps_{\sigma(i)}$ for all ${i\in\{1,\ldots,n\}}$ and $\gamma(Z(\Lambda)) = Z(\Lambda)$.
		\item\label{assumption center s stable} If, for a $\varphi_A$ as in \ref{assumption ex gamma}, there is a $\gamma\in \Aut_L(Z(A))$ such that $\gamma(\eps_i)=\eps_{\sigma(i)}$ for all $i$, then
		      $\gamma(Z(\Lambda))=Z(\Lambda)$.
		\item\label{assumption aut intersect} The images of $\Aut_K(Z(A))\cap \Aut_L(Z(A))$ and $\Aut_K(Z(A))$ in $\Aut(\Gro(A))$ are equal.
		\item\label{assumption autl} The kernel of $\Aut_L(Z(A)) \longrightarrow \Aut(\Gro(A))$ is contained in $\Aut_K(Z(A))$.
	\end{enumerate}
	Define $\mathcal{H}$ as the full preimage of $\Isom(\Gro(A))$ (the group of self-isometries, which depends on $K$)    under the group homomorphism $ 	\TrPic_S(\Lambda) \longrightarrow \Aut(\Gro(A))$. Then $\mathcal H \subseteq \TrPic _R(\Lambda)$, and if we let $\bar{\mathcal H}$ denote the image of $\mathcal H$ in $\TrPic_k(\bar \Lambda)$, we have
	\begin{equation}\label{eqn decomp h trpic}
		\TrPic_k(\bar \Lambda) =  \Pic_k(\bar \Lambda) \cdot \bar{\mathcal H}.
	\end{equation}
\end{lemma}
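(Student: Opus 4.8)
The assertion splits into two parts — the inclusion $\mathcal H\subseteq\TrPic_R(\Lambda)$ and the identity $\TrPic_k(\bar\Lambda)=\Pic_k(\bar\Lambda)\cdot\bar{\mathcal H}$, of which only ``$\subseteq$'' needs proof since $\bar{\mathcal H}$ and $\Pic_k(\bar\Lambda)$ both sit inside $\TrPic_k(\bar\Lambda)$ — and I would treat them in that order. For the first part: an element $Y^{\xbullet}\in\TrPic_S(\Lambda)$ induces a self-isometry of $\Gro(\Lambda)\cong\Gro(\bar\Lambda)$, equivalently of the full-rank sublattice $\Im D_\Lambda\subseteq\Gro(A)$, whose unique $\Q$-linear extension is the image of $Y^{\xbullet}$ under $\TrPic_S(\Lambda)\to\Aut(\Gro(A)\otimes_\Z\Q)$, and by definition $\mathcal H$ is the preimage of $\Isom(\Gro(A))$. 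For $Y^{\xbullet}\in\mathcal H$ the resulting $\varphi_Y\in\Isom(\Gro(A))$ preserves the positive-definite form $(-,=)_A$ and the lattice $\Gro(A)$, hence is a signed permutation $[V_i]\mapsto\tau(i)\,[V_{\sigma(i)}]$ of the distinguished basis (the situation of Proposition~\ref{prop compat center decomp}), and it stabilises $\Im D_\Lambda$. By \ref{assumption ex gamma} and \ref{assumption aut intersect} there is a $\gamma\in\Aut_K(Z(A))\cap\Aut_L(Z(A))$ with $\gamma(\eps_i)=\eps_{\sigma(i)}$ for all $i$. The central isomorphism $\gamma_Y$ of $Z(\Lambda)$ extends to a ring automorphism $\gamma_{KY}$ of $Z(A)=K\otimes_R Z(\Lambda)=L\otimes_S Z(\Lambda)$ which fixes $S$ pointwise (as $Y^{\xbullet}$ is $S$-linear), hence is $L$-linear, and which permutes the $\eps_i$ by $\sigma$; therefore $\gamma^{-1}\gamma_{KY}$ lies in the kernel of $\Aut_L(Z(A))\to\Aut(\Gro(A))$, which by \ref{assumption autl} is contained in $\Aut_K(Z(A))$, so $\gamma_{KY}=\gamma\cdot(\gamma^{-1}\gamma_{KY})$ is $K$-linear, fixes $X\in K$, and $\gamma_Y$ fixes $\iota_R(X)$. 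By Remark~\ref{remark detect trpic linearity}\,(2), $Y^{\xbullet}$ is isomorphic in $\mathcal D^b(\Lambda^{\opp}\otimes_\Z\Lambda)$ to an element of $\TrPic_R(\Lambda)$.

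For the main inclusion, let $\bar U^{\xbullet}\in\TrPic_k(\bar\Lambda)$ and let $\bar T^{\xbullet}$ be its restriction to a one-sided tilting complex over $\bar\Lambda$. By \cite[Proposition~3.1 and Theorem~3.3]{RickardLiftTilting} together with Proposition~\ref{prop reduction silting} (and the remark following Proposition~\ref{prop silti to silt}), $\bar T^{\xbullet}$ lifts to a tilting complex $T^{\xbullet}\in\mathcal K^b(\proj\Lambda)$, and $\Gamma:=\End_{\mathcal D^b(\Lambda)}(T^{\xbullet})$ is an $R$-order in a semisimple $K$-algebra $B$, derived equivalent to $\Lambda$ via a two-sided tilting complex $X^{\xbullet}$ restricting to $T^{\xbullet}$, with $\bar\Gamma:=\Gamma/\pi\Gamma\cong\bar\Lambda$. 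By Proposition~\ref{prop compat center decomp} the induced isometry $\varphi_{KX}\colon\Gro(B)\to\Gro(A)$ carries $\Im D_\Gamma$ to $\Im D_\Lambda$, so \ref{assumption decomp determined} permits us, after replacing $\Gamma$ by the image of a Morita self-equivalence of $B$ (available by \ref{assumption centre aut lifts}, and which changes neither the isomorphism type of $\bar\Gamma$ nor the $\Pic_k(\bar\Lambda)$-coset of the one-sided restriction of $X^{\xbullet}$), to assume $B=A$, $\Im D_\Gamma=\Im D_\Lambda$, and that a full set of primitive idempotents of $\Gamma$ is matched with $e_1,\dots,e_n$; then \ref{assumption gamma 1} and \ref{assumption gamma 2} of Lemma~\ref{lemma unique lifting} hold. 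For \ref{assumption gamma 3} one has $Z(\Gamma)=\gamma_{KX}(Z(\Lambda))$, and since $\varphi_{KX}$ is a signed permutation fixing $\Im D_\Lambda$, assumptions \ref{assumption ex gamma}, \ref{assumption center s stable} and \ref{assumption lambda 2} force $\gamma_{KX}(Z(\Lambda))=Z(\Lambda)$, i.e.\ $Z(\Gamma)=Z(\Lambda)$. Invoking \ref{assumprion a1}, Lemma~\ref{lemma unique lifting} yields $\Gamma\cong\Lambda$, conjugate within $A^{\times}$.

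Transporting $X^{\xbullet}$ along this isomorphism gives a two-sided tilting complex of $\Lambda$-$\Lambda$-bimodules that is $R$-linear and whose $K$-level isometry $\varphi_{KX}$ is a signed permutation fixing $\Im D_\Lambda$. Arguing as in Step~1 — now using \ref{assumption ex gamma}, \ref{assumption aut intersect}, \ref{assumption center s stable} and \ref{assumption centre aut lifts} — one produces an $L$-linear automorphism of $Z(\Lambda)$ inducing the same permutation of the $\eps_i$ and stabilising $Z(\Lambda)$, and twisting $X^{\xbullet}$ by a corresponding bimodule (an operation that, upon reduction mod $\pi$, merely multiplies by an element of $\Pic_k(\bar\Lambda)$) makes the induced central automorphism $L$-linear; hence the twist $X_0^{\xbullet}$ lies in $\TrPic_S(\Lambda)$, and since $\varphi_{KX_0}$ is still an isometry of $\Gro(A)$ we get $X_0^{\xbullet}\in\mathcal H$. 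Finally $\bar X_0^{\xbullet}$ has the same one-sided restriction as $\bar U^{\xbullet}$ up to an element of $\Pic_k(\bar\Lambda)$, so by \cite[Proposition~2.3]{RouquierZimmermann} we conclude $\bar U^{\xbullet}\in\Pic_k(\bar\Lambda)\cdot\bar X_0^{\xbullet}\subseteq\Pic_k(\bar\Lambda)\cdot\bar{\mathcal H}$.

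The genuinely hard part is the control of centres: ensuring the lift $\Gamma$ of $\bar\Lambda$ can be normalised so that $Z(\Gamma)=Z(\Lambda)$ inside $A$ (so that Lemma~\ref{lemma unique lifting} applies at all), and that the resulting two-sided tilting complex can be chosen $S$-linear rather than only $R$-linear. This is precisely what the bundle of hypotheses \ref{assumption ex gamma}--\ref{assumption autl} is engineered to provide, through the interplay of $\Aut_K(Z(A))$, $\Aut_L(Z(A))$, the image of $D_\Lambda$, and the action on $\Gro(A)$, in combination with the rigidity of pure sublattices exploited in Lemma~\ref{lemma unique lifting}; the Morita-theoretic bookkeeping needed to line up idempotents before that lemma can be invoked is where \ref{assumption decomp determined} and \ref{assumption centre aut lifts} come in.
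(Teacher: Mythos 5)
Your overall architecture is the same as the paper's: first $\mathcal H\subseteq\TrPic_R(\Lambda)$ via \ref{assumption ex gamma}, \ref{assumption aut intersect}, \ref{assumption autl} and Remark~\ref{remark detect trpic linearity}; then lift one-sided restrictions and identify the endomorphism order with $\Lambda$ via Lemma~\ref{lemma unique lifting}; finally twist elements of $\TrPic_R(\Lambda)$ into $\TrPic_S(\Lambda)$ modulo Picard groups. Your first and third steps are essentially the paper's arguments. There is, however, a genuine gap in the middle step, at the verification of \ref{assumption gamma 3}. After you ``assume $B=A$, $\Im D_\Gamma=\Im D_\Lambda$ and matched idempotents'', the subring $Z(\Gamma)\subseteq Z(A)$ is still not controlled: two identifications of $B$ with $A$ satisfying these normalisations differ by a $K$-linear automorphism of $A$ acting trivially on $\Gro(A)$, whose restriction to $Z(A)$ need not stabilise $Z(\Lambda)$. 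Your claim that \ref{assumption ex gamma}, \ref{assumption center s stable} and \ref{assumption lambda 2} ``force $\gamma_{KX}(Z(\Lambda))=Z(\Lambda)$'' is a non sequitur: \ref{assumption ex gamma} only asserts the existence of \emph{some} automorphism inducing the permutation $\sigma$ and preserving $Z(\Lambda)$, not that every automorphism inducing $\sigma$ does, and \ref{assumption center s stable} applies only to $L$-linear automorphisms, whereas $\gamma_{KX}$ is merely $K$-linear. (Relatedly, a Morita self-equivalence of $B$ cannot by itself let you ``assume $B=A$''; one needs a Morita equivalence between the semisimple algebras $B$ and $A$ together with basicness and $\iota_1([\bar\Gamma])=[\bar\Lambda]$ to force the bimodule to be of the form ${}_\alpha A$ for an isomorphism $\alpha$.)

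The repair is exactly where the paper spends its effort: the identification must be \emph{chosen} so that the centres match, rather than deduced afterwards. Take the automorphism $\delta\in\Aut_K(Z(A))$ supplied by \ref{assumption ex gamma} for the permutation corresponding to $\iota_2\circ\varphi_{KX}$, form $\gamma_2=\delta\circ\gamma_{KX}^{-1}:\ Z(B)\longrightarrow Z(A)$, which sends $Z(\Gamma)$ onto $Z(\Lambda)$ by construction, lift $\gamma_2$ via \ref{assumption centre aut lifts} to a Morita equivalence $M$ between $B$ and $A$ compatible with $\iota_2$, and use basicness to upgrade $M$ to an algebra isomorphism $\alpha$ with $\alpha|_{Z(B)}=\gamma_2$; then $Z(\alpha(\Gamma))=Z(\Lambda)$ and Lemma~\ref{lemma unique lifting} applies. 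A smaller inaccuracy: $\Im D_\Lambda$ need not be of full rank in $\Gro(A)$, so the action of $\TrPic_S(\Lambda)$ on $\Gro(A)$ is not recovered from its action on $\Gro(\bar\Lambda)$; one should work directly with the action on $\Gro(A)$ and Proposition~\ref{prop compat center decomp}, as you in effect do when invoking the signed-permutation property.
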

\begin{proof}
	Let $\bar X^\xbullet$ be an arbitrary element of $\TrPic_k(\bar{\Lambda})$, and let $\bar T^\xbullet \in \mathcal K^b(\bar{\Lambda} \projL)$ be its restriction to the left. By \cite[Proposition 3.1]{RickardLiftTilting} there is a tilting complex $T^\xbullet \in \mathcal K^b(\Lambda\projL)$ such that $k\otimes_R T^\xbullet \cong \bar T^\xbullet$. By \cite[Theorem 3.3]{RickardLiftTilting} the endomorphism algebra  $\Gamma= \End_{\mathcal D^b(\Lambda)}( T^\xbullet)^{\opp}$ is again an $R$-order, and $k\otimes_R\Gamma \cong \bar \Lambda$. By \cite[Proposition 3.1]{RickardDerEqDerFun} there is a two-sided tilting complex $X^\xbullet$ in $\mathcal D^b(\Lambda^{\opp}\otimes_R \Gamma)$ whose restriction to the left is $T^\xbullet$. We get a commutative diagram
	\begin{equation}\label{eqn double sq}
		\xymatrix{
		\Gro(\bar \Lambda) \ar[r]^{\varphi_{\bar X}} \ar[d]^{D_\Lambda} & \Gro(\bar \Gamma) \ar[d]^{D_\Gamma}\ar[r]^{\iota_1}
		& \Gro(\bar \Lambda) \ar[d]^{D_\Lambda} \\
		\Gro(A) \ar[r]^{\varphi_{KX}} & \Gro(B)\ar[r]^{\iota_2} & \Gro(A),
		}
	\end{equation}
	where the left hand square comes from Proposition~\ref{prop compat center decomp}   and the right hand square comes from assumption~\ref{assumption decomp determined}. 

	Now assumption~\ref{assumption ex gamma} implies that there is an automorphism of $Z(A)$ inducing the permutation on central primitive idempotents which corresponds to $\iota_2\circ \varphi_{KX}$, and mapping $Z(\Lambda)$ into itself. By Proposition~\ref{prop compat center decomp} there exists an algebra homomorphism $\gamma_{KX}$ corresponding to $\varphi_{KX}$ mapping $Z(\Lambda)$ into $Z(\Gamma)$, which tells us that there is an algebra homomorphism $\gamma_2:\ Z(B) \longrightarrow Z(A)$ which maps the central primitive idempotent corresponding to $[S]$ (for some simple $B$-module $S$), to the central primitive idempotent corresponding to $\iota_2([S])$, and $\gamma_2(Z(\Gamma))=Z(\Lambda)$. By considering a Morita equivalence between $A$ and $B$, and applying assumption~\ref{assumption centre aut lifts} to the isomorphism of centres induced by the Morita equivalence followed by $\gamma_2$, we see that there is in fact a Morita equivalence between $B$ and $A$, given by a bimodule $M$, such that $[W\otimes_B M] = \iota_2([W])$ for all $B$-modules $W$, and the induced isomorphism of centres $\gamma_{KM}$ is~equal~to~$\gamma_2$.

	Now note that $\iota_1([\bar{\Gamma}])=[\bar \Lambda]$, since $\bar \Lambda$ is assumed to be basic which means that $[\bar\Lambda]$ and $[\bar\Gamma]$ are just the sums over the distinguished bases. Moreover, $D_\Gamma([\bar{\Gamma}])=[B]$ and $D_\Lambda([\bar{\Lambda}])=[A]$. Hence commutativity of the rightmost square in \eqref{eqn double sq} implies that $[B\otimes_B M] =[A]$, which shows that $M$ is induced by an algebra isomorphism $\alpha:\ B \longrightarrow A$. That is, $M\cong {_\alpha A}$. Set $\Gamma'=\alpha(\Gamma)\subseteq A$. We have $\alpha|_{Z(B)}=\gamma_{KM}=\gamma_2$, which shows that $\alpha(Z(\Gamma))=Z(\Lambda)$. It follows that $Z(\Gamma')=Z(\Lambda)$. Applying Proposition~\ref{prop compat center decomp} to the functor $-\otimes_{\Gamma} {_\alpha \Gamma'}$   (and taking into account that it sends simple modules to simple modules) gives us an isometry $\iota_3:\ \Gro(\bar \Gamma)\longrightarrow \Gro(\Gamma'/\pi\Gamma')$ mapping the distinguished basis to the distinguished basis such that $\iota_2 \circ D_\Gamma =  D_{\Gamma'} \circ \iota_3$. Combining this with rightmost square in \eqref{eqn double sq} gives $D_{\Gamma'}=D_\Lambda\circ \iota_1\circ  \iota_3^{-1}$.  This means that the image of the distinguished basis of $\Gro(\Gamma'/\pi\Gamma')$ under $D_{\Gamma'}$ is the same (up to reordering) as the image of the distinguished basis of $\Gro(\bar{\Lambda})$ under $D_\Lambda$. It follows that the $K$-spans of the projective indecomposable $\Gamma'$-modules fulfil condition \ref{assumption gamma 1} in Lemma~\ref{lemma unique lifting}. Since we have checked  \ref{assumption gamma 2} and \ref{assumption gamma 3} already, we can conclude by Lemma~\ref{lemma unique lifting} that $\Gamma'\cong \Lambda$. We can hence compose our original two-sided tilting complex $X^\xbullet$ with a bimodule corresponding to an isomorphism between $\Gamma$ and $\Lambda$, to obtain an element of $\TrPic_R(\Lambda)$ whose restriction to the left is $T^\xbullet$. It is well-known that any two two-sided tilting complexes over $\bar{\Lambda}$ restricting to $\bar T^\xbullet$ differ from one another only by an element of $\Pic_k(\bar\Lambda)$ (see \cite[Proposition 2.3]{RouquierZimmermann}). We have thus shown that
	\begin{equation}
		\TrPic_k(\bar \Lambda) = \xoverline{\TrPic_R(\Lambda)} \cdot \Pic_k(\bar \Lambda).
	\end{equation}

	Now suppose that $Y^\xbullet \in \mathcal H \subseteq \TrPic_S(\Lambda)$. By Proposition~\ref{prop compat center decomp} $Y^\xbullet$ induces maps ${\varphi_{LY}:\ \Gro(A) \longrightarrow \Gro(A)}$ and $\gamma_{LY}\in \Aut_L(Z(A))$.  By definition of $\mathcal H$ the  map $\varphi_{LY}$ is an isometry of $\Gro(A)$ (equipped with the bilinear form coming from the $K$-algebra structure of $A$, rather than the $L$-algebra structure for which this would be trivial), and it maps $\Im D_\Lambda$ into itself. Therefore, by assumption \ref{assumption ex gamma}, there is a map $\gamma'\in \Aut_K(Z(A))$ which induces the same permutation of the $\eps_i$'s as $\gamma_{LY}$. By assumption~\ref{assumption aut intersect} there must be a $\gamma''\in \Aut_K(Z(A))\cap \Aut_L(Z(A))$ which also induces the same permutation, meaning that $\gamma''\circ \gamma_{LY}^{-1}$ is $L$-linear and acts trivially on $\Gro(A)$. By assumption~\ref{assumption autl} it follows that $\gamma''\circ \gamma_{LY}^{-1}$ is $K$-linear, but then $\gamma_{LY}$ must be $K$-linear as well. By Remark~\ref{remark detect trpic linearity} it follows that  $Y^\xbullet$ lies in (or, is isomorphic to an element of)~$\TrPic_R(\Lambda)$.

	Now let us consider an arbitrary $X^\xbullet \in \TrPic_R(\Lambda)$. By assumption~\ref{assumption aut intersect} there must be a $K$- and $L$-linear automorphism $\gamma':\ Z(A)\longrightarrow Z(A)$ inducing the same permutation of idempotents as $\gamma_{KX}$. By assumption~\ref{assumption center s stable} it follows that $\gamma'(Z(\Lambda))=Z(\Lambda)$. Set $\gamma''=\gamma_{KX}^{-1} \circ \gamma'$. Then $\gamma''$ is $K$-linear, acts trivially on $\Gro(A)$, and $\gamma''(Z(\Lambda))=Z(\Lambda)$. By assumption~\ref{assumption centre aut lifts} it follows that there is a Morita auto-equivalence of $A$ extending $\gamma''$, and since $\gamma''$ acts trivially on $\Gro(A)$ this must actually be induced by an $\alpha \in \Aut_K(A)$. That is, $\alpha$ induces the identity on $\Gro(A)$ and restricts to $\gamma''$. So, if we set $\Gamma=\alpha(\Lambda)$, the assumptions \ref{assumption gamma 1}--\ref{assumption gamma 3} of Lemma~\ref{lemma unique lifting} are satisfied, as well as the one needed for conjugacy instead of isomorphism, and therefore there is a central automorphism $\beta$ of $A$ such that $\beta(\alpha(\Lambda))=\Lambda$. Hence $Y^\xbullet = X^\xbullet \otimes_{\Lambda} {_{\beta\circ\alpha}\Lambda}$ induces the automorphism $\gamma_{KY}=\gamma'$ on $Z(A)$, which is $L$-linear. Hence $Y^\xbullet \in \TrPic_S(\Lambda)$  by Remark~\ref{remark detect trpic linearity}, and $Y^\xbullet$ clearly satisfies the condition defining $\mathcal H$. Therefore $X^\xbullet \in \mathcal H \cdot \Pic_R(\Lambda)$. We have now established that $\TrPic_k(\bar \Lambda) = \bar{\mathcal H} \cdot \Pic_k(\bar \Lambda)$, and the equality~\eqref{eqn decomp h trpic} follows by inverting.
\end{proof}

\begin{lemma}[Additional structure]\label{lemma extra structure}
	Assume that all assumptions of Lemma~\ref{lemma pic} hold.
	\begin{enumerate}
		\item If
		      \begin{enumerate}[start=1,label={\bfseries(B\arabic*)}]
			      \item\label{assumption B1} $\bar{\Lambda}$ is silting-connected
		      \end{enumerate}
		      then $\PicS_k(\bar{\Lambda})$ is normal in $\TrPic_k(\bar{\Lambda})$.
		\item If \ref{assumption B1} holds and in addition
		      \begin{enumerate}[start=2,label={\bfseries(B\arabic*)}]
			      \item\label{assumption B2} the kernel of the action of $\Pic_S({\Lambda})$ on $\Gro(\Lambda) \cong \Gro(\bar{\Lambda})$ is trivial, and
			      \item\label{assumption B3} the image of $\Pic_k(\bar \Lambda)$  in $\Aut(\Gro(\bar{\Lambda}))$ is contained in the image of $\Pic_S({\Lambda})\cap \Pic_R(\Lambda)$ in $\Aut(\Gro(\bar{\Lambda}))$,
		      \end{enumerate}
		      then $\bar{ \mathcal H }\cong \mathcal H$ and we can write
		      \begin{equation}
			      \TrPic_k(\bar \Lambda) = \PicS_k(\bar \Lambda) \rtimes \mathcal H.
		      \end{equation}
		      Note that if $Z(\bar\Lambda)=Z(\Lambda)/\pi Z(\Lambda)$ and $\Picent(\bar{\Lambda})\cap \PicS_k(\bar \Lambda) =1$, then $\TrPicent(\Lambda)\unlhd \mathcal H$ acts trivially on $ \PicS_k(\bar \Lambda)$.
		\item
		      If $\TrPic_k(\bar \Lambda) = \PicS_k(\bar \Lambda) \cdot \bar{\mathcal H}$ (e.g. due to \ref{assumption B1}--\ref{assumption B3} holding), and in addition
		      \begin{enumerate}[start=4	,label={\bfseries(B\arabic*)}]
			      \item\label{assumption b4} $\PicS_k(\bar \Lambda) \cap \Picent(\bar{\Lambda})\subseteq \bar{\mathcal H}$, and
			      \item\label{assumption b5} the images of $\bar{\mathcal H}$ and $\PicS_k(\bar{\Lambda})$ in $\Aut_k(Z(\bar{\Lambda}))$ intersect trivially,
		      \end{enumerate}
		      then
		      \begin{equation}\label{eqn dudfhidh}
			      \TrPicent(\bar \Lambda)=  \Ker(\bar{\mathcal H} \longrightarrow \Aut_k(Z(\bar{\Lambda}))).
		      \end{equation}
	\end{enumerate}
\end{lemma}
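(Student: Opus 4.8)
The plan is to deduce the identity~\eqref{eqn dudfhidh} formally from the factorization $\TrPic_k(\bar\Lambda)=\PicS_k(\bar\Lambda)\cdot\bar{\mathcal H}$ together with hypotheses~\ref{assumption b4} and~\ref{assumption b5}; no further information about the orders $\Lambda$ and $\Gamma$ is needed beyond what was already used in Lemma~\ref{lemma pic}, and in particular $\bar{\mathcal H}$ retains its meaning as the image of the subgroup $\mathcal H\subseteq\TrPic_S(\Lambda)\cap\TrPic_R(\Lambda)$, hence is a subgroup of $\TrPic_k(\bar\Lambda)$. The inclusion ``$\supseteq$'' in~\eqref{eqn dudfhidh} is then immediate from the definitions: an element of $\Ker(\bar{\mathcal H}\to\Aut_k(Z(\bar\Lambda)))$ lies in $\bar{\mathcal H}\subseteq\TrPic_k(\bar\Lambda)$ and induces the identity on $Z(\bar\Lambda)$, hence belongs to $\TrPicent(\bar\Lambda)$.

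For the reverse inclusion I would take an arbitrary $X^\xbullet\in\TrPicent(\bar\Lambda)$ and invoke the factorization to write $X^\xbullet=M\cdot Y^\xbullet$ with $M\in\PicS_k(\bar\Lambda)$ and $Y^\xbullet\in\bar{\mathcal H}$. Passing to central automorphisms, $\gamma_X$ is the composite of $\gamma_M$ and $\gamma_Y$ in one order or the other, and $\gamma_X=\id_{Z(\bar\Lambda)}$ forces $\gamma_M$ and $\gamma_Y$ to be mutually inverse in $\Aut_k(Z(\bar\Lambda))$. Since $\bar{\mathcal H}$ is a subgroup, $\gamma_M=\gamma_{(Y^\xbullet)^{-1}}$ lies in the image of $\bar{\mathcal H}$ in $\Aut_k(Z(\bar\Lambda))$, while it trivially lies in the image of $\PicS_k(\bar\Lambda)$; by~\ref{assumption b5} these images intersect trivially, so $\gamma_M=\id_{Z(\bar\Lambda)}$ and hence $\gamma_Y=\id_{Z(\bar\Lambda)}$ too. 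Now $M$ acts trivially on $Z(\bar\Lambda)$ and lies in $\Pic_k(\bar\Lambda)$, so $M\in\Pic_k(\bar\Lambda)\cap\TrPicent(\bar\Lambda)=\Picent(\bar\Lambda)$; combined with $M\in\PicS_k(\bar\Lambda)$ this gives $M\in\PicS_k(\bar\Lambda)\cap\Picent(\bar\Lambda)\subseteq\bar{\mathcal H}$ by~\ref{assumption b4}. Thus both $M$ and $Y^\xbullet$ lie in the subgroup $\bar{\mathcal H}$, whence $X^\xbullet=M\cdot Y^\xbullet\in\bar{\mathcal H}$; since $X^\xbullet$ also induces the identity on $Z(\bar\Lambda)$, we conclude $X^\xbullet\in\Ker(\bar{\mathcal H}\to\Aut_k(Z(\bar\Lambda)))$, which closes the argument.

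I do not expect a genuine obstacle here: once the product decomposition $\TrPic_k(\bar\Lambda)=\PicS_k(\bar\Lambda)\cdot\bar{\mathcal H}$ is in hand, part~(3) is a purely group-theoretic deduction from that decomposition and the two intersection conditions. The only point deserving a moment's care is fixing the convention from Proposition~\ref{prop compat center decomp} for whether $\gamma_{M\cdot Y}=\gamma_M\circ\gamma_Y$ or $\gamma_Y\circ\gamma_M$, and this does not affect the conclusion since both $\gamma_M$ and $\gamma_Y$ turn out to be trivial. If anything, the substantive work is upstream, namely securing the factorization itself (as in parts~(1)--(2) of this lemma, via \ref{assumption B1}--\ref{assumption B3}); granting that, the present statement follows formally.
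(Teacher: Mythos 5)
Your treatment of part (3) is correct and is essentially the paper's own argument: write $X^\xbullet = M\cdot Y^\xbullet$ with $M\in\PicS_k(\bar{\Lambda})$, $Y^\xbullet\in\bar{\mathcal H}$, use \ref{assumption b5} to force $\gamma_M=\gamma_Y=\id_{Z(\bar{\Lambda})}$, observe $M\in\PicS_k(\bar{\Lambda})\cap\Picent(\bar{\Lambda})\subseteq\bar{\mathcal H}$ by \ref{assumption b4}, hence $\TrPicent(\bar{\Lambda})\subseteq\bar{\mathcal H}$ and~\eqref{eqn dudfhidh} follows, the reverse inclusion being immediate.

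The gap is that the statement consists of three parts and you only prove the third, explicitly deferring parts (1) and (2) as ``upstream''. These are genuine claims of the lemma and are not formal; they are where the actual content lies. For (1) you must show that, under \ref{assumption B1}, every $M\in\PicS_k(\bar{\Lambda})$ fixes the isomorphism class of every indecomposable summand of every silting complex: since irreducible silting mutation is defined via minimal left/right approximations, the property of fixing all summands is inherited along mutations, and silting-connectedness propagates it from the projectives to all silting complexes; then for $X^\xbullet\in\TrPic_k(\bar{\Lambda})$ the complex $X^\xbullet\otimes_{\bar{\Lambda}}M\otimes_{\bar{\Lambda}}(X^{-1})^\xbullet$ has the same one-sided restriction as the trivial complex, so it lies in $\Pic_k(\bar{\Lambda})$, and acting trivially on $\Gro(\bar{\Lambda})$ it lies in $\PicS_k(\bar{\Lambda})$, giving normality. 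For (2) you must first prove $\mathcal H\cong\bar{\mathcal H}$: the kernel of $\TrPic_R(\Lambda)\longrightarrow\TrPic_k(\bar{\Lambda})$ is contained in $\Pic_R(\Lambda)$, and since $\mathcal H\cap\Pic_R(\Lambda)\subseteq\Pic_S(\Lambda)$ acts faithfully on $\Gro(\Lambda)$ by \ref{assumption B2}, the map $\mathcal H\longrightarrow\TrPic_k(\bar{\Lambda})$ is injective; then \ref{assumption B3} upgrades the factorization $\TrPic_k(\bar{\Lambda})=\Pic_k(\bar{\Lambda})\cdot\bar{\mathcal H}$ from Lemma~\ref{lemma pic} to $\TrPic_k(\bar{\Lambda})=\PicS_k(\bar{\Lambda})\cdot\mathcal H$, and faithfulness again gives $\PicS_k(\bar{\Lambda})\cap\mathcal H=1$, whence the semidirect product; finally the remark that $\TrPicent(\Lambda)$ acts trivially on $\PicS_k(\bar{\Lambda})$ needs the observation that when $\Picent(\bar{\Lambda})\cap\PicS_k(\bar{\Lambda})=1$ an element of $\PicS_k(\bar{\Lambda})$ is determined by the automorphism of $Z(\bar{\Lambda})$ it induces. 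None of this is in your proposal, so as a proof of the lemma as stated it is incomplete, even though the portion you did write matches the paper.
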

\begin{proof}
	If \ref{assumption B1} holds then any two silting complexes over $\bar{\Lambda}$ are linked by a finite sequence of irreducible mutations. It is clear from the definition of irreducible silting mutations  by minimal left or right approximations (see \cite[Definition-Theorem 2.3]{AiharaTiltingConnSymm}) that if  a Morita auto-equivalence fixes the isomorphism classes of all indecomposable summands of a silting complex, then it also fixes all indecomposable summands of an irreducible mutation of that complex. Hence, in the connected case, the group $\PicS_k(\bar{\Lambda})$ fixes all isomorphism classes of silting complexes, since it fixes the projective indecomposable modules and every silting complex is connected via mutation to their direct sum. In particular, the restriction  of $X^\xbullet \otimes_\Lambda M$ to the right is isomorphic to the restriction of $X^\xbullet$ to the right, for any $X^\xbullet \in \TrPic_k(\bar{\Lambda})$ and $M\in\PicS_k(\bar \Lambda)$. It follows that  $X^\xbullet \otimes_\Lambda M\otimes_\Lambda (X^{-1})^\xbullet$ lies in $\Pic_k(\bar \Lambda)$. Since $M$ acts trivially on $\Gro(\bar{\Lambda})$ its conjugate will also act trivially on $\Gro(\bar \Lambda)$, that is,  $X^\xbullet \otimes_\Lambda M\otimes_\Lambda (X^{-1})^\xbullet$ lies in $\PicS_k(\bar \Lambda)$. This proves the first part of the assertion.

	The kernel of the map  ${\TrPic_R(\Lambda) \longrightarrow \TrPic_k(\bar{\Lambda})}$ is contained in $\Pic_R(\Lambda)$ (see \cite[Lemma 3.4]{RouquierZimmermann}). Since, by the assumption \ref{assumption B2}, the group $\Pic_S(\Lambda)$ acts faithfully on $\Gro({\Lambda})$, and ${\mathcal H \cap \Pic_R(\Lambda) \subseteq \Pic_S(\Lambda)}$, it follows that $\mathcal H$ embeds into $\TrPic_k(\bar \Lambda)$, that is, $\mathcal H \cong \bar{\mathcal H}$. By the assumption \ref{assumption B3} it follows that $\TrPic_k(\bar{\Lambda})= \PicS_k(\bar{\Lambda})\cdot \mathcal H$, and by faithfulness of $\mathcal H\cap \Pic_R(\Lambda)$ on $\Gro(\bar{\Lambda})$ also $\PicS_k(\bar{\Lambda})\cap \mathcal H=\{1\}$.  This proves $	\TrPic_k(\bar \Lambda) = \PicS_k(\bar \Lambda) \rtimes \mathcal H$.  If $\Picent(\bar{\Lambda})\cap \PicS_k(\bar \Lambda) =1$ then an element of $\PicS_k(\bar \Lambda)$ is determined by the automorphism of $Z(\bar{\Lambda})$ it induces, which implies the last bit of the assertion of the second part.

	Let us now prove the third part.  Assumption~\ref{assumption b5} implies that
	\begin{equation}
		\TrPicent(\bar{\Lambda}) = {(\PicS_k(\bar{\Lambda}) \cap \TrPicent(\bar{\Lambda})) \cdot (\bar{\mathcal H} \cap \TrPicent(\bar{\Lambda}))},
	\end{equation}
	and assumption~\ref{assumption b4} ensures that $\TrPicent(\bar{\Lambda})\cap \PicS_k(\bar{\Lambda}) \subseteq \bar{\mathcal H}$. It follows that $\TrPicent(\bar{\Lambda}) \subseteq \bar{\mathcal H}$, from which one infers \eqref{eqn dudfhidh}.
\end{proof}

\begin{prop}\label{prop silting discrete}
	Assume $k$ is an algebraically closed field, and let $\bar \Lambda$ be a $k$-algebra of dihedral, semi-dihedral or quaternion type in the sense of Erdmann~\cite{TameClass}. Then $\bar \Lambda$ is silting-discrete.
\end{prop}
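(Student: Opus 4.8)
The plan is to obtain the statement from results in the literature, by reducing silting-discreteness of the symmetric algebra $\bar\Lambda$ to $\tau$-tilting finiteness of the algebras in its derived equivalence class. First I would recall that, $\bar\Lambda$ being symmetric, silting and tilting complexes over $\bar\Lambda$ coincide, so that the endomorphism algebra of any silting complex over $\bar\Lambda$ is derived equivalent to $\bar\Lambda$, and conversely every algebra derived equivalent to $\bar\Lambda$ arises this way. By the Aihara--Mizuno criterion \cite{AiharaMizuno}, a finite-dimensional algebra is silting-discrete as soon as the endomorphism algebra of each of its silting complexes is $\tau$-tilting finite; in the symmetric case this says that $\bar\Lambda$ is silting-discrete provided every algebra derived equivalent to $\bar\Lambda$ is $\tau$-tilting finite. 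Thus the whole proof reduces to establishing that last property.

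Next I would argue that being ``of dihedral, semi-dihedral or quaternion type'' in the sense of Erdmann \cite{TameClass} is preserved under derived equivalence. These classes are singled out by an algebra being symmetric, tame, having non-singular Cartan matrix, and having stable Auslander--Reiten quiver of a prescribed shape (only tubes of bounded rank, together with components of the form $\mathbb Z A_\infty^\infty$ or $\mathbb Z D_\infty$); symmetry, tameness and the Cartan determinant are invariants of the derived category, and a derived equivalence between self-injective algebras is a stable equivalence, so the stable module category and hence the shape of the stable Auslander--Reiten quiver are preserved as well. Consequently every algebra derived equivalent to $\bar\Lambda$ is again of dihedral, semi-dihedral or quaternion type, and it was shown in \cite{TauRigid} (via the classification of two-term silting complexes) that all such algebras are $\tau$-tilting finite. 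Combined with the first paragraph this proves that $\bar\Lambda$ is silting-discrete. The local algebras, and the exceptional $20$-dimensional family $\mathcal Q(3A)_1^{2,2}(d)$ left open by the derived classification of \cite{HolmDerivedClassErdmannAlg}, need no special treatment: they are still of the relevant Erdmann type and hence covered by \cite{TauRigid}; alternatively, over a local symmetric algebra every silting complex is a shift of a progenerator by \cite[Theorem~2.11]{RouquierZimmermann}, so $\silt{\bar\Lambda}$ is trivially discrete.

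I expect the only genuinely delicate point to be the passage from $\tau$-tilting finiteness of $\bar\Lambda$ itself to that of \emph{every} algebra derived equivalent to it. Since $\tau$-tilting finiteness is not a derived invariant, this step cannot be shortcut, and it hinges on \cite{TauRigid} establishing $\tau$-tilting finiteness uniformly for the whole Erdmann class (all parameter values, not merely one representative per derived equivalence class), together with the derived-invariance of that class described above. A self-contained argument avoiding \cite{TauRigid} would instead require checking directly that every interval of the poset $\silt{\bar\Lambda}$ is finite, which would be substantially more laborious. Finally, since silting-discrete algebras are silting-connected, this proposition in particular supplies the silting-connectedness hypothesis used later in Lemma~\ref{lemma extra structure}.
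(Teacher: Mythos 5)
Your proposal is correct and follows essentially the same route as the paper: both reduce silting-discreteness via the Aihara--Mizuno criterion \cite{AiharaMizuno} to $2$-silting ($\tau$-tilting) finiteness of every algebra in the derived equivalence class, then invoke closure of Erdmann's classes under derived (indeed stable) equivalence together with \cite{TauRigid}. The only difference is cosmetic: you justify the derived-invariance of the class explicitly, whereas the paper treats it as built into Erdmann's definition.
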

\begin{proof}
	The class of algebras  of dihedral, semi-dihedral or quaternion type is, by definition, closed under derived (and even stable) equivalences. All of these algebras are symmetric as well, which implies that tilting and silting complexes coincide. By \cite[Theorem 2.4]{AiharaMizuno} it suffices to show that all algebras in the derived equivalence class of $\bar{\Lambda}$ are $2$-tilting finite, and this was verified in \cite[Theorem 16]{TauRigid}.
\end{proof}

We will now verify the assumptions of Lemmas~\ref{lemma pic}~and~\ref{lemma extra structure} for the lifts of the algebras $\mathcal Q(3K)^{a_1,a_2,a_3}$ in characteristic two constructed in Proposition~\ref{prop lift weighted surface}, as well as the orders lifting twisted Brauer graph algebras from Proposition~\ref{prop biserial order} (subject to mild assumptions on the Brauer graph). This is straight-forward in principle, but requires us to describe these $k\sbb{X}$-orders more explicitly. A large chunk of the verification for the lifts of $\mathcal Q(3K)^{a_1,a_2,a_3}$ is already contained in Remark~\ref{remark structure q3k} below. This remark also highlights that these $k\sbb{X}$-orders are equicharacteristic analogues of the lifts over an extension $\OO$ of the $2$-adic integers coming from blocks of quaternion defect over $\OO$ (e.g. they share the same decomposition matrix). The same is true for Brauer tree algebras. However, for other Brauer graph algebras like $\mathcal D(3K)^{a_1,a_2,a_3}$ our lifts over $k\sbb{X}$ do have a different decomposition matrix and a centre of a different dimension than the lifts over $\OO$ coming from modular representation theory.

\begin{remark}[Structure of the lifts of $\mathcal Q(3K)^{a_1,a_2,a_3}$]\label{remark structure q3k}
	Assume $k$ is an algebraically closed field of characteristic two. Let $Q$, $f$, $\mm$, $\tt$  be as in Proposition~\ref{prop semimimple q3k}, set $c_\alpha=1$ for all $\alpha\in Q_1$, and let  $\mm':\ Q_1/\langle g \rangle \longrightarrow \Z_{>0}$ be arbitrary. Consider the $k\sbb{X}$-order $\Lambda=\Gamma(Q,f,\mm, \cc, \tt, \emptyset; \mm')$ constructed in Proposition~\ref{prop lift weighted surface}. Let $A$ denote the $k\sqq{X}$-span of~$\Lambda$. By definition, $\Lambda$ is a pullback
	\begin{equation}\label{eqn pullback gamma}
		\xymatrix{
			\frac{\widehat{k[X]Q}}{\phantom{.}\overline{(\alpha f(\alpha) - X\bar \alpha,\
				\alpha g(\alpha)-\bar \alpha g(\bar{\alpha})\ | \ \alpha \in Q_1)}\phantom{.} \ar@{->>}[rr]^{\varphi}} && \frac{\widehat{k[X]Q}}{\phantom{.}\overline{(\alpha f(\alpha),\
				\alpha g(\alpha)-\bar \alpha g(\bar{\alpha}),\ X+z \ | \ \alpha \in Q_1)} \phantom{.}}\\
			\Lambda	 \ar[u]\ar[rr] && \frac{\widehat{k[X]Q}}{\phantom{.}\overline{(\alpha f(\alpha),\ X+z \ | \  \alpha \in Q_1)}{\phantom{.}}} \ar@{->>}[u]_{\nu}
		}
	\end{equation}
	where $z=\sum_{i=1}^3 (\alpha_i g(\alpha_i) + g(\alpha_i)\alpha_i	)^{m'_{\alpha_i}}$ and $\varphi,\nu$ are the natural surjections. Let us denote the orders in the top left and bottom right corners of this diagram by $\Lambda_1$ and $\Lambda_2$, respectively, and the $k\sbb{X}$-algebra in the top right corner by $\bar \Lambda_0$. We will view $\Lambda$ as a subset of $\Lambda_1\oplus \Lambda_2$. Note that $\Lambda_1$ and $\Lambda_2$ can be described pictorially as
	\begin{equation}
		\vcenter{\hbox{
				\begin{tikzpicture}[font=\tiny,scale=0.55, every left delimiter/.style={xshift=.5em}, every right delimiter/.style={xshift=-.5em},]
					\matrix (mat0) [matrix of math nodes, left delimiter=., right delimiter=.] {
						k\sbb{X} \\
					};

					\matrix (mat1) [matrix of math nodes, left delimiter=., right delimiter=.] at (1.5,0) {
						k\sbb{X} \\
					};

					\matrix (mat2) [matrix of math nodes, left delimiter=., right delimiter=.] at (3,0) {
						k\sbb{X} \\
					};

					\matrix (mat3) [matrix of math nodes, left delimiter=(, right delimiter=)] at (6.8,0) {
						k\sbb{X} & (X)                      & (X)                      \\
						(X)                      & k\sbb{X} & (X)                      \\
						(X)                      & (X)                      & k\sbb{X} \\
					};

					\draw[thick] (mat0-1-1)   to [bend left] node [midway,above]{$\scriptscriptstyle k\sbb{X}/X^2$} (mat3-1-1);
					\draw[thick] (mat1-1-1) to [bend left=20] node [pos=0.1, above]{$\scriptscriptstyle k\sbb{X}/X^2$} (mat3-2-2);
					\draw[thick] (mat2-1-1) to [bend right] node [midway,below]{$\scriptscriptstyle k\sbb{X}/X^2$} (mat3-3-3);
				\end{tikzpicture}}}
		\textrm{ and }
		\vcenter{\hbox{
				\begin{tikzpicture}[font=\tiny,scale=1.2,  every left delimiter/.style={xshift=.5em}, every right delimiter/.style={xshift=-.5em},]
					\matrix (mat0) [matrix of math nodes, left delimiter=(, right delimiter=)] {
						k\sbb{X^{\frac{1}{u}}}          & k\sbb{X^{\frac{1}{u}}} \\
						(X^{\frac{1}{u}}) & k\sbb{X^{\frac{1}{u}}} \\
					};

					\matrix (mat1) [matrix of math nodes, left delimiter=(, right delimiter=)] at (2.5,0) {
						k\sbb{X^{\frac{1}{v}}}          & k\sbb{X^{\frac{1}{v}}} \\
						(X^{\frac{1}{v}}) & k\sbb{X^{\frac{1}{v}}} \\
					};

					\matrix (mat2) [matrix of math nodes, left delimiter=(, right delimiter=)] at (5,0) {
						k\sbb{X^{\frac{1}{w}}}          & k\sbb{X^{\frac{1}{w}}} \\
						(X^{\frac{1}{w}}) & k\sbb{X^{\frac{1}{w}}} \\
					};

					\draw[thick] (mat0-1-1) to [bend left=23] node [midway,above]{$\scriptscriptstyle k$} (mat1-1-1);
					\draw[thick] (mat1-2-2) to [bend right] node [midway,above]{$\scriptscriptstyle k$} (mat2-1-1);
					\draw[thick] (mat2-2-2) to [bend left=17] node [midway,below]{$\scriptscriptstyle k$} (mat0-2-2);
				\end{tikzpicture}}},
	\end{equation}
	where $u,v,w$ are the multiplicities $m'_{\alpha_1}$, $m'_{\alpha_2}$ and $m'_{\alpha_3}$. The arcs indicate that the entries linked by them must have the same image in the ring labelling the arc (either $k\sbb{X}/(X^2)$ or $k$). This information can be extracted from Proposition~\ref{prop semimimple q3k} and Proposition~\ref{prop biserial order}, respectively. Of course, the description above mainly serves to illustrate, the facts we are actually going to use are the following (easily obtained from Propositions~\ref{prop semimimple q3k}~and~\ref{prop biserial order} and the fact that $\bar \Lambda=\Lambda/X\Lambda$ is isomorphic to $\Lambda(Q,f,\mm + \mm', \cc, \tt, \emptyset)$):
	\begin{enumerate}
		\item $(e_1,e_1)$, $(e_2,e_2)$ and $(e_3,e_3)$ form a full set of orthogonal primitive idempotents~in~ $\Lambda\leq \Lambda_1\oplus\Lambda_2$. When it is unambiguous we will write ``$e_i$'' instead of $(e_i,e_i)$.
		\item $\Lambda_1$ and $\Lambda_2$ are both orders in semisimple $k\sqq{X}$-algebras which are Morita equivalent to their centres (i.e. no division algebras occur), and therefore so is $\Lambda$. It follows that $\Lambda$ satisfies assumption~\ref{assumption centre aut lifts} of Lemma~\ref{lemma pic} with $R=k\sbb{X}$.
		\item The central primitive idempotents in the $k\sqq{X}$-span of $\Lambda_1$ are $\eps_i =e_i-X^{-2}\alpha_i\beta_{i+1}$ for $i\in \{1,2,3\}$  (set $\beta_4=\beta_1$) and $\eps_4 = 1-\eps_1-\eps_2-\eps_3$. Those in the $k\sqq{X}$-span of $\Lambda_2$ are $\eps_{4+i}=X^{-1} (\alpha_i\beta_{i+1}+\beta_{i+1}\alpha_{i})^{m'_{\alpha_i}}$ for $i\in\{1,2,3\}$.
		\item\label{point iso centre} The elements $\eta_i=(\eps_i,0)$ for $1\leq i \leq 4$ and $\eta_i=(0,\eps_i)$ for $5\leq i \leq 7$ form a full set of primitive idempotents in $A$. We have
		      \begin{equation}
			      \eta_i Z(A)\cong k\sqq{X} \textrm{ for $1\leq i \leq 4$}, \quad  \eta_{4+i}Z(A) \cong k\sqq{X^{1/m'_{\alpha_i}}} \textrm{ for $1\leq i \leq 3$}
		      \end{equation}
		      as $k\sqq{X}$-algebras, where $\eta_{4+i}X^{1/m_{\alpha_i}}$ has preimage $ \alpha_i\beta_{i+1}+\beta_{i+1}\alpha_{i}$ in $Z(\Lambda_2)$. In particular,
		      \begin{equation}
			      \dim_{k\sqq{X}}(Z(A))=4+m_{\alpha_1}'+m_{\alpha_2}'+m_{\alpha_3}'=\dim_k(Z(\bar{\Lambda})),
		      \end{equation}
		      where the second equality is obtained by counting a basis of $Z(\bar{\Lambda})$ (alternatively see the appendix of \cite{TameClass}). It follows that $Z(\Lambda)$ surjects onto $Z(\bar{\Lambda})$, proving assumption \ref{assumption lambda 2} of Lemma~\ref{lemma unique lifting}.
		\item The bilinear form on $\Gro(A)$ has Gram-matrix
		      \begin{equation}
			      \operatorname{diag}(1,1,1,1,m'_{\alpha_1},m'_{\alpha_2},m'_{\alpha_3})
		      \end{equation}
		      and the map  $D_\Lambda:\ \Gro(\bar\Lambda) \longrightarrow \Gro(A)$ has matrix
		      \begin{equation}\label{eqn matrix D}
			      D=\begin{pmatrix}
				      1 & 0 & 0 \\
				      0 & 1 & 0 \\
				      0 & 0 & 1 \\
				      1 & 1 & 1 \\
				      1 & 1 & 0 \\
				      0 & 1 & 1 \\
				      1 & 0 & 1 \\
			      \end{pmatrix}^{\top} \quad\textrm{(acting on row vectors)}
		      \end{equation}
		      with respect to the distinguished bases (note that the ordering of the idempotents $\eta_1,\ldots, \eta_7$ induces an order on the basis of $\Gro(A)$ as well). In fact, $e_iA$ is the direct sum of $k\sqq{X}\otimes_{k\sbb{X}}e_i\Lambda_1$ and $k\sqq{X}\otimes_{k\sbb{X}}e_i\Lambda_2$, so this follows from the description of these two algebras.
		\item Each $e_i\bar{\Lambda}e_j$ for $i\neq j$ is generated by the unique arrow $\alpha \in Q_1$ pointing from $e_i$ to $e_j$. In fact, all other non-zero paths in $\bar{\Lambda}$ with the same source and target are  $(\alpha g(\alpha))^l\alpha = (\alpha g(\alpha) + \bar \alpha g(\bar{\alpha}))^l\alpha $ for some $l\in \N$, and the element $\alpha g(\alpha) + \bar \alpha g(\bar{\alpha})$ is central. That is, $\bar \Lambda$ satisfies assumption~\ref{assumption lambda 1} of Lemma~\ref{lemma unique lifting}.
		\item If $i,j,l\in\{1,2,3\}$ are pair-wise distinct, the $\dim_k e_i\bar{\Lambda}e_j\bar{\Lambda}e_l=1$ (by considering a basis), which is equal to $\dim_{k\sqq{X}}(e_iAe_jAe_l)$ by \eqref{eqn matrix D}. It also follows that $\Lambda$ satisfies assumption~\ref{assumption lambda 4} if we set all $Z_{i,j}$ equal to the simple $A$-module corresponding to $\eta_4$.

		      Moreover, if $i\neq j\in \{1,2,3\}$, then $e_i\bar{\Lambda}e_j\bar{\Lambda}e_i$ has the same dimension as $e_i\bar{\Lambda}e_j$ (just by checking that multiplying this by the unique arrow from $e_j$ to $e_i$ does not annihilate any elements), which shows that $\dim_k e_i\bar{\Lambda}e_j\bar{\Lambda}e_i=\dim_{k\sqq{X}}e_iAe_jAe_i$. In particular, $\Lambda$ satisfies  assumption~\ref{assumption lambda 3} of Lemma~\ref{lemma unique lifting}.
		\item Assumption~\ref{assumption lambda 5} of Lemma~\ref{lemma unique lifting} is trivially satisfied for $\Lambda$ since there is even an automorphism of $\Lambda$ (instead of $A$) inducing the desired permutation of idempotents.
		\item\label{point basis centre} The diagram \eqref{eqn pullback gamma} restricts to a  pullback diagram on the centres, from which one sees that $Z(\Lambda)\subset Z(A)$ is spanned as a  $k\sbb{X}$-lattice by
		      \begin{equation}\label{eqn basis centre}
			      \begin{gathered}
				      \displaystyle (1,1),\\ \displaystyle \phantom{X^{X^{X^X}}} \left(X^2(\eps_1+\eps_2), \eps_{5}X^{1/m'_{\alpha_1}}\right),
				      \left(X^2(\eps_2+\eps_3), \eps_{6}X^{1/m'_{\alpha_2}}\right), \left(X^2(\eps_1+\eps_3), \eps_{7}X^{1/m'_{\alpha_3}}\right),\\
				      \displaystyle \phantom{X^{X^{X^X}}} \left(X^3 \eps_1,0\right),  \left(X^3 \eps_2,0\right), \left(X^3 \eps_3,0\right),\\
				      \displaystyle\phantom{\left(X^{X^{X^X}}\right)} \left(0,\eps_5 X^{i_1/m'_{\alpha_1}}+\eps_6 X^{i_1/m'_{\alpha_2}}+\eps_7 X^{i_1/m'_{\alpha_3}}\right), \\
				      \displaystyle\phantom{\left(X^{X^{X^X}}\right)}
				      \left(0,\eps_6 X^{(i_2+1)/m'_{\alpha_2}}\right), \left(0,\eps_7 X^{(i_3+1)/m'_{\alpha_3}}\right) \quad \textrm{ for $1\leq i_j \leq m'_{\alpha_j}$}.
			      \end{gathered}
		      \end{equation}
		      One then verifies that the $\xoverline{k\sqq{X}}$-algebra automorphism $\alpha_{\sigma}$ of $\bigoplus_{i=1}^7 \xoverline{k\sqq{X}} \eta_i$ sending $\eta_i$ to $\eta_{\sigma(i)}$ restricts to $Z(\Lambda)$ for all
		      \begin{equation}\label{eqn group sigma}
			      \sigma \in \langle (1,2,3)(5,6,7),\ (1,2)(6,7),\ (1,2)(3,4) \rangle_{(1,1,1,1,m'_{\alpha_1},m'_{\alpha_2}, m'_{\alpha_3})},
		      \end{equation}
		      where the subscript indicates a stabiliser ($S_7$ acting on $\Z^7$ simply by permutation).

		      The only part of this that is not immediate from the definition is that $\alpha_{(1,2)(3,4)}$ maps $Z(\Lambda)$ into itself, specifically the third and fourth given generators (the others just get permuted). To verify this one just needs the fact that $X^2(\eps_2+\eps_3) = X^2(\eps_1+\eps_4) +X^2$, and and $X^2(\eps_1+\eps_3) = X^2(\eps_2+\eps_4) +X^2$.   
		\item\label{point self isom} In the case where  $m_{\alpha_i}'=1$ for all $i\in\{1,2,3\}$ the group of self-isometries of $\Gro(A)$ which map
		      $\Im D_\Lambda$ into itself was determined in \cite[Proposition 1.1]{HolmKessarLinckelmann}  to be $\langle (1,2,3)(5,6,7),\ (1,2)(6,7),\ (3,-4)(6,-7),\ -\id \rangle$ (this is adapted to our labelling of simple $A$-modules)  . The corresponding group of self-isometries of $\Gro(A)$ for an arbitrary choice of  $m_{\alpha_i}'$'s must be contained in this group, and it is characterised by the fact that its image in $S_7$ stabilises $(1,1,1,1,m'_{\alpha_1},m'_{\alpha_2}, m'_{\alpha_3})\in \Z^7$. Therefore the group of self-isometries of $\Gro(A)$ which preserve $\Im D_\Lambda$  maps onto the group of which $\sigma$ is an element in equation~\eqref{eqn group sigma}. It follows that \ref{assumption ex gamma} of Lemma~\ref{lemma pic} is satisfied for this choice of $\Lambda$ and $R=k\sbb{X}$.
		\item Assume $\Gamma$ is a $k\sbb{X}$-order in a $k\sqq{X}$-algebra $B$ Morita-equivalent to $A$ such that $\bar{\Gamma}=\Gamma/X\Gamma \cong \bar{\Lambda}$. Since $\dim_{k\sqq{X}} Z(B)=\dim_{k\sqq{X}} Z(A)=\dim_{k} Z(\bar{\Lambda})$ it follows that $Z(\Gamma)$ surjects onto $Z(\bar{\Gamma})$, and since $e\bar{\Gamma}e=eZ(\bar{\Gamma})$ for any primitive idempotent $e\in\bar{\Gamma}$ it follows that  $f\Gamma f=Z(\Gamma)f$ and $fBf=fZ(B)$ for any primitive idempotent $f\in \Gamma$. What this means is that the corresponding projective indecomposable module $f\Gamma$ spans a multiplicity-free $B$-module. That is, the entries of the matrix $D\in M_{3\times 7}(\Z)$ representing $D_\Gamma$ (with respect to the distinguished bases) has entries bounded by one. Now consider the equation
		      \begin{equation}\label{eqn yyrtt5}
			      D\cdot \operatorname{diag}(1,1,1,1,m_{\alpha_1}',m_{\alpha_2}',m_{\alpha_3}') \cdot D^\top = C_{\bar{\Gamma}}
		      \end{equation}
		      where $C_{\bar{\Gamma}}$ denotes the Cartan matrix of $\bar{\Gamma}$ (affording the bilinear form on $\Gro(\bar{\Gamma})$). If we define a set $I=I_1\uplus\ldots\uplus I_7$, where $|I_1|=\ldots=|I_4|=1$ and $|I_{4+i}|=m'_{\alpha_i}$ for $i\in\{1,2,3\}$, then we can define sets $R_{i}=\bigcup \{ I_j \ | \ 1\leq j \leq 7 \textrm{ and } D_{i,j}=1 \}$ for $i\in\{1,2,3\}$. Clearly these sets determine $D$, and we can interpret the entries on the left hand side of equation~\eqref{eqn yyrtt5} as the cardinalities $|R_i\cap R_j|$ for $i,j\in\{1,2,3\}$. Note that $D$ has no zero-columns, which implies that $|R_1\cup R_2\cup R_3|=|I|$. By the inclusion-exclusion principle the cardinalities of all possible intersections of $R_i$'s and complements of $R_i$'s are now determined. In particular, by comparing to the matrix in \eqref{eqn matrix D} (which also satisfies \eqref{eqn yyrtt5}) we easily see that $|I_1\cap I_2 \cap I_3|=1$ and $|I_i \cap (I\setminus \bigcup_{j\neq i} I_j)|=1$, which shows that, up to a permutation $\sigma\in S_7$ such that $|I_i|=|I_{\sigma(i)}|$ applied to the columns of $D$ (which corresponds to composing with an isometry of $\Gro(B)$ stabilising the distinguished basis), the first  four columns of $D$ are as in the matrix given in~\eqref{eqn matrix D}, that is, $R_i \subseteq I_i\cup I_4\cup I_5\cup I_6 \cup I_7$ for all $i$.

		      Now, for $(i,j)\in \{(1,2),(2,3),(3,1)\}$ we have $|R_i\cap R_j|=1+m_{\alpha_i}'$. If we choose $i$ such that $m_{\alpha_i}'$ is minimal amongst the values of $\mm'$, then, after potentially applying a permutation which fixes the cardinalities of the $I_i$'s,  $R_i\cap R_j$ must be equal to $I_4\cup I_{4+i}$. But then, again up to an admissible permutation,  $R_i=I_1\cup I_4\cup I_{4+i} \cup I_{4+\sigma(i)}$ and $R_j=I_1\cup I_4\cup I_{4+j} \cup I_{4+\sigma(j)}$ (where $\sigma=(3,2,1)$), since that is again the only union of $I_i$'s with the right cardinality. If $i'$ is the unique element of $\{1,2,3\}\setminus\{i,j\}$, then we just have to consider the intersections $R_i\cap R_{i'}$ and $R_j\cap R_{i'}$ (whose cardinalities we know) to infer that  $R_{i'}=I_1\cup I_4\cup I_{4+i'} \cup I_{4+\sigma(i')}$, showing that $D$ is the same matrix as the one given in \eqref{eqn matrix D}, up to the isometries of $\Gro(B)$ preserving the distinguished bases which we applied. Hence $\Lambda$ satisfies assumption~\ref{assumption decomp determined} of Lemma~\ref{lemma pic}.
	\end{enumerate}
\end{remark}

\begin{prop}\label{prop picent}
	Assume $k$ is an algebraically closed field of characteristic two. Let $Q$, $f$, $\mm$, $\tt$  be as in Proposition~\ref{prop semimimple q3k}, set $c_\alpha=1$ for all $\alpha\in Q_1$, and let  $\mm':\ Q_1/\langle g \rangle \longrightarrow \Z_{>0}$ be arbitrary. Then
	\begin{equation}
		\Picent(\Gamma(Q,f,\mm,\cc,\tt; \mm'))=1.
	\end{equation}
\end{prop}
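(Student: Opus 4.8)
The statement $\Picent(\Gamma)=1$, for $\Gamma:=\Gamma(Q,f,\mm,\cc,\tt;\mm')$, amounts to showing that every $k\sbb{X}$-algebra automorphism of $\Gamma$ fixing $Z(\Gamma)$ pointwise is inner: indeed $\Gamma$ is basic (its reduction $\bar\Gamma\cong\Lambda(Q,f,\mm+\mm',\cc,\tt,\emptyset)$ is), so every invertible $\Gamma$-bimodule is isomorphic to ${}_\alpha\Gamma$ for some $\alpha\in\Aut_{k\sbb{X}}(\Gamma)$, it lies in $\Picent(\Gamma)$ exactly when $\alpha|_{Z(\Gamma)}=\id$, and it is trivial exactly when $\alpha$ is inner. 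Throughout I keep the notation of Remark~\ref{remark structure q3k}: $A=k\sqq{X}\otimes_{k\sbb{X}}\Gamma$, the central primitive idempotents $\eta_1,\dots,\eta_7$ of $A$, the decomposition matrix $D$ of \eqref{eqn matrix D}, and the primitive idempotents $e_1,e_2,e_3\in\Gamma$. So let $\alpha\in\Aut_{k\sbb{X}}(\Gamma)$ with $\alpha|_{Z(\Gamma)}=\id$.

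\textbf{Step 1 (reduce to $\alpha(e_i)=e_i$).} The automorphism $\alpha$ extends to $\bar\alpha=\id\otimes\alpha\in\Aut_{k\sqq{X}}(A)$, and since $Z(\Gamma)$ is a full lattice in $Z(A)$, $\bar\alpha$ fixes $Z(A)$ pointwise; in particular it fixes each $\eta_l$, so on each simple component $\eta_lA$ it is inner by Skolem--Noether and acts trivially on $\Gro(\eta_lA)$. Hence the permutation $\sigma\in S_3$ by which $\alpha$ permutes the distinguished basis of $\Gro(\Gamma)\cong\Gro(\bar\Gamma)$ satisfies $D_\Gamma\circ\sigma=D_\Gamma$; since the three rows of $D$ in \eqref{eqn matrix D} are pairwise distinct, $\sigma=\id$, i.e.\ $\alpha(e_i\Gamma)\cong e_i\Gamma$ for all $i$. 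Composing $\alpha$ with an inner automorphism of $\Gamma$ we may therefore assume $\alpha(e_i)=e_i$ for all $i$.

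\textbf{Step 2 (the diagonal blocks are fixed).} By the verification of \ref{assumption lambda 1} in Remark~\ref{remark structure q3k}, each $e_i\bar\Gamma e_j$ is a cyclic $Z(\bar\Gamma)$-module; since $Z(\Gamma)\twoheadrightarrow Z(\bar\Gamma)$ (item~\ref{point iso centre} of Remark~\ref{remark structure q3k}), Nakayama's lemma shows each $e_i\Gamma e_j$ is a cyclic $Z(\Gamma)$-module $e_i\Gamma e_j=Z(\Gamma)\tilde a_{ij}$, and then $e_i\Gamma e_j=(e_i\Gamma e_i)\tilde a_{ij}=\tilde a_{ij}(e_j\Gamma e_j)$. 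As the entries of $D$ are at most $1$, each $e_iAe_i$ is a product of fields, so each $e_i\Gamma e_i$ is commutative. Writing $e_i=z_0\tilde a_{ii}$ with $z_0\in Z(\Gamma)$, the identity $\alpha(e_i)=e_i$ forces the scalar $z_1$ with $\alpha(\tilde a_{ii})=z_1\tilde a_{ii}$ to satisfy $z_1e_i=e_i$, whence $z_1\tilde a_{ii}=\tilde a_{ii}$; thus $\alpha$ fixes $e_i\Gamma e_i$ pointwise.

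\textbf{Step 3 (constructing a diagonal inner automorphism) and the main obstacle.} For $i\neq j$, $\alpha$ restricts to a $Z(\Gamma)$-module automorphism of the cyclic module $e_i\Gamma e_j$, so $\alpha(\tilde a_{ij})=z_{ij}\tilde a_{ij}$ with $z_{ij}\in Z(\Gamma)$ acting invertibly there. Applying $\alpha$ to $\tilde a_{ij}\tilde a_{jl}$ and using Step 2 together with the fact that $\tilde a_{ij}\tilde a_{jl}$ generates $e_i\Gamma e_l$ — which should follow from the dimension data in Remark~\ref{remark structure q3k} (compare the verifications of \ref{assumption lambda 3}--\ref{assumption lambda 4}) — yields a cocycle relation $z_{ij}z_{jl}=z_{il}$ as operators on $e_i\Gamma e_l$. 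Since $Z(\Gamma)$ surjects onto $Z(\bar\Gamma)$, each $e_i\Gamma e_i$ surjects onto the quotient of $Z(\Gamma)$ through which it acts on $e_i\Gamma e_j$, and this cocycle relation lets one solve for units $d_i\in(e_i\Gamma e_i)^\times$ with $d_i\tilde a_{ij}d_j^{-1}=z_{ij}\tilde a_{ij}$; setting $v=\sum_i d_i\in\Gamma^\times$, conjugation by $v$ agrees with $\alpha$ on every $e_i\Gamma e_j$ and (by commutativity of the diagonal blocks) on every $e_i\Gamma e_i$, so $\alpha$ is inner. The delicate part is exactly this last step: one must check that the $z_{ij}$ satisfy the cocycle identity modulo an annihilator small enough that the system $d_i/d_j=z_{ij}$ is solvable over the commutative rings $e_i\Gamma e_i$; both points rely on the explicit description of $Z(\Gamma)$ and of the $e_i\Gamma e_j$ in Remark~\ref{remark structure q3k}, and in particular on $Z(\Gamma)\twoheadrightarrow Z(\bar\Gamma)$ and the multiplicity-freeness of the $e_i\Gamma$ over $A$. (Equivalently, Steps 2--3 can be phrased as a re-run of the argument in the proof of Lemma~\ref{lemma unique lifting} with $\Lambda=\Gamma$, while tracking that the conjugating element can be taken in $\Gamma^\times$.)
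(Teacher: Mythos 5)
Your Steps 1 and 2 are sound and run parallel to the opening of the paper's proof: one reduces to a central automorphism $\gamma$ with $\gamma(e_i)=e_i$, and then $e_i\Gamma e_i=Z(\Gamma)e_i$ is fixed pointwise. The problem is Step 3, and specifically the claim that $\tilde{a}_{ij}\tilde{a}_{jl}$ generates $e_i\Gamma e_l$ as a $Z(\Gamma)$-module for pairwise distinct $i,j,l$. This is false, and the dimension data you appeal to shows the opposite: by Remark~\ref{remark structure q3k} (items (5) and (7) and the matrix in \eqref{eqn matrix D}), $\dim_{k\sqq{X}}e_iAe_jAe_l=1$ (only the $\eta_4$-component survives a triple product through three distinct idempotents), whereas $\dim_{k\sqq{X}}e_iAe_l=1+m'_{\alpha}\geq 2$. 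Consequently the relation you extract only says that $z_{ij}z_{jl}$ and $z_{il}$ agree as operators on the rank-one sublattice $e_i\Gamma e_j\Gamma e_l$, not on $e_i\Gamma e_l$; together with $z_{ij}z_{ji}=\operatorname{id}$ on $e_i\Gamma e_j\Gamma e_i$ these are much weaker constraints than a genuine cocycle identity. The solvability of $d_id_j^{-1}=z_{ij}$ with $d_i\in(e_i\Gamma e_i)^\times$ is therefore not a formality that "follows from the cocycle relation" — it is exactly the substance of the proposition, and your proposal explicitly defers it ("one must check that \dots the system is solvable") rather than proving it.

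This deferred verification is where the paper does its real work, and it is worth seeing how. First it reduces to $m'_\alpha=1$ for all $\alpha$ via Proposition~\ref{prop multiplicity independence}, which is legitimate because $\Picent$ only depends on the ring structure; this makes every component $e_i\eta_jAe_i\eta_j\cong k\sqq{X}$ and keeps all coefficients in $k\sqq{X}$ rather than in the fields $k\sqq{X^{1/m'_\alpha}}$ you would face with general $\mm'$. Then, since $\Outcent(A)=1$, it writes $\gamma$ as conjugation by a unit $u=\sum u_{ij}\,e_i\eta_j$ of $A$, normalises $u$ by central units of $A$ and by units of the form $\sum_i c_ie_i$, and uses the explicit $k\sbb{X}$-basis of $Z(\Lambda)$ from Remark~\ref{remark structure q3k}\,(9) to compute $\End_{Z(\Lambda)}(e_2\Lambda e_1)=\langle \eta_4+\eta_5, X\eta_5\rangle_{k\sbb{X}}$ and to exhibit concrete units $v\in e_i\Lambda e_i$ (built from elements such as $(X^2\eta_2+X\eta_5)e_2\in e_2\Lambda e_2$) that absorb the remaining coefficients $u_{25},u_{36},u_{37}$ one at a time until $u=1$. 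In other words, the "annihilator small enough" check you flag requires precisely this explicit knowledge of which elements of $\bigoplus_j \eta_j k\sqq{X^{1/m'_\alpha}}$ actually lie in $Z(\Lambda)$ and in $e_i\Lambda e_i$; without carrying it out (or an equivalent computation for general $\mm'$), the argument is incomplete at its decisive step.
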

\begin{proof}
	By Proposition~\ref{prop multiplicity independence} the isomorphism type of $\Gamma(Q,f,\mm,\cc,\tt; \mm')$ as a ring is independent of $\mm'$. Since $\Picent$ depends exclusively on the ring structure, we can assume without loss of generality that $m'_\alpha=1$ for all $\alpha\in Q_1$. Set $\Lambda=\Gamma(Q,f,\mm,\cc,\tt; \mm')$ and $A=k\sqq{X}\otimes_{k\sbb{X}} \Lambda$. Let $e_1,e_2,e_3$ be the orthogonal primitive idempotents in $\Lambda$ as in Remark~\ref{remark structure q3k}, and let $\eta_1,\ldots,\eta_7$ denote the primitive idempotents in $Z(A)$. By the shape of the matrix $D$ given in equation~\eqref{eqn matrix D} of Remark~\ref{remark structure q3k} it is clear that $e_i \eta_j$ is either zero or a primitive idempotent  in $A$ for all $i,j$. To be specific, $e_i \eta_j$ is non-zero if and only if the $(i,j)$-entry of the matrix $D$ in \eqref{eqn matrix D}	is non-zero. Moreover, by Remark~\ref{remark structure q3k}~\eqref{point iso centre} it also follows that $e_i \eta_j A e_i\eta_j \cong k\sqq{X}$ as $k\sqq{X}$-algebras whenever $e_i \eta_j\neq 0$ (using the assumption that  $m'_\alpha=1$ for all $\alpha\in Q_1$).

	Now let us consider an arbitrary $\gamma\in\Autcent(\Lambda)$ (note that $\Picent(\Lambda)=\Outcent(\Lambda)$, since the simple $\Lambda$-modules are the reduction modulo $X$ of irreducible lattices).  The idea of the remainder of the proof is to modify $\gamma$ be inner automorphisms until we reach the identity automorphism.  After modifying $\gamma$ by an inner automorphism we can assume $\gamma(e_i)=e_i$ for all $i\in\{1,2,3\}$, and therefore $\gamma(e_i\eta_j)=e_i\eta_j$ for all $i,j$. Since  $\Outcent(A)=1$ it follows that $\gamma(x)=uxu^{-1}$ for some unit $u\in A$, and  $\gamma(e_i\eta_j)=e_i\eta_j$  implies that $u=\sum_{i,j} u_{ij} e_i\eta_j$ for certain $u_{ij}\in k\sqq{X}^\times$, where $i,j$ run over all tuples indexing non-zero entries of the matrix given in equation~\eqref{eqn matrix D}.  Multiplying $u$ by $\sum_{i=1}^3 c_ie_i$, where the $c_i$ are units in $k\sqq{X}$, only changes $u$ by an inner automorphism. We can therefore assume that $u_{i4}=1$ for all $i\in\{1,2,3\}$.  Multiplying $u$ by a unit in $Z(A)$ does not change the automorphism it induces. Therefore we can also assume without loss of generality that $u_{1j}=1$ for all $1\leq j\leq 7$, and  $u_{22}=1$, $u_{33}=1$ as well as $u_{26}=1$. The only potentially non-trivial entries of $u$ are therefore  $u_{25}$, $u_{36}$ and $u_{37}$.

	Now we must have $u e_2\Lambda e_1 u^{-1}=(u_{25} \eta_5 + \eta_4) \cdot e_2 \Lambda e_1\subseteq e_2 \Lambda e_1$. Hence $u_{25} \eta_5 + \eta_4$ lies in the endomorphism ring of $e_2\Lambda e_1$ as a $Z(\Lambda)$-module. Since $e_2\Lambda e_1$ is generated by a single element as a $Z(\Lambda)$-module, this endomorphism ring is a quotient of $Z(\Lambda)$, given by  $\langle \eta_4+\eta_5, X\eta_5\rangle_{k\sbb{X}}$ (seen by projecting the generators given in Remark~\ref{remark structure q3k}~\eqref{point basis centre} to the fourth and fifth component). We have $e_2\Lambda e_2 = e_2 Z(\Lambda)$, and by projecting the generators given in Remark~\ref{remark structure q3k}~\eqref{point basis centre}  we see that $(X^2 \eta_2 + X\eta_5)e_2\in e_2\Lambda e_2$. Hence we can find a unit $v\in e_2\Lambda e_2$ such that  $\eta_5 v = u_{25}$ and $\eta_i v=1$ for all $i\not\in \{2,5\}$. We can then modify $u$ by $e_1+v+e_3$ to obtain a new $u$ where $u_{25}$ is equal to one, and all other $u_{ij}$ are unchanged except possibly $u_{22}$. We can multiply by an appropriate unit in $Z(A)$ to get $u_{22}=1$ as well.

	We can repeat the process above to modify $u$ in such a way that $u_{36}$ and $u_{37}$ also become equal to one, which means $u=1$. Hence we can modify an arbitrary central automorphism of $\Lambda$ by inner automorphisms to obtain the identity automorphism, which implies $\Picent(\Lambda)=\Outcent(\Lambda)=1$.
\end{proof}

\begin{thm}[Quaternion-type algebras]\label{thm trpic q3k}
	Assume $k$ is an algebraically closed field of characteristic two. Define an equivalence relation ``$\sim$'' on $\Z_{\geq 2}^3$ where  $ 	(a_1,a_2,a_3) \sim (b_1,b_2,b_3) $  precisely when the following holds: $a_i=a_j$ if and only if $b_i=b_j$ and $a_i=2$ if and only if $b_i=2$, for all $1\leq i,j\leq 3$.
	\begin{enumerate}
		\item For every $I\in \Z_{\geq 2}^3/\sim$ there is a group $\mathcal H_I$ equipped with a homomorphism $\mathcal H_I \longrightarrow S_7$ such that
		      \begin{equation}
			      \TrPic_k(\mathcal Q(3K)^{a_1,a_2,a_3}) \cong \PicS_k (\mathcal Q(3K)^{a_1,a_2,a_3}) \rtimes  \mathcal H_I
		      \end{equation}
		      for any choice of $(a_1,a_2,a_3)\in I$. The  action of $\mathcal H_I$ on $ \PicS_k (\mathcal Q(3K)^{a_1,a_2,a_3})$ factors through $S_7$.
		\item There is a group $\mathcal G$ such that
		      \begin{equation}
			      \TrPicent(\mathcal Q(3K)^{a_1,a_2,a_3}) \cong \mathcal G
		      \end{equation}
		      for all $a_1,a_2,a_3\geq 2$.
	\end{enumerate}
\end{thm}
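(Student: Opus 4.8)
The plan is to run the machinery of Lemmas~\ref{lemma pic} and~\ref{lemma extra structure} on the $k\sbb X$-orders lifting the algebras $\mathcal Q(3K)^{a_1,a_2,a_3}$. Fix $Q=Q_K$, $f=f_K$, $\cc=1$, $\tt=1$ and $\mm\equiv 1$ as in Proposition~\ref{prop semimimple q3k}, and for $a=(a_1,a_2,a_3)\in\Z_{\geq 2}^3$ set $\mm'_a=(a_1-1,a_2-1,a_3-1)$. Since $2a_i\geq 4$, Proposition~\ref{prop weighted surface cond}(2) identifies $\mathcal Q(3K)^{a}$ with $\Lambda(Q,f,a,\cc,\tt,\emptyset)$, so Proposition~\ref{prop application q3k} (via Proposition~\ref{prop lift weighted surface}) produces a $k\sbb X$-order $\Lambda_a=\Gamma(Q,f,\mm,\cc,\tt,\emptyset;\mm'_a)$ with $\Lambda_a/X\Lambda_a\cong\mathcal Q(3K)^{a}$, and by (the proof of) Proposition~\ref{prop multiplicity independence}(4) its underlying ring is the single ring $\Gamma_0=\Gamma_0(Q,f,\mm,\cc,\tt,\emptyset)$ of Definition~\ref{defi gamma0}, independent of $a$. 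Thus $\Gamma_0$ carries a family of $k\sbb X$-algebra structures $R_a\subseteq Z(\Gamma_0)$, each reducing modulo its uniformiser to some $\mathcal Q(3K)^{a}$. For $a,b\in\Z_{\geq 2}^3$ I would apply Lemma~\ref{lemma pic} with $\bar\Lambda=\mathcal Q(3K)^{a}$, $R=R_a$ and $S=R_b$ (the two copies of $k\sbb X$ in $Z(\Gamma_0)$ alluded to at the start of this section); the bulk of the hypotheses \ref{assumption lambda 1}--\ref{assumption lambda 5}, \ref{assumprion a1}, \ref{assumption centre aut lifts}--\ref{assumption decomp determined} is already checked in Remark~\ref{remark structure q3k}, which describes $\Lambda_a$, its centre, its Grothendieck group, the map $D_{\Lambda_a}$ and the self-isometry group of $\Gro(A)$ (with $A=K\otimes_{R_a}\Gamma_0$) explicitly. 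The assumptions \ref{assumption ex gamma}--\ref{assumption autl} comparing the $K$- and $L$-linear automorphisms of $Z(A)$ (with $L=\operatorname{Frac}(R_b)$) I would obtain from the same explicit description: the root-of-unity rescalings of the totally ramified factors $k\sqq{X^{1/m'_{\alpha_i}}}$ of $Z(A)$ that are $K$-linear but move the lattice $Z(\Lambda_a)$ are not $L$-linear, and symmetrically, so the two lattices together pin down the centre enough. This gives $\TrPic_k(\mathcal Q(3K)^{a}) = \Pic_k(\mathcal Q(3K)^{a})\cdot\bar{\mathcal H}$ with $\mathcal H$ the preimage of $\Isom(\Gro(A))$ in $\TrPic_{R_b}(\Gamma_0)$, and by the proof $\mathcal H\subseteq\TrPic_{R_a}(\Gamma_0)$ as well.

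Next I would strengthen this via Lemma~\ref{lemma extra structure}. Hypothesis \ref{assumption B1} holds because $\mathcal Q(3K)^{a}$ is silting-discrete by Proposition~\ref{prop silting discrete}, hence silting-connected. For \ref{assumption B2} I would deduce $\PicS_{R_b}(\Lambda_a)=1$ from $\Picent(\Lambda_a)=1$ (Proposition~\ref{prop picent}): a bimodule fixing every indecomposable projective acts trivially on $\Gro(A)$, so by Lemma~\ref{lemma action centre} (with $C$ the identity) it fixes each line $\langle s_i\rangle$, and the residual freedom on the ramified centre components that is compatible with both $R_a$- and $R_b$-linearity collapses, leaving only central automorphisms, which are trivial by Proposition~\ref{prop picent}. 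For \ref{assumption B3} and \ref{assumption b4}--\ref{assumption b5} I would use the description of $\Out$ of $\mathcal Q(3K)^{a}$ together with the symmetries of the quiver $Q_K$: the graph symmetries realising the image of $\Pic_k(\mathcal Q(3K)^{a})$ in $\Aut(\Gro)$ lift to automorphisms of $\Gamma_0$ that are simultaneously $R_a$- and $R_b$-linear, and the ``$\soc$-scaling'' part of $\Picent$ is visible on $Z(\bar\Lambda)$, giving the transversality of the images in $\Aut_k(Z(\bar\Lambda))$. Lemma~\ref{lemma extra structure}(2)--(3) then yields $\TrPic_k(\mathcal Q(3K)^{a}) = \PicS_k(\mathcal Q(3K)^{a})\rtimes\mathcal H$ with $\mathcal H\cong\bar{\mathcal H}$, together with $\TrPicent(\mathcal Q(3K)^{a}) = \Ker(\mathcal H\to\Aut_k(Z(\mathcal Q(3K)^{a})))$, and the homomorphism $\mathcal H\to S_7$ of the statement is the composite of $\mathcal H\xrightarrow{\sim}\bar{\mathcal H}\hookrightarrow\TrPic_{R_b}(\Gamma_0)\to\Aut(\Gro(A))$ with the action on central primitive idempotents.

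It remains to see that $\mathcal H$ depends only on $I=[a]$ and that its kernel to $\Aut_k(Z)$ is independent of $a$ altogether. Here I would argue intrinsically inside $\TrPic_{\Z}(\Gamma_0)$, using the embeddings of Remark~\ref{remark detect trpic linearity}: $\mathcal H$ is the subgroup of $\TrPic_{R_b}(\Gamma_0)$ of complexes acting on $\Gro(A)\cong\Z^7$ by isometries of the form $\operatorname{diag}(1,1,1,1,a_1-1,a_2-1,a_3-1)$ that preserve $\Im D$ for the matrix $D$ of \eqref{eqn matrix D}. By Remark~\ref{remark structure q3k}(10)--(11) this isometry group, and the permutation group of idempotents in \eqref{eqn group sigma} underlying the centre-preserving automorphisms, depend on the Gram matrix only through which of the $a_i-1$ coincide and which equal $1$, i.e.\ exactly through $I$; one can also put the pair $(R_b\subseteq Z(\Gamma_0),\ D)$ into a normal form depending only on $I$ (by conjugating by an automorphism of $\Gamma_0$). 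Assembling $\mathcal H$ from this data then produces one group $\mathcal H_I$ for each class, proving part~(1). For $\TrPicent$ one observes, via the explicit action on the socle elements $s_i$ in Lemma~\ref{lemma action centre}, that every extra idempotent permutation occurring when more of the $a_i$ agree acts non-trivially on $Z(\mathcal Q(3K)^{a})$, so $\Ker(\mathcal H\to\Aut_k(Z))$ is the same across all classes, giving the single group $\mathcal G$ of part~(2).

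The main obstacle is the last step: separating $\mathcal H$ from the choice of $a$. Every ingredient of its definition except $\Gamma_0$ itself — the base ring $R_b$, the algebra $A$, the lattice $Z(\Lambda_a)\subseteq Z(A)$, the decomposition map — varies with the multiplicities, and while the Gram matrix and the permutation group \eqref{eqn group sigma} manifestly depend only on $I$, upgrading ``same isometry group'' to ``isomorphic $\mathcal H$'' requires producing, for $a\sim b$, an honest isomorphism between the relevant slices of $\TrPic_{\Z}(\Gamma_0)$, which rests on the delicate rigidity of the embeddings $R_a\hookrightarrow Z(\Gamma_0)$ and of the lattices $Z(\Lambda_a)$ inside the ramified centre $Z(A)$. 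A secondary but genuinely fiddly point is the careful verification of \ref{assumption B2}--\ref{assumption b5}, since $\PicS$ and $\Picent$ of $\Gamma_0$ and of its various reductions must be tied together using Proposition~\ref{prop picent} and Lemma~\ref{lemma action centre}.
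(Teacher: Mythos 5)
Your outline follows the paper's strategy in broad strokes (lift to the ring $\Gamma_0$ carrying several $k\sbb{X}$-structures, run Lemmas~\ref{lemma pic} and~\ref{lemma extra structure}, outsource most hypotheses to Remark~\ref{remark structure q3k} and Propositions~\ref{prop picent} and~\ref{prop silting discrete}), but there is a genuine gap at exactly the point you flag as "the main obstacle", and your proposed choice of $S$ makes some hypotheses of Lemma~\ref{lemma pic} actually fail. You take $S=R_b$ for an arbitrary $b\sim a$. Then $L$-linear automorphisms of $Z(A)\cong k\sqq{X}^{\oplus 7}$ include root-of-unity rescalings of the ramified factors whenever $b_i-1$ has a prime factor different from $2$: for example with $a=(5,5,5)$ and $b=(4,4,4)$ (which are equivalent under $\sim$), the automorphism of the fifth component sending its uniformiser $X$ to $\zeta_3 X$ fixes $X^{b_1-1}=X^{3}$, hence is $L$-linear, and it acts trivially on $\Gro(A)$; but it does not fix $X^{a_1-1}=X^{4}$, so it is not $K$-linear. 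This violates assumption~\ref{assumption autl} (and the same phenomenon undermines \ref{assumption center s stable}, whose verification needs an $L$-linear automorphism to be determined by the permutation of central idempotents it induces). Your remark that the "rescalings that are $K$-linear but move the lattice are not $L$-linear, and symmetrically" is not enough: the problematic elements are $L$-linear, trivial on $\Gro(A)$, and non-$K$-linear, and no lattice argument is offered that excludes them. Likewise, your plan to recover independence of $\mathcal H$ from $a$ by putting $(R_b\subseteq Z(\Gamma_0),D)$ into a "normal form" inside $\TrPic_{\Z}(\Gamma_0)$ is asserted, not proved.

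The paper resolves both issues with one device that is absent from your proposal: instead of an arbitrary $R_b$, it takes as second structure the order $\Gamma(Q,f,\mm,\cc,\tt,\emptyset;\mm'')$ with the canonical multiplicity function $m''_{\alpha_i}=1$ if $a_i=2$ and $m''_{\alpha_i}=2$ otherwise, so that $S$ depends only on the class $I$ (via $I_2=\{i\mid a_i=2\}$). Because $\operatorname{char}(k)=2$, the degree-two extensions $k\sqq{X}/k\sqq{X^2}$ are purely inseparable, so $\Aut_L(Z(A))\cong S_{4+|I_2|}\times S_{3-|I_2|}$ has trivial kernel on $\Gro(A)$; this is what makes \ref{assumption autl}, \ref{assumption aut intersect} and \ref{assumption center s stable} go through, and it simultaneously makes $\mathcal H\leq\TrPic_S(\Lambda)$ manifestly depend only on $I$ (the isometry condition depends only on the coincidence pattern of the $a_i-1$, and $S$ itself only on $I_2$), so no normal-form or rigidity argument is needed. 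With that choice in place, your remaining steps (silting-discreteness for \ref{assumption B1}, $\Picent(\Lambda)=1$ for \ref{assumption B2}, quiver symmetries for \ref{assumption B3}, socle/Grothendieck-group analysis via Lemma~\ref{lemma action centre} for \ref{assumption b4}--\ref{assumption b5}, and $\mathcal G=\TrPicent(\Lambda)$, which is ring-intrinsic) do match the paper's argument, though your verifications of \ref{assumption B2} and \ref{assumption b4}--\ref{assumption b5} are sketched rather than carried out.
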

\begin{proof}
	Let $Q$, $f$, $\mm$, $\tt$  be as in Proposition~\ref{prop semimimple q3k}, set $c_\alpha=1$ for all $\alpha\in Q_1$, and let  $\mm':\ Q_1/\langle g \rangle \longrightarrow \Z_{>0}$ take values $a_1-1$, $a_2-1$, and $a_3-1$ on the $g$-orbits of $\alpha_1$, $\alpha_2$ and $\alpha_3$. Consider the $k\sbb{X}$-order $\Lambda=\Gamma(Q,f,\mm, \cc, \tt, \emptyset; \mm')$ constructed in Proposition~\ref{prop lift weighted surface}. We know that $\bar{\Lambda}=\Lambda/X\Lambda \cong \mathcal Q(3K)^{a_1,a_2,a_3}$. On top of that, define  $I_2=\{ 1\leq i \leq 3 \ | \ a_i=2 \}$ and a function $\mm'':\ Q_1/\langle g \rangle \longrightarrow \Z_{>0}$ such that $m''_{\alpha_i}=1$ if $i\in I_2$ and  $m''_{\alpha_i}=2$ if $i\not\in I_2$. We set $\Lambda'=\Gamma(Q,f,\mm, \cc, \tt, \emptyset; \mm'')$. Note that by Proposition~\ref{prop multiplicity independence} we have $\Lambda\cong \Lambda'$ as rings. What is even more, we are in the case of Proposition~\ref{prop multiplicity independence}~\eqref{prop muly indep part 1}, which means that both $\Lambda$ and $\Lambda'$ are isomorphic as rings to the pullback
	\begin{equation}\label{eqn pullback q3k ring}
		\xymatrix{
		\frac{\widehat{k[X]Q}}{\phantom{.}\overline{(\alpha f(\alpha) - X\bar \alpha,\
			\alpha g(\alpha)-\bar \alpha g(\bar{\alpha}) | \ \alpha \in Q_1)}\phantom{.}} \ar@{->>}[rr]^{X \mapsto -\sum_{i\in I_2} \alpha_i g(\alpha_i) + g(\alpha_i) \alpha_i} && \frac{\widehat{kQ}}{\phantom{.}\overline{(\alpha f(\alpha),\
			\alpha g(\alpha)-\bar \alpha g(\bar{\alpha}) \ | \ \alpha \in Q_1)} \phantom{.}}\\
		\Gamma	 \ar[u]\ar[rr] && \frac{\widehat{kQ}}{\phantom{.}\overline{(\alpha f(\alpha) \ | \  \alpha \in Q_1)}{\phantom{.}}} \ar@{->>}[u]_{\nu}
		}
	\end{equation}
	and both for $\Lambda$ and $\Lambda'$ the pullback diagram of $k\sbb{X}$-algebras given in equation~\eqref{eqn pullback gamma} is isomorphic to the pullback diagram above by applying the identity to the top left entry, and mapping paths in $\widehat{kQ}$ to themselves in the top and bottom right corners of the diagram. That fixes a ring isomorphism between $\Lambda'$ and $\Lambda$. If we set $A=k\sqq{X}\otimes_{k\sbb{X}} \Lambda$, we can identify (as rings only)
	\begin{equation}\label{eqn identify ssss}
		Z(A) \cong  k\sqq{X}^{\oplus 7}
	\end{equation}
	similar to  Remark~\ref{remark structure q3k}~\eqref{point iso centre}. Then the image of  $X\in Z(\Lambda)$ in the right hand side of equation~\eqref{eqn identify ssss} is $X_\Lambda = (X,X,X,X,X^{m'_{\alpha_1}}, X^{m'_{\alpha_2}}, X^{m'_{\alpha_3}})$, and that of $X\in Z(\Lambda')$ is $X_\Lambda'=(X,X,X,X,X^{m''_{\alpha_1}}, X^{m''_{\alpha_2}}, X^{m''_{\alpha_3}})$. Set $R=k\sbb{X_\Lambda}$ and  $S=k\sbb{X_{\Lambda'}}$, both contained in the centre of $\Lambda$, and define $K$ and $L$ as their respective fields of fractions. Instead of working with $\Lambda$ and $\Lambda'$, we can now simply consider $\Lambda$ either as an $R$-order or as an $S$-order.  We will now check that the assumptions of Lemma~\ref{lemma pic} are satisfied for $\Lambda$, $R$ and $S$. Note that we have already checked the assumptions~\ref{assumprion a1}--\ref{assumption ex gamma} in Remark~\ref{remark structure q3k}.

	By the definition of $\mm''$ we have
	\begin{equation}
		\Aut_L(Z(A)) = \Aut_{k\sqq{X}}(k\sqq{X}^{\oplus (4+|I_2|)}) \times  \Aut_{k\sqq{X^2}}(k\sqq{X}^{\oplus (3-|I_2|)}) \cong S_{4+|I_2|} \times S_{3-|I_2|},
	\end{equation}
	where we are using the fact that $\operatorname{char}(k)=2$ and therefore $\Aut_{k\sqq{X^2}}(k\sqq{X})=1$. In particular, assumption~\ref{assumption autl} of Lemma~\ref{lemma pic} is satisfied.

	Any $\gamma\in \Aut_K(Z(A))$ induces a permutation $\sigma \in S_7$ which fixes $X_{\Lambda}$. This is because any two entries of $X_\Lambda\in Z(A)=k\sqq{X}^{\oplus 7}$  are either equal or they generate (complete) subfields of different index in $k\sqq{X}$. In particular, $\sigma$ acting by permutation on the components of  $Z(A)$ also induces an element of $\Aut_K(Z(A))$, which in turn induces the same automorphism of $\Gro(A)$ as $\gamma$.  Now, by definition of $\mm''$, any $\sigma \in S_7$ which fixes $X_{\Lambda}$ also fixes $X_{\Lambda'}$, and therefore the $\sigma$ from before acting on $Z(A)$ by permutation is also an $L$-linear automorphism. We have thus found an element of $\Aut_L(Z(A))\cap \Aut_K(Z(A))$ inducing the same automorphism of $\Gro(A)$ as $\gamma$, which proves that assumption~\ref{assumption aut intersect} of Lemma~\ref{lemma pic} is satisfied.

	Since any self-isometry of $\Gro(A)$, where $A$ is considered as a $K$-algebra, remains a self-isometry when we consider $A$ as an $L$-algebra, and since in Remark~\ref{remark structure q3k} we  have effectively also verified assumption~\ref{assumption ex gamma} for $\Lambda$ considered as an $S$-algebra, it follows that in the situation of assumption~\ref{assumption center s stable} there always exists a $\gamma' \in \Aut_L(Z(A))$ inducing the desired permutation of central primitive idempotents such that $\gamma'(Z(\Lambda)) \subseteq Z(\Lambda)$. Now  assumption~\ref{assumption center s stable}  asks for any  $\gamma \in \Aut_L(Z(A))$ inducing the same permutation of central primitive idempotents as $\gamma'$ to also satisfy $\gamma(Z(\Lambda)) \subseteq Z(\Lambda)$. But from our verification of assumption~\ref{assumption autl} it follows that $\gamma$ is fully determined by the permutation it induces on the central primitive idempotents, which means that $\gamma=\gamma'$. This shows that  assumption~\ref{assumption center s stable}  of Lemma~\ref{lemma pic} is satisfied.

	Let us now also verify assumptions~\ref{assumption B1}--\ref{assumption b5} of Lemma~\ref{lemma extra structure}. The algebra $\bar{\Lambda} \cong\mathcal Q(3K)^{a_1,a_2,a_3}$ is silting-discrete by Proposition~\ref{prop silting discrete}, which implies assumption~\ref{assumption B1}. If an element of $\Pic_S(\Lambda)$ acts trivially on $\Gro({\Lambda})$, then it must also act trivially on  $\Gro(A)$ by Remark~\ref{remark structure q3k}~\eqref{point self isom}, since no self-isometry fixes the first three simple $A$-modules and induces a non-trivial permutation on the others. Hence an element of $\Pic_S(\Lambda)$ acting trivially on $\Gro({\Lambda})$ lies in $\Picent(\Lambda)$, which is trivial by Proposition~\ref{prop picent}. Assumption~\ref{assumption B2} follows. By definition of $\Lambda$ there is an automorphism (both $R$-linear and $S$-linear) inducing the same permutation on $\Gro(\bar \Lambda)$ as some $\sigma\in S_3$ if and only if $\sigma$ stabilises $(a_1,a_2,a_3)$, and the same is true for automorphisms of $\bar{\Lambda}$. This implies assumption~\ref{assumption B3}.

	Since the $S$-algebra structure on $\Lambda$ depends only on $I_2$ as defined at the beginning of the proof, the group ${\mathcal H\leq \TrPic_S(\Lambda)}$ from Lemma~\ref{lemma pic} depends only on $I$ (as $I$ determines which elements of $\Aut(\Gro(A))$ are isometries, and it determines $I_2$). It therefore makes sense to denote $\mathcal H$ by $\mathcal H_I$. Now, any element of $\PicS_k (\bar{\Lambda}) $ can be represented by an automorphism  $\gamma$ of $\bar{\Lambda}$ which fixes $e_1$, $e_2$ and $e_3$. Since each $e_i\bar{\Lambda}e_j$ is spanned by the paths along the $g$-orbit of the unique arrow from $e_i$ to $e_j$, it follows that each $\alpha \in Q_1$ gets mapped to $\gamma(\alpha)=q(\alpha) \alpha$ for some unit $q(\alpha) \in k[\alpha g(\alpha)]$.  Conjugation by $u=q(\alpha_1)e_1+q(\alpha_2)e_2+q(\alpha_3)e_3$ maps the arrow $\alpha_i$ to $r_i q(\alpha_i)\alpha_i$ for $i\in\{1,2,3\}$, where $r_1,r_2,r_3\in k^\times$ are constants. By further conjugation we can find an inner automorphism which maps $\alpha_1$ and $\alpha_2$ to $\gamma(\alpha_1)$ and $\gamma(\alpha_2)$, respectively, and $\alpha_3$ to $r\gamma(\alpha_3)$, where $r=r_1r_2r_3$. That is, we may assume without loss of generality that  $\gamma(\alpha_1)=\alpha_1$, $\gamma(\alpha_2)=\alpha_2$ and $\gamma(\alpha_3)=r^{-1}\alpha_3$ for some constant $r\in k^\times$. If we now assume that $\gamma$ restricts to the trivial automorphism of $Z(\bar{\Lambda})$, then  $\gamma(\alpha_1\alpha_2\alpha_3)=\alpha_1\alpha_2\alpha_3$, and therefore $r=1$. That is, $\gamma(\alpha_i)=\alpha_i$ for all $i\in\{1,2,3\}$. Moreover, again assuming that $\gamma$ is trivial on the centre, for an $\alpha\in\{\alpha_1,\alpha_2,\alpha_3\}$ we have $\gamma(\alpha g(\alpha) +  g(\alpha)\alpha)=\alpha g(\alpha) +  g(\alpha)\alpha$ and therefore  $\gamma(\alpha g(\alpha))=\alpha g(\alpha)$. Given that $\gamma(\alpha)=\alpha$, it follows that $g(\alpha)-\gamma(g(\alpha))$ is a linear combination of paths of the form $(g(\alpha)\alpha)^ig(\alpha)$  for $i\in \Z_{\geq 0}$ which annihilates $\alpha$. This is only possible if $g(\alpha)-\gamma(g(\alpha))$ is zero, since otherwise there would be an $i$ such that $(g(\alpha)\alpha)^ig(\alpha)\neq 0$ but $(\alpha g(\alpha))^{i+1}= 0$, which we know is not the case in generalised weighted surface algebras. Hence we have seen that if $\gamma\in \PicS_k(\bar{\Lambda})$ restricts to the identity automorphism on $Z(\bar{\Lambda})$, then $\gamma =1$. That is, the map $\PicS_k(\bar{\Lambda})\longrightarrow \Aut_k(Z(\bar{\Lambda}))$ is injective, so in particular condition~\ref{assumption b4} holds.

	To verify condition~\ref{assumption b5}  first recall that by Remark~\ref{remark structure q3k}~\eqref{point basis centre}~and~\eqref{point self isom} the image of $\mathcal H_I$  in $\Aut_S(Z(\Lambda))\subseteq \Aut_L(Z(A))\subseteq S_7$ is contained in
	\begin{equation}
		\langle (1,2,3)(5,6,7),\ (1,2)(6,7),\ (1,2)(3,4) \rangle_{(1,1,1,1,m'_{\alpha_1},m'_{\alpha_2}, m'_{\alpha_3})}.
	\end{equation}
	One can also compute the images of the elements of $\mathcal H_I$ in $\Aut(\F_2\otimes_\Z \Gro(\bar{\Lambda}))$, since these are determined by  the self-isometries of $\Gro(A)$ these elements induce (and Remark~\ref{remark structure q3k}~\eqref{point self isom} gives a list of possible self-isometries). One can check that an element of $\mathcal H_I$ acts trivially on $Z(\Lambda)$ if and only if it acts trivially  on $\F_2\otimes_\Z \Gro(\bar{\Lambda})$. The assumptions of Lemma~\ref{lemma action centre} are satisfied, and therefore it follows that if an element of $\mathcal H_I$ acts trivially on $\F_2\otimes_\Z \Gro(\bar{\Lambda})$ then it stabilises the subspaces $e_i\soc(\bar{\Lambda})e_i\subset Z(\bar{\Lambda})$ for all $i\in\{1,2,3\}$. Conversely, if an element of $\mathcal H_I$ stabilises these subspaces, then it has no choice but to act trivially on $\F_2\otimes_\Z \Gro(\bar{\Lambda})$, and therefore also on $Z(\Lambda)$. This is again by Lemma~\ref{lemma action centre}, but note that in characteristic $p$ it would only follow that each distinguished basis element of $\F_p\otimes_\Z \Gro(\bar{\Lambda})$ gets mapped to a non-zero multiple of itself, which in characteristic $p=2$ happens to be sufficient.

	An element of $\PicS_k(\bar\Lambda)$ is induced by an automorphism $\gamma\in \Aut_k(\bar{\Lambda})$ which fixes the idempotents $e_1$, $e_2$, and $e_3$, which shows that $\gamma(e_i\soc(\bar{\Lambda})e_i)=e_i\soc(\bar{\Lambda})e_i$ for all $i\in\{1,2,3\}$. It follows that if an element of $\PicS_k(\bar{\Lambda})$, represented by some $\gamma\in \Aut_k(\bar{\Lambda})$ which fixes all $e_i$, induces the same automorphism of $Z(\bar{\Lambda})$ as an element of $\mathcal H_I$, then that element of $\mathcal H_I$ induces the identity on $Z(\bar{\Lambda})$ by the discussion in the previous paragraph. It follows that condition~\ref{assumption b5}  holds.

	We now know by Lemma~\ref{lemma extra structure} that
	\begin{equation}
		\TrPic_k(\bar{\Lambda}) = \PicS_k(\bar{\Lambda}) \rtimes \mathcal H_I,
	\end{equation}
	where the action of $\mathcal H_I$ on $\PicS_k(\bar{\Lambda})$ has  $\TrPicent(\Lambda)$ in its kernel and therefore factors through the natural map  $\mathcal H_I \longrightarrow \Aut_S(Z(\Lambda)) \leq \Aut_L(Z(A))\leq S_7$.  We also know by Lemma~\ref{lemma extra structure} that $\TrPicent(\bar{\Lambda}) = \Ker(\mathcal H_I \longrightarrow \Aut_k(Z(\bar{\Lambda})))$, and by the preceding discussion regarding \ref{assumption b5} this kernel is equal to $\TrPicent(\Lambda)$. So we can set $\mathcal G=\TrPicent(\Lambda)$. This completes the proof.
\end{proof}

\begin{remark}
	In the situation of the preceding theorem, the action of $\mathcal H_I$ on $\PicS_k(\bar{\Lambda})$ can be determined explicitly. Namely, the image of the map $\mathcal H_I\longrightarrow \Aut_S(Z(\Lambda))\leq S_7$ is contained in $\langle (1,2,3)(5,6,7),\ (1,2)(5,6),\ (1,2)(3,4) \rangle$ by Remark~\ref{remark structure q3k}~\eqref{point self isom}. Clearly $\Pic_S(\Lambda)\cap\Pic_R(\Lambda)$ is contained in $\mathcal H_I$, which acts on $\bar \Lambda$ by permutation of vertices and arrows. This explains how the image of $\mathcal H_I$ in $\langle (1,2,3)(5,6,7), (1,2)(5,6) \rangle\cong S_3$ acts on $\PicS_k(\bar{\Lambda})$. But then one has to figure out how the permutation $(12)(34)$ acts on $Z(\bar{\Lambda})$. Of course Remark~\ref{remark structure q3k}~\eqref{point basis centre}  helps with that, but this is less straight-forward.
\end{remark}

\begin{thm}[Twisted Brauer graph algebras]\label{thm Brauer graph}
	Let $k$ be an algebraically closed field, and let $Q$ and $f$ be as in \S\ref{section gen weighted surf}.  Assume that the Brauer graph of $(Q,f)$  in the sense of Definition~\ref{def brauer graph}  is a connected simple graph (i.e. it has no loops and no double edges). Define an equivalence relation ``$\sim$'' on the set $\mathcal I = \{ \mm:\ Q/\langle g \rangle \longrightarrow \Z_{>0} \}$, where $\mm\sim \mm'$ means that $m_\alpha=m_\beta$ if and only if $m'_\alpha=m'_\beta$ for all $\alpha,\beta\in Q_1$. 

	\begin{enumerate}
		\item For each $I\in \mathcal I/\sim$ there is a group $\mathcal H_I$ such that
		      \begin{equation}
			      \TrPic_k (\Lambda_{\rm tw}(Q, f, \mm)) \cong \PicS_k (\Lambda_{\rm tw}(Q, f, \mm)) \cdot  \mathcal H_I
		      \end{equation}
		      for all $\mm \in I$.
		\item There is a group $\mathcal G$ such that
		      \begin{equation}
			      \TrPicent (\Lambda_{\rm tw}(Q, f, \mm)) \cong  \mathcal G
		      \end{equation}
		      for all $\mm \in \mathcal I$ for which $\Lambda_{\rm tw}(Q, f, \mm)$ is symmetric.
	\end{enumerate}
\end{thm}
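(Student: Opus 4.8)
We closely follow the proof of Theorem~\ref{thm trpic q3k}, with the order $\Gamma=\GammaTw(Q,f)$ of Proposition~\ref{prop biserial order} serving as the common lift. The crucial simplification in the present situation is that $\Gamma$, \emph{as a ring}, does not depend on $\mm$ at all: only the central element $z_\mm=\sum_{\alpha\in Q_1}\alpha g(\alpha)\cdots g^{m_\alpha n_\alpha-1}(\alpha)$ used to make $\Gamma$ into a $k\sbb{X}$-order depends on it, and $\Gamma/z_\mm\Gamma\cong\LambdaTw(Q,f,\mm)$. For $\mm\in\mathcal I$ put $R_\mm=k\sbb{z_\mm}\subseteq Z(\Gamma)$; then $\Gamma$ is free over $R_\mm$ and $A:=\operatorname{Frac}(R_\mm)\otimes_{R_\mm}\Gamma$ is, by Proposition~\ref{prop biserial order}, the semisimple algebra $\prod_{\alpha\langle g\rangle}M_{n_\alpha}\big(k\sqq{\pi_\alpha}\big)$, where $\pi_\alpha$ is a uniformiser of the $\alpha$-component with $\pi_\alpha^{m_\alpha}=z_\mm$. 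Inverting $z_\mm$ and $z_{\mm'}$ in turn shows that $A$ and its centre $Z(A)=\prod_\alpha k\sqq{\pi_\alpha}$ are in fact \emph{independent} of $\mm$; only the subfield $K:=\operatorname{Frac}(R_\mm)$ of $Z(A)$ varies with $\mm$. For each class $I\in\mathcal I/{\sim}$ we fix once and for all the auxiliary discrete valuation ring $S:=k\sbb{z_{\mm_0}}$, where $\mm_0\equiv 1$ is the trivial multiplicity (so $S$ does not even depend on $I$). With $\Lambda=\Gamma$, $R=R_\mm$ and this $S$ we apply Lemma~\ref{lemma pic} and Lemma~\ref{lemma extra structure}: the group $\mathcal H$ produced by Lemma~\ref{lemma pic} is the preimage in $\TrPic_S(\Gamma)$ of the isometries of $\Gro(A)$ for the $K$-bilinear form, whose Gram matrix is $\operatorname{diag}(m_\alpha)$; its isometry group, hence $\mathcal H$, depends only on which of the $m_\alpha$ coincide, i.e.\ only on $I$, so we may set $\mathcal H_I:=\mathcal H$ and $\mathcal G:=\TrPicent(\Gamma)$.

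Granting the hypotheses of the two lemmas, Lemma~\ref{lemma pic} gives $\TrPic_k(\LambdaTw(Q,f,\mm))=\Pic_k(\LambdaTw(Q,f,\mm))\cdot\bar{\mathcal H}$, with $\bar{\mathcal H}$ the image of $\mathcal H$. To upgrade $\Pic_k$ to $\PicS_k$, note that every element of $\Pic_k(\LambdaTw(Q,f,\mm))$ is represented by an algebra automorphism, which permutes the idempotents $e_i$ by some automorphism $\sigma$ of the Brauer graph with $\mm\circ\sigma=\mm$; as $\sigma$ lifts to an automorphism of $\Gamma$ fixing both $z_\mm$ and $z_{\mm_0}$, this automorphism lies in $\Pic_S(\Gamma)\cap\Pic_R(\Gamma)\subseteq\mathcal H$ and realises the same permutation of projectives, so modulo it we are left with an element of $\PicS_k(\LambdaTw(Q,f,\mm))$. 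Hence $\TrPic_k(\LambdaTw(Q,f,\mm))=\PicS_k(\LambdaTw(Q,f,\mm))\cdot\bar{\mathcal H}$; this argument does not require silting-connectedness, which is why assumption~\ref{assumption B1} is not needed and the multiplicities are unrestricted in part~(1). Assumption~\ref{assumption B2}, i.e.\ $\Picent(\Gamma)=1$ (the analogue of Proposition~\ref{prop picent}), then gives $\bar{\mathcal H}\cong\mathcal H=\mathcal H_I$, proving~(1). For~(2) we additionally invoke assumptions~\ref{assumption b4}~and~\ref{assumption b5} of Lemma~\ref{lemma extra structure} to obtain $\TrPicent(\LambdaTw(Q,f,\mm))=\Ker\big(\bar{\mathcal H}\to\Aut_k(Z(\LambdaTw(Q,f,\mm)))\big)\cong\TrPicent(\Gamma)=\mathcal G$, again independently of $\mm$.

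It remains to verify the hypotheses, for which one needs an explicit description of $\Gamma$, $Z(\Gamma)$ and $D_\Gamma$ playing the role of Remark~\ref{remark structure q3k}; the matrix units $e(\alpha)_{i,j}$ and the centre computation in the proof of Proposition~\ref{prop biserial order} provide the starting point. Assumption~\ref{assumption centre aut lifts} holds because $A$ is Morita equivalent to its commutative centre. The choice $\mm_0\equiv 1$ makes the remaining centre-theoretic hypotheses cheap: over $L:=\operatorname{Frac}(S)$ every component $k\sqq{\pi_\alpha}$ of $Z(A)$ has degree one, so $\Aut_L(Z(A))$ is just the group of permutations of the components and has no field automorphisms; hence the kernel of $\Aut_L(Z(A))\to\Aut(\Gro(A))$ is trivial (giving~\ref{assumption autl}), its image coincides with that of $\Aut_K(Z(A))$ (giving~\ref{assumption aut intersect}), and any $\gamma\in\Aut_L(Z(A))$ inducing a prescribed permutation of central primitive idempotents is uniquely determined, which reduces~\ref{assumption center s stable} to~\ref{assumption ex gamma}. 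Assumptions~\ref{assumption lambda 1}--\ref{assumption lambda 5} of Lemma~\ref{lemma unique lifting} are checked by path combinatorics in a twisted Brauer graph algebra whose graph is connected and simple:~\ref{assumption lambda 1} because between two distinct vertices of $Q$ there is, up to central ``rotation'' elements, a single path (absence of double edges leaves one route around the common vertex of the Brauer graph);~\ref{assumption lambda 3}~and~\ref{assumption lambda 4} because the relevant triple products are governed by the incidence of three edges at a common vertex;~\ref{assumption lambda 2} from surjectivity of $Z(\Gamma)$ onto $Z(\LambdaTw(Q,f,\mm))$, read off by matching a basis of $Z(\Gamma)$ with the known basis of the centre of a twisted Brauer graph algebra; and~\ref{assumption lambda 5} because any admissible permutation of vertices is induced by a Brauer graph automorphism and hence by an automorphism of $\Gamma$. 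Assumptions~\ref{assumption decomp determined}~and~\ref{assumption ex gamma} are the combinatorial heart: since the graph is connected and simple, $D_\Gamma$ has entries in $\{0,1\}$ with exactly two non-zero entries per column (the endpoints of the corresponding edge) and pairwise distinct columns, from which one shows that $\Im D_\Gamma$ recovers the incidence structure of the Brauer graph up to isomorphism, giving both the rigidity in~\ref{assumption decomp determined} and the fact that the permutation part of any self-isometry of $\Gro(A)$ preserving $\Im D_\Gamma$ is a Brauer graph automorphism, which lifts to an automorphism of $\Gamma$ and so, restricted to $Z(A)$, to a permutation of components preserving $Z(\Gamma)$, giving~\ref{assumption ex gamma}. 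Finally~\ref{assumption B3} was treated above, and~\ref{assumption b4},~\ref{assumption b5} follow — when $\LambdaTw(Q,f,\mm)$ is symmetric, so that $\soc(Z(\LambdaTw(Q,f,\mm)))=\soc(\LambdaTw(Q,f,\mm))$ and the $e_i\soc(\LambdaTw(Q,f,\mm))e_i$ are one-dimensional — from the facts that an element of $\PicS_k(\LambdaTw(Q,f,\mm))$ fixing the $e_i$ preserves each $e_i\soc(\LambdaTw(Q,f,\mm))e_i$, while the automorphism of $Z(\LambdaTw(Q,f,\mm))$ induced by an element of $\mathcal H_I$ is, by the $\mm_0\equiv 1$ choice, the reduction of a permutation of the $\pi_\alpha$ coming from a Brauer graph automorphism and hence moves some $e_i\soc(\LambdaTw(Q,f,\mm))e_i$ unless it fixes all vertices.

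The main obstacle, exactly as in the quaternion case, is to assemble the explicit description of $\Gamma$, $Z(\Gamma)$ and $D_\Gamma$ in enough generality — valid for \emph{every} connected simple Brauer graph — to push through~\ref{assumption decomp determined},~\ref{assumption ex gamma} and~\ref{assumption B2}, and in particular to pin down precisely which self-isometries of $\Gro(A)$ lift to central automorphisms preserving $Z(\Gamma)$. This is where the no-loops/no-double-edges hypothesis is indispensable, and one should expect that a few small or highly symmetric graphs will have to be examined by hand.
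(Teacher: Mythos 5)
Your overall strategy is the same as the paper's (the ribbon graph order $\GammaTw(Q,f)$ as common lift, $S$ given by the trivial multiplicity function, Lemmas~\ref{lemma unique lifting}, \ref{lemma pic} and \ref{lemma extra structure}, and the same direct argument replacing \ref{assumption B1} to upgrade $\Pic_k$ to $\PicS_k$), but several of the steps you declare or sketch are wrong or missing. Most concretely, the claimed analogue of Proposition~\ref{prop picent}, namely $\Picent(\GammaTw(Q,f))=1$, is false in general: for the triangle graph (the $\mathcal D(3K)$ case) there is a one-parameter family of central automorphisms of the lift acting trivially on $\Gro$, which is exactly why the remark after the theorem notes that condition~\ref{assumption B2} \emph{fails} there. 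So your route to $\bar{\mathcal H}\cong\mathcal H_I$ via \ref{assumption B2} collapses; the paper instead proves directly that the kernel of $\Out_S(\Lambda)\cap\Out_R(\Lambda)\longrightarrow \Out_k(\bar{\Lambda})$ is trivial, by an explicit computation with conjugating units whose entries lie in $1+X^{1/m_\alpha}k\sbb{X^{1/m_\alpha}}$ (a statement genuinely weaker than $\Picent=1$). Relatedly, your justification of \ref{assumption lambda 2} by surjectivity of $Z(\Gamma)\to Z(\LambdaTw(Q,f,\mm))$ fails whenever the Brauer graph contains a cycle (compare ranks of the centres); one must use the second alternative of \ref{assumption lambda 2}, via the explicit maximal-order description of $e\Lambda e$. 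For the same reason your final identification $\TrPicent(\LambdaTw(Q,f,\mm))\cong\TrPicent(\Gamma)=\mathcal G$ is unjustified: acting trivially on $Z(\Lambda)$ does not control the action on the larger algebra $Z(\bar{\Lambda})$, and the paper only identifies $\TrPicent(\bar{\Lambda})$ with $\Ker\big(\TrPicent(\Lambda)\to\Aut_k(k\otimes_\Z\Gro(\Lambda))\big)$, using Lemma~\ref{lemma action centre} — this is precisely where the symmetry hypothesis enters, and why $\mathcal G$ is taken to be that kernel rather than all of $\TrPicent(\Gamma)$.

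The second, and largest, gap is the verification of \ref{assumption decomp determined} (and, to a lesser extent, \ref{assumption lambda 5}). Its hypothesis concerns an \emph{unknown} $R$-order $\Gamma'$ derived equivalent to $\Lambda$ with $\Gamma'/X\Gamma'\cong\bar{\Lambda}$; nothing a priori says that its decomposition map has entries in $\{0,1\}$ with exactly two nonzero entries per column, or that the resulting incidence data can be matched with that of $\Lambda$ compatibly with the given isomorphism of reductions. Establishing exactly this is the technical heart of the paper's proof: one restricts to idempotents supported on spanning trees, uses the derived equivalence of Brauer tree algebras with Brauer star algebras, lifts the corresponding one-sided tilting complexes to both orders, compares the two lifted "star" orders by a pigeonhole argument on Grothendieck groups, and then invokes Hemminger's refinement of Whitney's line-graph theorem (twice — also for \ref{assumption lambda 5}) to produce an honest graph isomorphism, which yields the isometries $\iota_1,\iota_2$. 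Your proposal flags this as "the main obstacle" but supplies no argument for it, so as it stands the proof is incomplete at its most essential point.
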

\begin{proof}
	Define $\mm':\ Q_1/\langle g \rangle \longrightarrow \Z_{>0}$ by setting $m'_\alpha=1$ for all $\alpha\in Q_1$. Then $\GammaTw(Q,f,\mm)$ and $\GammaTw(Q,f,\mm')$ are isomorphic as rings, but they carry different  $k\sbb{X}$-algebra structures (this comes directly from Proposition~\ref{prop biserial order}). Set $\Lambda=\GammaTw(Q,f,\mm)$, where $R=k\sbb{X}$ acts as defined in Proposition~\ref{prop biserial order}. This becomes an $S=k\sbb{X}$-order by letting $X$ act as $\sum_{\alpha\in Q_1} \alpha g(\alpha)\cdots g^{n_\alpha-1}(\alpha)$, and  $\Lambda$ is isomorphic to $\GammaTw(Q,f,\mm')$ as an $S$-order.  Let us also define $K$ and $L$ as in Lemma~\ref{lemma pic}, and set $A= K\otimes _R \Lambda$. We should mention that we will exclude the case where the Brauer graph of $(Q,f)$ consists of only a single edge whenever necessary (in that case the assertion follows easily from \cite[Proposition~3.3]{RouquierZimmermann}).

	Let us first verify that $\Lambda$ satisfies the assumptions of Lemma~\ref{lemma unique lifting} as an $R$-order. Note that $\bar{\Lambda}=\Lambda/X\Lambda \cong \LambdaTw(Q,f,\mm)$. Condition~\ref{assumption lambda 1} is clear from the presentation of $\LambdaTw(Q,f,\mm)$. For condition~\ref{assumption lambda 2} we can use the second option: if $e$ is a primitive idempotent in $\Lambda$, and $\alpha$ and $\bar{\alpha}$ are the two arrows whose source is the corresponding vertex in $Q_0$, then $\alpha$ and $\bar \alpha$ lie in different $g$-orbits by our ``no loops''-assumption on the Brauer graph, which implies that  $e\Lambda e$ is isomorphic to $\{ (p,q) \in k\sbb{X} \oplus k\sbb{X} \ | \ p(0)=q(0) \}$ as an $S$-algebra (directly from the presentation of $\GammaTw(Q,f,\mm')$). This is a subspace of $k$-codimension one in $k\sbb{X}\oplus k\sbb{X}$, which is the unique maximal order in $k\sqq{X}\oplus k\sqq{X}$, irrespective of whether we view it as an $R$-order or an $S$-order. This shows that there is no local $R$-order  in $eAe$ properly containing $e\Lambda e$.

	For condition~\ref{assumption lambda 3} pick vertices $e_1,e_2,e_3\in Q_0$ such that  ${e_1\neq e_2}$ and ${e_2\neq e_3}$ (these correspond to edges in the Brauer graph and idempotents in $\Lambda$). If there is no vertex in the Brauer graph of $(Q,f)$ such that the three edges associated with $e_1$, $e_2$ and $e_3$ are incident to that vertex, then $e_1\Lambda e_2 \Lambda e_3=0$ and  condition~\ref{assumption lambda 3} holds for these three idempotents. If there is such a vertex, then it is unique by our assumption on the Brauer graph. By swapping $e_1$ and $e_3$ if necessary we can assume that $e_2$ is between $e_1$ and $e_3$ in the cyclic order around that vertex. Let $\alpha\in Q_1$ be the arrow whose $g$-orbit corresponds to the aforementioned vertex in the Brauer graph and whose source is $e_1$.  It then follows from the presentation of $\Lambda$ that $e_1 \Lambda e_2 \Lambda e_3$ is the completed span of all paths of positive length  from  $e_1$ to $e_3$ along the $g$-orbit of $\alpha$, since any such path passes through $e_2$ anyway. This is a pure sublattice of $\Lambda$, since if $zw$ lies in it for some $w\in \Lambda$ (with $z$ as in Proposition~\ref{prop biserial order}), then $w$ lies in it too. To see this one just has to note that if $w$ involves any paths along other $g$-orbits, even of length zero, then so does $zw$. Condition~\ref{assumption lambda 3} now follows from purity.

	To check condition~\ref{assumption lambda 4}, consider $e_1\neq e_2 \in Q_0$. If the corresponding edges in the Brauer graph do not meet in a vertex, then $e_1A e_2$ is zero and there is nothing to show. If these edges do meet in a vertex, then this vertex is unique by our assumptions on the Brauer graph, and therefore we can assign to $e_1$ and $e_2$ the simple $Z(A)$-module corresponding to that vertex in the Brauer graph (recall that we  parametrised the simple $A$-modules in Proposition~\ref{prop biserial order}).  If we now take pair-wise distinct $e_1, e_2, e_3\in Q_0$, then $e_1Ae_2Ae_3$ is non-zero if and only if the edges in the Brauer graph corresponding to $e_1$, $e_2$ and $e_3$ meet in a single vertex, and the simple $Z(A)$-module belonging to that vertex is by definition the simple $Z(A)$-module we attached to any pair selected from $e_1$, $e_2$ and $e_3$, which verifies \ref{assumption lambda 4}.

	We will deal with \ref{assumption lambda 5} further below, but first we need to have a closer look at the centre of $A$. The assumption \ref{assumption centre aut lifts} is trivially satisfied, since no matrix rings over (proper) skew-fields occur in the Wedderburn decomposition of $A$. The centre of $A$ can be described as
	\begin{equation}\label{eqn centre comp}
		Z(A) = \bigoplus _{\alpha \langle g\rangle\in Q_1/\langle g\rangle} k\sqq{X^{1/m_\alpha}}
	\end{equation}
	where $X\in R$ acts as $X$ on each component, and $X\in S$ acts as $X^{1/m_\alpha}$ on the component labelled by $\alpha\langle g \rangle$. So clearly $\Aut_L(Z(A))$ is the whole symmetric group on the components of the direct sum above, which are labelled by $Q_1/\langle g\rangle$. If, for some $L$-linear automorphism of $Z(A)$, the corresponding permutation of $Q_1/\langle g\rangle$ fixes $\mm$, then this $S$-linear automorphism is also $R$-linear.  This already implies \ref{assumption autl}. Similarly, an element of the group $\Aut_K(Z(A))$ induces a permutation of those components of \eqref{eqn centre comp} which share  the same multiplicity $m_\alpha$,  possibly followed by automorphisms of $k\sqq{X^{1/m_\alpha}}$ as a $k\sqq{X}$-algebra. In particular condition  \ref{assumption aut intersect} holds.

	Now note that the centre of $\Lambda$ is embedded in $Z(A)$ as described in \eqref{eqn centre comp} as follows:
	\begin{equation}\label{eqn z gamma}
		Z(\Lambda) = \left\{  p = (p_\alpha)_\alpha\in  \bigoplus_{\alpha \langle g\rangle\in Q_1/\langle g\rangle} k\sbb{X^{1/m_\alpha}} \subseteq Z(A) \ \bigg| \ p_{\alpha}(0)=p_\beta(0) \textrm{ for all $\alpha,\beta\in Q_1$} \right\}.
	\end{equation}
	Clearly this order is fixed by all elements of $\Aut_K(Z(A))$ and $\Aut_S(Z(A))$, which shows that conditions~\ref{assumption ex gamma}~and~\ref{assumption center s stable} hold.

	A map $\sigma$ as in  \ref{assumption lambda 5} can be interpreted as a permutation of the edges of the Brauer graph of $(Q,f)$, ignoring multiplicities for the time being. To simplify notation, let us regard such a $\sigma$ as a map from $Q_0$ into itself. If $e_1,\ldots,e_r\in Q_0$ (for $r\geq 2$) are pair-wise distinct and correspond to $r$ edges in the Brauer graph incident to some vertex $v$, and without loss of generality are ordered with respect to the cyclic order around $v$, then $e_{1}\bar{\Lambda} e_{2} \bar{\Lambda} \cdots \bar{\Lambda} e_{r}\neq 0$. By assumption on $\sigma$ we then have $\sigma(e_{1})A \sigma(e_{2})A \cdots A \sigma(e_{r})\neq 0$, which is only possible if the edges corresponding to $\sigma(e_{1}),\ldots,\sigma(e_{r})$ are all incident to some vertex $v'$ in the Brauer graph. That is, $\sigma$ induces an automorphism of the \emph{line graph} of the Brauer graph, which moreover preserves \emph{stars} (that is, collections of edges that meet in a single vertex). Since $\sigma$ is an automorphism of a line graph, the map $\sigma^{-1}$ needs to preserve stars as well (as the number of stars in the range of $\sigma$ is equal to the number of stars in the domain). By \cite[Theorem 1]{HemmingerLineGraph} (a variation of Whitney's line graph theorem) $\sigma$ is induced by a graph automorphism of the Brauer graph. The vertices of the Brauer graph correspond to the distinguished basis of $\Gro(A)$, and therefore there is a $\widehat \gamma \in \Aut(\Gro(A))$ permuting the distinguished basis such that $\widehat \gamma([eA]) =[\sigma(e)A]$ for all $e\in Q_0$. In particular $\widehat{\gamma}([A])=[A]$, so if $\widehat \gamma$ is induced by a Morita auto-equivalence, then it is induced by an automorphism. To check that $\widehat \gamma$ comes from a $\gamma \in \Aut_K(A)$ one only needs to check that there is a corresponding  automorphism of $Z(A)$ (since $A$ is Morita equivalent to its centre), which reduces to showing that $\widehat \gamma$ is an isometry. That is, we need to show that $\sigma$ preserves the multiplicities of vertices in the Brauer graph of $(Q,f)$. If a vertex in the Brauer graph is incident to at least two edges, corresponding to idempotents $e_1,e_2\in \Lambda$, then the multiplicity of the vertex is $\dim_K e_1Ae_2$, which is preserved by $\sigma$. If the vertex is incident only to a single edge, corresponding to an idempotent $e_1$, then this edge must be incident to another edge, corresponding to some idempotent $e_2$ (unless the Brauer graph has only one edge, a case we have excluded earlier). In this case the multiplicity of the vertex in question is $\dim_K  e_1Ae_1 - \dim_K e_1Ae_2$, which again is preserved by $\sigma$. It follows that $\sigma$ preserves multiplicities, which gives us an automorphism $\gamma$ inducing $\widehat \gamma$. Since $\gamma(Z(\Lambda)) \subseteq Z(\Lambda)$ is trivially satisfied, by the description of $Z(\Lambda)$ given in equation~\eqref{eqn z gamma} and the subsequent remarks, the condition \ref{assumption lambda 5} holds.

	Let us now show condition~\ref{assumption decomp determined}. To this end, fix an $R$-order $\Gamma$ derived equivalent to $\Lambda$ in a $K$-algebra~$B$ Morita equivalent to $A$, and assume $\bar{\Gamma} =\Gamma/X\Gamma\cong \bar\Lambda$. In particular, $Z(\Gamma)\cong Z(\Lambda)$. Fix an isomorphism $\varphi:\ \bar{\Lambda} \longrightarrow \bar{\Gamma}$. If $e_1,\ldots,e_r\in Q_0$ (for some $r\geq 1$) are pair-wise distinct, and $e=e_1+\ldots+e_r$, then $e\bar{\Lambda} e$ is again a twisted Brauer graph algebra whose Brauer graph is the subgraph of the Brauer graph of $(Q,f)$ retaining only the edges corresponding to $e_1,\ldots,e_r$ (removing all orphaned vertices and keeping the multiplicities of all other vertices unchanged). In the same vein, $e\Lambda e$ is the corresponding order of the form $\GammaTw(\ldots)$ as defined in Proposition~\ref{prop biserial order}. This can be verified using the presentations given in Definition~\ref{def twisted bga} and Proposition~\ref{prop biserial order} (refer to \cite[Lemma 1.12]{GnedinRibbonOrders} for a proof).

	Let us choose $e_1,\ldots,e_r\in Q_0$ corresponding to a spanning tree of the Brauer graph of $(Q,f)$.  Let ${e_1',\ldots,e'_r\in \Gamma}$ be pair-wise orthogonal lifts of $\varphi(e_1),\ldots,\varphi(e_r)$, and $e'=e'_1+\ldots+e'_r$. Then $e\bar \Lambda e$ and $e'\bar{\Gamma} e'$ are isomorphic Brauer graph algebras whose graph is a tree with $r$ edges. We know furthermore that
	\begin{equation}
		\rank_\Z \Gro(eAe) \leq \rank_\Z \Gro(A) = r+1 = \rank_\Z \Gro(B) \geq  \rank_\Z \Gro(e'Be').
	\end{equation}
	We also know that the leftmost ``$\leq$'' is actually an equality, and we have a description of the map $D_{e\Lambda e}$ from Proposition~\ref{prop biserial order}. The problem is that we do not have the corresponding information for $e'\Gamma e'$.

	By \cite[Theorem 7.4]{BrauerTreeDerEq} the algebra $e\bar{\Lambda}e$ is derived equivalent to a Brauer graph algebra $\Omega$ whose Brauer graph is a star, with the same multiplicities as those occurring in $e\bar{\Lambda} e$,  up to permutation. By \cite[Theorem 8.3]{BrauerTreeDerEq} we may actually assume that the multiplicity of the central vertex is minimal among all multiplicities, that is, it is equal to $m_0=\min\{m_\alpha \ | \ \alpha\in Q_1\}$. Now we can choose  a one-sided tilting complex $\bar T^\bullet$ over $e\bar{\Lambda}e$ such that $\End_{\mathcal D^b(e\bar{\Lambda}e)}(\bar T^\bullet)^{\opp} \cong \Omega$. Then  $\bar T'^\bullet = {e'\bar{\Gamma} e'_\varphi}\otimes_{e\bar{\Lambda}e} \bar T^\bullet$ is a one-sided tilting complex over $e'\bar{\Gamma} e'$ also with endomorphism ring $\Omega$. We can choose lifts $T^\bullet$ and $T'^\bullet$  of $\bar T^\bullet$ and $\bar T'^\bullet$ to $e\Lambda e$ and $e'\Gamma e'$, whose endomorphism rings are $R$-orders $\Omega_1$ and $\Omega_2$ reducing to $\Omega$. We can furthermore pick two-sided tilting complexes $Y^\bullet$ and $Y'^\bullet$ whose restrictions to the left are $T^\bullet$ and $T'^\bullet$. If $\bar Y^\bullet$ and $\bar Y'^\bullet$ denote the reductions of $Y^\bullet$ and $Y'^\bullet$ modulo $X$, then their restrictions to the right differ only by an automorphism of $\Omega$, which acts on $\Gro(\Omega)$ as some permutation $\tau$ of the distinguished basis. Hence we get a diagram
	\begin{equation}\label{eqn rho brtree}
		\xymatrix{
		\Gro(e\bar \Lambda e) \ar[r]^{\varphi_{\bar Y}} \ar[d]^{D_{e\Lambda e}} & \Gro(\Omega) \ar[d]^{D_{\Omega_1}}\ar[r]^{\tau}
		& \Gro(\Omega) \ar[d]^{D_{{\Omega_2}}} & 	\Gro(e'\bar \Gamma e') \ar[d]^{D_{e'\Gamma e'}} \ar[l]_{\varphi_{\bar Y'}} \\
		\Gro(eAe) \ar[r]^{\varphi_{KY}} & \Gro(K\otimes_R \Omega_1)\ar@{-->}[r]^{\exists\rho?} & \Gro(K\otimes_R \Omega_2) & \ar[l]_{\varphi_{KY'}} \Gro(e'Be')
		}
	\end{equation}
	where the leftmost and the rightmost squares are commutative. The composition ${\varphi_{\bar Y'}^{-1}\circ\tau\circ\varphi_{\bar Y}}$ corresponds to an isomorphism between $e\bar{\Lambda}e$ and $e'\bar{\Gamma}e'$, and therefore maps the distinguished basis to the distinguished basis. We will now show that we can find an isometry ${\rho:\ \Gro(K\otimes_R \Omega_1)\longrightarrow \Gro(K\otimes_R \Omega_2)}$ making the middle square commute. If such a map exists, then $\varphi_{KY'}^{-1}\circ \rho \circ \varphi_{KY}$ must actually map the distinguished basis of $\Gro(eAe)$ to that of  $\Gro(e'Be')$, since  $D_{e\Lambda e}$, $D_{e'\Gamma e'}$ and $\varphi_{\bar Y'}^{-1}\circ\tau\circ\varphi_{\bar Y}$ all preserve the $\Z_{\geq 0}$-span of the distinguished bases (and together these form a commutative square). It then follows that for a projective indecomposable $e'\bar{\Gamma}e'$-module $P$ the image $D_{e'\Gamma e'}([P])$  is the sum of exactly two distinguished basis elements of $\Gro(e'Be')$, since the analogous statement is true for $D_{e\Lambda e}$.

	To find $\rho$ first note that if $f_1,\ldots,f_r$ are a full set of orthogonal primitive idempotents in $\Omega$ such that the corresponding edges in the star-shaped Brauer graph follow the cyclic order around the central vertex, then $f_1\Omega f_2\Omega \cdots \Omega f_r\neq 0$. If $f\upbr{i}_1,\ldots,f\upbr{i}_r$ (for $i\in \{1,2\}$) denote mutually orthogonal lifts of $f_1,\ldots,f_r$ to $\Omega_i$, then  $f\upbr{i}_1\Omega_i f\upbr{i}_2\Omega_i \cdots \Omega_i f\upbr{i}_r\neq 0$. It follows that for each $i\in \{1,2\}$ there is a distinguished basis element $[V_i]$ in $\Gro(K\otimes_R \Omega_i)$ such that for each $1\leq j \leq r$ we have $D_{\Omega_i}([f_j\Omega]) = [V_i]+[W_{i,j}]$ for some $K\otimes_R\Omega_i$-module $W_{i,j}$. We know that $(D_{\Omega_i}([f_j\Omega]), D_{\Omega_i}([f_l\Omega]))=([f_j\Omega], [f_l\Omega])= \dim_k f_j\Omega f_l= m_0$ for all $j\neq l$, and $m_0$ happens to be the minimal length of an element of $\Gro(K\otimes_R\Omega_i)$.   Hence $[V_1]$ and $[V_2]$ are actually of length $m_0$, and the $[W_{i,j}]$ must be both mutually orthogonal and orthogonal to $[V_i]$. Moreover, the $[W_{i,j}]$ are all non-zero, since $(D_{\Omega_i}([f_j\Omega]),D_{\Omega_i}([f_j\Omega]))=\dim_k f_j\Omega f_j > m_0$ for all $j$. Since $\rank_\Z (\Gro(K\otimes_R\Omega_i))\leq r+1$ it follows (essentially by the ``pigeonhole principle'') that all $[W_{i,j}]$ must be distinguished basis elements, pair-wise distinct and distinct from $[V_i]$ (and $\rank_\Z (\Gro(K\otimes_R\Omega_i))= r+1$ also follows). To define $\rho$ we can now simply map $[V_1]$ to $[V_2]$ and $[W_{1,j}]$ to $[W_{2,j}]$ for $1\leq j \leq r$.

	Let us now assign a graph $G_\Omega$ to any $R$-order $\Omega$ in a semisimple $K$-algebra which has the property that for each primitive idempotent $e\in \Omega$ the $K\otimes_R\Omega$-module $K\otimes_Re\Omega$ has exactly two simple constituents, non-isomorphic to each other. Write $\bar\Omega=\Omega/X\Omega$, as usual. We define the vertices of $G_\Omega$ to be in bijection with the elements of the distinguished basis of $\Gro(K\otimes_R\Omega)$, and the edges of $G_\Omega$ to be in bijection with the elements of the distinguished basis of $\Gro(\bar\Omega)$. We want the edge labelled by $[e \bar\Omega]$, for a primitive idempotent $e\in\Omega$, to link the two vertices for which the sum of the corresponding basis elements is equal to $D_\Omega([e \bar\Omega])$. By this definition, two edges labelled by $[e_1\bar\Omega]$ and $[e_2\bar\Omega]$ are adjacent if and only if $e_1\Omega e_2\neq 0$, which happens if and only if $e_1\bar{\Omega} e_2\neq 0$.  If $\dim_k e_1 \bar\Omega e_1 > \dim_k e_1\bar\Omega e_2$ for all $[e_1\bar \Omega]\neq [e_2\bar \Omega]\in \Gro(\bar{\Omega})$, then $G_\Omega$ is a simple graph. By the discussion above (applied to all spanning trees of the Brauer graph) we get simple graphs $G_\Lambda$ and $G_\Gamma$, and $G_\Lambda$ is just the Brauer graph of $(Q,f)$.

	The isomorphism $\varphi:\ \bar\Lambda \longrightarrow \bar{\Gamma}$ induces a bijection between the edges of $G_\Lambda$ and $G_\Gamma$ by sending $[e\bar\Lambda]$, for a primitive idempotent $e\in \Lambda$, to $[\varphi(e)\bar{\Gamma}]$. By the discussion in the previous paragraph, this assignment preserves adjacency of edges.  That is, $\varphi$ induces an isomorphism $\widehat \varphi$ between the line graphs of $G_\Lambda$ and $G_\Gamma$. It follows from Whitney's line graph theorem \cite[Corollary]{HemmingerLineGraph} that either $G_\Lambda$ is isomorphic to $G_\Gamma$, or one of them is the complete graph $K_3$ and the other one is the complete bipartite graph $K_{3,1}$. The latter case cannot occur since both $G_\Lambda$ and $G_\Gamma$ have the same number of vertices, as their number is equal to the rank of $\Gro(A)\cong \Gro(B)$.

	The discussion following equation~\eqref{eqn rho brtree} implies that if $e=e_1+\ldots+e_r$ is a sum of orthogonal primitive idempotents in $\Lambda$ corresponding to a spanning tree of $G_\Lambda$, and $e'$ is a lift to $\Gamma$ of $\varphi(e)$, then $G_{e\Lambda e}$ is isomorphic to $G_{e'\Gamma e'}$. Note that, by definition, $G_{e\Lambda e}$ is a subgraph of $G_\Lambda$, $G_{e'\Gamma e'}$ is a subgraph of $G_{\Gamma}$, and $G_{e'\Gamma e'}$ is the image of $G_{e\Lambda e}$ under $\widehat{\varphi}$. That is, $\widehat \varphi$ maps spanning trees to  trees. Since every star in $G_\Lambda$ is contained in a maximal subtree, which is the same as a spanning tree, the map $\widehat \varphi$ maps stars to trees as well. But if a line graph isomorphism maps a star to a tree, then that tree must again be a star (all other possible images of a star contain a triangle). It follows that $\widehat\varphi$ maps stars to stars, and because it is a line graph isomorphism between two isomorphic graphs (with the same number of stars), the inverse of $\widehat \varphi$ preserves stars as well. By \cite[Theorem 1]{HemmingerLineGraph} the map $\widehat \varphi$ is induced by a graph isomorphism.

	We can attach the multiplicity $([V],[V])_{B}$ to the vertex of $G_\Gamma$ belonging to the distinguished basis element $[V]\in \Gro(B)$. The analogously defined multiplicities on $G_\Lambda$ coincide with the multiplicities already defined on it (as a Brauer graph). The Cartan matrices of $\bar{\Lambda}$ and $\bar{\Gamma}$ determine the multiplicities of the vertices of $G_\Lambda$ and $G_\Gamma$ by the same argument that was used in proving condition~\ref{assumption lambda 5}. In particular, $\widehat \varphi$ induces an isomorphism between $G_\Lambda$ and $G_\Gamma$ as graphs with multiplicities. By definition, these graphs determine the matrices of the maps $D_\Lambda$ and $D_\Gamma$ with respect to the respective distinguished bases (remember that both vertices and edges are labelled by such basis elements). The graph isomorphism $\widehat \varphi$ induces bijections between the sets of vertices and the sets of edges, which produces maps $\iota_1$ and $\iota_2$ as required in assumption~\ref{assumption decomp determined} (the fact that these are isometries follows from the fact that $\widehat \varphi$ preserves multiplicities). This finishes the verification of the conditions of Lemma~\ref{lemma pic}, which implies that there is a group  $\TrPicent(\Lambda)\leq \mathcal H_I \leq \TrPic_S(\Lambda)$ (which is the same for any $\mm\in I$) such that $\Pic_k(\bar{\Lambda})\cdot \bar{\mathcal H}_I$ is equal to $\TrPic_k(\bar{\Lambda})$.

	To finish the proof we will need to look at automorphism groups. Note that all algebras we consider are basic, so Picard groups and outer automorphism groups coincide. Let ${\gamma\in \Aut_S(\Lambda)\cap \Aut_R(\Lambda)}$ represent an element which lies in the kernel of the natural map ${\Out_R(\Lambda)\longrightarrow \Out_k(\bar{\Lambda})}$. Then $\gamma$ must act trivially on $\Gro(\bar{\Lambda})$. So we can assume that $\gamma(e)=e$ for all $e\in Q_0$, and by further modifying $\gamma$ by an inner automorphism we can assume that $\gamma$ is trivial on $\bar{\Lambda}$. In particular, $\gamma$ induces an automorphism of the Brauer graph of $(Q,f)$ which fixes all edges, and unless the Brauer graph consists of a single edge (which we assume it does not) such an automorphism must fix all vertices of the Brauer graph as well. So $\gamma$ acts trivially on $\Gro(A)$ and therefore becomes inner in $A$ (since $\gamma$ is $S$-linear and $A$ is a split semisimple $L$-algebra). Hence we can assume that $\gamma$ is induced by conjugation by
	\begin{equation}
		u=\sum_{e\in Q_0} \sum_{\alpha\langle g \rangle \in Q_1/\langle g \rangle} u_{e,\alpha\langle g\rangle}\cdot \eps_{\alpha\langle g\rangle}e,
	\end{equation}
	where $\eps_{\alpha\langle g\rangle}$ denotes the primitive idempotent in $A$ belonging to the $g$-orbit $\alpha\langle g \rangle$, and ${u_{e,\alpha\langle g\rangle} \in k\sqq{X^{1/m_\alpha}}}^\times$. We can multiply $u$ by a unit of $Z(A)$ and assume without loss of generality that for each $\alpha\in Q_1$ there is an $e(\alpha)\in Q_0$ such that $u_{e(\alpha),\alpha\langle g\rangle}=1$. Fix an orbit $\alpha\langle g\rangle$, and pick the representative $\alpha$ such that $e(\alpha)$ is the source of $\alpha$. Then conjugation by $u$ maps $\alpha g(\alpha) \cdots g^i(\alpha)$ (for any $i$) to $u_{t(g^i(\alpha)), \alpha\langle g \rangle}\cdot \alpha g(\alpha) \cdots g^i(\alpha)$, where $t(g^i(\alpha))$ denotes the target of $g^i(\alpha)$. At the same time we know that  the image of this element in $\bar \Lambda$ must be equal to  $\alpha g(\alpha) \cdots g^i(\alpha)$, which implies that $u_{t(g^i(\alpha)), \alpha\langle g \rangle}$ must lie in $1+X^{1/m_{\alpha}} k\sbb{X^{1/m_{\alpha}}}$. It follows that $u_{e,\alpha\langle g\rangle}$  lies in $1+X^{1/m_{\alpha}} k\sbb{X^{1/m_{\alpha}}}$ for all $\alpha\in Q_1$ and $e\in Q_0$. But from the description of $Z(\Lambda)$ in equation~\eqref{eqn z gamma} it is clear that then $u\in \bigoplus_{e\in Q_0} Z(\Lambda)e\subseteq \Lambda$, that is, $\gamma$ is inner. It follows that $\Out_S(\Lambda)\cap \Out_R(\Lambda)$ embeds into $\Out_k(\Lambda)$, and in particular $\bar{\mathcal H}_I\cong \mathcal H_I$.

	A permutation of the distinguished basis of $\Gro(\bar \Lambda)$ is induced by an automorphism of $\bar{\Lambda}$ if and only if it corresponds to a multiplicity preserving automorphism of the Brauer graph of $(Q,f)$. Similarly, there is an element of $\Aut_S(\Lambda)$ acting isometrically on $\Gro(A)$ (carrying the bilinear form coming from the $K$-algebra structure of $A$) which induces a given permutation of the distinguished basis of $\Gro(\Lambda)\cong \Gro(\bar{\Lambda})$ if and only if the permutation corresponds to a multiplicity preserving automorphism of the Brauer graph. Such an element of $\Aut_S(\Lambda)$  then gives rise to an element of $\mathcal H_I$. It follows that $\Pic_k(\bar{\Lambda}) \cdot \mathcal H_I 	= \PicS_k(\bar{\Lambda}) \cdot \mathcal H_I$, proving the first assertion.

	For the second assertion we will verify conditions~\ref{assumption b4}~and~\ref{assumption b5} of Lemma~\ref{lemma extra structure}. First let us show \ref{assumption b4}, that is, $\Picent(\bar\Lambda)\cap \PicS_k(\bar{\Lambda})\subseteq \mathcal H_I$. Let ${\gamma\in \Autcent(\bar \Lambda)}$ be an automorphism fixing the all $e\in Q_0$. For each $\alpha \in Q_1$ the element
	\begin{equation}
		z_{\alpha\langle g \rangle} = \sum_{\beta \in \alpha\langle g\rangle} \beta g(\beta)\cdots g^{n_\alpha-1}(\beta) \in \bar{\Lambda}
	\end{equation}
	is central and therefore fixed by $\gamma$, which implies $\gamma(\alpha g(\alpha)\cdots g^{n_\alpha-1}(\alpha))=\alpha g(\alpha)\cdots g^{n_\alpha-1}(\alpha)$. For each $0\leq i \leq n_\alpha-1$ we have $\gamma(g^i(\alpha))=u_i\cdot  g^i(\alpha)$ for some unit $u_i \in k[z_{\alpha\langle g \rangle}]$, and $u_0u_1\cdots u_{n_\alpha-1}z_{\alpha\langle g \rangle}=z_{\alpha\langle g \rangle}$. Write $u_i= (1+r_i)\cdot a_i$ for some $r_i\in z_{\alpha\langle g \rangle} k[z_{\alpha\langle g \rangle}]$ and $a_i\in k^\times$. Since $z_{\alpha\langle g \rangle}$ annihilates all arrows not contained in the $g$-orbit of $\alpha$ we can construct a unit $v\in\bar\Lambda$ such that $v \beta v^{-1}=\beta$ whenever $\beta\not\in \alpha\langle g\rangle$ and  $vg^i(\alpha)v^{-1} = (1+r_i)^{-1}g^i(\alpha)$ for all $1\leq i \leq n_\alpha-1$. If we compose $\gamma$ and conjugation by $v$ we can assume without loss of generality that $u_i\in k^\times$ for all $1\leq i \leq n_\alpha - 1$, which by $u_0u_1\cdots u_{n_\alpha-1}z_{\alpha\langle g \rangle}=z_{\alpha\langle g \rangle}$ implies that all $u_i$ lie in $k^\times$ (to see this note that if an element in $k[z_{\alpha\langle g \rangle}]$ does not annihilate the arrow $\alpha$, then it also does not annihilate $z_{\alpha\langle g \rangle}$). Note that we did not alter the image of any $\beta\not\in \alpha\langle g \rangle$. We can therefore assume without loss of generality that there are $u_\alpha\in k^\times$ such that $\gamma(\alpha)=u_\alpha\alpha$ for all $\alpha\in Q_1$, and moreover $\prod_{\beta\in \alpha \langle g \rangle}u_\beta =1$ for all $\alpha$. Now define a central automorphism of the order $\Lambda=\GammaTw(Q,f,\mm)$ that maps any $\alpha\in Q_1$ to $u_\alpha\alpha$ (this would not have been possible with the original $u_i$'s, since their product was equal to one only in $\bar{\Lambda}$ but not necessarily in $kQ$). Using the presentation given in Proposition~\ref{prop biserial order} one sees that this automorphism is well-defined and one checks that it is indeed trivial on $Z(\Lambda)$. This shows that $\Picent(\bar\Lambda)\cap \PicS_k(\bar{\Lambda})\subseteq \xoverline{\Picent(\Lambda)}\cong \Picent(\Lambda) \subseteq \mathcal H_I$, proving \ref{assumption b4}.

	For condition~\ref{assumption b5} we note that an element of $\mathcal H_I$ induces a permutation  on the elements of the form $z_{\alpha\langle g \rangle}$ defined earlier, as these are the reductions modulo $X$ of the elements ${X^{1/m_\alpha} \eps_{\alpha\langle g \rangle}\in Z(\Lambda)}$. An element of $\PicS_k(\bar{\Lambda})$ is induced by an automorphism $\gamma$ of $\bar{\Lambda}$ which fixes all $e\in Q_0$, and therefore can only map $z_{\alpha\langle g \rangle}$ to $z_{\beta\langle g \rangle}$ for $\alpha\langle g \rangle \neq \beta \langle g\rangle$ if the exact same vertices appear as sources of arrows in the orbits $\alpha\langle g \rangle$ and $\beta \langle g\rangle$. If two distinct  $g$-orbits of arrows have more than one vertex in common (as a source of an arrow), then by definition there is a double edge in the Brauer graph, which we do not allow. Hence $\gamma$ can only map $z_{\alpha\langle g \rangle}$ to $z_{\beta\langle g \rangle}$ if both $\alpha$ and $\beta$ are loops attached to the same vertex. In that case the Brauer graph has only a single edge, a case we exclude. So, if $\gamma$ induces the same automorphism of $Z(\bar{\Lambda})$ as some element of $\mathcal H_I$, then  $\gamma$ fixes  $z_{\alpha\langle g \rangle}$ for all $\alpha\in Q_1$.   But then $\gamma$ also fixes $ez_{\alpha\langle g \rangle}^{m_\alpha}$ for all $e\in Q_0$ and $\alpha\in Q_1$. The latter elements together with the $z_{\alpha\langle g \rangle}$ generate $Z(\bar{\Lambda})$. That is, $\gamma$ must be trivial on $Z(\bar{\Lambda})$, which implies~\ref{assumption b5}.

	It follows by Lemma~\ref{lemma extra structure} that $\TrPicent(\bar \Lambda) = \Ker(\mathcal H_I \longrightarrow \Aut_k(Z(\bar{\Lambda})))$. As already discussed above, an element of $\mathcal H_I$ induces a permutation of the $z_{\alpha\langle g \rangle}$ for $\alpha\in Q_1$, and since we are assuming that there is more than one edge in the Brauer graph we have $z_{\alpha\langle g \rangle}\neq z_{\beta\langle g \rangle}$ in $\bar{\Lambda}$ whenever $\alpha\langle g \rangle \neq \beta \langle g \rangle$.  In particular $\TrPicent(\bar\Lambda)\subseteq \xoverline{\TrPicent(\Lambda)}\cong \TrPicent(\Lambda)$, and we can therefore write $\TrPicent(\bar \Lambda) = \Ker(\TrPicent(\Lambda) \longrightarrow \Aut_k(Z(\bar{\Lambda})))$. Now $Z(\bar{\Lambda})$ is generated by the $z_{\alpha\langle g \rangle}$ for $\alpha \in Q_1$ and the elements of $\soc(\bar{\Lambda})=\langle  ez_{\alpha\langle g\rangle}^{m_\alpha}\ |\ e\in Q_0,\ \alpha\in Q_1\rangle_k$ (one obtains this from the presentation of $\bar{\Lambda}$).  In particular, $\soc(\bar{\Lambda})=\soc(Z(\bar{\Lambda}))$, and an element of $\TrPicent(\Lambda)$ induces the identity on $Z(\bar{\Lambda})$ if and only if it maps the elements $ez_{\alpha\langle g\rangle}^{m_\alpha}$ to multiples of themselves, since it fixes the elements $z_{\alpha\langle g \rangle}$ anyway.  In this part of the proof we can also assume that $\bar{\Lambda}$ is symmetric, so Lemma~\ref{lemma action centre} applies and it follows that
	\begin{equation}
		\TrPicent(\bar \Lambda) = \Ker(\TrPicent(\Lambda) \longrightarrow \Aut_k(k\otimes_{\Z} \Gro({\Lambda}))),
	\end{equation}
	which is independent of the $R$-algebra structure on $\Lambda$. If we let $\mathcal G$ denote the right hand side of the expression above then  the second assertion follows.
\end{proof}

\begin{remark}
	\begin{enumerate}
		\item While there may be some twisted Brauer graph algebras of independent interest, the main intended application of Theorem~\ref{thm Brauer graph} are twisted Brauer graph algebras which are isomorphic to their ordinary counterparts (e.g. in characteristic two, or when the Brauer graph is bipartite). In those cases the symmetry condition in the second part of Theorem~\ref{thm Brauer graph} is automatically met.
		\item One case we are particularly interested in are the algebras of dihedral type $\mathcal D(3K)^{a_1,a_2,a_3}$ from Erdmann's classification \cite{TameClass} in characteristic two, where $a_1,a_2,a_3\geq 1$. These are Brauer graph algebras, where the graph is a triangle (i.e. a complete graph on three vertices). Obviously Theorem~\ref{thm Brauer graph} applies, but these algebras are also silting-connected, which by Lemma~\ref{lemma extra structure} implies that
		      \begin{equation}
			      \PicS_k(\mathcal D(3K)^{a_1,a_2,a_3}) \unlhd \TrPic_k(\mathcal D(3K)^{a_1,a_2,a_3}).
		      \end{equation}
		      However, condition~\ref{assumption B2} of Lemma~\ref{lemma extra structure} fails to hold, and we do not get a semi-direct product decomposition as we did for $\mathcal Q(3K)^{a_1,a_2,a_3}$. Specifically, the quiver $Q$ for this algebra is an in Proposition~\ref{prop semimimple q3k}, and for each $c \in k^\times$ there is an automorphism sending $\alpha_1$ to $c\alpha_1$, $\beta_2$ to $c^{-1}\beta_2$, and fixing all other arrows. These automorphisms are trivial on the centre of the algebra, and lift to the $k\sbb{X}$-order $\Lambda$ used in the proof of Theorem~\ref{thm Brauer graph}, that is, they lie in $\mathcal H_I\cap \PicS_k(\bar{\Lambda})$ (in fact, this intersection consists exactly of the automorphisms we just described).
	\end{enumerate}
\end{remark}

Of course Theorem~\ref{thm Brauer graph} also applies to Brauer tree algebras, which are Brauer graph algebras whose graph is a tree and only a single vertex may have multiplicity bigger than one. Their derived Picard groups were already described in \cite{ZvonarevaBrauerStar, VolkovZvonareva} (which to some extent motivated Theorems~\ref{thm trpic q3k}~and~\ref{thm Brauer graph}). We can recover the fact that the derived Picard group decomposes as a direct product of $\PicS_k(\bar{\Lambda})$ and a group whose isomorphism type is mostly independent of multiplicities. If more than one vertex has multiplicity bigger than one, this becomes a semidirect product.

\begin{prop}\label{prop semidirect brauer tree}
	Let $k$ be algebraically closed and let $A(T, \mm)$ denote a Brauer graph algebra whose graph $T$ is a star with multiplicities $\mm$. Define $\mathcal I$ and ``$\sim$'' as in Theorem~\ref{thm Brauer graph}. For any $I\in \mathcal I /\sim$ there is a group $\mathcal H_I$ such that
	\begin{equation}
		\TrPic_k(A(T, \mm)) \cong \PicS_k(A(T, \mm))\rtimes \mathcal H_I.
	\end{equation}
	for all $\mm\in I$.

	Given a multiplicity function $\mm$ which assigns multiplicity one to all except the central vertex, there are exactly two possibilities for the equivalence class $I\in\mathcal I / \sim$  containing $\mm$ (one in which the multiplicity of the central vertex is also equal to one, and one in which it is bigger than one), and
	\begin{equation}
		\TrPic_k(A(T, \mm)) \cong \PicS_k(A(T, \mm))\times \mathcal H_I.
	\end{equation}
\end{prop}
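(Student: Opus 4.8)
The plan is to obtain both assertions from Theorem~\ref{thm Brauer graph} together with Lemma~\ref{lemma extra structure}. Since a star $T$ is a connected simple graph and, being a tree, is bipartite, Proposition~\ref{prop tw bga eq bga} identifies $A(T,\mm)$ with the twisted Brauer graph algebra $\LambdaTw(Q,f,\mm)$, and Theorem~\ref{thm Brauer graph} already yields $\TrPic_k(A(T,\mm))=\PicS_k(A(T,\mm))\cdot\mathcal H_I$ together with $\mathcal H_I\cong\bar{\mathcal H}_I$ and the multiplicity-independence of $\TrPicent$. (The degenerate star with a single edge gives a local symmetric algebra and is handled directly via \cite[Theorem~2.11]{RouquierZimmermann} and \cite[Proposition~3.3]{RouquierZimmermann}; so one may assume $T$ has at least two edges.) It then remains to upgrade the product $\PicS_k\cdot\mathcal H_I$ first to a semidirect product and, in the special case, to a direct product.

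For the semidirect product I would verify conditions~\ref{assumption B1}--\ref{assumption B3} of Lemma~\ref{lemma extra structure} for the order $\Lambda=\GammaTw(Q,f)$ equipped with the two $k\sbb{X}$-structures $R$ and $S$ used in the proof of Theorem~\ref{thm Brauer graph} (the hypotheses of Lemma~\ref{lemma pic} having been checked there). Condition~\ref{assumption B1} holds because a Brauer star algebra is a self-injective Nakayama algebra, hence silting-discrete and in particular silting-connected (cf.\ \cite{VolkovZvonareva,AiharaMizuno}). For condition~\ref{assumption B3}, the image of $\Pic_k(\bar\Lambda)$ in $\Aut(\Gro(\bar\Lambda))$ consists of the rotations of the star preserving the multiplicities; each such rotation is induced by an automorphism of $\Lambda$ permuting the vertices and arrows of $Q$ and fixing the uniformisers of both $R$ and $S$, hence lies in the image of $\Pic_S(\Lambda)\cap\Pic_R(\Lambda)$. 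Condition~\ref{assumption B2}, faithfulness of $\Pic_S(\Lambda)$ on $\Gro(\Lambda)$, is where the tree property of $T$ is really used: an $S$-linear automorphism of $\Lambda$ fixing the projective indecomposables fixes all central primitive idempotents of $A=K\otimes_R\Lambda$, acts as the identity on the $1\times1$ matrix blocks coming from the leaves, and acts on the block coming from the central vertex by conjugation with a diagonal unit; because $T$ has no cycle, the conditions gluing the matrix blocks of $\Lambda$ together impose no constraint on realising that conjugation by a unit of $\Lambda$ whose leaf-components are suitable constants, so the automorphism is inner. This is exactly the step that fails for the cyclic Brauer graph of $\mathcal D(3K)$ (cf.\ the remark preceding this proposition), and verifying it will be the main obstacle. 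With \ref{assumption B1}--\ref{assumption B3} in hand, Lemma~\ref{lemma extra structure}(2) gives $\TrPic_k(A(T,\mm))=\PicS_k(A(T,\mm))\rtimes\mathcal H_I$, with $\mathcal H_I\cong\bar{\mathcal H}_I$ depending only on $I$.

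For the final statement, take $\mm$ with $m_v=1$ at every leaf and $m_v=m$ at the centre, so that $A(T,\mm)$ is the self-injective Nakayama algebra with $n$ simple modules and Loewy length $mn+1$; the two $\sim$-classes correspond to $m=1$ and $m>1$. It suffices to show that the conjugation action of $\mathcal H_I$ on $\PicS_k(A(T,\mm))$ is trivial, since then the normal factor $\mathcal H_I$ centralises $\PicS_k(A(T,\mm))$ and the semidirect product becomes direct. Here $\PicS_k(A(T,\mm))\cong k^\times$ is generated by the automorphisms rescaling the arrows of the cyclic quiver and is detected by the total rescaling factor $c\in k^\times$ (all leaf multiplicities being $1$, there is no further finite contribution). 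Any element of $\mathcal H_I$ acts on the arrows of $\bar\Lambda$ by a permutation of the leaves followed by a rescaling, which leaves $c$ unchanged; alternatively one applies Lemma~\ref{lemma action centre} to the one-dimensional-per-vertex socle, using $\soc(A(T,\mm))=\soc(Z(A(T,\mm)))$. Either way the action is trivial, giving $\TrPic_k(A(T,\mm))=\PicS_k(A(T,\mm))\times\mathcal H_I$. The remaining work is to make the description of $\GammaTw(Q,f)$, of $\PicS_k(A(T,\mm))$ and of the $\mathcal H_I$-action explicit enough to run these arguments; as indicated, the genuinely delicate point throughout is condition~\ref{assumption B2}, where the Brauer graph really must be a tree.
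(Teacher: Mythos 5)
Your first part follows the paper's own route and is essentially correct: you verify \ref{assumption B1}--\ref{assumption B3} of Lemma~\ref{lemma extra structure} for $\GammaTw(Q,f)$ with the two $k\sbb{X}$-structures from the proof of Theorem~\ref{thm Brauer graph} (silting-discreteness for \ref{assumption B1}, a ``$\Picent(\GammaTw(Q,f))=1$''-type argument as in Proposition~\ref{prop picent} for \ref{assumption B2} -- your inner-unit construction using the absence of cycles is exactly that -- and the rotations of the star for \ref{assumption B3}), which is what the paper does.

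The second part, however, has a genuine gap. Your identification $\PicS_k(A(T,\mm))\cong k^\times$ fails in one of the two classes you must treat, namely when the central multiplicity $m$ is bigger than one: representing an element of $\PicS_k(A(T,\mm))$ by an automorphism fixing the vertices, each cycle arrow $\gamma_i$ goes to $\gamma_i q_i$ with $q_i$ a unit of $e_{i+1}\bar\Lambda e_{i+1}\cong k[z]/(z^m)$, and the outer class is detected by $\prod_i q_i\in (k[z]/(z^m))^\times$, not merely by its constant term. For instance, for $n=2$, $m=2$ the automorphism $a\mapsto a+aba$, $b\mapsto b$ fixes the projectives, is not inner, and acts nontrivially on the centre (it sends $ab+ba$ to $ab+ba+abab+baba$). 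So $\PicS_k$ has a unipotent part that your ``total rescaling factor $c$'' does not see. Moreover, the claim that an element of $\mathcal H_I$ ``acts on the arrows of $\bar\Lambda$ by a permutation of the leaves followed by a rescaling'' is not available: elements of $\mathcal H_I$ are two-sided tilting complexes, not algebra automorphisms, and the only handle on them is the induced automorphism $\gamma_X$ of $Z(\bar\Lambda)$ (the reduction of an automorphism of $Z(\GammaTw(Q,f,\mm))$) together with the action on Grothendieck groups. Since $\Picent(A(T,\mm))=1$, an element of $\PicS_k(A(T,\mm))$ is determined by its central automorphism, so the point that actually has to be proved is that $\gamma_X$ commutes with the central automorphisms induced by the $\PicS_k$-automorphisms above; this is exactly how the paper argues, and it stresses that this commutativity holds precisely because only the central vertex may have multiplicity $>1$, fails otherwise, and requires a computation on the non-socle part of the centre (the powers of the central cycle). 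Lemma~\ref{lemma action centre} cannot substitute for this, since it only controls the action on $\soc(Z(\bar\Lambda))$. Your argument does go through in the class with $m=1$, where $Z(\bar\Lambda)=k\oplus\soc(\bar\Lambda)$ and $\PicS_k\cong k^\times$, but the class with $m>1$ is not covered as written.
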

\begin{proof}
	The first part of the assertion follows from Lemma~\ref{lemma extra structure}. Clearly \ref{assumption B1} holds since $A(T,\mm)$ is silting-discrete (see \cite[Theorem 6.7]{BrauerGraphTauTilt}). An argument like the one in Proposition~\ref{prop picent} shows that  $\Picent(\GammaTw(Q,f))=1$, where $\LambdaTw(Q,f,\mm)$ is  the twisted Brauer graph algebra isomorphic to $A(T, \mm)$ (note that $Q$ is not a circular quiver, but rather a circular quiver with a loop attached to each vertex). This implies condition~\ref{assumption B2} of Lemma~\ref{lemma extra structure}. Condition~\ref{assumption B3} is also satisfied, as both images in $\Aut(\Gro(A(T,\mm)))$ correspond precisely to the automorphisms of the tree  $T$. By applying Lemma~\ref{lemma extra structure} on top of Theorem~\ref{thm Brauer graph} we get a semidirect product decomposition $\PicS_k(A(T, \mm))\rtimes \mathcal H_I$. For the second part of the assertion we should note that $\Picent(A(T,\mm))=1$ irrespective of $\mm$, and the centre of $A(T,\mm)\cong \LambdaTw(Q,f,\mm)$ is the reduction of the centre of $\GammaTw(Q,f,\mm)$. In particular, the action of $\mathcal H_I$ on $\PicS_k(A(T,\mm))$ factors through $\Aut(\Gro(k\sqq{X} \otimes_{k\sbb{X}} \GammaTw(Q,f,\mm)))$. Hence one only needs to check that the automorphisms of $Z(A(T,\mm))$ coming from automorphisms of $Z(\GammaTw(Q,f,\mm))$ commute with those which are induced by automorphisms of $A(T,\mm)$. This is true if only the central vertex is allowed to have multiplicity bigger than one, and false otherwise (this requires a computation).
\end{proof}

\bibliographystyle{halpha}
\bibliography{refs}

\end{document}